\documentclass[11pt]{article}

\usepackage{amsthm}
\usepackage{amssymb}
\usepackage{amsmath,enumerate}
\usepackage{comment}
\usepackage{thm-restate}
\usepackage{comment}
\usepackage{thm-restate}
\usepackage{centernot}
\usepackage{url} 
\usepackage{subfigure}
\usepackage{enumitem}
\usepackage{mathrsfs}
\usepackage[dvipsnames]{xcolor}

\usepackage{color}
\usepackage[normalem]{ulem}
\usepackage{setspace}

\usepackage[margin=1in]{geometry}

\newcommand{\eps}{\varepsilon}

\renewcommand{\v}{\textup{\textsf{v}}}

\newenvironment{proofclaim}[1][]%
{\noindent \emph{Proof.} {}{#1}{}}{\hfill
	$\Diamond$\vspace{1em}}

\newcommand{\thistheoremname}{}
\newtheorem{genericthm}[section]{\thistheoremname}

\newcommand{\overbar}[1]{\mkern 1.5mu\overline{\mkern-1.5mu#1\mkern-1.5mu}\mkern 1.5mu}

\setlistdepth{9}

\newlist{myEnumerate}{enumerate}{9}
\setlist[myEnumerate,1]{label=(\arabic*)}
\setlist[myEnumerate,2]{label=(\Roman*)}
\setlist[myEnumerate,3]{label=(\Alph*)}
\setlist[myEnumerate,4]{label=(\roman*)}
\setlist[myEnumerate,5]{label=(\alph*)}
\setlist[myEnumerate,6]{label=(\arabic*)}
\setlist[myEnumerate,7]{label=(\Roman*)}
\setlist[myEnumerate,8]{label=(\Alph*)}
\setlist[myEnumerate,9]{label=(\roman*)}

\def\P{\mathbb P}

\usepackage{amsmath,amssymb,amstext} 
\usepackage[pdftex]{graphicx} 

\usepackage[pdftex,pagebackref=false]{hyperref} 
\usepackage[noabbrev,capitalise]{cleveref}
\hypersetup{
    plainpages=false,       
    unicode=false,          
    pdftoolbar=true,        
    pdfmenubar=true,        
    pdffitwindow=false,     
    pdfstartview={FitH},    
    pdfnewwindow=true,      
    colorlinks=true,        
    linkcolor=blue,         
    citecolor=green,        
    filecolor=magenta,      
    urlcolor=cyan           
}

\theoremstyle{plain}
\newtheorem{thm}{Theorem}[section]
\newtheorem{lem}[thm]{Lemma}
\newtheorem{claim}{Claim}[thm]
\newtheorem{prop}[thm]{Proposition}
\newtheorem{cor}[thm]{Corollary}

\newtheorem{conj}[thm]{Conjecture}
\newtheorem{definition}[thm]{Definition}
\newtheorem{que}[thm]{Question}

\title{When are random regular triangle-free graphs bipartite?}

\author{Gregory DeCamillis\\ University of Waterloo\\ gdecamillis@uwaterloo.ca \and Pu Gao \\ University of Waterloo\\ pu.gao@uwaterloo.ca}

\date{}
\begin{document}

\maketitle

\begin{abstract}

We study the structure of random $d$-regular triangle-free graphs and show that a sharp phase transition occurs at $d=\frac{\sqrt 3}{2}\sqrt{n \log n}$. For smaller $d$, asymptotically almost surely the graph is non-bipartite, whereas for greater $d$, asymptotically almost surely the graph is bipartite.

\end{abstract}

\section{Introduction}
One of the fundamental problems in random graph theory is estimating the probability that a random graph has a property $P$ as the number of vertices goes to infinity. The most classical random graphs include $G_{n,p}$ and $G_{n,m}$, both of which are defined on vertex set $[n]$. In $G_{n,p}$, every pair of distinct vertices are adjacent independently with probability $p$, whereas $G_{n,m}$ denotes the random graph chosen uniformly from those having exactly $m$ edges.  Note that $G_{n,m}$ is distributed as $G_{n,p}$ conditioned on having $m$ edges, and it is often straightforward to transfer many a.a.s.\ properties between $G_{n,p}$ and $G_{n,m}$, with $m=p\binom{n}{2}$.  Conditioning on $G_{n,p}$ being $d$-regular yields another extensively studied random graph  $G_{n,d}$, a  graph chosen uniformly at random from the set of all $d$-regular graphs on $[n]$. Although $G_{n,p}$ and $G_{n,d}$ share many a.a.s.\ properties with $d=pn$, until the recent work on resolving the sandwich conjecture~\cite{behague2025proof, gao2025kimvussandwichconjecturetrue, gao2025sandwichingrandomregulargraphs}, there is no easy approach to translate results from $G_{n,p}$ to $G_{n,d}$. For instance, the Hamiltonicity of $G_{n,m}$ was completely determined in the early 1980s by Koml\'os and Szemer\'edi \cite{KOMLOS198355} whereas understanding the Hamiltonicity of $G_{n,d}$ involved a series of papers \cite{BOLLOBAS198397,COOPER_FRIEZE_REED_2002, FENNER1983301,  Krivelevich2001, Robinson1992, Robinson1994}, the last of which was published in 2002. In fact, unlike $G_{n,p}$ in which edges appear independently and standard probabilistic tools such as the Chernoff bounds, the Azuma--Hoeffding inequality, and Janson's inequality can be readily applied, the analysis of $G_{n,d}$ typically requires more technical enumerative methods, as these tools generally do not apply directly.

The study of subgraph containment lies in the center of random graph theory. Given a graph $H$, what is the probability that $H$ appears as a subgraph in the random graph? What is the distribution of $X_H$, the number of copies $H$ in the random graph? The classical results study the phase transition when $H$ starts to appear (e.g.\ in $G_{n,m}$ or $G_{n,d}$), and the limiting distribution of $X_H$ when $p$ or $d$ is above the emergence threshold. More recently, there has been considerable interest in understanding the extreme tails of  $X_H$~\cite{demarco2012,Dembo2018,  harel2022, JansonOleszkiewiczRucinski2004}, far away from its standard deviation. In particular, when $\mathbb{E} X_H$ is large, what is the probability that $X_H=0$? Estimations of this probability in $G_{n,m}$ and $G_{n,d}$ immediately yields estimations for the following enumeration problems.

\begin{que}
 \label{q:enumeration}   \begin{enumerate}
    \item[(a)]   How many $n$-vertex $m$-edge graphs are there that are $H$-free?
   
    \item[(b)] How many $n$-vertex $d$-regular graphs are there that are $H$-free?
   \end{enumerate}
\end{que}

Another direction of the research 
studies $G_{n,m}$ and $G_{n,d}$ conditioning on the event $X_H=0$.
\begin{que}\label{q:structure}
    What can be said about the structures of $G_{n,m}\mid \{X_H=0\}$ and $G_{n,d}\mid \{X_H=0\}$?
\end{que}

In extremal graph theory, an $H$-free graph on $[n]$ with the maximum number of possible edges has a certain structure. For instance, if $H=K_3$, which we call a triangle, then any triangle-free graph can have at most $n^2/4$ --- known as the Tur\'{a}n number for $K_3$--- edges and graphs with that many edges  must be  bipartite.
Question~\ref{q:structure} can be viewed as a probabilistic version of it. In the case $H=K_3$, Question~\ref{q:structure} asks when $G_{n,m}\mid \{X_H=0\}$ and $G_{n,d}\mid \{X_H=0\}$ become asymptotically almost surely (a.a.s.) bipartite.

In this paper, we focus on the case when $H=K_3$. We briefly review work in the literature around Questions~\ref{q:enumeration} and~\ref{q:structure} in this special case. More discussions on general $H$ will be given in \cref{sec: other}. We state  results in $G_{n,m}$ although some of the results we discuss were originally stated in $G_{n,p}$.  As discussed earlier, results can often be transferred straightforwardly between the two models (either by conditioning on the number of edges, or by coupling the two models, or by almost the same proof in the two models); we indicate in the survey whenever such a transfer is not immediate. To simplify the notation, we write
\[
T_{n,m}=G_{n,m}\mid\{X_{K_3}=0\},\quad T_{n,d}=T_{n,m}=G_{n,d}\mid\{X_{K_3}=0\}.
\]
See the beginning of \cref{sec: preleminaries} for our definitions of asymtotics.

\subsection{The irregular setting}\label{sec: irregular}

First consider $G_{n,m}$ and $T_{n,m}$. A phase transition in the way the triangles cluster in $G_{n,m}$ occurs when $m\approx n^{3/2}$, which naturally splits the range of $m$ into the following three regimes where $\eps>0$ is an arbitrarily small constant:\smallskip

\begin{center}
\begin{minipage}{10cm}
\raggedright
\noindent\textit{Sparse regime}: $m \leq n^{3/2 - \eps}$;

\noindent\textit{Intermediate regime}:  $n^{3/2 - \eps}<m < (1+\eps) \frac{\sqrt 3}{4}n\sqrt{n \log n}$;

\noindent\textit{Dense regime:} $m \ge (1+\eps) \frac{\sqrt 3}{4}n\sqrt{n \log n}$. 

\end{minipage}
\end{center}

\smallskip

 Work in the sparse regime was initiated by Erd\H{o}s and R\'enyi in their seminal paper on random graphs \cite{erdos1960evolution}, in which they showed that if $m\sim cn/2$ then the limiting distribution of $X=X_{K_3}$ converges to  Poisson with mean $c^3/6$ as $n\to \infty$.  A number of small improvements were made to this result \cite{Bollobs_1981, Karonski1983} with more careful analysis. The first major improvement came from Janson, {\L}uczak, and Ruci\'nski in \cite{janson1988exponential}, where they extended the previous reulst to $m=o(n^{6/5})$ by applying Janson's correlation inequalities to handle cases where $X$ has a higher variance.  This result was originally written in $G_{n,p}$, and was later extended to $G_{n,m}$ by Pr\"omel and Steger \cite{PROMEL1996311}. 

Soon after \cite{janson1988exponential}, Frieze  \cite{Frieze1992SmallSubgraphs} gave a different proof for the distribution of $X$ beyond the $m \sim cn$ setting. This proof relied on what would later be known as the switching method. Wormald described the switching method in more detail in \cite{wormald1996}, and used it to extend the estimation of $\P[X = 0]$ up to $m=o(n^{4/3})$. Finally,  by more refined switching arguments, Stark and Wormald~\cite{STARK_WORMALD_2018} obtained the asymptotic estimation of $\P[X = 0]$ covering all $m$ in  the sparse regime.
Interestingly, the expression of $\log \P[X = 0]$ is a convergent power series in $\hat p=m/\binom{n}{2}$, which no longer agrees with the Poisson distribution in~\cite{PROMEL1996311}.

As for the dense regime, Pr\"omel and Steger initiated the study of Question~\ref{q:structure} for $T_{n,m}$ in \cite{promel1996asymptotic}. This was fully resolved by Osthus, Pr\"omel, and Taraz in \cite{osthus2003densities}, where they showed the following.

\begin{thm}\label{thm: osthus 0-1}\cite{osthus2003densities}
            Let $G \sim T_{n,m}$. For all $\eps > 0$,
            \begin{align*}
                \P[G \text{ is bipartite}] \rightarrow \begin{cases}
                    1 &\text{if}\quad m = o(n)\\
                    0 &\text{if}\quad n/2 \leq m \leq (1 - \eps)\frac{\sqrt 3}{4}n\sqrt{n\log n}\\
                    1 &\text{if}\quad m \geq (1 + \eps)\frac{\sqrt 3}{4}n\sqrt{n\log n}
                \end{cases}
                .
            \end{align*}
        \end{thm}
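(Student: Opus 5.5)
We would prove the three regimes of \cref{thm: osthus 0-1} separately, each time comparing the number of triangle-free graphs with $m$ edges to the number of bipartite graphs with $m$ edges. Write $B(n,m)$ for the number of bipartite graphs on $[n]$ with $m$ edges and $T(n,m)$ for the number of triangle-free ones, so that $\P[G\text{ bipartite}]=B(n,m)/T(n,m)$.

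\emph{Sparse case $m=o(n)$.} Here we work directly in $G_{n,m}$. A triangle-free graph is non-bipartite exactly when it contains an odd cycle of length at least $5$. The expected number of cycles of length $k$ in $G_{n,m}$ is at most $(2m/n)^k/(2k)$, so the expected number of cycles of any length is $o(1)$. Hence $G_{n,m}$ is a.a.s.\ a forest, and in particular bipartite. Since $\P[X_{K_3}=0]\to 1$ in this range, conditioning on being triangle-free changes nothing and the claim follows.

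\emph{Intermediate case $n/2\le m\le(1-\eps)\frac{\sqrt3}{4}n\sqrt{n\log n}$.} We show that non-bipartite triangle-free graphs vastly outnumber bipartite ones. We would use a switching that starts from a bipartite graph $H$ with parts $(A,B)$ and creates a non-bipartite triangle-free graph.
\begin{itemize}
\item Choose a vertex $v\in A$ and a vertex $u\in A$ with no common neighbour, add the edge $uv$, and delete some edge elsewhere to keep $m$ edges.
\item In a typical bipartite $H$ the codegree of a pair in $A$ is about $p^2n/2$, where $p\approx 4m/n^2$.
\item The number of valid pairs $(u,v)$ is therefore roughly $n^2 e^{-p^2 n/2}$, which is large precisely when $p^2n/2<\log n$ up to lower order terms. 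This is exactly the condition $m<\frac{\sqrt3}{4}n\sqrt{n\log n}$ once the constants are tracked.
\item The constant $\sqrt3/4$ has to come out of this calculation: the weight of the bipartite structure near the balanced partition must be balanced against the edges added inside a part.
\end{itemize}
Counting the reverse operations (identify the unique odd-cycle-creating edge inside a part) shows the switching maps bipartite graphs to $\omega(1)$ times as many non-bipartite graphs, so $\P[\text{bipartite}]\to0$. For sparse $m$ near $n/2$ a simpler argument suffices: a pentagon appears with probability bounded away from zero, and conditioning on triangle-freeness costs only a constant factor.

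\emph{Dense case.} This is the main obstacle. We must show $T(n,m)=(1+o(1))B(n,m)$, and we would split it in two steps.
\begin{itemize}
\item \emph{Stability.} Use a container or regularity argument, or the Kleitman--Rothschild style counting of Pr\"omel--Steger, to show that almost all triangle-free graphs with $m\gg n^{3/2}$ edges admit a partition with $o(m)$ edges inside the parts.
\item \emph{Exactness.} Fix such an optimal partition and count graphs with exactly $k\ge1$ edges inside the parts, comparing with $k=0$. Each inside edge $xy$ forces $N(x)\cap N(y)=\emptyset$ across the bipartite graph, which costs a factor of about $(1-p^2)^{n/2}\approx e^{-p^2n/2}$. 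Up to the partition entropy there are about $n^2$ choices for the edge, so the ratio of the $k$-th term to the $0$-th is at most $(n^2e^{-cp^2n})^k/k!$. This is summably $o(1)$ exactly above the threshold.
\end{itemize}
The delicate part is handling vertices of atypically low degree and the correlations between several inside edges sharing vertices. We would control these by first discarding graphs with low-degree vertices via a separate union bound, then applying a Janson-type estimate to the inside-edge configurations.
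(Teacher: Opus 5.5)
The paper does not prove this theorem; it quotes it from \cite{osthus2003densities}. The in-paper arguments to compare with are the regular $0$-statement in \cref{sec: zero} and the sketch of the dense $1$-statement in \cref{sec: overview}. Your sparse case is fine, but your explanation of the constant is wrong, and the same slip sits in your dense-case bound.

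Write $m=\gamma n^{3/2}\sqrt{\log n}$. With $p=4m/n^2$ one gets $p^2n/2=8\gamma^2\log n$. Then $n^2e^{-p^2n/2}\to\infty$ means $\gamma<1/2$, and $p^2n/2<\log n$ means $\gamma<1/(2\sqrt2)$; neither is $\sqrt3/4$. What is missing is the price of moving one edge from across to inside. In your switching the forward count is (admissible pairs)$\times m$, because you also delete an edge. The reverse step must not only identify the inside edge but also re-insert a cross edge, which is about $n^2/4$ choices. The ratio is therefore about $\frac{n^2}{4}e^{-p^2n/2}\cdot\frac{m}{n^2/4}=\Theta(m)\,e^{-8m^2/n^3}=n^{3/2-8\gamma^2+o(1)}$, which goes from $\infty$ to $0$ exactly at $8\gamma^2=3/2$. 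This is what \cref{sec: zero} computes in the regular setting: one inside edge per part gives $N_1/N_0\ge d^2n^2\exp[-\frac{4d^2}{n}(1+o(1))]$, i.e.\ a factor $\Theta(dn)e^{-2d^2/n}=\Theta(m)e^{-p^2n/2}$ per inside edge. Likewise your $k$-th term in the dense case must be $(\Theta(m)e^{-p^2n/2})^k/k!$; with $n^2$ choices per inside edge you would only get bipartiteness for $\gamma>1/2$.

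Separately, near $m=n/2$, a pentagon appearing with probability bounded away from zero only gives $\P[\text{non-bipartite}]\ge c>0$, not $\to1$. At $m=n/2$ the expected number of odd cycles with length in $[5,L]$ is only about $\frac14\log L$, so you need the joint Poisson limit of odd cycle counts with $L\to\infty$. Also, for $m=O(n\log n)$ your switching must take $u,v$ in the same component of $H$: a disconnected $H$ has no canonical bipartition, and an edge between components creates no odd cycle.

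The more serious gap is your exactness step. The product bound, one factor $e^{-p^2n/2}$ per inside edge, is exactly what fails when defect edges cluster at a few vertices of large \emph{defect} degree, and a Janson-type estimate does not repair it. If the defect edges meet at vertices of defect degree $D$, then $\Delta/\mu=\Theta(pD)$, so for $D\gg 1/p$ Janson only gives $\exp(-\mu^2/2\Delta)=\exp(-\Theta(\mu/pD))$. For a single star with $D=d/2$ leaves (where $d=2m/n$) this is $e^{-d/2}$. The number of ways to choose the leaves, even after paying $p$ per moved edge, is about $\binom{n/2}{D}p^D\approx(2e)^{d/2}=e^{(1+\log 2)d/2}$, so the bound loses. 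Yet the true cost is tiny: each leaf must avoid the cross-neighbours of the centre, which has probability about $e^{-pd}=n^{-8\gamma^2}$ per leaf. Discarding low-degree vertices does nothing here.

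This is why \cite{osthus2003densities}, and this paper (\cref{sub: Torso}), sort defect graphs by the size of a $D$-torso with $D=\eps^4 d/(2\log\frac{dn}{2k})$. If the torso is large, Janson's inequality suffices. If it is small, one passes to a small spine of high-defect-degree vertices $x$ and shows that typically $V(x)=N_A(N_B(x))$ covers all but a tiny part of $A$. The gain then comes from counting defect graphs, because $N_{G[A]}(x)$ must lie in $A\setminus V(x)$ (see \cref{sec: overview} and the proof of \cref{lem: Small Torso Counting}). Without this second mechanism, or a substitute that conditions on the cross-neighbourhoods of high-defect-degree vertices, your dense case does not close.
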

The first phase transition in the theorem above corresponds to the appearance of the first cycle in $G_{n,m}$, which was already well understood. Note also that Theorem~\ref{thm: osthus 0-1} also answers Question~\ref{q:enumeration} (a) for $H=K_3$ in the dense regime.

This leaves Question~\ref{q:enumeration}(a) and Question~\ref{q:structure}
in the intermediate regime. {\L}uczak~\cite{luczak2000triangle} proved a stability result for $T_{n,m}$, given below, which is akin to a randomized version of Erd\H{o}s and Simminovotz's stability theorem~\cite{ErdosSimonovits1966} for near extremal graphs.
This gives a partial answer to Question~\ref{q:structure} for $T_{n,m}$ in the intermediate regime.

\begin{definition}
    For any $0 \leq \delta \leq 1$, a graph $G$ is $\delta$-bipartite if $G$ can be made bipartite by deleting at most $\delta|E(G)|$ edges from $G$.
\end{definition}

\begin{thm}\label{thm: luc 13}\cite{luczak2000triangle}
    For every $\delta > 0$ there exists a constant $c = c(\delta)$ such that, for $m \geq cn^{3/2}$,
    $T_{n,m}$ is $\delta$-bipartite with probability $1 - o(1)$.
\end{thm}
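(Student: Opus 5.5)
The plan is to go through the hypergraph container method for triangle-free graphs. The first step is to get a count. Every triangle-free graph with $m$ edges has at least $m$ edges, while a bipartite graph has only about $n^2/4$ pairs available. So the number of triangle-free $m$-edge graphs should be compared with the number of $m$-edge subgraphs of complete bipartite graphs, which is roughly $2^{n}\binom{n^2/4}{m}$.

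By the container theorem for $K_3$ (Balogh--Morris--Samotij, Saxton--Thomason), there is a family $\mathcal C$ of graphs on $[n]$ with the following properties:
\begin{itemize}
\item every triangle-free graph is contained in some $C\in\mathcal C$;
\item each $C$ has at most $\eps n^2$ triangles;
\item $|\mathcal C|\le \exp(O(n^{3/2}\log n))$.
\end{itemize}
By the triangle removal lemma together with the Erd\H{o}s--Simonovits stability theorem, each container $C$ has at most $(1/4+\gamma)n^2$ edges. Moreover, $C$ is within $\gamma n^2$ edges of a complete bipartite graph, where $\gamma=\gamma(\eps)\to0$.

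The second step separates the good graphs from the bad ones. Fix a container $C$ and a max-cut partition $(A,B)$ of it. Let $D$ be the set of edges of $C$ inside $A$ or inside $B$, so $|D|\le \gamma' n^2$. A triangle-free $G\subseteq C$ with $m$ edges that is not $\delta$-bipartite must use more than $\delta m$ edges of $D$.

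This lets us count the bad graphs inside $C$. We choose $k\ge\delta m$ edges from $D$ and $m-k$ edges from $C$, giving at most
\[
\sum_{k\ge \delta m}\binom{\gamma' n^2}{k}\binom{(1/4+\gamma)n^2}{m-k}.
\]
Compared with $\binom{n^2/4}{m}$, the ratio is at most roughly $(e\gamma'/\delta)^{\delta m}\,e^{O(\gamma m)}$. If $\gamma$ is chosen small relative to $\delta$, this is $e^{-c'\delta m\log(1/\gamma)}$.

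The third step sums over containers. Multiplying by $|\mathcal C|=e^{O(n^{3/2}\log n)}$, the bad count is $o\bigl(\binom{n^2/4}{m}\bigr)$ provided $\delta m\log(1/\gamma)\gg n^{3/2}\log n$. This requires $m\ge c n^{3/2}\log n$, which carries an extra $\log n$ factor. Meanwhile, the total number of triangle-free graphs is at least $\binom{n^2/4}{m}$, the count of bipartite ones on a balanced partition. Hence the conditional probability of being non-$\delta$-bipartite tends to $0$.

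The main obstacle is removing the $\log n$ loss so that the result holds down to $m\ge cn^{3/2}$. For this I would use the refined, balanced container lemma: containers are indexed by fingerprints of size $O(n^{3/2})$ and are built iteratively. Alternatively, I would follow \L uczak's original approach, applying the sparse regularity lemma to $G$ itself. Sparse regularity yields a reduced graph $R$ in which $G$ has density about $p=m/\binom n2$. The K\L R conjecture, in the counting form proved for triangles (Kohayakawa--\L uczak--R\"odl), shows that the number of graphs whose reduced graph contains a triangle is an $e^{-\Omega(m)}$ fraction of all graphs, once $m\ge cn^{3/2}$. Hence a.a.s.\ $R$ is triangle-free with about $m$ edges, so $R$ has near-Tur\'an density. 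Then stability applied to $R$, lifted back to $G$, gives $\delta$-bipartiteness. The delicate point is the K\L R counting estimate at density $p\asymp n^{-1/2}$, where $c=c(\delta)$ must be taken large. I would cite it rather than reprove it.
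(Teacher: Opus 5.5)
Your container computation is essentially sound, modulo two slips. First, the containers have at most $\eps n^3$ triangles, not $\eps n^2$. Second, containers with at most $(1/4-\eta)n^2$ edges are not close to complete bipartite; they must be discarded separately via a factor $(1-4\eta)^m$.

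The real problem is the range. As you note, the argument only proves the statement for $m\ge cn^{3/2}\log n$, while the theorem asserts it for all $m\ge cn^{3/2}$. Neither repair you propose closes the remaining range as written.

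In route (1), indexing containers by fingerprints of size $t=O(n^{3/2})$ does not by itself remove the $\log n$, because there are still $e^{\Theta(n^{3/2}\log n)}$ possible fingerprints. The missing idea is that the fingerprint satisfies $T\subseteq G\subseteq T\cup C(T)$. So you should count $G$ as $T$ together with $m-t$ edges of $C(T)$, which gives $\binom{\binom n2}{t}\binom{|C(T)|}{m-t}\big/\binom{n^2/4}{m}\le e^{O(t\log(em/t))+O(\gamma m)}$. The cost of summing over fingerprints is then $e^{O(n^{3/2}\log c)}$ rather than $e^{O(n^{3/2}\log n)}$. The saving $(e\gamma'/\delta)^{\delta m}$ has exponent linear in $c$, so it beats this cost once $c$ is large.

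Route (2) is the approach the paper sketches for {\L}uczak's proof, but your key step fails. A single triangle in the reduced graph $R$ on $k$ clusters forces only one triangle-free $(\eps,p)$-regular triple, with about $m/k^2$ edges per pair. So K{\L}R saves only a factor $\beta^{\Theta(m/k^2)}$. The comparison must be with $\P[G_{n,m}\text{ is triangle-free}]\approx 2^{-m}$, not with all graphs. Beating $2^{-m}$ would need $\log(1/\beta)\gg k^2$, which is circular: $k$ is only controlled in terms of $\eps$, and $\eps$ must shrink with $\beta$. So you cannot conclude that $R$ is a.a.s.\ triangle-free.

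Moreover, triangle-freeness bounds $e(R)$ only from above. Near-Tur\'an density of $R$ has to come from an entropy comparison with the bipartite lower bound, after discarding the bad structures (irregular partitions, too many edges outside dense regular pairs).

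What {\L}uczak actually uses, per the paper's summary, is a deterministic dichotomy (his Corollary~15): such an $R$ either contains $\Omega(k^2)$ edge-disjoint triangles or is $\delta'$-close to bipartite. In the first case the K{\L}R savings multiply over the disjoint triples to $\beta^{\Omega(m)}\le 3^{-m}$, with $\beta$ chosen after the dichotomy constant, and this does beat $2^{-m}$. In the second case the near-bipartition of $R$ lifts to $G$.
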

As for Question~\ref{q:enumeration}(a), {\L}uczak~\cite{luczak2000triangle} obtained an estimation for $\P[X = 0]$ in $G_{n,m}$, given below,  in the intermediate regime, which follows as a corollary of \cref{thm: luc 13}.

\begin{thm}\label{thm: luczak estimate}\cite{luczak2000triangle}
    Let $G \sim G_{n,m}$. For every $\eps > 0$, there exists a constant $c = c(\eps)$ such that if $cn^{3/2} \leq m \leq n^2/c$, then 
    \[
    \left(\frac{1}{2} - \eps\right)^m \leq \P[X = 0] \leq \left(\frac{1}{2} + \eps\right)^{m}.
    \]
\end{thm}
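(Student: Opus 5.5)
To prove \cref{thm: luczak estimate}, I would derive both bounds from \cref{thm: luc 13}. Throughout, $X$ is the number of triangles in $G_{n,m}$ and $N=\binom{n}{2}$.

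\textbf{Lower bound.} Count $m$-edge subgraphs of complete bipartite graphs, since these are triangle-free. Take a balanced bipartition $(A,B)$ with $|A|=\lfloor n/2\rfloor$. Then $|A||B|\ge (n^2-1)/4$, so
\[
\P[X=0]\ge \binom{|A||B|}{m}\Big/\binom{N}{m}\ge \left(\frac{|A||B|-m}{N}\right)^m .
\]
Since $|A||B|/N = 1/2+O(1/n)$ and $m\le n^2/c$, we have $(|A||B|-m)/N \ge 1/2 - O(1/n) - 2/c$. This is at least $1/2-\eps$ once $c$ is large. Note that this uses the upper bound $m\le n^2/c$.

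\textbf{Upper bound.} By \cref{thm: luc 13} with a small $\delta=\delta(\eps)$, at least half of all triangle-free $m$-edge graphs are $\delta$-bipartite, provided $n$ is large and $c\ge c(\delta)$. So it suffices to count $\delta$-bipartite $m$-edge graphs. Such a graph is determined by three choices:
\begin{itemize}
\item a bipartition, in at most $2^n$ ways;
\item a set of at most $\delta m$ edges inside the parts, in at most $\sum_{j\le\delta m}\binom{N}{j}\le (eN/(\delta m))^{\delta m}$ ways;
\item the remaining edges across the bipartition, in at most $\binom{n^2/4}{m}$ ways.
\end{itemize}
Dividing by $\binom{N}{m}$ and using $\binom{n^2/4}{m}/\binom{N}{m}\le (n^2/(4N))^m = (1/2+O(1/n))^m$ gives
\[
\P[X=0]\le 2\cdot 2^n \left(\frac{eN}{\delta m}\right)^{\delta m}\left(\tfrac12+O(1/n)\right)^m .
\]
Three error factors remain:
\begin{itemize}
\item $2^n = e^{O(m/\sqrt n)}$, which is negligible because $m\ge cn^{3/2}$, so $2^{n} \le 2^{m/(c\sqrt n)}$.
\item $(1+O(1/n))^m$, which is $e^{O(m/n)}$. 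This is \emph{not} $e^{o(m)}$ in general, but it is $(1+O(1/n))^m$ per edge, so it is absorbed into $(1/2+\eps)^m$ for large $n$.
\item The factor $(eN/(\delta m))^{\delta m}$, which is the main obstacle.
\end{itemize}

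\textbf{The main obstacle.} I need $(eN/(\delta m))^{\delta}\le 1+\eps'$. When $m$ is close to $cn^{3/2}$, however, $N/m\approx \sqrt n/c$ is unbounded, so for fixed $\delta$ this factor is $n^{\Theta(\delta m)}$ and it dominates. The naive count of "bad" edges is too generous.

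To fix this, I would not choose the bad edges freely. Instead I would bound the overcount more carefully. Take a max-cut bipartition; then every vertex has at least as many cross edges as inside edges. I would then estimate the probability that $G_{n,m}$ is triangle-free together with having a given set $F$ of inside edges. The point is that each inside edge $uv$ forbids all common neighbourhoods across the cut, which costs a factor exponential in the cross degrees.

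Alternatively, I would use the structure of Łuczak's proof more directly. One can strengthen \cref{thm: luc 13} so that a.a.s.\ the number of inside edges is at most $\delta m/\log(N/m)$. Alternatively, one can split into $m\ge n^2/\log n$, where the naive count works because $\log(N/m)$ is small, and smaller $m$, where the triangle-free condition penalises each inside edge by a factor $e^{-\Omega(m^2/n^3)}$. This penalty beats the $N/m$ choices precisely when $m\ge cn^{3/2}$. I expect this step to need the most care, and I would try the max-cut/penalty argument first.
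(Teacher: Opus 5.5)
Your lower bound is correct, and reducing the upper bound to counting $\delta$-bipartite $m$-edge graphs via \cref{thm: luc 13} is the intended route; the paper gives no proof and simply cites the estimate as a corollary of that theorem. The gap is that your upper-bound count never closes. You declare $(eN/(\delta m))^{\delta m}$ a genuine obstacle and leave the fix open, but the obstacle is created by your third bullet. A graph with $j$ inside edges has exactly $m-j$ cross edges, so the right factor is $\binom{M}{m-j}$ with $M=\lfloor n^2/4\rfloor$, not $\binom{M}{m}$. Since $\binom{M}{m-j}\le \bigl(m/(M-m)\bigr)^j\binom{M}{m}\le (8m/n^2)^j\binom{M}{m}$ once $c\ge 16$, you get, for $j\le\delta m$,
\[
\binom{N}{j}\binom{M}{m-j}\le\Bigl(\frac{eN}{j}\cdot\frac{8m}{n^2}\Bigr)^{j}\binom{M}{m}\le\Bigl(\frac{4em}{j}\Bigr)^{j}\binom{M}{m}\le\Bigl(\frac{4e}{\delta}\Bigr)^{\delta m}\binom{M}{m}.
\]
The last step uses that $j\mapsto j\log(4em/j)$ is increasing for $j<4m$. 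The factor $N/j$ that worried you is cancelled by the factor $m/M$ lost on the cross side, leaving $e^{\delta\log(4e/\delta)m}$. Now choose $\delta$ with $(4e/\delta)^{\delta}\le 1+\eps$, and then $c\ge\max\{c(\delta),16\}$. This gives
\[
\P[X=0]\le 2\cdot 2^n(\delta m+1)\Bigl(\frac{4e}{\delta}\Bigr)^{\delta m}\Bigl(\frac{n}{2(n-1)}\Bigr)^m\le\Bigl(\frac12+\eps\Bigr)^m
\]
for large $n$. Here $n=o(m)$, and $\bigl(\tfrac{n}{2(n-1)}\bigr)^m=2^{-m}e^{O(m/n)}$. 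Contrary to your remark, $e^{O(m/n)}$ is $e^{o(m)}$.

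The repairs you propose instead would not work as stated. A strengthening of \cref{thm: luc 13} to at most $\delta m/\log(N/m)$ inside edges is not available, and heuristically it is false near $m=cn^{3/2}$. There, two vertices on the same side have only about $8c^2$ expected common cross-neighbours, so an $e^{-\Theta(c^2)}$ fraction of inside pairs can individually be added without creating a triangle. Consequently a typical triangle-free graph has of order $e^{-\Theta(c^2)}m\gg m/\log n$ inside edges. For the same reason, the per-edge penalty $e^{-\Omega(m^2/n^3)}$ is only a constant at $m=cn^{3/2}$ and cannot beat the $N/m\approx\sqrt n$ choices. That comparison is won only for $m\gtrsim n^{3/2}\sqrt{\log n}$, which is the bipartiteness threshold of \cref{thm: osthus 0-1}, not $cn^{3/2}$. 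Finally, your naive count fails for $m\ge n^2/\log n$ too: at $m=n^2/\log n$ the quantity $\log(N/m)\approx\log\log n$ is still unbounded, and the factor is $e^{\Theta(\delta m\log\log n)}$. The naive count only works when $m=\Theta(n^2)$. The bookkeeping correction above is all the argument needs.
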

An asymptotically accurate estimation of $\P[X = 0]$ is obtained in a recent breakthrough by Jenssen, Perkins, and Potukuchi~\cite{JENSSEN2025110499} when $m \geq \frac{13}{56}n\sqrt{n\log n}$. For smaller $m$ in the intermediate regime,
Jenssen, Perkins, Potukuchi, and Simkin~\cite{jenssen2024lower}
obtained an approximation of $\log \P[X=0]$ up to an additive error $o(n^{3/2})$. These two results improve Theorem~\ref{thm: luczak estimate}.

\subsection{The regular setting}\label{sec: regular}
Now we turn to $G_{n,d}$ and $T_{n,d}$. 
We say $d$ is in the sparse, intermediate, or the dense regime, if $m=dn/2$ is in the sparse, intermediate, or the dense regime respectively.
Again, $X=X_{K_3}$ denotes the number of copies of triangles.  Independently, Bollob\'as \cite{Bollobas1980} and Wormald \cite{Wormald1978,Wormald1981} proved that the joint distribution of the number of short cycles converges to independent Poisson when $d$ is constant, which immediately gives that $\P[X=0]\to \exp(-(d-1)^2/6)$ as $n\to \infty$ for constant $d\ge 2$. McKay, Wormald, and Wysocka~\cite{McKayWormaldWysocka2004} extended it to  $d=o(n^{1/5})$.  

The asymptotic estimation of $\P[X=0]$ in the sparse regime for $d=\Omega(n^{1/5})$ is open. The limiting distribution of $X$ was shown to converge to normal $d = O(n^{-1/2})$ (see \cite{Gao2024,GaoWormald2008}) or $d \geq n/\log n$ (see \cite{Sah_Sawhney_2023}), but this only gives meaningful information when $X$ is within a constant number of standard deviations of its mean.

Since $G_{n,d}$ has the same distribution as $G_{n,m}$, with $m=dn/2$, conditioned on being regular, and it is easy to show that the probability that $G_{n,m}$ is regular is $\exp(-\Theta(n\log d))$, we can immediately translate any estimate of $\log\P_{G_{n,m}}[X=0]$ to an upper bound on $\log\P_{G_{n,d}}[X=0]$ with the same expression with an additional additive error $O(n\log d)$. For instance,
in the sparse regime, the estimation of $\log\P[X=0]$ in $G_{n,m}$ by Stark and Wormald~\cite{STARK_WORMALD_2018} immediately gives an upper bound on $\log\P[X=0]$  in $G_{n,d}$, by taking 
the power series of $\log_{G_{n,m}} \P[X=0]$ (with $m=dn/2$) in~\cite{STARK_WORMALD_2018}, plus an additional additive error $O(n\log d)$. Of course, terms in the power series of order  $O(n\log d)$ can be truncated.

Similarly, in the dense regime (but with $d=o(n)$), translating \cref{thm: luczak estimate} into the regular setting gives the following, where the additive error $O(n\log d)$ is already absorbed by $o(dn)$:
    \begin{equation}\label{eq: intermediate approx}
        \log\P[X = 0] \leq \big(\log(1/2)+o(1)\big) \frac{dn}{2}.
    \end{equation}
The afore-mentioned results~\cite{JENSSEN2025110499, jenssen2024lower} on the estimation of $\log \P_{G_{n,m}}[X=0]$ in the intermediate regime can  be translated to upper bounds of $\log \P_{G_{n,d}}[X=0]$, with an additional additive error $O(n\log d)$ factor, in a similar manner.
This $O(n\log d)$ error comes from conditioning $G_{n,m}$ on being regular, and so the upper bounds given above are unlikely to be asymptotically tight, even  for $\log \P[X=0]$.

The set of $d$-regular bipartite graphs
gives a trivial lower bound for $\log \P[X=0]$:
\[
\log \P[X=0] \ge (\log(1/2)+o(1)) dn/2.
\]
Combining with~\eqref{eq: intermediate approx}, this gives an estimation of $\P[X=0]$, up to a multiplicative error $\exp(O(dn))$. One of our main contributions is to give an asymptotic 
estimation of $\P[X=0]$ in the dense case (see Corollary~\ref{cor: enum} below). 
We are not aware of any nontrivial lower bound for $d=\Omega(n^{1/5})$ in the sparse regime and for $d$ in the intermediate regime.

 \subsection{Our contribution}
Our primary contribution is establishing the threshold for $T_{n,d}$ being bipartite, which resolves Question~\ref{q:structure} for $T_{n,d}$. Note that $d\le n/2$ is always assumed when we discuss $T_{n,d}$ as otherwise the probability space is empty. Likewise, we assume that $n$ is even, as otherwise there are no regular bipartite graphs on $[n]$.

\begin{thm}\label{thm: 0-1}
Assume $n$ is even and $G \sim T_{n,d}$. For all $\varepsilon > 0$,
    \begin{align*}
        \mathbb{P}[G \text{ is bipartite}] \rightarrow \begin{cases}
            0 &\text{if } 2 \leq d \leq (1 - \varepsilon)\frac{\sqrt 3}{2}\sqrt{n\log n}\\
            1 &\text{if }  (1 + \varepsilon)\frac{\sqrt 3}{2}\sqrt{n \log n} \le d \le n/2.
        \end{cases}
        .
    \end{align*}
\end{thm}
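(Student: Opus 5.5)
We compare two quantities: $B$, the number of $d$-regular bipartite graphs on $[n]$, and $N$, the number of $d$-regular triangle-free graphs on $[n]$ that are not bipartite. Since $T_{n,d}$ is uniform on triangle-free $d$-regular graphs, \cref{thm: 0-1} is equivalent to showing $N/B\to\infty$ below the threshold and $N/B\to 0$ above it.

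The constant $\frac{\sqrt3}{2}\sqrt{n\log n}$ comes from a single-edge heuristic. Take a bipartite $d$-regular graph with parts $V_1,V_2$ and add one edge $uv$ inside $V_1$; to restore regularity, we perform a small switching that removes two cross edges, say $ux$ and $vy$, and reroutes them. The new edge must not lie in a triangle, so $u$ and $v$ must have no common neighbour in $V_2$. In a random bipartite $d$-regular graph this has probability about $(1-d/n\cdot 2)^{d}\approx \exp(-2d^2/n)$, since each part has size $n/2$. The number of choices of the edge and the switching is roughly $n^2\cdot(\text{poly})$. Comparing with the loss of the bipartition symmetry, the expected number of such near-bipartite graphs, relative to $B$, is about $n^{c}\exp(-2d^2/n\cdot\text{const})$. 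The constant is fixed by the exact minimal defect structure: degree constraints force at least two internal edges, or one internal edge in each part. One needs the exponent $\frac{4}{3}\cdot\frac{d^2}{n}$ to compete with $\log n$, which gives $d^2=\frac34 n\log n$. I would first compute carefully which minimal non-bipartite configuration is optimal. The natural candidate is one edge inside $V_1$ and one inside $V_2$, obtained from a bipartite graph by the switching $\{ux,vy\}\to\{uv,xy\}$. The pair then has to avoid common neighbours, which yields the $3/4$ constant.

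\textbf{Lower regime.} I would count switchings between bipartite graphs (with a fixed bipartition, later summed) and graphs with exactly one internal edge per side and no triangles. Forward switchings number about $B\cdot nd^2$ times the probability, in a random bipartite $d$-regular graph, that the created edges lie in no triangle. To estimate that probability I would use the known asymptotic enumeration of bipartite regular graphs with forbidden edges, or a concentration argument on codegrees, which gives $\exp(-(\frac43+o(1))d^2/n)$. Each target graph has at most polynomially many reverse switchings. When $d\le(1-\eps)\frac{\sqrt3}{2}\sqrt{n\log n}$ this gives $N/B\ge n^{\eps'}\to\infty$. For constant or small $d$ the argument is simpler: odd cycles (for example 5-cycles) appear with probability bounded away from zero, and in fact tend to one. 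I would handle those $d$ separately using the Poisson results of Bollobás and Wormald together with \cref{thm: luc 13}-type reasoning.

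\textbf{Upper regime.} Here the main obstacle is to show that every non-bipartite triangle-free $d$-regular graph is counted. First I would use the stability result, transferred to the regular model as \eqref{eq: intermediate approx} was, together with the bipartite lower bound, to show that a.a.s.\ $T_{n,d}$ is $\delta$-bipartite. Given this, fix a max-cut partition and let $k\ge1$ be the number of internal edges. I would then bound the number of graphs with $k$ internal edges by $B\cdot\big(n^{C}e^{-(4/3-o(1))d^2/n}\big)^{k/2}$ via a multi-edge switching. The delicate parts are controlling internal edges that share vertices (stars of internal edges, where the max-cut property caps the internal degree at $d/2$) and handling partitions that are unbalanced. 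Large $k$ (beyond $\delta dn$) is excluded by stability. Medium $k$ is handled by the crude bound that each internal edge $uv$ forbids about $d^2/n$ common neighbours, which costs $e^{-\Omega(d^2/n)}$ per edge. Summing over $k$ gives $N/B\to0$ when $d\ge(1+\eps)\frac{\sqrt3}{2}\sqrt{n\log n}$. For $d$ linear in $n$ the same argument works with the triangle-avoidance cost growing further. I expect the core difficulty to be the exact constant in the codegree-avoidance probability for adjacent or clustered internal edges.
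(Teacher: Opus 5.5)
Your lower regime follows the paper's argument: compare regular graphs with exactly one internal edge in each part of a balanced partition against regular bipartite graphs. The paper counts these directly rather than by switching, exposing disjoint neighbourhoods for the four endpoints and applying the bipartite enumeration to the residual degrees.

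Your numbers need fixing. There are $\Theta(d^2n^2)$ forward switchings, not $nd^2$. The probability that both new edges avoid triangles is $e^{-(4+o(1))d^2/n}$, since the expected codegree is $2d^2/n$ on each side, not $e^{-(4/3)d^2/n}$. The threshold comes from $d^2n^2e^{-4d^2/n}$, i.e. $4d^2/n=3\log n$; with your two quantities it would land at $d^2=\frac32 n\log n$. The resulting bound is uniform down to $d=2$, so small $d$ needs no separate treatment.

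\textbf{The gap: clustered defect edges.} The bound you aim for, $|\mathcal G^{\mathrm{bip}}_{n,d}|\,(n^Ce^{-(4/3-o(1))d^2/n})^{k/2}$, is false, and so is your ``crude'' cost $e^{-\Omega(d^2/n)}$ per internal edge, once defect edges cluster. Here is a counterexample.
\begin{itemize}
\item Take $P_1,P_5\subseteq A$ of size $d$, joined by a $(d/4)$-regular bipartite graph, so $k=d^2/4$.
\item Split $B=P_2\cup P_4$ into halves, and send all cross edges of $P_1$ into $P_2$ and all cross edges of $P_5$ into $P_4$.
\item This is a subgraph of a blow-up of $C_5$, hence triangle-free. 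The probability that a random completion realizes it is about $2^{-\Theta(d)}$ per cluster vertex, i.e. $e^{-O(k)}$ in total, not $e^{-\Omega(kd^2/n)}$.
\item The number of such graphs is at least $|\mathcal G^{\mathrm{bip}}_{n,d}|\exp(-\frac{d^2}{4}\log\frac nd-O(d^2+n))$. Any bipartition still leaves $\Theta(d^2)$ internal edges, because of the many edge-disjoint $5$-cycles through the cluster. So this count exceeds your bound once $d^2/n\gg\log n$, e.g. for $d=n^{0.9}$.
\end{itemize}
These graphs are negligible only because their defect graphs carry little entropy. So no per-edge switching can work; you need an entropy-versus-cost trade-off.

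The paper supplies this trade-off with $D$-torsos and spines:
\begin{itemize}
\item Defect graphs are counted by their torso size $k'$: there are at most $\exp[k\log\frac nd+(1+\eps^2)k'\log\frac{dn}{2k}]$ of them.
\item If the torso is large, Janson's inequality on the bounded-degree torso edges (where $\Delta/\mu$ is small) gives a triangle-avoidance cost of $\exp[-(1+8\eps^2)k'\log\frac{dn}{2k}]$.
\item If the torso is small, most defect edges hang off a small spine of high-degree vertices. One shows their cross-neighbourhoods expand, so the defect graph itself has few admissible choices.
\end{itemize}

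\textbf{Second missing ingredient: dependence in the regular model.} Your plan gives no way to get sharp triangle-avoidance bounds when cross edges are dependent. Transferring from $G_{n,m}$ loses $e^{O(n\log d)}$, which is fatal once $k_A+k_B$ is below order $n\log d$. The paper handles this in two ways:
\begin{itemize}
\item It exposes cross edges in rounds. First comes a local sprinkling at the torso vertices, where Janson applies to random bipartite graphs with a prescribed number of edges. A reserved fraction $\lambda=\Theta(1/\log n)$ of edges is then used to saturate degrees. Finally, degrees are completed using the bipartite enumeration.
\item For the expansion step, it proves a separate switching lemma. This bounds the probability that a random bipartite graph with given near-regular degrees avoids a large forbidden edge set.
\end{itemize}
A ``multi-edge switching'' with a per-edge cost, as you propose, cannot replace either of these ingredients.
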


As an immediate corollary, we obtain the asymptotic estimate for $\log \P[X=0]$ in $G_{n,d}$ in the dense regime, answering Quesion~\ref{q:enumeration} (b) in the dense regime.

\begin{cor}\label{cor: enum}
    Suppose that $ (1 + \eps)\frac{\sqrt3}{2}\sqrt{n\log n} \le d\le n/2$ for some constant $\eps > 0$. Then, the number of $d$-regular triangle-free graphs with vertex set $[n]$ is asymptotic to
    \begin{equation}\notag
        \binom{n}{n/2}\binom{n/2}{d}^n\binom{n^2/4}{dn/2}^{-1}e^{-1/2}.
    \end{equation}
    
\end{cor}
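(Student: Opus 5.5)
By \cref{thm: 0-1}, in the stated range of $d$ a uniformly random $d$-regular triangle-free graph on $[n]$ is a.a.s.\ bipartite. Hence the number of $d$-regular triangle-free graphs is $(1+o(1))B_{n,d}$, where $B_{n,d}$ is the number of $d$-regular bipartite graphs on $[n]$, and it suffices to show
\[
B_{n,d}\sim \binom{n}{n/2}\binom{n/2}{d}^n\binom{n^2/4}{dn/2}^{-1}e^{-1/2}.
\]

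\textbf{Step 1: reduce to counting pairs.} I will count pairs $(\{A,B\},H)$, where $\{A,B\}$ is an unordered partition of $[n]$ and $H$ is a $d$-regular bipartite graph with all edges between $A$ and $B$. Regularity forces $|A|=|B|=n/2$, since $d|A|=e(H)=d|B|$. So the number of such pairs is
\[
\tfrac12\binom{n}{n/2}\,R_{n/2,d},
\]
where $R_{n/2,d}$ is the number of $d$-regular bipartite graphs with labelled parts of size $n/2$, that is, $n/2\times n/2$ $0$-$1$ matrices with all row and column sums equal to $d$. A connected bipartite graph has a unique bipartition, and a graph with $c$ components has $2^{c-1}$ bipartitions. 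So I also need that a uniformly random $d$-regular bipartite graph with $d\ge (1+\eps)\frac{\sqrt3}{2}\sqrt{n\log n}$ is connected a.a.s. This holds because every component of a $d$-regular bipartite graph has at least $2d$ vertices. If some component had between $2d$ and $n/2$ vertices, a standard first-moment (or expansion/switching) bound in the dense regime makes this exponentially unlikely. I expect this to be routine: for instance, the density of the bipartite adjacency matrix lets one show that any $S\subseteq A$ with $|S|\le n/4$ has a neighbourhood substantially larger than $|S|$. Consequently $B_{n,d}\sim \frac12\binom{n}{n/2}R_{n/2,d}$.

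\textbf{Step 2: asymptotics for $R_{n/2,d}$.} I will invoke the known asymptotic enumeration of dense regular bipartite graphs (Canfield--McKay for $d$ proportional to $n$, together with McKay's and Liebenau--Wormald-type results for sparse and intermediate degrees, which cover $d\gg\sqrt{n\log n}$). With $k=n/2$, these give
\[
R_{k,d}\sim \binom{k}{d}^{2k}\binom{k^2}{dk}^{-1}e^{-1/2},
\]
uniformly for $1\le d\le k-1$ in the relevant range. Substituting $k=n/2$ gives $\binom{n/2}{d}^n\binom{n^2/4}{dn/2}^{-1}e^{-1/2}$.

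Combining the two steps should give the claimed formula, but the factor $\tfrac12$ needs checking against the normalisation. This is where I expect the main difficulty, which is bookkeeping rather than probability. I must verify whether the stated formula intends ordered bipartitions, in which case the factor $\frac12$ is absent, or whether the constant in the bipartite enumeration formula absorbs it. I also need the enumeration formula to hold uniformly across the whole range $(1+\eps)\frac{\sqrt3}{2}\sqrt{n\log n}\le d\le n/2$, including $d=n/2$, where the graph is forced to be $K_{n/2,n/2}$. At $d=n/2$ the formula gives $\binom{n}{n/2}e^{-1/2}$, which has to be compared with the true value $\frac12\binom{n}{n/2}$; this comparison suggests the endpoint must be handled separately, or requires care with the range of validity of the enumeration formula near $d=k$. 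If the known formula fails near $d=k$, I would use complementation: $R_{k,d}=R_{k,k-d}$. This reduces large $d$ to small $k-d$, where the sparse formula applies, and the expression is symmetric under $d\mapsto k-d$.
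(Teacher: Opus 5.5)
Your route is the paper's: \cref{thm: 0-1} reduces the problem to counting $d$-regular bipartite graphs, and that count is read off from the fixed-bipartition formula \eqref{eq: reg bip}. The discrepancy you flag in Step~1 is not a convention to be checked, though. It is a real error in the paper's derivation, not in yours, and it means the statement cannot be proved as written.

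The paper gets \eqref{eq:reg bip count2} by multiplying \eqref{eq: reg bip} by the $\binom{n}{n/2}$ choices of the set $A$. But $(A,B)$ and $(B,A)$ determine the same set of graphs, so every connected graph is counted twice. Nothing can absorb this factor:
\begin{itemize}
\item the left-hand side counts graphs, so whether bipartitions are ordered is irrelevant;
\item the constant $e^{-1/2}$ in $R_{k,d}$ is the correct one for labelled parts. At $d=k-1$ it gives $(1+o(1))\,k!=(1+o(1))R_{k,k-1}$, because $\binom{k^2}{k}=\frac{k^{2k}}{k!}e^{-1/2+o(1)}$.
\end{itemize}
Concretely, $d=n/2-1$ lies in the stated range for large even $n$. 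Every $(n/2-1)$-regular triangle-free graph is bipartite (Andr\'asfai--Erd\H{o}s--S\'os, since $n/2-1>2n/5$). Hence it is $K_{n/2,n/2}$ minus a perfect matching, which is connected. So there are exactly $\frac12\binom{n}{n/2}(n/2)!$ such graphs. The displayed expression, however, equals $\binom{n}{n/2}(n/2)^n\binom{n^2/4}{n/2}^{-1}e^{-1/2}\sim\binom{n}{n/2}(n/2)!$. The same factor $2$ is harmless in the paper's proof of \cref{thm: 0-1}, where constant factors in the ratios do not matter.

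The endpoint fails for a second reason. At $d=n/2$, Mantel's theorem leaves only copies of $K_{n/2,n/2}$, exactly $\frac12\binom{n}{n/2}$ graphs, against the stated $\binom{n}{n/2}e^{-1/2}$. Even the halved formula is off there by $e^{1/2}$, since $R_{k,k}=1$ while the enumeration formula gives $e^{-1/2}$. Complementation sends this case to degree $0$, where the sparse formula is not valid.

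So do not try to rescue the statement. Instead, conclude the corrected version from your own Steps~1 and~2: for $(1+\eps)\frac{\sqrt3}{2}\sqrt{n\log n}\le d\le n/2-1$, the number of $d$-regular triangle-free graphs is asymptotic to $\frac12\binom{n}{n/2}\binom{n/2}{d}^n\binom{n^2/4}{dn/2}^{-1}e^{-1/2}$. Your argument proves exactly this.

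The connectivity input is also lighter than you suggest. The number of $d$-regular bipartite graphs on $[n]$ equals $\frac12\binom{n}{n/2}R_{n/2,d}\,\mathbb E[2^{1-c}]$, where $c$ is the number of components of a uniform $d$-regular bipartite graph with fixed parts of size $n/2$. So $\P(c=1)\to1$ suffices. This is deterministic for $d>n/4$, since each component meets each side in at least $d$ vertices. Below that it is a routine first-moment estimate.
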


Let $B_{n,d}$ be the uniform probability distribution over all $d$-regular bipartite graphs with vertex set $[n]$. Another direct corollary provides the following approximation of $T_{n,d}$ by $B_{n,d}$ in total variation distance.

\begin{cor} Suppose that $(1 + \eps)\frac{\sqrt3}{2}\sqrt{n\log n}\le d\le n/2$ for some constant $\eps > 0$. Then, $$d_{TV}(T_{n,d},B_{n,d})=o(1).$$

\end{cor}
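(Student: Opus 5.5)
This corollary follows from Corollary~\ref{cor: enum} together with the bipartite half of Theorem~\ref{thm: 0-1}; nothing new has to be proved. Let $\mathcal T$ be the set of $d$-regular triangle-free graphs on $[n]$, and let $\mathcal B\subseteq\mathcal T$ be the set of $d$-regular bipartite graphs on $[n]$. The inclusion holds because every bipartite graph is triangle-free. $T_{n,d}$ is uniform on $\mathcal T$ and $B_{n,d}$ is uniform on $\mathcal B$.

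For a uniform measure on a finite set $\mathcal T$ and the uniform measure on a subset $\mathcal B\subseteq\mathcal T$, the total variation distance can be computed exactly. Each $G\in\mathcal B$ has $B_{n,d}(G)=1/|\mathcal B|$, which is at least $T_{n,d}(G)=1/|\mathcal T|$. Each $G\in\mathcal T\setminus\mathcal B$ has $B_{n,d}(G)=0$. Therefore
\[
d_{TV}(T_{n,d},B_{n,d})=\sum_{G\in\mathcal B}\Big(\frac{1}{|\mathcal B|}-\frac{1}{|\mathcal T|}\Big)=1-\frac{|\mathcal B|}{|\mathcal T|}=\P_{T_{n,d}}[G\text{ is not bipartite}].
\]
Equivalently, $B_{n,d}$ is exactly $T_{n,d}$ conditioned on the event that $G$ is bipartite. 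The conditioned distribution is at total variation distance equal to the probability of the complementary event.

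By the second case of Theorem~\ref{thm: 0-1}, for $(1+\eps)\frac{\sqrt3}{2}\sqrt{n\log n}\le d\le n/2$ (with $n$ even, as assumed throughout) we have $\P[G\text{ is bipartite}]\to1$. Hence the right-hand side above is $o(1)$, which proves the claim. All the difficulty sits inside Theorem~\ref{thm: 0-1}. The only step that could fail here is confirming that $\mathcal B$ is nonempty, so that $B_{n,d}$ is well defined. This holds since $n$ is even and $d\le n/2$: a $d$-regular bipartite graph with parts of size $n/2$ exists, for example a union of $d$ perfect matchings of $K_{n/2,n/2}$.
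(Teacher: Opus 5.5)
Your proof is correct and is the argument the paper has in mind when it calls this a direct corollary: the paper also notes that $B_{n,d}$ is exactly $T_{n,d}$ conditioned on being bipartite, so the total variation distance equals $\P[T_{n,d}\text{ is non-bipartite}]=o(1)$ by the 1-statement of Theorem~\ref{thm: 0-1}. The only quibble is that your opening appeal to Corollary~\ref{cor: enum} is never used and can be dropped.
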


\noindent {\bf Remark.}
    With more careful analysis, an explicit upper bound for the above total variation distance can be obtained.

For $0 \leq p \leq 1$, let $B_{n,p}$ denote a random graph obtained by first choosing a balanced bipartition $(A,B)$ of $[n]$ uniformly at random and then including each edge between $A$ and $B$ independently with probability $p$. Our next corollary gives a sandwiching approximation of $T_{n,d}$ by $B_{n,p}$.

\begin{cor}\label{cor: our sandwich}
     Suppose that $d = \omega((n\log n)^{3/4})$ and $d\le n/2$. Then, there exist $p_* = (2 - o(1))d/n$, $p^* = (2 + o(1))d/n$, and a coupling $(G_*, G, G^*)$ where $G_* \sim B_{n, p_*}$, $G^* \sim B_{n, p^*}$, and $G \sim T_{n,d}$ such that $\P[G_* \subseteq G \subseteq G^*] = 1 - o(1)$.
\end{cor}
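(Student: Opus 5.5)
The plan is to derive Corollary~\ref{cor: our sandwich} from the second case of \cref{thm: 0-1} together with a sandwiching result for random regular bipartite graphs. Since $d=\omega((n\log n)^{3/4})$ is far above $(1+\eps)\frac{\sqrt3}{2}\sqrt{n\log n}$, the corollary on total variation distance gives $d_{TV}(T_{n,d},B_{n,d})=o(1)$. It therefore suffices to build a coupling $(G_*,H,G^*)$ with $H\sim B_{n,d}$, $G_*\sim B_{n,p_*}$, $G^*\sim B_{n,p^*}$ and $\P[G_*\subseteq H\subseteq G^*]=1-o(1)$. We then replace $H$ by $G\sim T_{n,d}$ using an optimal coupling of $T_{n,d}$ and $B_{n,d}$, which agrees with $H$ with probability $1-o(1)$. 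All three couplings are glued along $H$ via conditional distributions, and the failure probabilities simply add.

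To construct the bipartite coupling, first note that in $B_{n,d}$ the bipartition of a connected $d$-regular bipartite graph is unique up to swapping sides, and a.a.s.\ the graph is connected. Hence, by symmetry, the bipartition $(A,B)$ of $H$ is uniform among balanced bipartitions; the $o(1)$ disconnected mass can be ignored. Conditioned on $(A,B)$, the graph $H$ is a uniformly random $d$-regular bipartite graph on $A\times B$, that is, a uniform $n/2\times n/2$ $0/1$ matrix with all row and column sums equal to $d$. In $B_{n,p}$ we condition on the same uniform $(A,B)$, which leaves a binomial random bipartite graph on $A\times B$. This reduces everything to the bipartite sandwich conjecture: couple the random $d$-regular bipartite graph on $\frac n2+\frac n2$ vertices between binomial bipartite graphs with edge probabilities $(1\mp o(1))\,d/(n/2)=(2\mp o(1))d/n$.

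For this step I would invoke the bipartite versions of the recent sandwich theorems \cite{gao2025kimvussandwichconjecturetrue, gao2025sandwichingrandomregulargraphs}. These results are stated for the bipartite setting as well, with the range $d=\omega((n\log n)^{3/4})$ matching the hypothesis of the corollary. For $d$ near $n/2$, meaning the density of $H$ inside $A\times B$ is close to $1$, I would handle the upper coupling by taking complements: the complement of $H$ within $A\times B$ is $(n/2-d)$-regular. If $n/2-d$ itself is still in the range where sandwiching applies, we apply it to the complement; otherwise $p^*$ can be taken as $1$ when $d=(1-o(1))n/2$, and the statement still holds with $p^*=(2+o(1))d/n$ truncated at $1$.

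The main obstacle is verifying that the cited sandwich result covers the full range, including dense $d$ and the bipartite (rather than general) model. If it does not, I would fall back on a direct proof using the edge-switching coupling of Gao--Isaev--McKay type, which produces the sandwich from asymptotic enumeration of bipartite degree sequences (Canfield--Greenhill--McKay). The only other point needing care is the final gluing step. That step is routine: the union bound over the three failure events, each of probability $o(1)$, gives the claimed $1-o(1)$.
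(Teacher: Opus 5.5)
Your argument is essentially the paper's proof. Theorem~\ref{thm: 0-1}, through the total-variation corollary, replaces $T_{n,d}$ by $B_{n,d}$ up to $o(1)$, and a bipartite sandwich theorem on a shared uniform balanced bipartition $(A,B)$ finishes; your connectivity remark even tidies the paper's slightly loose claim that its mixture has exactly the right marginals. The one correction concerns the key input: the bipartite sandwich for exactly $d=\omega((n\log n)^{3/4})$ is Theorem~\ref{thm: bip coupling} (Klimo\v{s}ov\'a--Reiher--Ruci\'nski--\v{S}ileikis), stated just before the corollary, not the Gao et al.\ results, which concern $G_{n,d}$; the paper's Remark~(b) indicates the bipartite statement is not known below that range. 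Citing Theorem~\ref{thm: bip coupling} directly removes your ``main obstacle'', the vague switching fallback, and the separate dense-$d$ discussion, which in any case only addressed the upper inclusion $H\subseteq G^*$.
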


\noindent {\bf Remark.} 
(a) Interestingly, it is not possible to sandwich $T_{n,d}$ between two correlated $T_{n,m_*}$ and $T_{n,m^*}$ such that $m_*,m^*=(1+o(1))dn/2$. This is because that even though $T_{n,m}$ is a.a.s.\ bipartite by Theorem~\ref{thm: 0-1} in the dense regime, a.a.s.\ it does not have a perfectly balanced bipartition.

(b) The condition $d=\omega((n\log n)^{3/4})$ is inherited from a theorem (See Theorem~\ref{thm: bip coupling} below) which is used in the proof. It is believed that Theorem~\ref{thm: bip coupling} holds for $d\gg \log n$. However, no such a statement has been proved yet.

\smallskip

The proof of Corollary~\ref{cor: our sandwich} uses a result of Klimo{\v{s}}ov{\'a}, Reiher, Ruci{\'n}ski, and {\v{S}}ileikis on sandwiching uniform random $d$-regular bipartite graphs, given below.
\begin{thm}\cite{KlimosovaReiherRucinskiSileikis2023}\label{thm: bip coupling}
For any $0 \leq p \leq 1$ and partition $(A,B)$ of $[n]$ with $|A| = |B| = n/2$, let $G_{A, B, p}$ be a random graph chosen by including each edge between $A$ and $B$ independently with probability $p$. Let $G_{A,B, d}$ be a uniform random $d$-regular bipartite graph with bipartition $(A,B)$. 
Suppose that $d = \omega(n\log n)^{3/4}$. Then, there exist $p_* = (2- o(1))d/n$,
$p^* = (2 + o(1))d/n$ and a coupling  $(G_*, G, G^*)$ such that $G_* \sim G_{A,B, p_*}$, $G^* \sim G_{A,B, p^*}$,
$G \sim G_{A,B, d}$ and $P(G_* \subseteq G \subseteq G^*) = 1- o(1)$.    
\end{thm}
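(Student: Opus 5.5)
The statement is the sandwiching theorem of Klimo{\v{s}}ov{\'a}, Reiher, Ruci{\'n}ski and {\v{S}}ileikis. To prove it from scratch I would follow the Kim--Vu / Dudek--Frieze--Ruci{\'n}ski--{\v{S}}ileikis strategy: expose a uniform $d$-regular bipartite graph one edge at a time and compare with a binomial process. I would prove only the embedding $G_*\subseteq G$ and obtain the other side by complementation. The complement of $G_{A,B,d}$ inside $K_{A,B}$ is distributed as $G_{A,B,n/2-d}$, and $K_{A,B}\setminus G_{A,B,p}$ is distributed as $G_{A,B,1-p}$. So a lower coupling for $n/2-d$ gives an upper coupling for $d$ with $p^*=1-p_*(n/2-d)$. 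Checking $p^*=(2+o(1))d/n$ needs care: it holds for $d=\Theta(n)$ but not for all $d$ in range. For $d=o(n)$ I would therefore also run a direct upper coupling, in which $G$ is revealed as a random ordering of its edges and embedded into a slightly larger uniform $m$-edge bipartite graph.

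\textbf{Process.} Let $M=dn/2$. Generate $G_{A,B,d}$ as a uniformly random ordering $e_1,\dots,e_M$ of its edges, and let $G_t=\{e_1,\dots,e_t\}$. Conditional on $G_t$, the next edge $e_{t+1}$ is an edge of a uniform completion $H\supseteq G_t$, where $H$ is a bipartite graph with residual degrees $d-\deg_{G_t}(v)$ avoiding $G_t$. The key estimate is that, for every pair $ab\in K_{A,B}\setminus G_t$,
\[
\P[e_{t+1}=ab\mid G_t]=\frac{1+O(\delta)}{|K_{A,B}\setminus G_t|},
\]
for all $t\le(1-\delta)M$. This holds on a high-probability event on which all residual degrees stay within $o(\,\cdot\,)$ of their mean $(1-t/M)d$, uniformly over vertices, with some $\delta\to0$.

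\textbf{Proof of the estimate.} This probability is proportional to the number of completions that use $ab$. I would estimate it with the asymptotic enumeration of dense or sparse bipartite graphs with given degrees that avoid a given subgraph, in the style of Canfield--Greenhill--McKay, or equivalently by switchings. The count of completions containing $ab$ depends on $ab$ through the residual degrees of $a$ and $b$ and through codegree-type terms with $G_t$. Concentration of residual degrees, via martingale bounds along the process, makes these factors $1+O(\delta)$.

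\textbf{Coupling.} Given the estimate, couple step by step with a uniform random ordering of $K_{A,B}$: at each step, with probability $1-O(\delta)$, let the regular process take the same edge as a uniformly chosen new edge. This embeds a uniform random graph with $(1-O(\delta))M$ edges into $G$. Standard monotone couplings then embed $G_{A,B,p_*}$ with $p_*=(1-O(\delta))2d/n$ into it with probability $1-o(1)$.

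\textbf{Main obstacle.} The hard step is the uniformity estimate, which must hold with relative error $o(1)$ throughout almost the whole process, including late stages when residual degrees are small. The fluctuations of residual degrees are about $\sqrt{d\log n}$, while the enumeration error terms grow as residual degrees shrink. Balancing these two effects is what should force $d=\omega((n\log n)^{3/4})$. I would first try stopping the process at $t=(1-\delta)M$ with $\delta\to0$ chosen as slowly as this error balance permits.
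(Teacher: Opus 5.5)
The paper does not prove this theorem. It is quoted from Klimo\v{s}ov\'a--Reiher--Ruci\'nski--\v{S}ileikis and used only as a black box in the proof of Corollary~\ref{cor: our sandwich}, so your argument has to stand on its own. Your lower embedding $G_*\subseteq G$ is the standard Dudek--Frieze--Ruci\'nski--\v{S}ileikis-type route: a random edge ordering of $G$, near-uniformity of the next edge up to time $(1-\delta)M$, and a step-by-step coupling with a uniform ordering of $K_{A,B}$. It is fine in outline.

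The gap is the upper embedding $G\subseteq G^*$ for $d=o(n)$, which is essentially the whole range where the result is applied. A process comparison in that direction needs an \emph{upper} bound $\P[e_{t+1}=ab\mid G_t]\le(1+o(1))/|K_{A,B}\setminus G_t|$ for every pair and every $t<M$, since every edge of $G$ must be covered. Your estimate is only claimed for $t\le(1-\delta)M$, and beyond that point it is genuinely false. Once residual degrees are comparable to their fluctuations, the next edge lands on pairs of vertices with large residual degree, which get far more than the average probability, while pairs at saturated vertices get none. So the last $\delta M\gg n$ edges of $G$ are left unmatched, and nothing in your plan places them inside $G^*$. What remains of $G^*$ is a sparse random set of $o(M)$ pairs. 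Forcing it to contain the residual completion of $G$, a random graph with a very irregular residual degree sequence, is the sandwich problem again in a worse form. This is exactly the defect in Kim--Vu's upper embedding; ``embed $G$ into a slightly larger uniform $m$-edge graph'' names the goal, not a mechanism.

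Complementation does not break down for $d=o(n)$ in principle, only with your error budget. A lower embedding of $G_{A,B,n/2-d}$ with relative loss $\gamma$ gives $p^*=1-(1-\gamma)(1-2d/n)=2d/n+\gamma(1-2d/n)$, so you need $\gamma=o(d/n)$, not merely $\gamma=o(1)$. That means coupling the complement process until only an $o(d/n)$ fraction of its edges remain, with every next-edge probability bounded below to relative accuracy $o(d/n)$ along the way. Equivalently, you need sharp control of $\P[ab\in G]$ for the $d$-regular $G$ conditioned to avoid a forbidden set containing almost all of its non-edges. The residual-degree fluctuations of the complement process alone force $d$ to be polynomially large for this to work. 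Your plan, which lets $\delta\to0$ as slowly as convenient, never confronts this quantitative requirement, and it is where a polynomial hypothesis such as $d=\omega((n\log n)^{3/4})$ enters.

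Your closing paragraph therefore misplaces the obstacle. The process argument for the lower embedding works for far smaller $d$ (down to $d\gg\log n$ for ordinary random regular graphs). This is consistent with the paper's remark that the two-sided statement is believed, but not known, for $d\gg\log n$.
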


\begin{proof}  Let $p_*,p^*$ be given by Theorem~\ref{thm: bip coupling}.
    Let $\rho$ be the probability that $T_{n,d}$ is nonbipartite.  By Theorem~\ref{thm: 0-1}, $\rho=o(1)$. Moreover, $T_{n,d}$ conditional on $T_{n,d}$ being bipartite is exactly $B_{n,d}$. To couple $(G_*,G,G^*)$, with probability $\rho$, sample $G$ from $T_{n,d}$ conditional on $T_{n,d}$ being non-bipartite, and sample $G_*\sim B_{n,p_*}$, and $G^*\sim B_{n,p^*}$ independently. With the remaining probability $1-\rho$, take a uniform balanced bipartition $(A,B)$, and then sample $G$, $G_*$, and $G^*$ with bipartition $(A,B)$ according to the coupling given in \cref{thm: bip coupling}. It is clear that $G$, $G_*$, and $G^*$ have the correct marginal distributions given by the coupling above. Thus, combining $\rho=o(1)$, \cref{thm: bip coupling} implies that $\P[G_* \subseteq G \subseteq G^*] = 1 - o(1)$.
\end{proof}

\subsection{Other related results}\label{sec: other}
Here, we discuss a few generalizations of results stated in \cref{sec: irregular} and \cref{sec: regular}. For any graph $H$, let
$G^{H^-}_{n,m}$ denote $G_{n,m}$ conditioned on being $H$-free. 
Osthus, Pr\"omel, and Taraz \cite{osthus2003densities} strengthened \cref{thm: osthus 0-1} to cover all cased where $H$ is an odd cycle. 

\begin{thm}\cite{osthus2003densities}\label{thm: odd cycles}
    Given an odd integer $\ell> 1$, let 
    \[
    t_\ell = \left(\frac{\ell}{\ell - 1}\left(\frac{n}{2}\right)^\ell\log n\right)^{1/(\ell - 1)}.
    \]
    Then, for any $\eps > 0$, 
    \begin{align*}
    {\P[G _{n,m}^{H^-}\text{ is bipartite}]}\rightarrow \begin{cases}
        1 &\text{if}\quad m = o(n)\\
        0 &\text{if}\quad n/2 \leq m \leq (1 - \eps)t_\ell\\
        1 &\text{if}\quad m \geq (1 + \eps)t_\ell
    \end{cases}
    .
    \end{align*}
\end{thm}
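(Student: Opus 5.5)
We treat the three ranges of $m$ separately. Throughout, we compare the number of $C_\ell$-free graphs with $m$ edges that are bipartite with the number that are not. The only truly delicate part is the sharp upper threshold.

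\emph{Case $m=o(n)$.} Here $G_{n,m}$ is a.a.s.\ a forest, because the expected number of cycles is $O\big(\sum_k (2m/n)^k/k\big)=o(1)$. A forest is bipartite and contains no $C_\ell$, so the event $\{X_{C_\ell}=0\}$ has probability $1-o(1)$. Conditioning on it therefore changes probabilities by only $o(1)$, and the conditioned graph is a.a.s.\ bipartite.

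\emph{Case $n/2\le m\le (1-\eps)t_\ell$.} We show that non-bipartite $C_\ell$-free graphs dominate, and we split this range at $m=n^{1+\delta}$ for a small $\delta>0$.

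For sparse $m$ (roughly $n/2\le m\le n^{1+\delta}$), we use that $G_{n,m}$ a.a.s.\ contains cycles of lengths tending to infinity. Near $m=n/2$ these are cycles of length about $n^{1/3}$ in the critical window, and beyond the window there are many long cycles. There are only boundedly many (Poisson) short cycles of length $\le\ell$, and the probability that these avoid length $\ell$ is bounded below. Conditioning on $C_\ell$-freeness is therefore mild. The long cycles have lengths of both parities with probability $1-o(1)$, which yields an odd cycle.

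For $n^{1+\delta}\le m\le(1-\eps)t_\ell$, we use a switching/counting argument. Fix a balanced bipartition $(A,B)$ and a bipartite graph with $m-1$ cross edges, so the density is $p=m/(n^2/4)$. We add one edge $uv$ inside $A$. This creates a copy of $C_\ell$ exactly when there is a $u$--$v$ path of length $\ell-1$. The expected number of such paths is
\[
\mu=(n/2)^{\ell-2}p^{\ell-1}.
\]
By Janson's inequality (or a switching argument), the probability that no such path exists is about $e^{-\mu}$. Comparing graphs with one inner edge against bipartite ones, the ratio of counts is roughly $n^2 p\, e^{-\mu}$ up to constant factors. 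This tends to infinity precisely when $\mu\le(1-\eps')\log(n^2p)$, which is equivalent to $m\le(1-\eps)t_\ell$. (For $\ell=3$ it gives $p^2=3\log n/n$, i.e.\ $m=\frac{\sqrt3}{4}n\sqrt{n\log n}$.) To turn this expectation into "a.a.s.\ non-bipartite", we would apply a second moment argument over many disjoint choices of inner edges, or a many-to-few switching between graphs with $0$ and with $k$ inner edges.

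\emph{Case $m\ge(1+\eps)t_\ell$.} This is the main obstacle. First, a stability result for $C_\ell$-free graphs (the analogue of Theorem~\ref{thm: luc 13}) shows that a.a.s.\ $G$ is $\delta$-bipartite. Fix a max-cut partition $(A,B)$ and let $k$ be the number of edges inside the parts. Most vertices have about $pn/2$ neighbours across the cut. For such typical vertices, each inner edge again forbids about $\mu\ge(1+\eps')\log(n^2p)$ expected paths. Summing over partitions, choices of the $k$ inner edges and the crossing graph, the total contribution of $k\ge1$ is bounded by
\[
\sum_{k\ge1}\big(n^2p\,e^{-\mu}\big)^k=o(1)
\]
times the number of bipartite graphs.

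The hard part is the atypical structure: vertices of low degree, vertices with many neighbours on their own side, and unbalanced partitions. For these the path estimate fails. I would first try to show that such vertices are few, via the max-cut property and an entropy or counting bound on bipartite graphs with irregular degrees. I would then handle inner edges touching them by a separate cruder count, showing that their cost still exceeds their entropy gain.
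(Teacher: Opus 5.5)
This theorem is not proved in the paper. It is quoted from Osthus, Pr\"omel and Taraz, and the paper only sketches their triangle-case argument for the upper $1$-statement in \cref{sec: overview}. Measured against that sketch, your case $m=o(n)$ is fine and your one-inner-edge heuristic correctly recovers $t_\ell$. There are, however, two gaps, and the first is essential.

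\textbf{The $1$-statement.} Your bound $\sum_{k\ge1}(n^2pe^{-\mu})^k$ assumes that a random crossing graph closes no $C_\ell$ through any of $k$ given inner edges with probability about $e^{-k\mu}$. That is a Janson estimate, and it holds only when the defect graph has small maximum degree: for $\ell=3$ and defect maximum degree $D$, the ratio $\Delta/\mu$ is of order $Dp$.

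Take a defect star at $x$ with $D\ge 1/p$ leaves. The true probability is $(1-p+p(1-p)^D)^{n/2}=e^{-\Theta(pn)}$, not $e^{-D\mu}$, because the cheap escape is for $x$ to have almost no cross-neighbours. Without the max-cut constraint, summing over such stars already gives about $n$ times the number of bipartite graphs: it is just the count of bipartite graphs with one vertex placed on the wrong side.

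Fewness of such vertices does not help either. A spine of $O(k/D)$ high-degree vertices can carry almost all $k$ defect edges (the ``small torso'' case), so ``show atypical vertices are few, then count crudely'' is not a workable strategy.

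What is needed is the mechanism Osthus, Pr\"omel and Taraz use:
\begin{enumerate}
\item By max-cut, a high-degree defect vertex $x$ has at least $D$ cross-neighbours $U(x)$.
\item Using intersection properties of the sets $U(x)$, most such vertices are \emph{expanding}: the cross-neighbourhood of $U(x)$ covers almost all of $x$'s side. For $C_\ell$, this set is the set of vertices reachable from $x$ by crossing paths of length $\ell-1$.
\item Cycle-freeness then confines the defect neighbours of $x$ to a set of size $o(n)$.
\item One therefore bounds the number of defect graphs given the crossing edges, rather than the probability of the crossing edges given the defect graph.
\end{enumerate}
Large torsos are handled separately by Janson's inequality applied to the torso. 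Your proposal has no substitute for this step, so your argument for the $1$-statement covers only near-regular defect graphs.

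\textbf{The $0$-statement.} Your sparse argument works only for $m=O(n)$. For $m=\omega(n)$ the expected number of copies of $C_\ell$ is about $(2m/n)^\ell/(2\ell)\to\infty$, so $\P[X_{C_\ell}=0]\to0$ and conditioning is no longer mild. The range $\omega(n)\le m<n^{1+\delta}$ is therefore covered by neither of your sub-arguments.

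This is easy to patch. For $m=\omega(n)$ you have $\P[G_{n,m}\text{ bipartite}]\le 2^n\bigl(\tfrac{n}{2(n-1)}\bigr)^m=2^{-(1-o(1))m}$. Harris's inequality, applied in $G_{n,p}$ with $p$ slightly above $m/\binom n2$ and then transferred, gives $\P[X_{C_\ell}=0]\ge\exp(-(1+o(1))\mathbb E X_{C_\ell})=e^{-o(m)}$ whenever $m=o(n^{\ell/(\ell-1)})$. This range overlaps with your switching range once $\delta<1/(\ell-1)$.

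Two smaller corrections in the denser sub-case:
\begin{itemize}
\item The lower bound $e^{-(1+o(1))\mu}$ on the no-path probability comes from Harris's inequality, not Janson's; Janson only bounds $\P[X=0]$ from above.
\item No second-moment step is needed. A ratio tending to infinity between non-bipartite $C_\ell$-free graphs and bipartite graphs already gives $\P[\text{bipartite}\mid X_{C_\ell}=0]\to0$, exactly as in \cref{sec: zero}.
\end{itemize}
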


\cref{thm: osthus 0-1} was also generalized when $H$ is a clique by Balogh, Morris, Samotij, and Warnke \cite{BaloghMorrisSamotijWarnke2016TypicalStructure}. 

\begin{thm}\label{thm: cliques}\cite{BaloghMorrisSamotijWarnke2016TypicalStructure}
     For each $r \geq 2$, let 
    \[
    \theta_r = \frac{r-1}{2r}\cdot\left[r\cdot\left(\frac{2r+2}{r+2}\right)^{1/(r-1)}\right]^{2/(r+2)}
    \]
    and 
    \[
    m_r = \theta_rn^{2-2/(r+1)}(\log n)^{\frac{1}{\binom{r+1}{2}-1}}.
    \]
    Fix $r \geq 3$ and let $G$ be a graph obtained by choosing a $K_{r+1}$-free graph with vertex set $[n]$ and $m$ edges uniformly at random. Then, there exists a function $d_r = d_r(n) = \Theta(n)$ such that for all $\eps > 0$
    \begin{align*}
        \P[G \text{ is bipartite}] \rightarrow \begin{cases}
            1 &\text{if} \quad m \leq (1 - \eps)d_r\\
            0 &\text{if} \quad (1 + \eps)d_r \leq m \leq (1 - \eps)m_r\\
            1 &\text{if}\quad (1 + \eps)m_r
        \end{cases}
        .
    \end{align*}
\end{thm}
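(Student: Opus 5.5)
Although the statement says ``bipartite'', for $K_{r+1}$-free graphs the natural property is being $r$-partite, and I will prove it in that form. The plan splits the range of $m$ into three parts and treats each with a different tool: a sparse coloring threshold for small $m$, stability via hypergraph containers, and a counting (switching) comparison between $r$-partite graphs and graphs with a few ``bad'' edges inside parts. The two nontrivial thresholds come from different mechanisms.

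\textbf{Sparse range ($m=\Theta(n)$).} Copies of $K_{r+1}$ only appear in $G_{n,m}$ at $m\gg n^{2-2/(r+1)}\gg n$. So for $m=O(n)$ the event $X_{K_{r+1}}=0$ has probability $1-o(1)$, and conditioning barely changes the distribution. Hence $G$ is close in total variation to $G_{n,m}$. The first transition is then the $r$-colorability threshold of $G_{n,m}$, which is sharp by Friedgut's theorem and occurs at some $d_r=\Theta(n)$ (the usual first-moment and second-moment bounds on the number of $r$-colorings). This gives the first two cases with that $d_r$.

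\textbf{Stability for larger $m$.} For $m\ge Cn^{2-2/(r+1)}$, I would use the hypergraph container lemma applied to the $\binom{r+1}{2}$-uniform hypergraph of copies of $K_{r+1}$ in $K_n$. Together with the Erd\H{o}s--Simonovits stability theorem, this shows, as in Theorem~\ref{thm: luc 13}, that a.a.s.\ $G$ is $\delta$-$r$-partite. A further counting step should refine this: a.a.s.\ there is a balanced partition $V_1,\dots,V_r$ such that every vertex has degree $\approx m\cdot\frac{2}{n}\cdot\frac{1}{r}$ into each other part. Vertices violating this are handled by showing that configurations containing them are exponentially rarer than typical $r$-partite graphs.

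\textbf{The window $(1+\eps)d_r\le m\le(1+\eps)m_r$ via counting.} Let $p=m/\big(\tfrac{r-1}{2r}n^2\big)$ be the density between parts, and write $\mathcal G_k$ for the $K_{r+1}$-free graphs with $m$ edges whose optimal partition has exactly $k$ edges inside parts.
\begin{itemize}
\item[(i)] Given a graph in $\mathcal G_0$ with a typical partition, I would add one edge $uv$ inside a part and delete one crossing edge. This produces an element of $\mathcal G_1$ precisely when no $K_{r+1}$ is created.
\item[(ii)] The number of $K_{r+1}$ through $uv$ is governed by the number of $K_{r-1}$'s in the common neighbourhood across the other $r-1$ parts, which has mean $\mu\approx (n/r)^{r-1}p^{\binom{r+1}{2}-1}$.
\item[(iii)] Using Janson's inequality in the random $r$-partite model (and transferring to fixed $m$), the probability that $uv$ creates no $K_{r+1}$ is $e^{-(1+o(1))\mu}$.
\item[(iv)] Double counting then gives $|\mathcal G_1|/|\mathcal G_0|\approx n^2 e^{-\mu}$, up to polynomial factors that the constants absorb.
\end{itemize}
The identity $n^2e^{-\mu}=1$ is exactly what defines $m_r$, including the constant $\theta_r$ after optimizing the sizes of the parts. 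Therefore below $(1-\eps)m_r$ the ratio tends to infinity, so $G$ is a.a.s.\ not $r$-partite. Above $(1+\eps)m_r$, a similar bound gives $|\mathcal G_k|/|\mathcal G_0|\le (n^2e^{-\mu})^k/k!$ summed over $k\ge1$, which is $o(1)$. Combined with the stability step (which rules out graphs far from $r$-partite), $G$ is a.a.s.\ $r$-partite.

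\textbf{Main obstacle.} The hardest part is the upper bound on $|\mathcal G_k|$ for $k\ge1$ uniformly over all $k$ up to $\delta m$. Bad edges can cluster at atypical vertices whose neighbourhoods are unbalanced, which lowers $\mu$ for them. I would handle this by partitioning vertices into typical and exceptional ones and bounding exceptional vertices by a separate entropy versus penalty count. I would also control dependence among the events ``$e_i$ creates no $K_{r+1}$'' via Janson's inequality applied jointly to all $k$ bad edges, using that their common-neighbourhood structures overlap only negligibly.
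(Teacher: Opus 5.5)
The paper does not prove this theorem; it only quotes it from \cite{BaloghMorrisSamotijWarnke2016TypicalStructure}, so your sketch has to stand on its own.

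Your sparse-range argument is fine. In fact it extends to all $m\ll n^{2-2/r}$, which is where $K_{r+1}$ first appears, not $n^{2-2/(r+1)}$. The container/stability input for $m\ge Cn^{2-2/(r+2)}$ is also fine.

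The statement as printed has two typos. ``Bipartite'' should read ``$r$-partite'', which you caught. The exponent in $m_r$ should be $2-\frac{2}{r+2}$, and your own $\mu$ shows this: since $\binom{r+1}{2}-1=\frac{(r+2)(r-1)}{2}$, the condition $(n/r)^{r-1}p^{\binom{r+1}{2}-1}=\Theta(\log n)$ forces $p=\Theta\big(n^{-2/(r+2)}(\log n)^{1/(\binom{r+1}{2}-1)}\big)$.

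\textbf{Step (iv) is miscounted.} The double counting you set up in (i) gives
\[
\frac{|\mathcal G_1|}{|\mathcal G_0|}\approx \frac{n^2}{2r}\cdot\frac{m}{N-m}\,e^{-\mu}\approx \frac{n^2}{2r}\,p\,e^{-\mu},\qquad N=\frac{r-1}{2r}\,n^2,
\]
because the crossing edge you delete costs a factor $m/(N-m)\approx p$. This factor $p=n^{-2/(r+2)+o(1)}$ cannot be absorbed into constants. At the threshold $\mu=\Theta(\log n)$, so a polynomial factor moves the critical $m$ by a constant factor.

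The correct balance is $n^2pe^{-\mu}\approx 1$, i.e.\ $\mu=(\frac{2r+2}{r+2}+o(1))\log n$. This, with balanced parts and no optimisation of part sizes, is what produces $\theta_r$. For $r=2$ it gives $\mu=\frac32\log n$ and the constant $\frac{\sqrt3}{4}$; your normalisation $\mu=2\log n$ gives $\frac12$ instead.

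With your version, the $0$-statement would be claimed for some $m\ge(1+\eps)m_r$, contradicting the third case. Your sum for the $1$-statement would only be $o(1)$ above a strictly larger threshold. For the $0$-statement you also still need two things:
\begin{itemize}
\item that the modified graphs are genuinely not $r$-partite, i.e.\ the colouring of the underlying $r$-partite graph forces $u,v$ into the same class;
\item a bound on how many partitions each graph is counted under.
\end{itemize}

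\textbf{The main gap is the $1$-statement.} It rests entirely on the bound $|\mathcal G_k|/|\mathcal G_0|\le (n^2pe^{-\mu})^k/k!$, uniformly for $1\le k\le\delta m$. You assert this, and the mechanism you propose for it fails.

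Joint Janson gives $e^{-(1-o(1))k\mu}$ only when $\Delta=o(k\mu)$, and this breaks down when defect edges share a vertex. Suppose $x\in V_1$ has defect neighbours $v_1,\dots,v_t$. Every $K_{r+1}$ through some $xv_i$ uses crossing edges at $x$. Pairs of such cliques sharing a single $w_j$ already give $\Delta/(t\mu)\asymp t\mu/(pn)$. So once $t\gg pn/\log n$, Janson yields only $\exp(-\mu_{\mathrm{tot}}^2/2\Delta)=\exp(-\Theta(pn))$, independent of $t$.

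This is not just a weakness of the inequality. The events are genuinely positively correlated, since shrinking the crossing neighbourhoods of $x$ avoids all of them at once. The true probability can therefore be far larger than $e^{-t\mu}$.

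What rescues the $1$-statement is two facts:
\begin{itemize}
\item concentrated defect graphs carry much less entropy (about $\binom{n}{t}$ rather than $\binom{n^2}{t}$ choices);
\item by maximality of the partition, $x$ has at least $t$ neighbours in every other part.
\end{itemize}
Turning these into a bound that holds uniformly over all defect-degree profiles and all $k\le\delta m$, after summing over up to $r^n$ partitions, is the heart of the proof.

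For $r=2$ this is the torso/spine decomposition plus the expanding-vertex argument of \cite{osthus2003densities}. This paper redoes that argument in the regular setting in Sections~\ref{sec: Janson} and~\ref{sec: Small Torso}. For $r\ge3$ it is harder still: one needs the common neighbourhoods of $x$ and $v_i$ in $V_2,\dots,V_r$ to be $K_{r-1}$-free rather than empty. This is presumably where most of the work in \cite{BaloghMorrisSamotijWarnke2016TypicalStructure} goes.

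``Partition into typical and exceptional vertices and do an entropy-versus-penalty count'' names this step without supplying it. Likewise, your refinement that a.a.s.\ every vertex has about $2m/(nr)$ neighbours in each other part cannot be read off from $G_{n,m}$ tail bounds. The conditioning event has probability $e^{-\Theta(m)}$, far smaller than the $e^{-\Theta(pn)}$ cost of one atypical vertex, so this refinement is itself part of what must be proved.
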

Observe that $m_r$ agrees with the larger threshold obtained in \cref{thm: osthus 0-1} when $r = 2$. The existence of $d_r$ follows quickly from the establishment of $d_r$ as a threshold for $G \sim G_{n,m}$ being $r$-colourable by Achlioptas and Friedgut in \cite{Achlioptas1999SharpThreshold}. Together, these results completely answer \cref{q:enumeration}(a) and \cref{q:structure} for sufficiently large $m$ when $H$ is an odd cycle or a clique.

Focusing on \cref{q:structure}, {\L}uczak \cite{luczak2000triangle} proved that for any non-bipartite $H$ and sufficiently large $m$ (depending on $H$), $G_{n,m}^{H^-}$ can be made $\chi(H)$-colourable by deleting few edges a.a.s, which is a more general version of \cref{thm: luc 13}.

In the regular setting, Kim, Sudakov, and Vu \cite{KimSudakovVu2007} extended the previously discussed results of \cite{Bollobas1980,Wormald1978,Wormald1981} to 
 show that if $H$ is any balanced graph, $X_H$ converges to a Poisson random variable in $G_{n,d}$ when $d$ is a constant. This resolves \cref{q:enumeration}(b) when $d$ is a constant for a wide range of graphs $H$.
\subsection{Open questions}

For simplicity we only considered regular degree sequences on an even number of vertices. Most parts of our proofs should extend to the odd $n$ case with minor adjustments. Also, if $n$ is odd, notice that $T_{n,d}$ will contain at least $d$ defect edges. Thus, we ask the following:

\begin{conj}
    Let $Q$ be the event that $T_{n,d}$ can be made bipartite by deleting $d$ edges. Then, for odd $n$ and any $\eps > 0$, 
    \begin{align*}
        \P[Q] \rightarrow \begin{cases}
            0 \quad \text{if}\quad 2 \leq d \leq (1 - \eps)\frac{\sqrt 3}{2}\sqrt{n\log n}\\
            1 \quad \text{if} \quad (1 + \eps)\frac{\sqrt 3}{2}\sqrt{n\log n} \leq d \leq \frac{n-1}{2}
        \end{cases}
        .
    \end{align*}
\end{conj}

Likewise, many parts of our proofs extend directly to more general degree sequences. We believe that the statement of Theorem~\ref{thm: 0-1} holds for degree sequences that are sufficiently close to the regular sequences. For a degree sequence $\mathbf d$ on $[n]$, let $T_{n,\mathbf d}$ be a triangle-free graph chosen uniformly at random from all graphs with vertex set $[n]$ and degree sequence $\mathbf d$.

\begin{conj} Suppose $\mu > 0$ is a sufficiently small constant. Let $\mathbf d$ be a degree sequence on $[n]$ with average degree $d$ where $|d_u - d| \leq  d^{1/2+\mu}$ for all $u \in [n]$ that admits a bipartite graph. Then, for any $\eps > 0$,
    \begin{align*}
        \P[T_{n,{\bf d}} \text{ is bipartite}] \rightarrow \begin{cases}
            0 &\text{if}\quad 2 \leq d \leq (1 - \eps)\frac{\sqrt 3}{2}\sqrt{n\log n}\\
            1 &\text{if} \quad(1 + \eps)\frac{\sqrt 3}{2}\sqrt{n\log n} \leq d \le (1/2-\eps)n
        \end{cases}
        .
    \end{align*}

\end{conj}

It is also interesting to obtain either structural or enumeration results for $H$-free $d$-regular graphs beyond triangles. In \cite{osthus2003densities}, a threshold for the bipartiteness of random graphs avoiding longer odd cycles is established. We conjecture that this holds for random regular graphs as well. Similar to $G^{H^-}_{n,m}$, let $G^{H^-}_{n,d}$ denotes $G_{n,d}$ conditional on being $H$-free.

\begin{conj}
     For all odd integers $\ell > 1$, let $t_\ell$ be defined as in \cref{thm: odd cycles}. Then, for any $\eps > 0$,
    \begin{align*}
        \P[G^{H^-}_{n,d} \text{ is bipartite}]\rightarrow \begin{cases}
            0 &\text{if}\quad 2 \leq d \leq (1 - \eps)2t_\ell/n\\
            1 &\text{if} \quad (1 + \eps)2t_\ell/n \leq d \leq n/2
        \end{cases}
        .
    \end{align*}
\end{conj}

Likewise, \cref{thm: cliques} should hold in the regular setting.

\begin{conj}
    For each $r \geq 2$, let $\theta_r$ and $m_r$ be as described in \cref{thm: cliques}.
    Fix $r \geq 3$ and let $G\sim G_{n,d}^{K_{r+1}^-}$. Then, there exists a function $d_r = d_r(n) = \Theta(1)$ such that for all $\eps > 0$
    \begin{align*}
        \P[G \text{ is $r$-colourable}] \rightarrow \begin{cases}
            1 &\text{if} \quad d \leq (1 - \eps)d_r\\
            0 &\text{if} \quad (1 + \eps)d_r \leq d \leq (1 - \eps)2m_r/n\\
            1 &\text{if}\quad (1 + \eps)2m_r/n \leq d
        \end{cases}
        .
    \end{align*}
\end{conj}



In all of these questions and conjectures, we expect the regular case to behave very similarly to the general case. It is thus natural to expect some sandwiching result to encompass all of these smaller structural results. As we have shown in \cref{cor: our sandwich}, however, this is more nuanced than just sandwiching $G \sim T_{n,d}$ between two copies of $T_{n,m}$. In the sparse regime, there is no bipartite structure on which the difference between regular bipartite and arbitrary bipartite graphs can cause problems. Thus, the following conjecture seems natural.

\begin{conj}\label{conj: sparse sandwich}
    If $d = \omega(\log n)$ and $d = o(n^{1/2})$, then there exist $m_* = (1 - o(1))dn/2$, $m^* = (1 + o(1))dn/2$, and a coupling $(G_*, G, G^*)$ where $G_* \sim T_{n, m_*}$, $G^* \sim T_{n, m^*}$, and $G \sim T_{n, d}$ such that $\P[G_* \subseteq G \subseteq G^*] = 1 - o(1)$. 
\end{conj}

It is much harder to conjecture what happens in the intermediate regime, as this is very poorly understood even in $T_{n,m}$. When $T_{n,d}$ has very few defect edges, this structural difference between regular bipartite graphs and arbitrary bipartite graphs should still have an impact. 
In \cite{osthus2003densities}, it was shown that that a graph chosen uniformly at random from all bipartite graphs with vertex set $[n]$ and $m \geq n$ edges will have a bipartition $(A,B)$ where $\big ||A| - |B|\big | \leq 2n\sqrt {\log n}/\sqrt m$. Let $G$ be a $d$-regular graph that can be made bipartite by deleting exactly $k$ edges. Then, $[n]$ has a partition $(U,V)$ where $|E(G[U])| + |E(G[V])| = k$. In \cref{prop: ab}, we show that $|U|$ and $|V|$ differ by at most $\frac{k}{d}$. Hence, the latest we can hope to sandwich $T_{n,d}$ between copies of $T_{n,m}$ is when $\Omega(\sqrt{dn\log n})$ edges deleted to from $T_{n,d}$ to obtain a bipartite graph with non-negligible probability. If $T_{n,d}$ behaves similarly to $T_{n,m}$, then according to the threshold for connected defect graphs obtained in \cite{JENSSEN2025110499}, $k$ will not be this large until $d \ll \frac{1}{2}\sqrt{n \log n}$ (they do not consider defect graphs this big, and so we can only say that $d$ is much smaller than the lowest value they consider). In fact, it is likely that $k$ must be even larger than this, as in this range the maximum cut of $T_{n,m}$ may be even more unbalanced. 

\begin{que}
    When can $T_{n,d}$ be sandwiched between copies of $T_{n,m}$? Is $d = n^{1/2}$ a course threshold for such a sandwiching being possible? 
\end{que}

It would also be interesting to obtain conditional or joint edge probability results similar to \cref{thm: bip edge probs} in the triangle-free setting. This would allow for more direct analysis of structures in random regular triangle-free graphs.

\section{Preliminaries}\label{sec: preleminaries}
In this paper, we consider all asymptotics as $n \rightarrow \infty$. Then, property $X$ holds a.a.s if $\P[X] \rightarrow 1$. 

\begin{definition}
    Fix $f: \mathbb R_{\geq 0} \rightarrow \mathbb R$ and $g: \mathbb R_{\geq 0} \rightarrow \mathbb R$.
    \begin{itemize}
        \item $f(n) = O(g(n))$ if there exists a positive number $M$ and $n_0$ such that for all $n \geq n_0$, 
        $$|f(n)| \leq M|g(n)|;$$
        \item $f(n) = \Omega(g(n))$ if $\lim_{n \rightarrow \infty}f(n) \geq 0$, $\lim_{n \rightarrow \infty} g(n) \geq 0$, and there exists a positive number $N$ and $n_0$ such that for all $n\geq n_0$, 
        $$f(n) \geq Ng(n);$$
        \item $f(n) = \Theta(g(n))$ if $f(n) = O(g(n))$ and $f(n) = \Omega(g(n))$;
        \item $f(n) = o(g(n))$ if 
        $$\lim_{n \rightarrow \infty}\frac{f(n)}{g(n)} = 0;$$
        \item $f(n) = \omega(g(n))$ if 
        $$\lim_{n \rightarrow \infty} \frac{f(n)}{g(n)} = +\infty;$$
        \item $f(n) \sim g(n)$ if 
        $$f(n) = (1 + o(1))g(n).$$
        We also say that $f(n)$ is asymptotic to $g(n)$ in this case.
    \end{itemize}
\end{definition}

We define binomial coefficients using gamma functions. This allows for us to bound large products of binomial coefficients by simpler products of binomial coefficients that may have non-integer inputs and easily use Stirling's approximation to evaluate these non-integral binomial coefficients. See \cref{sec: Appendix} for more details.

Our main proof technique is to count the number of $d$-regular triangle-free graphs and then compare this result to the number of $d$-regular bipartite graphs. As such, we need an asymptotic enumeration of the number of $d$-regular bipartite graphs. We also need an asymptotic approximation for the number of bipartite graphs with near regular degree sequence.
Greenhill, McKay, and Wang \cite{GREENHILL2006291} proved the sparse case (when $d = o(n^{1 + \eps})$) of the following theorem and Liebenau and Wormald \cite{liebenau2023asymptotic} proved the moderate case in Remark 1.2 of their paper. This bound also holds when $d = \Theta(n)$, but is not used in this paper.
\begin{thm}\label{thm: bip enum}\cite{GREENHILL2006291,liebenau2023asymptotic}
    Let $(A,B)$ be a partition of $[n]$. Let $\mathbf d$ be a degree sequence on $[n]$ with maximum degree at most $d$. Let $M = \sum_{u \in [n]}d_u$, $d_A= M/|A|$, and $d_B = M/|B|$. For sufficiently small $\mu_0 > 0$, if 
    \begin{itemize}
        \item $||A| - |B|| \leq n/2$;
        \item $M/(|A||B|) \leq \mu_0$;
        \item $|d_A - d_u| \leq \sqrt{d_A}$ for all $u \in A$;
        \item  $|d_B - d_v| \leq \sqrt{d_B}$ for all $v \in B$;
    \end{itemize}  
    then the number of bipartite graphs with bipartition $(A,B)$ and degree sequence $\mathbf d'$ is approximately
    \[
    \Theta\left({|A||B| \choose M/2}^{-1}\prod_{u \in A}{|B| \choose d_u} \prod_{v \in B}{|A| \choose d_v}\right).
    \]
    In particular, when $\mathbf d = (d, \ldots, d)$, the number of such graphs is asymptotic to 
    \begin{equation}\label{eq: reg bip}
        \binom{n^2/4}{dn/2}^{-1}\binom{n/2}{d}^ne^{-1/2}.
    \end{equation}
\end{thm}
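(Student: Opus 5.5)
The plan is to obtain the statement directly from the known asymptotic enumeration formulas for bipartite graphs with a prescribed degree sequence, rather than to prove anything from scratch. For a bipartition $(A,B)$ with $|A|=s$, $|B|=t$, a degree sequence $\mathbf s$ on $A$ and $\mathbf t$ on $B$ with common sum $M/2$ (one edge per unit of degree on each side), write $\lambda=(M/2)/(st)$ for the edge density. Define
\[
\sigma_s^2=\sum_{u\in A}\bigl(d_u-\tfrac{M}{2s}\bigr)^2,\qquad \sigma_t^2=\sum_{v\in B}\bigl(d_v-\tfrac{M}{2t}\bigr)^2 .
\]
The Canfield--Greenhill--McKay type formula then says that the number of such graphs equals
\[
\binom{st}{M/2}^{-1}\prod_{u\in A}\binom{t}{d_u}\prod_{v\in B}\binom{s}{d_v}\;\exp\Bigl(-\tfrac12\Bigl(1-\tfrac{\sigma_s^2}{\lambda(1-\lambda)st}\Bigr)\Bigl(1-\tfrac{\sigma_t^2}{\lambda(1-\lambda)st}\Bigr)+o(1)\Bigr).
\]
Greenhill, McKay and Wang~\cite{GREENHILL2006291} prove this in the sparse range, and Liebenau and Wormald~\cite{liebenau2023asymptotic} (their Remark 1.2) prove it in the moderate range. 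Together, these two ranges cover all $d$ satisfying $M/(|A||B|)\le\mu_0$.

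The first step is to check that the hypotheses listed in the theorem imply the hypotheses of those two papers. There are three things to check.
\begin{itemize}
\item The side sizes are comparable: $||A|-|B||\le n/2$ gives $s,t\ge n/4$.
\item The density is small: $\lambda\le\mu_0$.
\item The degrees are near-regular: the deviations are at most the square root of the average degree on each side.
\end{itemize}
In the sparse case I also need the maximum-degree condition of~\cite{GREENHILL2006291}, i.e.\ that $d_{\max}$ is at most a small power of $M$. This follows because $d\le\mu_0 n$ and $n$ is bounded in terms of $M$ when every degree is close to the side averages.

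The second step is to show that the exponential correction factor is $\Theta(1)$. Since every deviation is at most $\sqrt{d_A}$, we get $\sigma_s^2\le s\,d_A$. Since $d_A=\Theta(\lambda t)$, this gives $\sigma_s^2/(\lambda(1-\lambda)st)=O(1)$, and the same argument bounds the corresponding ratio for $B$. So the exponent is bounded and the correction factor is $\Theta(1)$, which yields the $\Theta(\cdot)$ form of the statement. The main obstacle is purely bookkeeping: matching the normalisations of $d_A$, $d_B$, $M$ and the ranges in the two source papers to the hypotheses exactly as stated here, so that the moderate range is genuinely covered.

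For the ``in particular'' part, take $s=t=n/2$ and $d_u=d$ for all $u$. Then $M/2=dn/2$ and $\sigma_s=\sigma_t=0$, so the correction factor is exactly $e^{-1/2+o(1)}$. The binomial product becomes $\binom{n/2}{d}^n$, and the normalising factor becomes $\binom{n^2/4}{dn/2}^{-1}$. This gives~\eqref{eq: reg bip} as an asymptotic equality, not just up to a constant factor.
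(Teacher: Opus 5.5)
Your plan matches the paper's, which gives no argument beyond citing Greenhill--McKay--Wang for the sparse range and Liebenau--Wormald (Remark~1.2) for the moderate range; your $\Theta(1)$ bound on the correction factor and your specialization $\sigma_s=\sigma_t=0$ for the regular case fill in what that citation leaves implicit. One slip to fix: the GMW hypothesis $s_{\max}t_{\max}=o(S^{2/3})$ does not follow from $d\le\mu_0 n$, which only gives $d=O(M^{1/2})$; for near-regular sequences it amounts to $d=o(n^{1/2})$, so define the sparse range that way and let Liebenau--Wormald cover everything above it up to density $\mu_0$.
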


Note that~\eqref{eq: reg bip} predates these works. In fact, a generalization where $\mathbf d|_A$ and $\mathbf d|_B$ are each regular was shown to hold for a wide range of settings by McKay and Wang \cite{McKayWang2003} and by Canfield and McKay \cite{CanfieldMcKay2005}.
Let
    $\mathcal G_{n,d}^\text{bip}$ denote the set of $d$-regular bipartite graphs on $n$ vertices.
There are $\binom{n}{n/2}$ ways to obtain an equal-sized bipartition of $[n]$.
Hence, immediately
    \begin{equation}
    |\mathcal G_{n,d}^{\text{bip}}| =  \binom{n}{n/2}\binom{n/2}{d}^n\binom{n^2/4}{dn/2}^{-1}\exp[-1/2].\label{eq:reg bip count2}
    \end{equation}

We also need to bound the probability that a large bipartite graph chosen uniformly at random from all bipartite graphs with a given degree sequence avoids every edge in some large and well-behaved set of forbidden edges. To do so, we use a switching argument. For this switching argument, we need an approximation for the probability that some given edge appears in the randomly chosen graph conditioned on some fixed set of edges being included in the graph and some other fixed set of edges not being included in the graph. 
For any graph $H$, let $\mathbf{d}^H$ denote the degree sequence on $H$. Larkin, McKay, and Tian  proved the following theorem approximating edge probability in random graphs with a given degree sequence in~Lemma 5.3 of \cite{larkin2025subgraphs}.

\begin{thm}\label{thm: bip edge probs} \cite{larkin2025subgraphs}
Let $(A,B)$ be a partition of $[n]$ where $|A|, |B| = (n/2)(1 + o(1))$. Let $\mathbf d$ be a degree sequence on $[n]$ with maximum degree at most $d$. Let $G$ be a graph chosen uniformly at random from all bipartite graphs with bipartition $(A,B)$ and degree sequence $\mathbf d$.
Let $H_1$ and $H_2$ be two disjoint bipartite graphs on $[n]$ where $\mathbf{d}^{H_1} \leq \mathbf d$. Take $uv \in (A \times B)\setminus(H_1 \cup H_2)$. For sufficiently small $\mu_0 > 0$, if
\begin{itemize}
    \item $d^{H_1}_u \leq \mu_0d$;
    \item $d^{H_1}_v \leq \mu_0d$; 
    \item $d^{H_2}_u \leq \mu_0n$;
    \item $d^{H_2}_v \leq \mu_0n$;
    \item $|E(G)| = (1 - o(1))\frac{dn}{2}$;
    \item $|E(H_1)| = o(dn)$;
    \item $d < \mu_0n$;
\end{itemize}
then 
    \[
    \mathbb{P}[uv \in G \mathrel | E(H_1) \subseteq E(G), E(H_2)\cap E(G) = \emptyset] < \frac{2d}{n}(1 + O(\mu_0)).
    \]
\end{thm}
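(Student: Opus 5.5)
The plan is a direct two-way switching between the graphs that contain $uv$ and those that do not, inside the conditional space. Let $\Omega$ be the set of bipartite graphs $G$ with bipartition $(A,B)$, degree sequence $\mathbf d$, $E(H_1)\subseteq E(G)$ and $E(H_2)\cap E(G)=\emptyset$. Split $\Omega=\mathcal S^+\cup\mathcal S^-$ according to whether $uv\in G$. Since
\[
\P[uv\in G\mid \cdot\,]=\frac{|\mathcal S^+|}{|\mathcal S^+|+|\mathcal S^-|}\le \frac{|\mathcal S^+|}{|\mathcal S^-|},
\]
it suffices to show $|\mathcal S^+|/|\mathcal S^-|\le \frac{2d}{n}(1+O(\mu_0))$. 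I would prove this by double counting a switching between $\mathcal S^+$ and $\mathcal S^-$.

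\emph{The switching.} Take $G\in\mathcal S^+$ and an edge $u'v'\in G$ with $u'\in A$ and $v'\in B$. Require that $u'v'\notin H_1$, that $uv'\notin G\cup H_2$, and that $u'v\notin G\cup H_2$. Delete $uv$ and $u'v'$, and insert $uv'$ and $u'v$. The degrees, the bipartition, the containment of $H_1$ (note $uv\notin H_1$ by hypothesis) and the avoidance of $H_2$ are all preserved. The result lies in $\mathcal S^-$ because the only edge at $u$ towards $v$ has been removed.

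\emph{Lower bound on forward switchings.} For each $G\in\mathcal S^+$, count the admissible edges $u'v'$. There are $|E(G)|=(1-o(1))dn/2$ candidate edges. From these I remove the following bad cases:
\begin{itemize}
\item[(i)] edges of $H_1$, of which there are $o(dn)$;
\item[(ii)] edges with $v'\in N_G(u)$, at most $d\cdot d$ of them;
\item[(iii)] edges with $u'\in N_G(v)$, at most $d^2$ of them;
\item[(iv)] edges with $uv'\in H_2$, at most $d^{H_2}_u\cdot d\le \mu_0 nd$ of them;
\item[(v)] edges with $u'v\in H_2$, at most $\mu_0 nd$ of them.
\end{itemize}
Since $d<\mu_0 n$, the terms $d^2$ are also $O(\mu_0 dn)$. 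Hence every $G\in\mathcal S^+$ admits at least $\frac{dn}{2}(1-O(\mu_0)-o(1))$ forward switchings.

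\emph{Upper bound on reverse switchings.} For $G'\in\mathcal S^-$, a reverse switching is determined by a choice of $v'\in N_{G'}(u)$ and $u'\in N_{G'}(v)$. So there are at most $d_ud_v\le d^2$ of them, and this uses only the maximum degree bound. Double counting the pairs $(G,G')$ related by a switching then gives
\[
|\mathcal S^+|\cdot\frac{dn}{2}\bigl(1-O(\mu_0)\bigr)\le |\mathcal S^-|\,d^2,
\]
so $|\mathcal S^+|/|\mathcal S^-|\le \frac{2d}{n}(1+O(\mu_0))$, absorbing the $o(1)$ into $O(\mu_0)$. For the strict inequality, either use the strict slack from $|\mathcal S^+|/(|\mathcal S^+|+|\mathcal S^-|)<|\mathcal S^+|/|\mathcal S^-|$ when $\mathcal S^+\neq\emptyset$, or note that the bound is trivial when $\mathcal S^+=\emptyset$.

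The main obstacle is the forward lower bound. It is the one step where all the hypotheses are used: $|E(H_1)|=o(dn)$, the $H_2$-degree bounds $\le\mu_0 n$ at $u$ and $v$, and $d<\mu_0 n$. Together these guarantee that the excluded configurations are only an $O(\mu_0)$ fraction of the roughly $dn/2$ edges. The $H_1$-degree hypotheses at $u$ and $v$ are not actually needed for this one-sided bound. They would matter only for a matching lower bound, where one must ensure the reverse switching does not delete $H_1$-edges at $u$ and $v$.
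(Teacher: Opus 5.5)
Your switching argument is correct. The paper itself gives no proof of this statement: it imports it from Lemma 5.3 of \cite{larkin2025subgraphs} and uses it only as the one-sided bound in the proof of \cref{clm: switching expectation}. So your route is genuinely different, since it replaces an external result by a self-contained two-edge switching between $\mathcal S^+$ and $\mathcal S^-$.

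In your switching, each $G\in\mathcal S^+$ has at least $|E(G)|-|E(H_1)|-d(d_u+d_v+d^{H_2}_u+d^{H_2}_v)$ forward moves, and each $G'\in\mathcal S^-$ has at most $d_ud_v$ reverse moves. This yields only an upper bound on the conditional probability, but that is all the paper needs.

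Your bound also uses strictly fewer hypotheses than the stated one:
\begin{itemize}
\item neither the balance of $(A,B)$ nor the $H_1$-degree bounds at $u$ and $v$ are used;
\item the condition $|E(H_1)|=o(dn)$ can be relaxed to $|E(H_1)|=O(\mu_0 dn)$ without changing the conclusion.
\end{itemize}
This robustness suits the paper's application well. There $H_1=\chi\in\mathcal X_i$ is only known to have maximum degree at most $\mu_0\alpha^{-4}d$, not $\mu_0 d$, so applying the cited statement literally needs a silent renaming of constants. Your version covers it directly. The cited lemma comes from a more general treatment of subgraph probabilities, and none of that extra strength is used here.

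Two small points are worth writing out explicitly:
\begin{itemize}
\item Distinctness $u'\neq u$ and $v'\neq v$ is forced by your exclusions (ii) and (iii), because $v\in N_G(u)$ and $u\in N_G(v)$.
\item $\mathcal S^-\neq\emptyset$ whenever $\mathcal S^+\neq\emptyset$, since every $G\in\mathcal S^+$ has at least one forward switching. This justifies dividing by $|\mathcal S^-|$ and the strict inequality.
\end{itemize}
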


Finally, our main arguments rely on $G \sim T_{n,d}$ being fairly well structured. \cref{thm: luc 13} can be translated to the following theorem in the regular setting, which gives us our desired structure. We discuss this translation in more detail in \cref{sec: Luczak}. For any $0 \leq \delta \leq 1$, a graph $G$ is $\delta$-bipartite if it is $(\delta, 2)$-partite.

\begin{thm}\label{thm: Luckzak}\cite{luczak2000triangle}
    Let $G \sim T_{n,d}$. For every $\delta > 0$, there exists a constant $c = c(\delta) > 0$ and $C > 0$ such that for all $c\sqrt n \leq d \leq Cn$,
    \[
    \mathbb P[G \text{ is $\delta$-bipartite}] \rightarrow 1.
    \]
\end{thm}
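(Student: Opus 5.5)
The plan is to derive the regular-setting statement from Łuczak's result for $T_{n,m}$ (\cref{thm: luc 13}) by a transfer argument. The cost of that transfer is the probability that $G_{n,m}$, conditioned on being triangle-free, is $d$-regular. This cost is only $\exp(-O(n\log d))$, which is negligible against a probability bound of the form $\exp(-\Omega(dn))$. So the real work is to show that Łuczak's argument gives much more than $1-o(1)$: it gives that a triangle-free graph with $m=dn/2$ edges fails to be $\delta$-bipartite with probability at most $\exp(-\gamma m)$ for some $\gamma=\gamma(\delta)>0$. This holds provided the constant $c(\delta)$ is large enough.

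First, fix $\delta$ and set $m=dn/2$. Write $\mathcal N$ for the family of triangle-free graphs on $[n]$ with $m$ edges that are not $\delta$-bipartite. I will revisit Łuczak's proof via the sparse regularity lemma and the $K_3$-free counting lemma, or equivalently a container-type argument. That proof bounds $|\mathcal N|$ by summing, over regularity partitions or containers, the number of ways to place $m$ edges into a structure that is $\delta/2$-far from bipartite but has few triangles. Such a structure has density at most $(1/2-\eta(\delta))\binom n2$, so the count is at most $\binom{(1/2-\eta)n^2/2}{m}e^{o(m)}$. The number of partitions or containers is $e^{o(m)}$ once $m\ge c n^{3/2}$. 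Meanwhile, the number of all triangle-free graphs is at least the number of bipartite ones, which is about $\binom{n^2/4}{m}$. Together these give $|\mathcal N|/|\{\text{triangle-free}\}| \le e^{-\gamma m}$ with $\gamma\approx 2\eta$.

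Second, I pass to regular graphs. Let $\mathcal R$ be the set of $d$-regular graphs in $\mathcal N$. Then
\[
\P_{T_{n,d}}[\text{not } \delta\text{-bipartite}] = \frac{|\mathcal R|}{|T_{n,d}|}\le \frac{|\mathcal N|}{|\mathcal G^{\text{bip}}_{n,d}|}.
\]
Using \eqref{eq:reg bip count2}, the denominator is $\binom{n/2}{d}^n\binom{n^2/4}{m}^{-1}e^{O(n)}$. Comparing this with $\binom{n^2/4}{m}$, i.e.\ the probability that a random bipartite $m$-edge graph is regular, shows it is at least $\binom{n^2/4}{m}e^{-O(n\log d)}$. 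Hence the ratio is at most $\exp(-\gamma dn/2+O(n\log d))=o(1)$, since $d\ge c\sqrt n\to\infty$. The upper restriction $d\le Cn$ is only needed so that $m\le n^2/c'$ stays in Łuczak's range, and so that the density gap $\eta$ remains meaningful (for $d$ near $n/2$ the statement is trivial anyway).

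The main obstacle is the first step: extracting the exponential-in-$m$ error bound, rather than just $o(1)$, from Łuczak's proof. Everything else is standard estimation with binomial coefficients via Stirling's approximation. If the regularity route proves awkward, I would first try to quote the hypergraph container theorem for triangle-free graphs. It supplies $\exp(O(n^{3/2}\log n))$ containers, each with $o(n^3)$ triangles. By triangle-removal stability, each container is either close to bipartite or has at most $(1/2-\eta)n^2/2$ edges. The container count $e^{O(n^{3/2}\log n)}$ is $e^{o(m)}$ only when $d\gg\sqrt{n}\log n$. For the window $c\sqrt n\le d\le \sqrt n\log n$ I would therefore need either the sharper $\log$-free container bound or Łuczak's original argument, and this is where I expect the real difficulty.
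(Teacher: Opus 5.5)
Your proposal is correct and follows essentially the paper's route. Both arguments transfer {\L}uczak's argument from $G_{n,m}$ to $T_{n,d}$ by observing that every failure event in his proof has probability exponentially smaller in $m$ than the $\approx 2^{-m}$ bipartite benchmark, and so survives the $e^{O(n\log d)}$ cost of imposing regularity; the paper packages this as a conversion claim built on the Liebenau--Wormald count of $d$-regular graphs, while you compare directly with $|\mathcal G^{\text{bip}}_{n,d}|$, which is equivalent. One caveat applies only to your container fallback: a container that is close to bipartite in the dense sense can still contain $m$-edge graphs that are not $\delta$-bipartite, because $\delta m \ll n^2$, so there you would also need to count graphs with at least $\delta m$ edges among the $\delta' n^2$ non-bipartite container edges, which costs a factor about $(4e\delta'/\delta)^{\delta m}$ relative to $\binom{n^2/4}{m}$ and is small once $\delta' \ll \delta$ (your primary regularity/counting-lemma route does not face this issue).
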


\subsection{Translating \cref{thm: luc 13} to \cref{thm: Luckzak}}\label{sec: Luczak}

Here, we discuss the proof of \cref{thm: luc 13} in more detail, and how we can translate this result to the corresponding result in the regular setting. We state \cref{thm: Luckzak} when the forbidden graph is a triangle, as that is all that we need, but this translation holds for all forbidden graphs. We first need to estimate the probability that a graph $G \sim G_{n,m}$ is regular. The following enumeration of $d$-regular graphs by Liebnau and Wormald in \cite{LiebenauWormald2024} is much stronger than what we need.
\begin{thm}\label{eq: num d-reg grahs}\cite{LiebenauWormald2024}
    For all $1 \leq d \leq n-2$, the number of $d$-regular graphs with vertex set $[n]$ is asymptotic to
    \[
    \frac{\binom{n-1}{d}^{n}\binom{ \binom{n}{2}}{dn/2}}{\binom{n(n-1)}{dn}}\cdot e^{1/4}.
    \]
\end{thm}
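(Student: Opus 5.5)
The formula can be read as a binomial heuristic. In $G(n,p)$ with $p=d/(n-1)$, the degrees behave roughly like independent $\mathrm{Bin}(n-1,p)$ variables, conditioned on the total being exactly $dn$, which forces exactly $dn/2$ edges. In that heuristic, $\binom{n-1}{d}^n\binom{\binom n2}{dn/2}/\binom{n(n-1)}{dn}$ is precisely the number of $d$-regular graphs predicted by the independent-binomial model, and $e^{1/4}$ is the correction coming from the dependence between degrees. I would therefore not count regular graphs directly. Instead I would prove the more general Liebenau--Wormald statement for all near-regular degree sequences $\mathbf d$: the number $N(\mathbf d)$ of graphs with degree sequence $\mathbf d$ equals the binomial prediction $P(\mathbf d)$ times $\exp(1/4-\gamma_2^2/(4\mu^2(1-\mu)^2)+o(1))$, where $\mu=d/(n-1)$ and $\gamma_2$ is the variance of $\mathbf d$. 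For $\mathbf d=(d,\ldots,d)$ we have $\gamma_2=0$, which gives the theorem.

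The engine is a pair of ratio recursions. First I would express the ratio $N(\mathbf d-e_a-e_b)/N(\mathbf d)$ in terms of the edge probability $P_{ab}(\mathbf d)=\P[ab\in G_{\mathbf d}]$. I would also derive the identity relating $N(\mathbf d-e_a)/N(\mathbf d-e_b)$ to the quantities $P_{ac},P_{bc}$ in the sequences with one fewer degree. Summing over $c$ gives a closed system: it determines the edge probabilities from the ratios, and the ratios from the edge probabilities. I would then posit an explicit approximate solution $\tilde P_{ab}$, computed as a power series in the deviations $d_a-d$, $d_b-d$ and $\gamma_2$. The next step is to show, by a contraction or fixed-point argument, that the true $P_{ab}$ agrees with $\tilde P_{ab}$ to within a relative error $O(n^{-1-c})$, uniformly over all near-regular sequences. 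This requires a crude a priori bound on the edge probabilities, for example from a simple switching showing that each $P_{ab}$ is within a constant factor of $d/n$.

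Once the ratios $N(\mathbf d')/N(\mathbf d)$ are known for neighbouring sequences, I would compare both $N$ and $P$ along a path of $O(n\sqrt d)$ single-step moves. The path runs from $(d,\ldots,d)$ to a sequence where the answer is already known, or it absorbs the unknown constant by summing $N(\mathbf d)$ over all sequences with $dn/2$ edges, which totals $\binom{\binom n2}{dn/2}$. The per-step errors must telescope to $o(1)$. This summation fixes the constant $e^{1/4}$ as a Gaussian integral over $\gamma_2$.

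I expect the main obstacle to be the second step: obtaining edge-probability estimates with relative error small enough, around $n^{-1-c}$, that the accumulated error over a path of length about $n\sqrt d$ is still $o(1)$, uniformly across the moderate range $n^{c}\le d\le cn$. At the extremes I would avoid the recursion and invoke the classical results. For $d=o(\sqrt n)$ these are the configuration-model and switching estimates of McKay and Wormald. For $\min(d,n-d)\ge cn/\log n$ they are the saddle-point (complex-analytic) enumeration of McKay and Wormald, and the theorem's formula agrees with both of these in their ranges.
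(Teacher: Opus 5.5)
The paper does not prove this statement. It quotes it from Liebenau--Wormald and uses it only in \cref{lem: conversion}, where an error factor $d^{O(n)}$ is harmless. Your outline is a reasonable sketch of the Liebenau--Wormald route itself: the system linking $P_{ab}(\mathbf d)$ with the ratios $N(\mathbf d-e_a)/N(\mathbf d-e_b)$, an explicit near-solution, a stability argument seeded by a crude switching bound $P_{ab}=O(d/n)$, integration of ratios along paths, the constant fixed by summing over degree sequences, and McKay--Wormald at the two ends. However, the quantitative target you set for the key step is wrong, and as written the argument does not close.

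A degree sequence typical for $G(n,dn/2)$ has $\sum_i|d_i-d|\asymp n\sqrt d$, so any path of single moves from $(d,\dots,d)$ to it has $\asymp n\sqrt d$ steps. With relative error $n^{-1-c}$ per step, your budget only bounds the accumulated error by order $n^{-c}\sqrt d$. That is $o(1)$ only when $d=o(n^{2c})$, not on the range $n^{c}\le d\le cn/\log n$ you want the recursion to cover.

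What you actually need, uniformly on the domain, is per-step relative accuracy $o\bigl(1/(n\sqrt d)\bigr)$. Near the hand-off to the saddle-point range this is about $n^{-3/2}$. Alternatively, you would have to show that most of the per-step error cancels between moves that lower degrees and moves that raise them, which your plan does not address.

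Your other option, running the path to a sequence where the count is already known, is worse. For $d$ between $\sqrt n$ and $n/\log n$, every sequence covered by the McKay--Wormald results you cite has total degree far from $dn$. Such a path therefore has length of order $n$ times the degree gap.

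Moving from $n^{-1-c}$ to $o(1/(n\sqrt d))$ is a substantial change. The near-solution $\tilde P_{ab}$ must be correct to higher order in $(d_a-d)/d$. The stability argument must also be iterated more often, on nested domains, since each relation refers to neighbouring sequences.

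Two smaller corrections. First, in the general formula $\gamma_2$ must be $(n-1)^{-2}\sum_i(d_i-d)^2$, not the variance of $\mathbf d$; with the raw variance the exponent is of order $n^2$ on typical sequences. Second, the summation does not produce $e^{1/4}$ through a Gaussian integral. It uses two facts:
\begin{enumerate}
\item both the degree sequence of $G(n,dn/2)$ and the conditioned binomial model lie in the near-regular domain with probability $1-o(1)$ (Chernoff plus a union bound, fine for $d\ge n^{c}$);
\item under the binomial model $\gamma_2=\mu(1-\mu)(1+o(1))$, so the correction factor is $1+o(1)$ on typical sequences.
\end{enumerate}
This pins the unknown constant at $e^{1/4}$. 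The regular sequence carries the factor $e^{1/4}$ simply because $\gamma_2=0$ there.
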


Originally, \cref{thm: luc 13} was proved whenever the forbidden graph $H$ satisfied a conjecture of Kohayakawa, {\L}uczak, and R\"odl that was connected to the study of randomized extremal graph concepts, and particularly the study of Tur\'an's problem in random graphs \cite{kohayakawa1997k} (this is not the randomized Tur\'an statement we discussed earlier). This conjecture has since been proved for all graphs using the method of hypergraph containers by Balogh, Morris, and Samotij in \cite{BaloghMorrisSamotij2015}. 

Notation in this subsection is taken directly from \cite{luczak2000triangle}. Let $G = (V,E)$ be a graph and $\varepsilon$ be a positive constant. 
The following claim is a simple calculation and allows us to translate very strong probabilistic results in $G_{n,m}$ to results in $G_{n,d}$.

\begin{claim}\label{lem: conversion}
    Let $m = dn/2$, $G \sim G_{n,m}$, and $G' \sim G_{n,d}$. Suppose $0 < \alpha < 1$ and $P$ is a graph property where $\mathbb P[G \text{ satisfies } P] \leq \alpha^{m}$. Then, if  $n$ is sufficiently large, 
    \[
    \mathbb P[G' \text{ satisfies $A$}] \leq (\alpha + O(\log d / d))^{dn/2}.
    \]
\end{claim}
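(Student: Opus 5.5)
The proof is a conditioning argument: $G_{n,d}$ is $G_{n,m}$ conditioned on the event $R$ that the graph is $d$-regular, so
\[
\mathbb P[G' \text{ satisfies } P] = \mathbb P[G \text{ satisfies } P \mid R] \le \frac{\mathbb P[G \text{ satisfies } P]}{\mathbb P[R]} \le \frac{\alpha^{m}}{\mathbb P[R]} .
\]
(The property written as $A$ in the display is the property $P$.) Everything therefore reduces to a lower bound of the form $\mathbb P[R] \ge \exp(-O(n\log d))$, and then to absorbing this factor into the base of the exponent.

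To bound $\mathbb P[R]$, I would use \cref{eq: num d-reg grahs}. The number of $d$-regular graphs on $[n]$ is asymptotic to
\[
\binom{n-1}{d}^{n}\binom{\binom n2}{m}\binom{n(n-1)}{dn}^{-1}e^{1/4},
\]
so
\[
\mathbb P[R] \sim e^{1/4}\binom{n-1}{d}^{n}\binom{n(n-1)}{dn}^{-1}.
\]
Set $N=n-1$ and $q=d/N$. Standard Stirling estimates give $\binom{N}{d}=\Theta(d^{-1/2})\,q^{-d}(1-q)^{-(N-d)}$ when $d\le N/2$, with a symmetric version otherwise. They also give
\[
\binom{nN}{dn} \le q^{-dn}(1-q)^{-(nN-dn)},
\]
since $\binom{M}{k}\le (M/k)^k(M/(M-k))^{M-k}$. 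The exponential parts cancel exactly, leaving
\[
\mathbb P[R] \ge \Theta(d^{-1/2})^{n} = \exp(-O(n\log d))
\]
for $d\ge 1$ and $n$ large. The case $d$ near $n$ is handled by complementation, or by the same estimate with $d$ replaced by $N-d$.

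Combining the two steps gives
\[
\mathbb P[G' \text{ satisfies } P] \le \alpha^{dn/2}\exp(Cn\log d) = \bigl(\alpha\, e^{2C\log d/d}\bigr)^{dn/2}.
\]
Because $\log d/d$ is bounded, $e^{2C\log d/d} = 1+O(\log d/d)$, and since $\alpha<1$,
\[
\alpha\, e^{2C\log d/d} \le \alpha + O(\log d/d).
\]
This is the claimed bound.

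The only step that needs care is making sure the $\binom{\binom n2}{m}$ factor in \cref{eq: num d-reg grahs} cancels exactly against $|G_{n,m}|$, and that the Stirling error terms are uniform over all $1\le d\le n-2$. For constant $d$ the statement is weak anyway, since $O(\log d/d)$ is then a constant, so I would simply use the crude binomial inequalities above rather than full asymptotics. This keeps the argument uniform in $d$.
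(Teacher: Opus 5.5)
Your proposal is correct and follows the same route as the paper. You bound $\mathbb P[G'\in P]\le \alpha^m/\mathbb P[R]$, estimate $\mathbb P[R]$ with the Liebenau--Wormald count (so the $\binom{\binom n2}{m}$ factors cancel), show by Stirling that the resulting ratio is $d^{O(n)}$, and absorb that factor into the base. Your version is just more explicit than the paper's one-line computation: the entropy bound $\binom{M}{k}\le (M/k)^k(M/(M-k))^{M-k}$ makes the exponential parts cancel exactly. One small caveat: at $d=1$ your bound $\Theta(d^{-1/2})^n$ is $e^{-O(n)}$, not $e^{-O(n\log d)}$, so $\log d$ has to be read as $1+\log d$ there; the paper's $d^{O(n)}$ has the same blemish.
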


\begin{proofclaim}
    By \cref{eq: num d-reg grahs}, and using \cref{cor: choose} in the second line below, we have 
    \begin{align*}
        \mathbb P[G' \text{ satisfies $A$}] &< \frac{\alpha^{dn/2}{n^2/2 \choose dn/2}\binom{n^2-n}{dn}}{e^{1/4}{n-1 \choose d}^n\binom{n^2/2 - n/2}{dn/2}}\\
        &= \alpha^{dn/2}d^{O(n)}\\
        &= (\alpha + O(\log d/d))^{dn/2},
    \end{align*}
    as desired.
\end{proofclaim}

It is first shown that some `bad' structures in $G$ occur with probability much smaller than the probability that $G$ is $H$-free, and so are negligible. To do this, they first note that the probability that $G$ is $H$-free is at least the probability that $G$ is $h-1$-partite, which is 
\[
\left[\frac{h-2}{h-1} + O\left( \frac{m}{n^2} \right)\right]^m.
\]
By \cref{lem: conversion}, it follows that for large enough $n$, the probability that $G'$ is $H$-free is at least 
\[
\left[\frac{h-2}{h-1} + O\left( \frac{d}{n} \right)\right]^{dn/2}.
\]

The `bad' structures correspond to $G$ not satisfying a low density version of Szemer\'edi's regularity lemma (as described in \cite{kohayakawa1997szemeredi}) or having some anomalous distribution of edges among the $\varepsilon$-regular pairs in the partition described by the regularity lemma. By \cref{lem: conversion}, these `bad' structures will still be negligible in the regular setting. More precisely, for the same choice of parameters as in the proof of Theorem 13 in \cite{luczak2000triangle}, the structures described in Corollary 9, Lemma 10, and Lemma 11 of \cite{luczak2000triangle} will still be negligible in the regular setting. 

We next consider the auxiliary graph $\mathscr{G}$ whose vertices are the parts of the regular partition of $G$ and whose edges correspond to $\varepsilon$-regular pairs that have a moderate number of edges between them. By neglecting the `bad' structures, it follows that $\mathscr{G}$ satisfies the conditions of the deterministic Corollary 15 in \cite{luczak2000triangle}. Hence, $\mathscr{G}$ will either contain many edge-disjoint copies of $K_h$ or will be $(\delta', h-1)$-partite. It is shown that the probability that $G$ is such that $\mathscr{G}$ has many edge-disjoint copies of $K_h$ is at most $3^{-M}$. It follows, by \cref{lem: conversion}, we will have that the corresponding auxiliary graph $\mathscr{G}'$ of $G'$ will have many edge-disjoint copies of $K_h$ with negligible probability. Hence, $\mathscr{G}'$ will be $(\delta', h-1)$-partite, so $G'$ can be made $(\delta', h-1)$-partite by deleting few edges. That is, $G'$ is $(\delta, h-1)$-partite for some $\delta$ slightly larger than $\delta'$.

\section{Proof of $0$-statement of \cref{thm: 0-1}}\label{sec: zero}
We prove the $0$-statement of \cref{thm: 0-1} by showing that the number of $d$-regular triangle-free graphs that can be made bipartite by deleting exactly two edges (one from each component induced by a maximum cut) is much greater than the number of $d$-regular bipartite graphs.
Let $(A,B)$ be a partition of $[n]$ with $|A| = |B| = n/2$ and let $\mathcal G_0$ denote the set of $d$-regular bipartite graphs with bipartition $(A,B)$ and $\mathcal G_1$ denote the set of $d$-regular graphs with vertex set $[n]$ where $|E(G[A])| = |E(G[B])| = 1$. Let $N_1 = |\mathcal G_1|$ and $N_0 = |\mathcal G_0|$. We obtain a lower bound for $N_1$ by first choosing the unique edges $wx \in G[A]$ and $yz \in G[B]$, then exposing the remaining neighbours of $w,x,y,z$, and then counting the ways to choose the remaining edges using \cref{thm: bip enum}. When exposing the neighbours of $w,x,y,z$, we may assume that $E(G[\{w,x,y,z\}]) = \{wx, yz\}$ since we are establishing a lower bound for $N_1$. Let $\mathbf {d'}$ denote the residual degree sequence on $(A \setminus \{w,x\})\cup (B \setminus \{y,z\})$ after exposing the neighbours of $w,x,y,z$. Let $d'_A$ be the average degree of $d'$ in $A \setminus\{w,x\}$ and $d'_B$ be the average degree of $d'$ in $B \setminus \{y,z\}$. Note that we are able to apply \cref{thm: bip enum} for graphs on the residual degrees since $d'_{A} \geq1$ and $|d'_{A} - d'_v| \leq 1 \leq \sqrt{d'_{A}}$ for all $v \in A \setminus \{w,x\}$ and likewise for $B$. 
Hence, by \cref{thm: bip enum},
\begin{equation}\label{eq: N_1}
    N_1 = \Omega\left({n^4}{\frac{n}{2} - 2 \choose d-1}^2{\frac{n}{2} - d - 1 \choose d - 1}^2{\frac{n}{2} - 2 \choose d}^{n - 4d}{\frac{n}{2} - 2 \choose d-1}^{4d -4}{\frac{n^2}{4} - 2n + 4 \choose \frac{dn}{2} - 4d + 2}^{-1}\right),
\end{equation}
On the other hand,~\eqref{eq: reg bip}  gives that 
\begin{equation}\label{eq: N_0}
    N_0 = (1 + o(1)){\frac{n}{2} \choose d}^n{\frac{n^2}{4} \choose \frac{dn}{2}}^{-1}.
\end{equation}

It suffices to show that $\frac{N_1}{N_0} = \omega(1)$.
By first applying \cref{prop: choose top} and then applying \cref{prop: choose bot} to all but the final binomial coefficient in \eqref{eq: N_1}, we have 

{\allowdisplaybreaks \begin{align*}
    &\binom{n/2 - 2}{d - 1}^2 \geq \binom{n/2}{d}^2\left(2d/n\right)^{2}\exp[o(d^2/n)];\\
    &\binom{n/2 - d - 1}{d - 1}^2 \geq \binom{n/2}{d}^2(2d/n)^2\exp[-4d^2/n + o(d^2/n)];\\
    &\binom{n/2 - 2}{d}^{n - 4d} \geq \binom{n/2}{d}^{n-4d}\exp[-4d - 4d^2/n + 16d^2/n  + o(d^2/n)];\\
    &\binom{n/2 - 2}{d - 1}^{4d-4} \geq \binom{n/2}{d}^{4d-4}(2d/n)^{4d-4}\exp[-16d^2/n + 8d^2/n + o(d^2/n)].
\end{align*}}
Substituting these into~\eqref{eq: N_1} gives

\begin{equation}\notag
    N_1 \geq n^4 \left(\frac{2d}{n}\right)^{4d} \binom{\frac{n}{2}}{d}^n \binom{\frac{n^2}{4} - 2n + 4}{\frac{dn}{2} - 4d + 2}^{-1}\exp\left[-4d + o\left(d^2/{n}\right) + O(1)\right].
\end{equation}
Combining this with \eqref{eq: N_0}, we have 
\begin{equation*}
    \frac{N_1}{N_0} \geq n^4\left(\frac{2d}{n}\right)^{4d}\frac{\binom{n^2/4}{dn/2}}{\binom{n^2/4 - 2n + 4}{dn/2 - 4d + 2}}\exp[-4d + o(d^2/n) + O(1)].
\end{equation*}
We then apply \cref{cor: choose} to both binomial coefficients in $\frac{\binom{n^2/4}{dn/2}}{\binom{n^2/4 - 2n + 4}{dn/2 - 4d + 2}}$ to obtain
\begin{align}
    \frac{\binom{n^2/4}{dn/2}}{\binom{n^2/4 - 2n + 4}{dn/2 - 4d + 2}} &= \frac{(n/2)^{dn}(dn/2)^{dn/2 - 4d + 5/2}(1 - 2d/n)^{n^2/4 - dn/2 -2n + 4d}}{(dn/2)^{dn/2 + 1/2}(1 - 2d/n)^{n^2/4 - dn/2}(n/2)^{dn-8d + 4}}\exp[O(1)]\notag\\
    &=(n/2)^{4d-2}d^{-4d + 2}\exp[4d - 4d^2/n + o(d^2/n) + O(1)].\notag
\end{align}
That is, 

\begin{align}
    \frac{N_1}{N_0} \geq d^2n^2\exp\left[ -\frac{4d^2}{n}(1 + o(1)) + O(1)\right]. \label{eq: N_1/N_0}
\end{align}
It is easy to check that the second derivative of the logarithm of the right hand side of~\eqref{eq: N_1/N_0} with respect to $d$ is negative, and so the right hand side of~\eqref{eq: N_1/N_0} is minimized either when $d = 2$ or $d = (1 - \eps)\frac{\sqrt 3}{2}\sqrt{n\log n}$. If $d = 2$, it is easy to see that the right hand side of~\eqref{eq: N_1/N_0} is $\omega(1)$. If $d = (1 - \varepsilon)\frac{\sqrt 3}{2}\sqrt{n\log n}$, then we have
\[
\frac{N_1}{N_0} \geq n^3\log n \exp[-3\log n + O(1)] = \omega(1),
\]
which concludes the proof.
\qed

\section{Proof of the 1-statement of \cref{thm: 0-1}}\label{sec: one}

The reminder of this paper focuses on proving the 1-statement of \cref{thm: 0-1}. The case when $d \geq n/\log n$ follows from certain results in \cite{Promel1996Structure}. 
\begin{thm}\label{thm: linear d}
    Let $G \sim T_{n,d}$. Then, when $\frac{n}{\log n} \leq d \leq n/2$, 
    \[
    \P[G \text{ is bipartite}] \rightarrow 1.
    \]
\end{thm}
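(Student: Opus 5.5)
We compare the number of non-bipartite $d$-regular triangle-free graphs with $|\mathcal G^{\text{bip}}_{n,d}|$ from \eqref{eq:reg bip count2}, and show their ratio tends to $0$. The range $n/\log n\le d\le n/2$ is dense enough that a crude structural argument, in the spirit of the Pr\"omel--Steger analysis of $T_{n,m}$ for $m=\Theta(n^2)$ (see \cite{Promel1996Structure}), should suffice.

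\textbf{Structure.} By \cref{thm: Luckzak} (for $d\le Cn$; for $d$ close to $n/2$ triangle-freeness together with Andr\'asfai-type stability already forces near-bipartiteness), we may restrict to graphs that are $\delta$-bipartite for small $\delta$. Fix a maximum cut $(U,V)$ of such a $G$ and let $k=|E(G[U])|+|E(G[V])|\le \delta dn/2$ be the number of defect edges. We will use two facts.
\begin{itemize}
\item Maximality of the cut gives every vertex at least as many neighbours across the cut as on its own side.
\item Triangle-freeness gives a strong constraint. If $xy$ is a defect edge in $U$, then $N(x)\cap N(y)\cap V=\emptyset$. Since $x$ and $y$ each have at least $d/2$ neighbours in $V$, the pair $x,y$ forbids about $d_V(x)\,d_V(y)\ge d^2/4$ potential edges.
\end{itemize}

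\textbf{Counting.} Fix a partition $(U,V)$ with $\big||U|-|V|\big|\le k/d$ (this is \cref{prop: ab}). Fix also the defect graph $D$ with $k\ge1$ edges. We then count the bipartite remainder, i.e.\ the crossing graph with the residual degree sequence, using \cref{thm: bip enum} when $d\le\mu_0 n$. Relative to a $d$-regular bipartite graph, we make three adjustments.
\begin{itemize}
\item Choosing $D$ costs at most $\binom{n^2/4}{k}\le (en^2/k)^k$.
\item Removing degree from the crossing graph gives a factor of roughly $(2d/n)^{2k}\cdot(n/d)^{O(k)}$, computed exactly as in \eqref{eq: N_1/N_0}. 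In total these two give at most $n^{O(k)}$.
\item Avoiding the common-neighbourhood edges is handled by a switching argument driven by \cref{thm: bip edge probs}. Each such edge is present with probability at most $(2d/n)(1+O(\mu_0))$, and avoiding the roughly $kd^2/4$ forbidden pairs should cost a factor of about $(1-2d/n)^{\Omega(kd^2)}=\exp(-\Omega(kd^3/n))$.
\end{itemize}
For $d\ge n/\log n$ we have $d^3/n\ge n^2/\log^3 n$. This overwhelms $n^{O(k)}$, the imbalance factor and the $2^{O(k)}$-type losses, so the ratio is at most $\exp(-k\,n^{2-o(1)})$. Summing over $k\ge 1$ gives $o(1)$. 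For $\mu_0 n\le d\le n/2$, \cref{thm: bip enum} and \cref{thm: bip edge probs} are unavailable. Here we instead argue directly by switchings: each defect edge $xy$ can be switched into a crossing configuration in $\Omega(d^2 n)$ ways, while the reverse switching has only $O(kn)$ choices. This gives ratio $O(k/d^2)$ per defect edge and again a vanishing sum.

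\textbf{Main obstacle.} The hard step is making the avoidance estimate rigorous: the forbidden pairs depend on the random crossing graph itself, namely on the neighbourhoods of the defect vertices. I would first expose the crossing neighbourhoods of the $O(k)$ defect endpoints, so the forbidden pairs become a fixed set $H_2$. I would then apply \cref{thm: bip edge probs} sequentially, with $H_1$ the exposed edges, checking the degree hypotheses $d^{H_2}_u\le\mu_0 n$. When the defect graph is large ($k$ close to $\delta dn$), these hypotheses fail, and there I would fall back on the cruder $\delta$-bipartite counting bound in the style of \cref{lem: conversion}.
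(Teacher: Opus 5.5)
Your plan takes a genuinely different route from the paper's. Its central step, the avoidance estimate, is not correct as stated.

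\textbf{The per-edge price is miscounted.} For a defect edge $xy\subseteq U$, triangle-freeness only requires $N_V(x)\cap N_V(y)=\emptyset$. Once $N_V(x)$ is known, this forbids the roughly $d$ pairs $\{y\}\times N_V(x)$, not $d_V(x)d_V(y)$ pairs. So the price of an isolated defect edge is $e^{-\Theta(d^2/n)}$, not $e^{-\Omega(d^3/n)}$. Indeed \eqref{eq: N_1/N_0}, which you cite, is the \emph{lower} bound $N_1/N_0\ge d^2n^2e^{-(4+o(1))d^2/n+O(1)}$ for two isolated defect edges, which rules out a price of $e^{-\Omega(d^3/n)}$ per edge.

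\textbf{No per-edge price holds uniformly over defect graphs $D$.} This is the more serious problem. Suppose $D$ contains a $K_{t,t}$ with parts $P,Q\subseteq U$ and $t$ a constant fraction of $d$. Send all crossing edges at $P$ into a fixed half $W$ of $V$, and all those at $Q$ into $V\setminus W$. This kills every triangle at a cost of only $e^{-O(td)}$, which is vastly more likely than $e^{-\Omega(t^2d^2/n)}$ since $d^2\gg n$. Taking $D$ to be a disjoint union of such $K_{t,t}$'s shows that any bound uniform in $D$ is only $e^{-O(k)}$. That cannot offset the factor of order $(dn/k)^k$ from choosing $D$ and adjusting degrees. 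So the cost must be counted jointly with the structure of $D$, which is exactly what the torsos and spines do in the paper's $d=o(n)$ argument.

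\textbf{The tool points the wrong way.} \cref{thm: bip edge probs} is an \emph{upper} bound on conditional edge probabilities. Applying it sequentially therefore yields a \emph{lower} bound on $\P[E(G)\cap X=\emptyset]$. An upper bound needs a switching such as \cref{lem: switching}, and you would have to verify its hypotheses: a fixed forbidden set $X$, bounded $X$-degrees, and residual degree at least $\alpha d$ at the endpoints of $X$. Also, exposing the crossing neighbourhoods of \emph{all} defect endpoints leaves nothing random, because every forbidden pair meets a defect endpoint. Only one endpoint of each defect edge may be exposed.

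\textbf{The remaining pieces are only sketched.}
For $\mu_0n\le d\le n/2$ the switching is not specified. With $(U,V)$ fixed, \cref{prop: ab} fixes $k_U-k_V$, so the switching must remove defect edges on both sides at once. You must also check that the switched graph stays triangle-free. In addition, \cref{thm: Luckzak} only covers $d\le Cn$, and Andr\'asfai--Erd\H{o}s--S\'os only covers $d>2n/5$, so the range in between is unaddressed.

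\textbf{What the paper does instead.} The idea you are missing is that in this dense range the regularity constraint can simply be dropped. By \eqref{eq:reg bip count2}, $|\mathcal G^{\text{bip}}_{n,d}|\ge e^{-O(n\log n)}\binom{n^2/4}{dn/2}$. The paper then quotes Pr\"omel and Steger:
\begin{itemize}
\item every non-bipartite triangle-free graph with $m=dn/2$ edges lies in one of $\mathcal A_{n,m},\mathcal B_{n,m},\mathcal C_{n,m},\mathcal D_{n,m}$;
\item no $d$-regular graph lies in $\mathcal A_{n,m}$;
\item when $n/\log n\le d\le n/2$, each of the other three classes has at most $\binom{n^2/4}{m}e^{-\omega(n\log n)}$ members, the savings in the exponent being $dn^{1/4}$, $n^{3n/(4d)}\ge n^{3/2}$ and $d^2n/5$.
\end{itemize}
No conditional edge probabilities, switchings or stability input are needed.
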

\begin{proof}
    Let $\mathcal G_{n,m}^{\text{bip}}$ denote the number of bipartite graph with vertex set $[n]$ and $m$ edges. We prove this by applying the results of \cite{Promel1996Structure}. Namely, in Lemma 2.3 of \cite{Promel1996Structure}, it was shown that every triangle-free graph that is not bipartite is contained in one of four sets, called $\mathcal A_{n,m}$, $\mathcal B_{n,m}$, $\mathcal C_{n,m}$, and $\mathcal D_{n,m}$. It immediately follows that each $d$-regular triangle-free graph that is not bipartite is also contained in one of these sets. The authors of \cite{Promel1996Structure} then bounded the size of each of these sets. We will show that when $m$ is sufficiently large (which corresponds to $d\geq n/\log n$), their results imply our desired result. In fact, $\mathcal A_{n,m}$ is the set of graphs on vertex set $[n]$ with $m$ edges that have a vertex with maximum degree at most $\sqrt n$. When $m = dn/2$, this is much smaller than $d$, so no regular graph will be contained in $\mathcal A_{n,m}$.
    
    By Theorem 2.2 in \cite{Promel1996Structure}, we have 
    \begin{equation}\label{eq: num bip graphs}
        |\mathcal G_{n,m}^\text{bip}| = 2^{n - o(n)}\binom{n^2/4}{m}.
    \end{equation}
    Thus, by \eqref{eq:reg bip count2}, when $m = dn/2$ we have 
    \begin{equation}
        \frac{|\mathcal G_{n,d}^\text{bip}|}{|\mathcal G_{n,m}^\text{bip}|} \geq \exp[-Cn\log n]
    \end{equation}
    for some constant $C > 0$. Thus, it suffices to show that each of $\mathcal B_{n,m},\mathcal C_{n,m},\mathcal D_{n,m}$ contain at most 
    \begin{equation}\label{max set}
        \exp[-Cn\log n + O(n)]\binom{n^2/4}{dn/2}
    \end{equation}
    graphs. In the proof of Theorem 2.13 in \cite{Promel1996Structure}, it is shown that 
    \begin{align}
        &|\mathcal B_{n,m}| \leq \binom{n^2/4}{dn/2}\exp\left[-dn^{1/4} + O(n)\right],\label{B_n,m}\\
        &|\mathcal C_{n,m}| \leq \binom{n^2/4}{dn/2} \exp\left[-n^{\frac{3n}{4d}} + O(n)\right],\label{C_n,m}\\
        &|\mathcal D_{n,m}| \leq \binom{n^2/4}{dn/2}\exp\left[ -\frac{d^2n}{5} + O(n) \right].\label{D_n,m}
    \end{align}
    Then,~\eqref{B_n,m}and~\eqref{D_n,m} are smaller than~\eqref{max set} because $d \geq n/\log n$ and~\eqref{C_n,m} is smaller than~\eqref{max set} because $d \leq n/2$. This completes the proof.
    
\end{proof}

When $d = o(n)$, we follow the same strategy used to prove the corresponding 1-statement in \cref{thm: osthus 0-1}, but need to significantly modify many of their techniques. Hence, we first give a brief overview of its proof in \cite{osthus2003densities}. 

\subsection{Overview of the proof of the second 1-statement in \cref{thm: osthus 0-1}}\label{sec: overview}

Observe that by \cref{thm: luc 13} we may assume that $k \le \delta m$ for some small constant $\delta > 0$.

For any $0 \leq k \leq m$, we say a graph with $m$ edges is $k$-bipartite if $k$ is the minimum number of edges that must be deleted to obtain a bipartite graph. Let $\mathcal T_{k,n,m}$ denote the set of triangle-free $k$-bipartite graphs on $[n]$ with $m$ edges. Let $G \sim G_{n,m}$. Then, it suffices to show that
\[
\sum_{k = 0}^{m}\P[G \in \mathcal T_{k,n,m}] = (1 + o(1))\P[G \text{ is bipartite}].
\]
Since every graph is $k$-bipartite for some $0 \leq k \leq m$, observe that 
\[
\P[G \in \mathcal T_{k, n,m}] = \P[G \in \mathcal T_{k,n,m} \mathrel | G \text{ is $k$-bipartite}]\cdot\P[G \text{ is $k$-bipartite}].
\]
When $k$ is small, it was shown that $G$ being $k$-bipartite implies that $G$ has a close-to-balanced partition $(A,B)$ (so that $G[A]\cup G[B]$ has $k$ edges). If $G[A]$ and $G[B]$ are both triangle-free, then $G$ is triangle-free if and only if the cross-edges (between $A$ and $B$) in $G$ avoid creating triangles with edges in $G[A]$ and $G[B]$. By further conditioning on $(A,B)$, $G[A]$, and $G[B]$ and letting $H$ denote a random bipartite graph on $(A,B)$ with $m-k$ edges chosen uniformly at random, the problem can be reduced to bounding the probability $H$ avoids creating triangles with $G[A]\cup G[B]$. We will commonly refer to $G[A]$ and $G[B]$ as defect graphs. Let $X$ be the number of triangles in $H\cup G[A]\cup G[B]$. 
The authors of \cite{osthus2003densities} estimate the probability of $X=0$ by Janson's inequality~\cite{janson1988exponential} when the variance of $X$ is sufficiently small.
The variance of $X$ is determined by the structure of $G[A]$ and $G[B]$ (which determines the dependency graph; we refer the readers to~\cref{sec: janson args} for detailed discussions). When $G[A]$ and $G[B]$ are closer to being regular, then $X$ has a smaller variance. The authors of \cite{osthus2003densities} introduced graph structures called `torsos' (see \cref{sub: Torso} for their definitions) to characterise the regularity of the defect graphs. More specifically, the size of the torso of a graph meausres the regularity of that graph. Then they count the defect graphs conditioning on their torsos and use Janson's inequality to bound $\P(X=0)$ under the same conditioning. When the torsos are sufficiently large, the bound from  Janson's inequality turned out to be sufficient.

When the torso is small, Janson's inequality is not sufficient. In this case, $G[A]$ (or $G[B]$) has many vertices with very high degree. Let $x$ be such a high degree vertex, $U(x)$ be the cross-neighbourhood of $x$, and $V(x)$ be the cross-neighbourhood of $U(x)$. It was shown that with very high probability, there are many high defect degree vertices $x$ where $|V(x)|$ is very large --- $x$ is then expanding if $|V(x)|$ is very large. But then $G$ will contain a triangle unless $N_{G[A]}(x) \cap V(x) = \emptyset$ for every expanding vertex $x$, which the authors of \cite{osthus2003densities} showed to be very unlikely. Their argument to prove that there are many expanding vertices is quite involved, and relies on first establishing nice intersecting properties of $U(x)$ for high degree vertices $x\in A$.

If we naively translate these result to random $d$-regular graphs by conditioning on the degree sequence of $G_{n,m}$ being $d$-regular then it creates an error $\exp(O(n\log n))$ which is too large when the number of edges in $G[A]\cup G[B]$ is $O(n\log n)$. However, both Janson's inequality and their treatment for small torsos --- expanding $x$ and intersecting properties of $U(x)$ --- rely heavily on the model $G_{n,m}$, and the analogous tools do not exist in the random $d$-regular graphs.

{\bf Our contribution}: instead of exposing all the cross-edges of $G$ in a single round, we expose them in multiple rounds. We carefully restrict the set of edges that may be exposed in each round, so that Janson's inequality as well as the expanding properties (in the small torso case) can be applied ``locally'' in some round, whereas in some other round, the degree constraint is taken care of. We describe below a template of our counting argument for the large torso case. Note that the full proof needs to consider various cases according to the size of the torso and the size of $G[A]$ and $G[B]$ so the template will be adjusted slightly in each scenario (see \cref{sec: Janson} for a more complete description of this argument).

Suppose $E_X$ is a subset of edges in $G[X]$ for $X\in \{A,B\}$ that are pre-chosen (depending on the cases). Suppose $S_A$ and $S_B$ are subsets of vertices in $A$ and $B$; typically $S_A$ and $S_B$ are the vertices incident to the edges in $E_A$ and $E_B$. To bound the number of $d$-regular triangle-free
graphs with defect graph $G[A]$ and $G[B]$ given, we construct a superset of graphs ${\mathcal H}$ 
as follows.
\begin{enumerate}
    \item (local $(S_A,S_B)$-sprinkling) Add (a pre-set number) $m_1$ edges joining $S_A$ and $S_B$ such that no edge in $E_A$ or $E_B$ is contained in a triangle.
    \item (local $(S_A,\bar S_B)$-sprinkling) Add (a pre-set number) $m_A$ edges joining $S_A$ and $B\setminus S_B$ such that no edge in $E_A$ is contained in a triangle.
    \item (saturating $S_A$) Add edges so that all the vertices in $S_A$ have degree equal to $d$.
    \item Repeat steps 2 and 3 swapping the role of A and B.\
    \item (degree completion) Add edges so that every vertex has degree equal to $d$.
\end{enumerate}

There are some nuances that we skipped in the template above. For instance, in each of steps 1--4, choices need to be made so that no vertex is incident to more than $d$ edges. Also, we consider any values of $m_1$, $m_A$, and $m_B$ that can result in a $d$-regular graph. As mentioned before, small adjustment is required to the template in different cases in the proof, as well as the choices for parameters like $m_1,m_A,m_B$. 
We bound $|{\mathcal H}|$ by bounding the number of choices in each step; sometimes the number of choices in a step depends on the choices of previous steps.
The important observation we want to point out here is that we can count the choices in steps 1 and 2 by considering random bipartite graphs on $(S_A,S_B)$ (or $(S_A,\bar S_B)$) with $m_1$ (or $m_A$) edges, and we can apply Janson's inequality in these steps. For step 5, we apply the enumeration result for bipartite graphs with fixed degree sequences found in \cref{thm: bip enum}. See \cref{sec: Janson Counting 1} and \cref{sec: Janson counting 2} for complete descriptions of these arguments.

The small torso case uses a different template. Without loss of generality, assume that $G[A]$ has a small torso. Let $E_A$ and $S_A$ be pre-set subsets of edges and vertices as before. Instead of conditioning on $G[A]$, we expose only the degree sequence ${\bf d}^*$ of $G[A]$ on $A$. However, $G[B]$ is fully exposed as before. To count such $d$-regular triangle-free graphs we construct them following the template below.
\begin{enumerate}
    \item (choosing cross-edges for $S_A$) Pick $d-d^*_u$ neighbours for every $u\in S_A$ from $B$.
    \item (degree completion) Add cross-edges so that all vertices in $B$ has degree equal to $d$, for all $u\in A$, its degree is equal to $d-d^*_u$, and no triangle is created.
    \item (choosing $G[A]$) Choose a defect graph $G[A]$ conditioning on its degree sequence ${\bf d}^*$ such that $G$ is triangle-free.
\end{enumerate}

A detailed analysis of this process is given in \cref{sec: small torso proof 1}. As in their proof, we show that there are many vertices in $S_A$ that expand.  Due to the degree constraint in step 2, we prove this using the switching method (see \cref{sec: subswitching} for more details). This argument applies when the number of edges in $G[A]$ is sufficiently large. When $G[A]$ has fewer edges (but still has a small torso), we use a different but simpler counting argument (see \cref{sec: small torso simple}).



\subsection{Torso structure}\label{sub: Torso}
Here, we formally define torsos and describe how we use them to partition the set of graphs that need to be considered to prove the one statement of \cref{thm: 0-1}. Let $\mathcal T_{\delta_0,n,d}$ denote the set of $d$-regular triangle-free $(\delta_0,2)$-partite graphs with vertex set $[n]$.
By \cref{thm: Luckzak}, it suffices to show that $|\mathcal T_{\delta_0, n,d}| = |\mathcal G_{n,d}^\text{bip}|(1 + o(1))$  for some small constant $\delta_0$ when $(1 + \varepsilon)\frac{\sqrt 3}{2}\sqrt{n \log n} \leq d \ll n$. 


Let $(A,B)$ be a partition of $[n]$ and $\mathcal G_{A, B, k_A, k_B, d}$ denote the collection of $d$-regular graphs with vertex set $[n]$ where $|E(G[A])| = k_A$ and $|E(G[B])| = k_B$. Our arguments rely on further partitioning $\mathcal G_{A,B,k_A,k_B,d}$ based on the torsos of the defect graphs, as described in \cite{osthus2003densities}. Take $0 < \varepsilon < 10^{-6}$ to be a small constant. Also, assume that $k_A + k_B \leq \delta_0 dn/2$ where $\log\log 1/\delta_0 = \varepsilon ^{-5}$. For all non-negative integers $r$, let 

\begin{equation}
k_{A,r, \varepsilon} := (1- \varepsilon)^rk_A,\label{def:kAr}
\end{equation}
let
\begin{equation}\label{eq: D}
    D_{A, \varepsilon} := \frac{\varepsilon^4d}{2\log\left(\frac{dn}{2k_A}\right)},
\end{equation}
and let $\overbar{r}_{A, \varepsilon}$ be such that
\begin{equation}\label{eq: r1}
    (1-\varepsilon)^{\overbar{r}_{A,\varepsilon}} = \frac{4\log\log\left( \frac{dn}{2k_A} \right)}{\varepsilon^2\log\left( \frac{dn}{2k_A} \right)}.
\end{equation}
It follows that
\begin{equation}\label{eq: r asymtotics}
    \overbar{r}_{A, \varepsilon} = \Theta\left(\log\log\frac{dn}{2k_A}\right).
\end{equation}

Note that the error incurred in~\eqref{eq: r1} by treating $\bar r_A$ as an integer is $(1-\varepsilon)^\beta$ for some $0 \leq \beta < 1$. This will be negligible for our analysis, so we assume that $\bar r_A$ is an integer for simplicity. We extend the above definitions to $k_{B,r, \varepsilon}$, $D_{B, \varepsilon}$,  and $\overbar{r}_{B, \varepsilon}$ in the natural way. Since $\varepsilon$ is fixed throughout the argument, we suppress it from the subscripts of the above notation to ease readability. Thus, we use $k_{X,r}$, $D_{X}$,  and $\overbar{r}_{X}$ for $X\in \{A,B\}$.

By~\eqref{def:kAr} and~\eqref{eq: r1}, it is immediate that for $X\in \{A,B\}$,
\begin{equation}
\label{eq:k'bound}    k_{X,r}\log(dn/2k_X)=\Omega(k_X\log\log (dn/2k_X))= \omega(k_X),\quad \text{for all $1\le r\le \bar r_X$}.
\end{equation}
We are now able to define torsos and use them to partition $\mathcal T_{\delta_0, n, d}$.

\begin{definition}
Let $H$ be a graph and $D>0$. A spanning subgraph $T$ of $H$ is a $D$-torso of $H$ if it is an edge-maximum spanning subgraph of $H$ with maximum degree at most $D$.     
\end{definition}


Let $\mathcal T_{A,B,k_{A},{r_A}, k_{B},{r_B}, d}$ be the set of triangle-free $d$-regular graphs $G$ where for each $X\in \{A,B\}$, $G[X]$ has a $D_X$-torso $T$ with the following properties:
 \begin{itemize}
     \item $k_{X, r_X} < |T| \leq k_{X, r_X-1}$ if $r < \overbar r_{X}$;
     \item  $|T| \leq k_{X, \overbar{r}_{X}-1}$ if $r = \overbar r_{X}$.
 \end{itemize} 
 
Since every graph has a torso and the two conditions above give us a way to classify every graph in $\mathcal T_{\delta_0,n,d}$ based on the size of the torsos of its defect graphs, we have that 
\[
\mathcal T_{\delta_0, n, d} = \bigcup_{A,B}\bigcup_{k_A+ k_B = 0}^{\delta_0 n}\bigcup_{r_A, r_B = 1}^{\bar r_A, \bar r_B}\mathcal T_{A, B, k_A, r_A, k_B, r_B}.
\]
Moreover, note that
$\bigcup_{A,B}\mathcal T_{A,B,0, r_A, 0,r_B, d} = \mathcal G_{n,d}^\text{bip}$, and so
\begin{equation}\label{eq: main counting}
    \frac{|\mathcal T_{\delta_0, n, d}|}{|\mathcal G_{n,d}^\text{bip}|} \leq 1 + \sum_{A,B}\sum_{k_A + k_B = 1}^{\delta_0dn/2}  \sum_{r_A, r_B = 1}^{\overbar{r}_{A}, \overbar{r}_{B}}\frac{|\mathcal T_{A, B, k_{A}, {r_A},k_{B}, {r_B}, d}|}{|\mathcal G_{n,d}^{\text{bip}}|}.
\end{equation}

We now fix a partition $A,B$ of $[n]$, and bound $|\mathcal T_{A, B, k_A, r_A, k_B, r_B, d}|$ given $k_A$, $k_B$, $r_A$, and $r_B$. As $A$ and $B$ are fixed, we further suppress them from the subscript. Similarly we omit $d$ whenever no confusion can arise. Thus, we use $\mathcal T_{k_A, r_A, k_B, r_B}$ for $\mathcal T_{A, B, k_A, r_A, k_B, r_B, d}$ in the rest of the paper. Throughout the whole proof we make the following assumptions as discussed before.
\begin{align}
\text{$\mu_0, \varepsilon > 0$ are fixed }&\text{and sufficiently small,    $\log\log 1/\delta_0 = \varepsilon^{-5}$,}\notag\\ 
&\text{and $(1 + \varepsilon)\frac{\sqrt 3}{2}\sqrt{n\log n} \leq d = o(n)$.}\label{Assumptions}
\end{align}

\subsection{Large defect graphs}
The lemmas presented in the subsection arise from directly applying the results of \cite{osthus2003densities} when the defect graphs are large enough that the bounds established in \cite{osthus2003densities} are small compared to the probability that a randomly chosen graph is regular.
When $k_A + k_B$ is sufficiently large, the two lemmas below give sufficient bounds on 
$|\mathcal T_{k_A, r_A, k_B, r_B}|$.

\begin{lem}\label{lem: large k far threshold}
    Assume \eqref{Assumptions}. If $n\log d \leq k_A + k_B \leq \delta_0dn/2$, we have
    \[
    \frac{|\mathcal T_{k_A, r_A, k_B, r_B}|}{|\mathcal G_{n,d}^{\text{bip}}|} \leq \frac{1}{\binom{n}{n/2}}\exp\left[-\varepsilon^{3}\left(k_{A, r_A - 1}\log \frac{dn}{2k_A} + k_{B, r_B - 1}\log\frac{dn}{2k_B}\right)\right].
    \]
    
    \end{lem}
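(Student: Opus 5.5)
We import the corresponding bound from \cite{osthus2003densities} in $G_{n,m}$ and pay for regularity with a crude conditioning cost. This is affordable because $k_A+k_B\ge n\log d$ dominates that cost. Let $m=dn/2$ and let $\mathcal T^{G_{n,m}}_{k_A,r_A,k_B,r_B}$ be the set of triangle-free graphs on $[n]$ with $m$ edges whose defect graphs $G[A],G[B]$ have $k_A,k_B$ edges and $D_X$-torsos of the prescribed sizes. This set contains $\mathcal T_{k_A,r_A,k_B,r_B}$, since the torso definition does not involve regularity. In the proof of the $1$-statement of \cref{thm: osthus 0-1}, both the large torso case (Janson's inequality) and the small torso case (expanding vertices) bound the number of such graphs by an expression of the form
\[
\binom{n^2/4}{m}\exp\left[-c\left(k_{A,r_A-1}\log\frac{dn}{2k_A}+k_{B,r_B-1}\log\frac{dn}{2k_B}\right)\right],
\]
with $c$ a constant depending on $\varepsilon$ that is larger than $\varepsilon^3$, say $2\varepsilon^3$. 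The first step is to extract this estimate from their argument, checking that their torso parameters match $D_X$ and $\bar r_X$ as defined here. We must also confirm that the hypothesis $m\ge(1+\varepsilon)\frac{\sqrt3}{4}n\sqrt{n\log n}$ is what drives the exponent, and that it holds under \eqref{Assumptions}.

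Second, we compare with $|\mathcal G^{\text{bip}}_{n,d}|$ using \eqref{eq:reg bip count2}. By \cref{cor: choose} (Stirling), $\binom{n/2}{d}^n\binom{n^2/4}{dn/2}^{-1}=\exp[-O(n\log d)]\binom{n^2/4}{dn/2}^{0}$ in the appropriate sense, i.e.
\[
\binom{n}{n/2}\binom{n^2/4}{dn/2}\le |\mathcal G_{n,d}^{\text{bip}}|\exp[O(n\log d)] .
\]
This is exactly the $d^{O(n)}$ loss seen in \cref{lem: conversion}. Dividing gives
\[
\frac{|\mathcal T_{k_A,r_A,k_B,r_B}|}{|\mathcal G^{\text{bip}}_{n,d}|}\le \frac{1}{\binom{n}{n/2}}\exp\left[O(n\log d)-2\varepsilon^3 K\right],
\]
where $K=k_{A,r_A-1}\log\frac{dn}{2k_A}+k_{B,r_B-1}\log\frac{dn}{2k_B}$. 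The factor $\binom{n}{n/2}^{-1}$ is harmless, since it is only $2^{-O(n)}$ and can also be absorbed.

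Third, we absorb the error term. By \eqref{eq:k'bound}, $k_{X,r}\log(dn/2k_X)=\omega(k_X)$ for $r\le\bar r_X$, and $k_{X,0}=k_X$ covers $r_X=1$. Hence $K=\omega(k_A+k_B)$ whenever $k_X$ is nonnegligible. When one of $k_X$ is small, the other term already exceeds a constant times $k_A+k_B\ge n\log d$. Thus $\varepsilon^3K\ge C n\log d$ for any fixed $C$ once $n$ is large, and the $O(n\log d)$ term is dominated by the spare $\varepsilon^3K$, which yields the claimed bound.

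The main obstacle is the first step. The bound in \cite{osthus2003densities} is phrased probabilistically and is split over torso cases with their own constants, so we must verify that its exponent is genuinely $\Omega(K)$ uniformly in $r_A,r_B$, including $r_X=\bar r_X$. We also need it to hold for each fixed partition $(A,B)$, so that the union over partitions in \eqref{eq: main counting} is handled separately. If their small torso bound only gives an exponent of order $k_X\log\log(dn/2k_X)$, we would instead use \eqref{eq: r1} to show that this still dominates $n\log d$ in the range $k\ge n\log d$.
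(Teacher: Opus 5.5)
Your proposal is correct and is essentially the paper's argument. The paper likewise bounds $|\mathcal T_{k_A,r_A,k_B,r_B}|$ by the Osthus--Pr\"omel--Taraz count of triangle-free graphs with exactly $dn/2$ edges and prescribed $(k_X,r_X,X,D_X)$ defect graphs (with exponent $\varepsilon^2$), divides by \eqref{eq:reg bip count2} at a cost of $\binom{n^2/4}{dn/2}^2/\binom{n/2}{d}^n=\exp[O(n\log d)]$, and absorbs that loss using $k_A+k_B\ge n\log d$ together with \eqref{eq:k'bound}. Your intermediate identity with $\binom{n^2/4}{dn/2}^{0}$ is garbled (the exponent should be $1$), but the inequality you state and use immediately afterwards is the correct one.
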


With slightly more sophisticated methods, we are able to get the same bound as above for even smaller defect graphs as long as $d$ is close to the critical order $\sqrt{n \log n}$.

    \begin{lem}\label{lem: large k near threshold}
Assume~\eqref{Assumptions} and suppose that $k_{A,r_A-1}\ge k_{B,r_B-1}$, $d \leq \eps^{-4}\sqrt{n\log n}$, $\eps^4 n \leq k_{A,r_A-1} \leq \delta_0dn/2$, and $k_A, k_B \leq n\log d$. Then,
  
    \[
    \frac{|\mathcal T_{k_A, r_A, k_B, r_B}|}{|\mathcal G_{n,d}^{\text{bip}}|} \leq \frac{1}{\binom{n}{n/2}}\exp\left[-{\varepsilon^{3}}\left(k_{A, r_A - 1}\log \frac{dn}{2k_A} + k_{B, r_B - 1}\log\frac{dn}{2k_B}\right)\right].
    \]
    
\end{lem}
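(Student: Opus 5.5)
We transfer the bound of \cite{osthus2003densities} to the regular setting via conditioning, as for \cref{lem: large k far threshold}. In the near-threshold regime the defect graphs are too small for a crude $e^{O(n\log d)}$ loss to be absorbed directly. The extra input is that $d\le \eps^{-4}\sqrt{n\log n}$ forces $\log(dn/2k_A)$ to be $\Theta(\log n)$, while $k_{A,r_A-1}\ge \eps^4 n$. Hence the target exponent $\eps^3 k_{A,r_A-1}\log(dn/2k_A)$ is at least of order $\eps^7 n\log n$. This beats $\log\binom{n}{n/2}=O(n)$ and is comparable to the regularity-conditioning loss, provided that loss can be made $O(n\log d)$ with a small enough constant, or replaced by something of order $O(n)$.

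\textbf{Step 1 (reduction to bipartite-type counts).} Fix $(A,B)$, $G[A]$ and $G[B]$ with the prescribed torso sizes. Count the cross edges between $A$ and $B$ making $G$ $d$-regular and triangle-free, and compare with $|\mathcal G_{n,d}^{\text{bip}}|$ using \eqref{eq:reg bip count2}. Rather than conditioning $G_{n,m}$ on regularity, expose cross edges in two rounds, as in the template of \cref{sec: overview}.

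First, the edges from $S_A$ (vertices covered by the torso $T_A$) to $B$: there are at most $|S_A|\,d$ of them. In this round, bound the probability that a uniform bipartite graph on $(S_A,B)$ with the given number of edges avoids creating triangles on torso edges. Use Janson's inequality; this is exactly what the torso (maximum degree $\le D_A$) makes possible.

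Second, complete the degrees using \cref{thm: bip enum}. The residual degree sequence satisfies its hypotheses because only $O(k_A+k_B)=O(n\log d)$ edges are removed, which is small compared with $dn$. The loss from fixing the $S_A$-degrees is only $\exp(O(|S_A|\log d))$, not $\exp(O(n\log d))$.

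\textbf{Step 2 (Janson estimate).} For each torso edge $xy\in T_A$, the probability that it closes a triangle is about $1-(1-p^2)^{n/2}$ with $p\approx 2d/n$. This makes the event ``$xy$ safe'' have probability roughly $\exp(-2d^2/n)$. By Janson, the avoidance probability is at most $\exp(-\mu+\Delta)$ with $\mu\approx |T_A|\cdot n p^2/2$. The torso degree bound $D_A=\eps^4 d/(2\log(dn/2k_A))$ keeps $\Delta\le \eps\mu$. Since $d\ge(1+\eps)\frac{\sqrt3}{2}\sqrt{n\log n}$, we get $\mu \ge (1+\eps)^2\frac{3}{2}\log n\cdot|T_A|$.

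\textbf{Step 3 (counting defect graphs and summing).} The number of choices of $G[A]$ with $k_A$ edges and torso size in $(k_{A,r_A},k_{A,r_A-1}]$ is at most $\binom{n^2/8}{k_A}$ times a penalty for non-torso edges, as in \cite{osthus2003densities}. This gives roughly $\exp\big(k_A\log(en^2/8k_A)\big)$. The cross-edge count loses a factor $(2d/n)^{k_A}$ from edges moved into $A$, giving a net factor $\exp\big(k_A\log(dn/2k_A)+O(k_A)\big)$ against the Janson gain $\exp(-\frac32(1+\eps)^2|T_A|\log n)$.

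The main obstacle is the case where $|T_A|$ is much smaller than $k_A$. There I would use the fact that the non-torso edges sit on vertices of degree above $D_A$. Few such vertices exist, so these edges contribute only $k_A\log(d/D_A)$-type entropy rather than $k_A\log(dn/2k_A)$. By \eqref{eq:k'bound}, $k_{A,r_A-1}\log(dn/2k_A)$ dominates $k_A$, so the $O(k_A)$ and $O(|S_A|\log d)$ errors are absorbed. The remaining margin, from $\eps$ and $(1+\eps)^2$, yields the factor $\exp[-\eps^3 k_{A,r_A-1}\log(dn/2k_A)]$, and similarly for $B$. That margin also absorbs the $\binom{n}{n/2}$ needed to sum over partitions, using $k_{A,r_A-1}\ge\eps^4 n$ and $\log(dn/2k_A)=\Theta(\log n)$.
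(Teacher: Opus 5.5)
Your plan has a genuine gap in the small-torso case $r_A=\bar r_A$. The lemma places no restriction on $r_A$ or $r_B$, and the final case analysis invokes it whenever $k_{A,r_A-1}\ge\eps^4 n$ and $d\le\eps^{-4}\sqrt{n\log n}$, whatever the torso sizes.

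\begin{itemize}
\item When $r_A=\bar r_A$, the definition gives only the upper bound $|E(T_A)|\le k_{A,\bar r_A-1}$. The torso may be tiny or empty, for example if $G[A]$ is a union of stars whose centres have defect degree $\Theta(d)$.
\item Your Janson gain is proportional to $|E(T_A)|\log n$, so it then disappears. The target still requires a saving of $\eps^3k_{A,\bar r_A-1}\log\frac{dn}{2k_A}\approx 4\eps k_A\log\log\frac{dn}{2k_A}$, on top of paying for the entropy of $G[A]$.
\item Your remedy only lowers the non-torso entropy to about $k_A\log(d/D_A)=k_A\big(\log\log\frac{dn}{2k_A}+O(\log\frac{1}{\eps})\big)$. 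That is still a cost, not a gain. It is also not $O(k_A)$, so \eqref{eq:k'bound} does not absorb it.
\item Running Janson on all of $G[A]$ does not rescue this. With spine vertices of defect degree $\Theta(d)$ one has $\Delta\gg\mu$ and only $\mu^2/(2\Delta)=\Theta(k_A)$.
\end{itemize}

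A genuinely new ingredient is needed here. In \cite{osthus2003densities}, and in \cref{lem: Small Torso Counting} (where it costs the switching \cref{lem: switching}), that ingredient is expansion. Much of the cross-neighbourhood of a high-degree defect vertex has a joint cross-neighbourhood covering almost all of that vertex's own side. This forces its defect neighbours into a set of size at most $y$.

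The paper avoids redoing any triangle-avoidance analysis in the regular model. It applies \cref{clm: no degree constraints}, the bound of \cite{osthus2003densities} that already covers every torso size including small ones. It does so for triangle-free graphs with only $(1-\lambda)dn/2$ edges and no degree constraints, where $\lambda=\beta/\log n$. It then sprinkles the remaining $\lambda dn/2$ edges to reach regularity, bounding completions by \cref{cor: max sequence} and dividing by the overcount $\binom{dn/2-k_A-k_B}{\lambda dn/2}$. The price of regularity drops from $e^{O(n\log d)}$ to $\lambda^{-n/2}e^{O(\lambda d^2+k_A+k_B)}$. Here $\lambda d^2=O(n)$ uses $d\le\eps^{-4}\sqrt{n\log n}$. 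The exponent $\frac{n}{2}\log\log n$ is $o\big(k_{A,r_A-1}\log\frac{dn}{2k_A}\big)$ because $k_{A,r_A-1}\ge\eps^4 n$. Your opening paragraph correctly identified that the task is to shrink the regularity loss; this sprinkling transfer is the step that does it without reopening the small-torso case.

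Two smaller points in your large-torso argument:
\begin{itemize}
\item $\Delta\le\eps\mu$ fails near $d=\eps^{-4}\sqrt{n\log n}$. There $\Delta/\mu\approx 4D_Ad/n=2\eps^4d^2/(n\log\frac{dn}{2k_A})$ can be of order $\eps^{-4}$, so you need the $\mu^2/(2\Delta)$ form of Janson.
\item The $O(|S_A|\log d)$ degree-fixing loss is of the same order as $k_{A,r_A-1}\log\frac{dn}{2k_A}$. It is therefore not absorbed by \eqref{eq:k'bound}; it can only be absorbed by an explicit constant margin.
\end{itemize}
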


\cref{lem: large k far threshold} and \cref{lem: large k near threshold} are proved in \cref{sec: Large k}.

\subsection{Large Torsos}
Next, we consider the case where $G[A]$ and $G[B]$ do not have too many edges in total, and at least one of them has a large torso, i.e.\ $r_A<\bar r_A$ or $r_B<\bar r_B$. In the case that only one of them, say $G[A]$, has a large torso, we need a technical condition that
$k'_A\log\frac{dn}{2k_A} \geq \frac{2}{\varepsilon^2}k'_B\log\frac{dn}{2k_B}$, which allows us to bound $|\mathcal T_{k_A, r_A,k_B, r_B}|$ by using the large torso in $G[A]$ without worrying about edges in $G[B]$ creating triangles.

\newtheorem*{lem: Janson Counting 1}{Lemma \ref{lem: Janson Counting 1}}
\begin{lem}\label{lem: Janson Counting 1}
    Assume \eqref{Assumptions}. Suppose $d \leq \eps^{-4}\sqrt{n \log n}$, $k_{A, r_A -1},  k_{B, r_B - 1} \leq \eps^4 n$, $1 \leq k_A + k_B \leq n \log d$, $r_A < \overbar{r}_{A}$, and either $r_B < \overbar{r}_{B}$ or $k_{A, r_A -1}\log\frac{dn}{2k_A} \geq \frac{2}{\varepsilon^2}k_{B, r_B -1}\log\frac{dn}{2k_B}$. Then,
    \[
    \frac{|\mathcal T_{k_A, r_A,k_B, r_B}|}{|\mathcal G^{\text{bip}}_{n,d}|} \leq \frac{1}{\binom{n}{n/2}} \exp\left[-2\varepsilon^2\left(k_{A, r_A -1}\log\frac{dn}{2k_A} + k_{B, r_B -1}\log\frac{dn}{2k_B}\right)\right].
    \]
\end{lem}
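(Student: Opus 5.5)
We follow the large-torso template of \cref{sec: overview}. We fix $G[A]$ and $G[B]$ with the prescribed sizes and torso classes, count the ways to complete them to a $d$-regular triangle-free graph, and then sum over all admissible defect graphs.

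\textbf{Step 1: fix the defect graphs and the torso edges.} Fix defect graphs $G[A]$, $G[B]$ with $k_A,k_B$ edges. For $X\in\{A,B\}$ with $r_X<\bar r_X$, let $T_X$ be a $D_X$-torso with $|T_X|>k_{X,r_X}$. Set $E_X=E(T_X)$ and let $S_X$ be the vertices incident to $E_X$. By maximality of the torso, every edge of $G[X]\setminus T_X$ has an endpoint of $T_X$-degree exactly $D_X$. Hence the number of defect graphs with a given torso size is at most
\[
\binom{\binom{n/2}{2}}{|T_X|}\cdot(\text{choices of the high-degree remainder}),
\]
which is bounded by $\exp[(1+O(\varepsilon))k_{X,r_X-1}\log(dn/2k_X)]$. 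This uses that $k_{X,r_X}\ge(1-\varepsilon)k_{X,r_X-1}$, together with the bound $O(k_X)=o(k_{X,r}\log(dn/2k_X))$ from \eqref{eq:k'bound}.

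\textbf{Step 2: run the sprinkling template.} We build the superset $\mathcal H$ via the template steps 1--5. In the local $(S_A,S_B)$- and $(S_A,\bar S_B)$-sprinklings we condition on $m_1$ and $m_A$. Because vertices of $S_A$ have torso degree at most $D_A=\varepsilon^4 d/(2\log(dn/2k_A))\ll d$, we can choose $m_A\approx d|S_A|(1-O(\varepsilon))$. That is, almost all cross-edges at $S_A$ are sprinkled in the uniform bipartite model on $(S_A,B)$.

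Janson's inequality then bounds the probability that no torso edge $xy\in E_A$ acquires a common cross-neighbour. Two regimes matter:
\begin{itemize}
\item The expected number of triangles is $\mu\approx |E_A|\cdot(n/2)(2d/n)^2=|E_A|\cdot 2d^2/n$, which is at least $(1+\varepsilon)\tfrac32|E_A|\log n$ for $d$ in our range.
\item Bounded torso degree keeps $\Delta\ll\mu$: pairs of triangles share a cross-edge only through adjacent torso edges, giving $\Delta=O(\mu D_A\cdot d/n)=o(\mu)$.
\end{itemize}
This yields a factor $\exp[-(1-O(\varepsilon))\mu]$.

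Saturation and degree completion (template steps 3--5) are counted with \cref{thm: bip enum}, since the residual degree sequences are near-regular. Comparing with \eqref{eq:reg bip count2} as in \cref{sec: zero}, the cost of $k_X$ defect edges in place of cross-edges is roughly $(n/2d)^{2k_X}$. The $\binom{n}{n/2}$ factor appears because $(A,B)$ is fixed.

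\textbf{Step 3: combine the bounds.} Multiplying the entropy of Step 1 by the Janson factor and the degree cost, each $X$ contributes at most
\[
\exp\!\left[k_{X,r_X-1}\Big(\log\tfrac{dn}{2k_X}-\tfrac{3}{2}(1+\varepsilon)(1-O(\varepsilon))\log n+\dots\Big)\right].
\]
The threshold $d=(1+\varepsilon)\tfrac{\sqrt3}{2}\sqrt{n\log n}$ makes $2d^2/n$ exceed $\tfrac32\log n$ by a factor $1+\varepsilon$. Using $k_X\le n\log d$, $\log(dn/2k_X)\lesssim \tfrac12\log n$ and $\log d\approx\tfrac12\log n$, the exponent becomes at most $-2\varepsilon^2k_{X,r_X-1}\log(dn/2k_X)$.

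When only $A$ has a large torso, the $B$-side contributes entropy at most $\exp[k_{B,r_B-1}\log(dn/2k_B)(1+o(1))]$. The hypothesis $k_{A,r_A-1}\log\frac{dn}{2k_A}\ge\frac{2}{\varepsilon^2}k_{B,r_B-1}\log\frac{dn}{2k_B}$ lets the $A$-side gain absorb this.

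\textbf{Main obstacle.} The hardest part is the bookkeeping that justifies Janson's inequality inside the regular model. The sprinkled edge counts $m_1,m_A,m_B$ must be large enough that $\mu$ is essentially the full $2d^2|E_A|/n$, while keeping degrees at most $d$. The loss from summing over $m_1,m_A,m_B$ and from the non-uniformity of saturation must stay $e^{o(k\log n)}$. We would first try taking $m_A=\lfloor(1-\varepsilon)(d-D_A)|S_A|\rfloor$ fixed and bounding the saturation count crudely by binomials, accepting an $O(\varepsilon)$ loss in the exponent.
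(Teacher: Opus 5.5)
Your overall architecture matches the paper's: fix $(G_A,G_B)$ and torsos, let $S_A$ contain the torso-active vertices, extract a Janson factor from cross-edges sprinkled at $S_A$, complete degrees via Lemma~\ref{cor: max sequence}, and compare with \eqref{eq:reg bip count2}. But the step you defer as the ``main obstacle'' is where the lemma is decided, and your concrete plan for it fails.

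\textbf{The sprinkle/saturate split.} Suppose a $(1-\lambda)$-fraction of the $m_A$ cross-edges at $S_A$ is sprinkled uniformly in $S_A\times B'$, and each $u\in S_A$ is then saturated. Every final graph then arises from at least $\binom{m_A}{(1-\lambda)m_A}$ construction sequences, since any $(1-\lambda)m_A$ of its $S_A$-edges could have been the sprinkled ones.
\begin{itemize}
\item If you bound saturation by crude binomials and do not divide by this multiplicity, you lose $\exp[\Theta(\lambda\log(1/\lambda))\,ds_A]$. This swamps the Janson gain $\exp[-\Theta(s_A\log n)]$, because $d\gg\log n$.
\item Even after dividing, a Stirling computation shows that relaxing the per-vertex degrees costs about $\exp[2\lambda d^2s_A/n+\frac{s_A}{2}\log\frac1\lambda]$. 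In addition, $\mu$ shrinks by $(1-\lambda)^2$. Together this is a relative loss of about $3\lambda$ in the exponent.
\item Near $d=(1+\eps)\frac{\sqrt3}{2}\sqrt{n\log n}$ the whole margin is only about $\eps$. The gain is $\mu\ge(1-\eps)\frac{2d^2}{n}k'_A\ge(1-\eps)(1+\eps)^2\frac32 k'_A\log n$, where the $(1-\eps)$ appears because only $|T_A|>(1-\eps)k'_A$ is guaranteed. Against this stands the entropy $(1+\eps^2)k'_A\log\frac{dn}{2k_A}$, with $\log\frac{dn}{2k_A}\le(\frac32+o(1))\log n$.
\item Your choice $m_A=(1-\eps)(d-D_A)|S_A|$, i.e.\ $\lambda\approx\eps$, therefore leaves a positive exponent.
\item The opposite extreme $\lambda=0$ (no saturation) costs $d^{\Theta(s_A)}=n^{\Theta(s_A)}$ from the exact degrees, a constant fraction of the gain. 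That is why the paper uses $\lambda=0$ only in Lemma~\ref{lem: Janson Counting 2}, far from the threshold.
\end{itemize}
The missing idea is to divide by $\binom{m'_A}{(1-\lambda)m'_A}\binom{m'_B}{(1-\lambda)m'_B}$ and take $\eps^7/\log n<\lambda<\eps^6/\log n$. Then the relaxation costs only $e^{O(\lambda d^2 s_A/n)}=e^{O(s_A)}$ and $\lambda^{-s_A/2}=e^{O(s_A\log\log n)}$, both $o(k'_A\log\frac{dn}{2k_A})$. More generally, every loss must be small compared with $\eps$, not merely $O(\eps)$. The paper gets defect entropy factor $(1+\eps^2)$ from the definition of $\bar r_A$, $\Delta/\mu=O(\eps^3)$ near the threshold, and relaxation loss $O(\eps^5)$.

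\textbf{The $S_A\times S_B$ edges.} When $r_B<\bar r_B$ you must also sprinkle at $S_B$, and edges of $S_A\times S_B$ close triangles with $T_A$ or $T_B$. Your estimate silently assumes $m_A\approx ds_A$, which fails when $m_1$ is large. The paper needs three ingredients here:
\begin{itemize}
\item a separate Janson bound for $X_1$ (Claim~\ref{clm: big m1});
\item a three-case analysis on $m_1$;
\item a maximization over $m_1$ of the binomial factor $\xi$, which peaks at $m_1=2ds_As_B/n$ with $\xi\le 1$.
\end{itemize}

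\textbf{Intermediate claims that are wrong as stated.}
\begin{itemize}
\item The number of defect graphs is $\exp[k\log\frac nd+(1+\eps^2)k'\log\frac{dn}{2k}]$ (Proposition~\ref{cor: counting defects}), not $\exp[(1+O(\eps))k'\log\frac{dn}{2k}]$. Already for $r_X=1$ it equals $\binom{\binom{|X|}{2}}{k_X}=e^{k_X\log(n^2/k_X)+O(k_X)}$.
\item The completion count, relative to $|\mathcal G^{\text{bip}}_{n,d}|/\binom{n}{n/2}$, contributes $(d/n)^{k_A+k_B}e^{O(k_A+k_B)}$, not $(n/2d)^{2k_X}$. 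It is this factor that cancels the $(n/d)^{k}$ in the entropy.
\item The bound you need is $\log\frac{dn}{2k_X}\le(\frac32+o(1))\log n$, not $\lesssim\frac12\log n$. This is exactly where the constant $\frac{\sqrt3}{2}$ comes from.
\item $\Delta=o(\mu)$ fails in part of the range. $\Delta/\mu$ can be of order $D_A d/n\asymp \eps^4 d^2/(n\log\frac{dn}{2k_A})$, which reaches order $\eps^{-4}$ at $d=\eps^{-4}\sqrt{n\log n}$. This is why the paper also uses $\exp[-\mu^2/2\Delta]$ and treats $d\ge 3(1+\eps)\sqrt{n\log n}$ separately.
\end{itemize}
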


Since the above lemma only considers the case that $d$ is near the threshold, we will only need to apply it when the defect graphs are too small to apply \cref{lem: large k near threshold}. The second lemma below also deals with the case that one of the defect graphs, say $G[A]$, has a large torso, and applies to the range of $d$ where $d\gg \sqrt{n\log n}$. Restricting $d$ away from the threshold allows us to weaken the technical condition $k'_A\log\frac{dn}{2k_A} \geq \frac{2}{\varepsilon^2}k'_B\log\frac{dn}{2k_B}$ in ~\cref{lem: Janson Counting 1}.

\newtheorem*{lem: Janson Counting 2}{Lemma \ref{lem: Janson Counting 2}}
\begin{lem}\label{lem: Janson Counting 2}
    Assume \eqref{Assumptions}. Suppose that $k_{A, r_A - 1} \log\frac{dn}{2k_A} \geq k_{B, r_B - 1}\log\frac{dn}{2k_B}$, $1 \leq k_A + k_B \leq n \log d$, $\varepsilon^{-4}\sqrt{n\log n} \leq d$, and $r_A < \overbar{r}_{A}$. Then,
    \[
    \frac{|\mathcal T_{k_A, r_A ,k_B, r_B}|}{|\mathcal G^{\text{bip}}_{n,d}|} \leq \frac{1}{\binom{n}{n/2}}\exp\left[-{\varepsilon^{-1}}\left(k_{A, r_A - 1} \log\frac{dn}{2k_A} + k_{B, r_B - 1}\log\frac{dn}{2k_B}\right)\right].
    \]
\end{lem}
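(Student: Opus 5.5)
We follow the template for the large torso case described in \cref{sec: overview}, exploiting that $d\ge \eps^{-4}\sqrt{n\log n}$ makes the Janson exponent much larger than in \cref{lem: Janson Counting 1}. We first fix the defect graphs $G[A]$ and $G[B]$. Let $T_A$ be the $D_A$-torso of $G[A]$, so $k_{A,r_A}<|T_A|\le k_{A,r_A-1}$, since $r_A<\bar r_A$. We then take $E_A=E(T_A)$, let $S_A$ be the vertices incident to $E_A$, and ignore the constraints coming from $G[B]$ altogether. This is allowed because $k_{A,r_A-1}\log\frac{dn}{2k_A}$ dominates the $B$-term, so it suffices to gain $\exp[-2\eps^{-1}k_{A,r_A-1}\log\frac{dn}{2k_A}]$ after paying for the choice of the defect graphs.

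\emph{Step 1 (defect graphs).} The number of choices of $G[A]$ and $G[B]$ with $k_A,k_B$ edges is at most $\binom{n^2/8}{k_A}\binom{n^2/8}{k_B}$. We compare this with the loss in bipartite cross-edge counts: each defect edge removes two cross-degree units. Using \cref{thm: bip enum} and the estimates of \cref{prop: choose top}, \cref{prop: choose bot} exactly as in \cref{sec: zero}, the number of degree completions relative to $|\mathcal G^{\text{bip}}_{n,d}|/\binom{n}{n/2}$ is $\exp[O(k_X\log(dn/2k_X))]$ for $X\in\{A,B\}$, up to factors $\exp[O(k_Xd/n)]$. These costs are $O(k_A\log\frac{dn}{2k_A}+k_B\log\frac{dn}{2k_B})$. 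By \eqref{eq:k'bound}, they are $o$ of the corresponding $k_{X,r_X-1}\log\frac{dn}{2k_X}$ terms times any constant. The $B$-cost is then absorbed using the hypothesis $k_{A,r_A-1}\log\frac{dn}{2k_A}\ge k_{B,r_B-1}\log\frac{dn}{2k_B}$.

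\emph{Step 2 (local sprinkling and Janson).} We expose the cross-edges at $S_A$ in rounds. We first choose $m_A\approx d|S_A|(1-o(1))$ edges from $S_A$ to $B$, as a uniform random bipartite graph with that many edges, under the constraint that no edge $xy\in E_A$ has a common cross-neighbour. Let $X$ count the pairs $(xy,b)$ with $xy\in E_A$ and $b\in B$ adjacent to both $x$ and $y$. With $p\approx 2d/n$, we have $\mathbb E X\approx |E_A|\cdot\frac n2 p^2=2|T_A|d^2/n$. The correlation term $\Delta$ only comes from pairs of torso edges sharing a vertex and a common $b$. Since the torso has maximum degree $D_A$, this gives $\Delta=O(|T_A| D_A\, n p^3)$. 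Hence $\Delta/\mathbb E X=O(D_Ad/n)=o(1)$, and Janson's inequality yields
\[
\P[X=0]\le \exp[-(1-o(1))2|T_A|d^2/n]\le \exp[-(1-o(1))2k_{A,r_A}\eps^{-8}\log n].
\]
Since $\log n\ge \frac12\log\frac{dn}{2k_A}$ and $k_{A,r_A}=(1-\eps)k_{A,r_A-1}$, this is at most $\exp[-\eps^{-7}k_{A,r_A-1}\log\frac{dn}{2k_A}]$, which is far more than we need. We then saturate $S_A$ and complete the degrees using \cref{thm: bip enum}. Throughout, the random-graph probability is converted into a count, relative to $\binom{|S_A||B|}{m_A}$ and to the completion count, by the ratio bounds of \cref{cor: choose}.

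\emph{Step 3 (summation).} We sum over $m_A$ (polynomially many values) and over the degree distributions of the saturation stage. A crude $n^{O(|S_A|)}=\exp[O(k_{A,r_A-1}\log n)]$ bound is swamped by the $\eps^{-7}$ gain. The main obstacle is justifying the Janson step inside the regular model: the sprinkled edges must be uniform, and the residual degree sequences must stay within $\sqrt{d'}$ of their average so that \cref{thm: bip enum} applies in the completion. Vertices of $S_A$ may have defect degree up to $d$ (the torso caps only $T_A$), so we plan to restrict $S_A$ to torso vertices and bound the high-degree defect vertices separately. Their contribution is charged to the $\binom{n^2/8}{k_A}$ term in Step 1.
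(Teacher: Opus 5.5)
There is a genuine gap in Step~1. You bound the defect graphs by $\binom{n^2/8}{k_A}\binom{n^2/8}{k_B}$ and multiply by the completion factor $(d/n)^{k_A+k_B}$. That leaves a net cost of $\exp[(1+o(1))(k_A\log\frac{dn}{2k_A}+k_B\log\frac{dn}{2k_B})]$, which you claim is negligible against $k_{X,r_X-1}\log\frac{dn}{2k_X}$ ``by \eqref{eq:k'bound}''. But \eqref{eq:k'bound} only gives $k_{X,r_X-1}\log\frac{dn}{2k_X}=\omega(k_X)$, and since $k_{X,r_X-1}\le k_X$ the comparison you need is false. Concretely, set $L=\log\frac{dn}{2k_A}$, which is $\Theta(\log n)$ here because $k_A\le n\log d$ and $d\ge\sqrt n$. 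For $r_A=\bar r_A-1$, \eqref{eq: r1} gives $k_{A,r_A-1}=\Theta(\eps^{-2}k_A\log\log L/L)$. So any triangle-avoidance gain of the form $\exp[-\eps^{-c}k_{A,r_A-1}\log n]$ is only $\exp[-\Theta(\eps^{-c-2}k_A\log\log L)]$, far smaller than the cost $e^{k_AL}$. The $B$-side fails the same way: if $r_B=\bar r_B$, then $k_{B,r_B-1}\log\frac{dn}{2k_B}=o(k_B\log\frac{dn}{2k_B})$, and your hypothesis controls only the former. The non-torso edges are exactly those on which Janson gains nothing, so their entropy must be removed by counting rather than by the probability bound. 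The missing ingredient is \cref{cor: counting defects}, which uses the spine structure from \cite{osthus2003densities}. It gives at most $\exp[k_X\log\frac nd+(1+\eps^2)k_{X,r_X-1}\log\frac{dn}{2k_X}]$ admissible defect graphs. The $k_X\log\frac nd$ term then cancels against the $(d/n)^{k_A+k_B}$ in the completion ratio, up to $\exp[O(k_{A,r_A-1}\log\frac{dn}{2k_A}+k_{B,r_B-1}\log\frac{dn}{2k_B})]$ (this is \cref{clm: s_B = 0} with $\lambda=0$). What remains is beaten by a gain of order $\eps^{-2}k_{A,r_A-1}\log\frac{dn}{2k_A}$, together with the hypothesis $k_{A,r_A-1}\log\frac{dn}{2k_A}\ge k_{B,r_B-1}\log\frac{dn}{2k_B}$.

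Your Janson step is also misstated. The ratio $\Delta/\mathbb E X$ is of order $D_Ad/n=\eps^4d^2/(2nL)$. Since $d^2\ge\eps^{-8}n\log n$ and $L\le 2\log n$, this is at least $\eps^{-4}/4$. So $\Delta\gg\mathbb EX$ throughout the range, and $\exp[-\mu+\Delta/2]$ gives nothing. You must use $\exp[-\mu^2/(2\Delta)]\approx\exp[-|T_A|d/(4D_A)]=\exp[-\frac{1}{2\eps^4}|T_A|L]$. That suffices for the $\eps^{-1}$ exponent once the defect count is fixed, but not for your claimed $\eps^{-7}$. This order is also essentially the truth in your relaxed uniform model: a torso vertex of torso-degree $D_A$ can simply receive no cross-edges at a cost of about $e^{-d}$, i.e.\ $e^{-2\eps^{-4}L}$ per torso edge. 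So no sharper probability estimate can rescue the crude count. Finally, your worry about residual degrees staying within $\sqrt{d'}$ of their mean is unnecessary. \cref{cor: max sequence} bounds the number of completions for an arbitrary degree sequence by that of the most regular one. High defect degrees in $S_A$ are absorbed by relaxing to $ds_A-2k_A\le m'_A\le ds_A$. The paper also takes $|S_A|=\min\{\eps^{1/2}k'_A,\eps^{1/2}n\}$ spanning at least $\eps|E(T_A)|$ torso edges. This keeps $|A\setminus S_A|\approx n/2$ and controls the $d^{s_A}$ overcount from choosing the cross-edges at $S_A$ jointly.
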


\cref{lem: Janson Counting 1} and \cref{lem: Janson Counting 2} are proved in \cref{sec: Janson}.

\subsection{Small Torsos}
Finally, we consider the case where one of $G[A]$ and $G[B]$ has a small torso, i.e.\ $r_A=\bar r_A$ or $r_B=\bar r_B$. Assume $r_B = \bar r_B$. We will apply these lemmas when $k_A + k_B$ is not too large (as otherwise we apply \cref{lem: large k far threshold}) and when $k_{A, r_A - 1}\log\frac{dn}{2k_A} \leq \frac{2}{\varepsilon^2}k_{B, r_B - 1}\log\frac{dn}{2k_B}$ (as otherwise we may apply \cref{lem: Janson Counting 1}). 
\begin{lem}\label{lem: Small torso counting small k}
    Assume \eqref{Assumptions}. Suppose $1 \leq k_A + k_B \leq n\log d$, $k_{A, r_A - 1}\log\frac{dn}{2k_A} \leq \frac{2}{\varepsilon^2}\times k_{B, r_B - 1}\log\frac{dn}{2k_B}$, $k_B/16 < |A|\log^{-4000/\eps^4}\frac{dn}{2k_B}$, and $r_B = \overbar{r}_{B}$. Then,

    \[
    \frac{|\mathcal T_{k_A, r_A, k_B, r_B}|}{|\mathcal G^{\text{bip}}_{n,d}|} \leq \frac{1}{\binom{n}{n/2}} \exp\left[ -3\left( k_{A, r_A - 1}\log\frac{dn}{2k_A} + k_{B, r_B -1}\log\frac{dn}{2k_B}\right) \right].
    \]
\end{lem}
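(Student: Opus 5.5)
Since $r_B=\overbar r_B$, the torso $T$ of $G[B]$ has at most $k_{B,\overbar r_B-1}$ edges. By \eqref{eq: r1} this is $O(k_B\log\log(dn/2k_B)/\log(dn/2k_B))=o(k_B)$, so almost all of the $k_B$ defect edges in $B$ are incident to vertices of defect degree larger than $D_B$. The hypothesis $k_B/16<|A|\log^{-4000/\eps^4}\frac{dn}{2k_B}$ says these edges are few compared with $n$. So I will not use the expansion/switching machinery (which is reserved for larger $k_B$). Instead I plan a direct counting argument, the ``simpler'' one mentioned in the overview.

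\textbf{Step 1 (high-degree vertices).} Let $L\subseteq B$ be the set of vertices with $G[B]$-degree greater than $D_B$. The edges of $G[B]$ outside $T$ are incident to $L$, so $|L|\le 2k_B/D_B=O(k_B\log(dn/2k_B)/(\eps^4 d))$. Consider $x\in L$ with defect degree $t_x>D_B$, and let $N_x$ be its defect neighbourhood in $B$. Triangle-freeness forces $x$ and $N_x$ to have disjoint cross-neighbourhoods in $A$. So $x$ has $d-t_x$ cross-neighbours, and these must avoid the $\approx|N_x|(d-D_B)\ge t_x d/2$ cross-neighbours of $N_x$ (these sets are essentially disjoint, since $G[A]$ has few edges). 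Comparing with the bipartite count through \cref{thm: bip edge probs}, where $H_2$ is the set of forbidden pairs, each cross-edge from $x$ survives with probability at most $1-(1-O(\mu_0))\cdot 2d/n\cdot$(forbidden fraction). This gives a factor of roughly $\exp(-(2d/n)(d)(t_xd/2)/(n/2)\cdot\ldots)$. More simply, one can use the fact that $x$ needs its $d$ cross-neighbours inside a set of size $n/2-\Omega(t_x d)$. Because $t_xd\ge D_Bd=\eps^4d^2/(2\log(dn/2k_B))$ and $d^2\ge (3/4)(1+\eps)^2 n\log n$, this yields a saving of $\exp(-\Omega(d\cdot t_x d/n))=\exp(-\Omega(t_x\log n/\log(dn/2k_B)))$ per high vertex. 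Summed over $L$, the saving is $\exp(-\Omega(k_B\log n/\log(dn/2k_B)))$.

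\textbf{Step 2 (entropy cost).} I will count the defect graphs and the degree-completion using the template from \cref{sec: overview}: choose $G[A]$ and $G[B]$, choose cross-edges for the defect vertices, then complete with \cref{thm: bip enum}. Compared with $|\mathcal G^{\text{bip}}_{n,d}|/\binom{n}{n/2}$, the entropy is about $\binom{n^2/8}{k_A}\binom{n^2/8}{k_B}$ times the regularity correction $(n/(2d))^{2(k_A+k_B)}$ coming from defect edges using up degree, as in the $N_1/N_0$ computation of \cref{sec: zero}. This is $\exp(k_B\log(n^2/(d^2k_B))+O(k_B))$. The condition $k_B\le n\log^{-C}$ makes $\log(dn/2k_B)\ge\log d$ up to constants. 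The key quantitative point is that the triangle penalty from Step 1 exceeds the entropy: for $d$ near threshold, $\log(n^2/d^2)\approx\log n$, while the penalty per edge is $t_x d^2/n$ per vertex, i.e.\ $d^2/n\approx\frac34(1+\eps)^2\log n$ per edge counted with multiplicity $2$ from $t_x=$ edges at $x$. I expect this to reach the $1+\eps$ margin exactly as in the Osthus--Pr\"omel--Taraz threshold computation. Surplus of $3k_{B,r_B-1}\log(dn/2k_B)$ is easy since $k_{B,r_B-1}=o(k_B)$. The $A$ side is handled by the assumed inequality $k_{A,r_A-1}\log\frac{dn}{2k_A}\le\frac{2}{\eps^2}k_{B,r_B-1}\log\frac{dn}{2k_B}$, which lets the $B$ saving absorb the required $A$ term, with a crude $G[A]$ entropy bound.

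\textbf{Main obstacle.} The hard part is Step 1 rigorously under the degree constraint. Edge events for the cross-edges of the different $x\in L$ are dependent, and the forbidden sets depend on the other cross-edges. I would expose the cross-neighbourhoods of $N_x$ (over all $x$) first as a pre-chosen bipartite piece. Then I would bound the probability that the remaining cross-edges at $L$ avoid the forbidden pairs, by a product of conditional probabilities from \cref{thm: bip edge probs}. The degree conditions there ($d^{H_2}\le\mu_0 n$) need $t_xd\le\mu_0n$. For vertices where this fails I would cap the forbidden set at size $\mu_0 n$, which still gives a saving of $\exp(-\Omega(\mu_0 d))$ per such vertex; this dominates since $|L|$ is small.
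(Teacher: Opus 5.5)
There is a genuine gap in Step~1, which is the heart of the lemma. You place the forbidden pairs at the high-degree vertex $x$, against $U_x=\bigcup_{v\in N_x}N_A(v)$. Three things go wrong.

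(i) Nothing makes the sets $N_A(v)$, $v\in N_x$, nearly disjoint. $N_x$ is independent in $G[B]$, its vertices may share cross-neighbours, and $G[A]$ has no bearing on this. Since you are bounding a count from above, you must show that configurations with $|U_x|$ small are rare. That requires a spread estimate for the pre-exposed leaf neighbourhoods, the analogue of the paper's property (P) and \cref{clm: P fails}, and the proposal has none.

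(ii) Where $U_x$ is large, it is too large for the tools you cite. We have $t_xd>D_Bd=\eps^4d^2/(2\log\frac{dn}{2k_B})$, and $\log\frac{dn}{2k_B}=\Theta(\log n)$ because $k_B\le n\log d$. So $t_xd=\Omega(\eps^4 n)$, and $t_xd\gg n$ once $t_x=\Theta(d)$. Hence the hypothesis $d^{H_2}\le\mu_0 n$ of \cref{thm: bip edge probs} (and (2)--(3) of \cref{lem: switching}) fails generically, not exceptionally. Capping then gives only $e^{-O(\mu_0 d)}$ per centre, a total of $O(\mu_0 k_B)$ when the centres have degree $\Theta(d)$. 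The lemma needs a saving of order $\eps^{-4}k_B\log\log\frac{dn}{2k_B}$. At $r_B=\overbar r_B$ one has $k_{B,r_B-1}\log\frac{dn}{2k_B}\approx\frac{4}{\eps^2}k_B\log\log\frac{dn}{2k_B}=\omega(k_B)$, so the surplus is not ``easy''; the hypothesis on $k_{A,r_A-1}$ then multiplies the requirement by a further $\eps^{-2}$. Your own formula $\exp(-\Omega(t_x\log n/\log\frac{dn}{2k_B}))$ likewise sums to only $\exp(-\Omega(k_B))$.

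(iii) \cref{thm: bip edge probs} is only an upper bound on conditional edge probabilities. Chaining it bounds the avoidance probability from below, which is the wrong direction.

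The paper's proof reverses the orientation. Take $u\in S'$, the spine vertices with $d_{G_B}(u)\ge D'=\eps^7 d/(16\log\log\frac{dn}{2k_B})$. For each such $u$ it first exposes only $d_{G_B}(u)$ cross-neighbours $A_u$, constrained only by maximum degree. It shows that $\{A_u\}$ is well-spread except with probability $\exp[-13\eps^{-2}k_{B,r_B-1}\log\frac{dn}{2k_B}]$, using Corollary~24 of \cite{osthus2003densities} and a union bound over $2^s$ subsets; this is where $k_B/16<|A|\log^{-4000/\eps^4}\frac{dn}{2k_B}$ is used. It then forbids $X=\bigcup_u A^*_u\times(N_{G_B}(u)\cap B')$ with the $A^*_u$ pairwise disjoint. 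Now $|X|\gtrsim k_BD'/16$ and every vertex meets at most $d$ forbidden pairs. After discarding the $o(k_B)$ vertices of $A$ with residual degree below $d/4$, \cref{lem: switching} with $\alpha=1/4$ gives avoidance probability $\exp(-\Omega(k_B\log n/\log\log\frac{dn}{2k_B}))$. So switching is used in this lemma; only the expansion property (P5) is reserved for larger $k_B$.

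Your Step~2 accounting should also be replaced. Use \cref{cor: counting defects}, which exploits the spine structure, together with \cref{clm: small torso max}. The crude $\binom{n^2/8}{k_B}$ costs up to about $\log n$ per edge near the threshold. That exceeds the per-edge saving $2d(d-t_x)/n\approx\frac34(1+\eps)^2\log n$ for stars with $t_x\approx d/2$. With the correct accounting, the net cost is only $(1+\eps^2)\big(k_{A,r_A-1}\log\frac{dn}{2k_A}+k_{B,r_B-1}\log\frac{dn}{2k_B}\big)+O(k_A+k_B)$, and this case is far from tight at the constant $\frac{\sqrt3}{2}$.
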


\newtheorem*{lem: Small Torso Counting}{Lemma \ref{lem: Small Torso Counting}}
\begin{lem}\label{lem: Small Torso Counting}
    Assume \eqref{Assumptions}. Suppose $1 \leq k_A + k_B \leq n\log d$, $k_{A, r_A - 1}\log\frac{dn}{2k_A} \leq \frac{2}{\varepsilon^2}\times k_{B, r_B - 1}\log\frac{dn}{2k_B}$, $k_B/16 \geq |A|\log^{-4000/\eps^4}\frac{dn}{2k_B}$, and $r_B = \overbar{r}_{B}$. Then,

    \[
    \frac{|\mathcal T_{k_A, r_A, k_B, r_B}|}{|\mathcal G^{\text{bip}}_{n,d}|} \leq \frac{1}{\binom{n}{n/2}} \exp\left[ -3\left( k_{A, r_A - 1}\log\frac{dn}{2k_A} + k_{B, r_B -1}\log\frac{dn}{2k_B}\right) \right].
    \]
\end{lem}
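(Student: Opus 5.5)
We follow the small-torso template of \cref{sec: overview}, in the regime where $G[B]$ is dense: $k_B/16 \ge |A|\log^{-4000/\eps^4}\frac{dn}{2k_B}$ and $r_B=\bar r_B$. Write $k'_X=k_{X,r_X-1}$ and $L_X=\log\frac{dn}{2k_X}$. The torso condition $|T|\le k'_B$ together with $k'_B=O(k_B\log\log L_B/L_B)$ from \eqref{eq: r1} shows the following. Most of the $k_B$ defect edges of $G[B]$ are incident to vertices of defect degree larger than $D_B=\eps^4 d/(2L_B)$, since a maximal spanning subgraph of maximum degree $D_B$ saturates every edge not at a high-degree vertex. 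Let $S_B$ be the set of these high defect degree vertices. Then $|S_B|\le 2k_B/D_B$, and the $B$-defect edges at $S_B$ number $(1-o(1))k_B$.

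The counting runs as follows. Condition on $G[A]$, on the degree sequence $\mathbf d^*$ of $G[B]$, and on $S_B$.

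First, for each $x\in S_B$, choose its $d-d^*_x$ cross-neighbours $U(x)\subseteq A$.

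Second, complete the cross-edges to a bipartite graph with the residual degree sequence. This is counted by \cref{thm: bip enum}, and the near-regularity hypotheses hold because $k_A+k_B\le n\log d$ gives total defect degree $o(dn)$. Comparing the resulting count to $|\mathcal G^{\text{bip}}_{n,d}|$ via \cref{prop: choose top}, \cref{prop: choose bot} and \cref{cor: choose} should lose only a factor $\exp[O(k_A+k_B)]\binom{n}{n/2}^{-1}$, up to the gain computed below. Here we use $d^2/n \ge \tfrac34(1+\eps)^2\log n$, so that $(2d/n)^{2k}$ factors are offset just as in the $0$-statement computation.

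Third, choose $G[B]$ with degree sequence $\mathbf d^*$ so that no triangle is formed. Every $B$-neighbour $y$ of $x\in S_B$ must then avoid $V(x)=N(U(x))\cap B$.

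The key claim is that, with probability at least $1-\exp[-4(k'_A L_A+k'_B L_B)]$ over the uniform completion in the second step, at least half of the vertices $x\in S_B$ \emph{expand}, meaning $|B\setminus V(x)|\le |B|\exp[-(1-\eps)\,d|U(x)|/n]$, which is tiny since $|U(x)|\approx d$ and $d^2/n\gg1$. Because completions are not independent, this cannot be done with Janson's inequality. Instead we use a switching argument (\cref{sec: subswitching}) with \cref{thm: bip edge probs} as the engine. Let $H_1$ be the fixed edges at $S_B$ and $H_2$ the forbidden edges. For a vertex $y\in B$ and $x\in S_B$, the event $y\notin V(x)$ means that $y$ avoids all of $U(x)$. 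Multiplying the conditional non-edge probabilities $1-\frac{2d}{n}(1+O(\mu_0))$ over $U(x)$, in sequence, bounds this by $\exp[-(2-O(\mu_0))d|U(x)|/n]$. The hypotheses of \cref{thm: bip edge probs} hold because $d^{H_2}\le\mu_0 n$ for these sequential conditionings. A union bound over choices of non-expanding sets then gives the claim; the density hypothesis $k_B/16\ge|A|\log^{-4000/\eps^4}L_B$ is what makes $|S_B|$ large enough for the union bound to pay off.

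Given expansion, step three forces each of at least $(1-o(1))k_B/2$ edges at expanding vertices to land in the small set $B\setminus V(x)$. This replaces a factor $\binom{|B|}{d^*_x}$ by $\binom{|B|e^{-(2-\eps)d^2/n}}{d^*_x}$. The saving is $\exp[-(2-\eps)(d^2/n)k_B]\le n^{-(3/2)(1+\eps)k_B}$, and against the $\binom{n^2/4}{k_B}\approx (n^2/k_B)^{k_B}$ choices of defect edges this leaves at least $\exp[-c\,\eps k_B\log n]$. Since $k_B L_B\ge k'_B L_B$ and $k'_A L_A\le \frac{2}{\eps^2}k'_B L_B$, this dominates $\exp[-3(k'_AL_A+k'_BL_B)]$ once we note $L_B\le\log n$ and that $k_B\log n$ exceeds $\eps^{-3}k'_B L_B$. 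Edges of $G[A]$ are handled trivially by their count together with \cref{thm: bip edge probs} avoidance.

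The main obstacle is the expansion claim: controlling dependencies among the $U(x)$ under the degree constraint via switchings, and summing the error over all $\mathbf d^*$ and $S_B$.
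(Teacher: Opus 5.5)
Your outline (condition on $G[A]$, on the degree sequence of $G[B]$ and on a spine; choose the spine's cross-neighbourhoods; complete; count $G[B]$ using expansion) matches the paper's template. The gap is the central expansion claim, which fails as you formulate it.

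\textbf{The tool is used in the wrong direction.} \cref{thm: bip edge probs} supplies only an \emph{upper} bound $\P[uv\in G\mid\dots]<\frac{2d}{n}(1+O(\mu_0))$. Multiplying the conditional non-edge probabilities along $U(x)$ therefore gives a \emph{lower} bound on $\P[y\notin V(x)]$, not the upper bound you need. Upper bounds on avoidance probabilities are what \cref{lem: switching} provides, and only with the weak exponent $\alpha^3d|X|/(2n)$, $\alpha=1/4$. So no constant near $2$ is available.

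\textbf{Polynomially strong expansion is false at the probability level you need.} Take $d=(1+\eps)\frac{\sqrt3}{2}\sqrt{n\log n}$ and a spine vertex with $d^*_x=o(d)$. Then $t:=|B|e^{-(1-\eps)d|U(x)|/n}=n^{1/4-\Theta(\eps)}$. Used in its correct direction, \cref{thm: bip edge probs} shows that $t+1$ prescribed vertices all miss $U(x)$ with probability at least $\exp[-O(t\log n)]$. The condition $r_B=\overbar r_B$ allows $G[B]$ to be disjoint stars whose centres have degree $\Theta(\eps^6d/\log\log L_B)$, and your own bound gives $|S_B|\le 2k_B/D_B=O(k_BL_B/(\eps^4d))$. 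Hence half of $S_B$ fails to expand with probability at least $\exp[-O(|S_B|\,t\log n)]=\exp[-o(k_B)]$. You need $\exp[-4k'_BL_B]$, and $k'_BL_B=\Theta(\eps^{-2}k_B\log\log L_B)$ by \eqref{eq: r1}.

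\textbf{The final arithmetic does not close even granting the claim.} With only half of the edges at expanding vertices and exponent $(1-\eps)d^2/n$, the saving is about $n^{-3k_B/8}$, not $n^{-(3/2)(1+\eps)k_B}$. That falls far short of the $(n^2/k_B)^{k_B}=n^{(1+o(1))k_B}$ you compare against. Also, ``half of the vertices'' must be weighted by defect degree.

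\textbf{The missing idea: no polynomial saving is needed once the bookkeeping is right.}
\begin{itemize}
\item By \cref{clm: small torso max}, the number of completions is at most $(d/n)^{k_A+k_B}e^{O(k_A+k_B)}|\mathcal G^{\text{bip}}_{n,d}|/\binom{n}{n/2}$, not merely $e^{O(k_A+k_B)}$ times it.
\item Up to $e^{O(k_A+k_B)}$, this factor cancels the $k\log\frac nd$ entropy of spine-structured defect graphs (\cref{cor: counting defects}, and \cref{clm: num deg sequences}, where the density hypothesis gives $D_Bk_B\ge n$).
\item What remains to beat is only $O(\eps^{-2}k'_BL_B)=O(\eps^{-4}k_B\log\log L_B)$. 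The $G[A]$ term $(1+\eps^2)k'_AL_A$ is absorbed via $k'_AL_A\le 2\eps^{-2}k'_BL_B$; no triangle-avoidance is needed for $G[A]$.
\end{itemize}
The paper therefore asks only for weak expansion: $|B'\setminus N(A_u)|\le y=b/\log^{400/\eps^4}L_B$.
\begin{itemize}
\item The union bound over missed sets $B_u$ then costs only $e^{O(y\log\log L_B)}$ per set.
\item \cref{lem: switching}, applied to $X=\bigcup_u A^*_u\times B_u$, gives $\exp[-\Omega(dyy'/n)]$. Here the $A^*_u\subseteq A_u$ are pairwise disjoint with total size about $y'=a/\log^{4000/\eps^4}L_B$, and they are pruned so that every vertex has $\mathbf d^*$-degree at least $d/4$.
\item Producing such disjoint pieces is why the paper first draws $A_u$ of size $d'_u$ uniformly and proves the well-spread property (P4) via Corollary~24 of \cite{osthus2003densities}. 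This is exactly the overlap issue your sketch leaves open.
\item Finally, confining the $G[B]$-neighbours of expanding spine vertices to $S'\cup B_u$ saves $(y/b)^{k_B/8}=\exp[-\frac{50}{\eps^4}k_B\log\log L_B]$. By \eqref{eq: r1}, this beats $10\eps^{-2}k'_BL_B$.
\end{itemize}
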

Note that these lemmas obtain the same bound. We split them up because their proofs differ quite significantly.
\cref{lem: Small torso counting small k} and \cref{lem: Small Torso Counting} are proved in \cref{sec: Small Torso}.

\subsection{Completing the proof}
One final observation is needed to complete the proof. 
\begin{prop}\label{prop: ab}
    $|\mathcal T_{k_A, r_A, k_B, r_B}| > 0$ only if $|A| = \frac{n}{2} + \frac{k_A-k_B}{d}$ and $|B| = \frac{n}{2} - \frac{k_A - k_B}{d}$.
\end{prop}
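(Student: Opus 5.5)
Our plan is a double-counting argument on the degree sums over the two sides of the partition. Fix any $G \in \mathcal T_{k_A, r_A, k_B, r_B}$; membership only uses that $G$ is $d$-regular with $|E(G[A])| = k_A$ and $|E(G[B])| = k_B$. The torso conditions and triangle-freeness play no role. Let $e(A,B)$ denote the number of cross edges of $G$ between $A$ and $B$.

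First, sum the degrees over $A$. Each edge inside $A$ contributes $2$ to this sum, and each cross edge contributes $1$, so
\[
d|A| = \sum_{u\in A} d_G(u) = 2k_A + e(A,B).
\]
Summing over $B$ in the same way gives $d|B| = 2k_B + e(A,B)$.

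Subtracting the second identity from the first eliminates the unknown $e(A,B)$ and gives $d(|A|-|B|) = 2(k_A-k_B)$, that is,
\[
|A| - |B| = \frac{2(k_A-k_B)}{d}.
\]
Combining this with $|A|+|B| = n$ and solving the two linear equations yields $|A| = \frac n2 + \frac{k_A-k_B}{d}$ and $|B| = \frac n2 - \frac{k_A-k_B}{d}$.

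Hence, if $|A|$ and $|B|$ do not take these values, no such $G$ exists and $\mathcal T_{k_A, r_A, k_B, r_B}$ is empty, which is the claim. There is no real obstacle here. The only point needing care is the factor of $2$: it appears in the difference $|A|-|B|$ but is halved in each individual part size. As a byproduct, $|A|-|B| \le 2(k_A+k_B)/d$, which is the imbalance bound mentioned earlier in the paper.
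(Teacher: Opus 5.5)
Your argument is correct and is the same as the paper's: the paper notes that the number of cross edges equals both $d|A| - 2k_A$ and $d|B| - 2k_B$, and solves this together with $|A|+|B| = n$. Your side remark is also right, and is best stated as $\bigl||A|-|B|\bigr| = 2|k_A-k_B|/d \le 2(k_A+k_B)/d$, i.e.\ each part differs from $n/2$ by at most $(k_A+k_B)/d$.
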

\begin{proof}
    Let $a=|A|$ and $b=|B|$.
    Suppose $|\mathcal T_{k_A, r_A, k_B, r_B}| > 0$. Then, 
    \[
        da - 2k_A = db - 2k_B,\quad \text{and}\         a + b = n.
    \]
    That is, 
    \[
    a = \frac{n}{2} + \frac{k_A - k_B}{d},\quad \text{and}\     b = \frac{n}{2} - \frac{k_A-k_B}{d}.
    \]
\end{proof}

By this proposition, it suffices to consider only partitions $(A,B)$ such that 
\begin{equation}
|A| = \frac{n}{2} + \frac{k_A-k_B}{d},\quad \text{and}\ \  |B| = \frac{n}{2} - \frac{k_A - k_B}{d},\label{eq:AB}
\end{equation}
as otherwise $|\mathcal T_{k_A, r_A, k_B, r_B}| = 0$.  Moreover,~\eqref{eq:AB} fixes $|A|$ and $|B|$ in terms of $k_A$ and $k_B$, which is convenient for many arguments. 

\medskip

\noindent \textit{Proof of $1$-statement of \cref{thm: 0-1}.} By \cref{thm: linear d}, we may assume $d = o(n)$. Hence, assume~\eqref{Assumptions}. By \cref{thm: Luckzak}, it suffices to consider partitions $(A,B)$ satisfying~\eqref{eq:AB} where $k_A + k_B \leq \delta_0dn/2$. We now use \cref{lem: large k far threshold}, \cref{lem: large k near threshold}, \cref{lem: Janson Counting 1}, \cref{lem: Janson Counting 2}, \cref{lem: Small torso counting small k}, and \cref{lem: Small Torso Counting} to bound $|\mathcal T_{k_A, r_A, k_B, r_B}|/|\mathcal G_{n,d}^{\text{bip}}|$ in~\eqref{eq: main counting} when $k_A + k_B > 0$. 
To do so, it is useful to first define the following conditions to clarify our cases:
\begin{enumerate}
    \item[(A)] $k_A + k_B \geq n\log d$;
    \item[(B)] $k_{A, r_A - 1} \geq \eps^4 n$ or $k_{B, r_B -1} \geq \eps^4 n$;
    \item[(C)] $d \leq \eps^{-4}\sqrt{n \log n}$;
    \item[(D)] $r_A < \bar r_A$;
    \item[(E)] $k_{A, r_A - 1}\log\frac{dn}{2k_A} \geq 2\eps^{-2}k_{B, r_B - 1}\log\frac{dn}{2k_B}$;
    \item[(F)] $k_B/16 < |A|\log^{-4000/\eps^4}\frac{dn}{2k_B}$.
\end{enumerate}

Then, we apply \cref{lem: large k far threshold} when (A) holds, \cref{lem: large k near threshold} when (B) and (C) hold, \cref{lem: Janson Counting 2} when (C) fails and (D) and (E) hold (by potentially swapping $A$ and $B$ in (D) and (E)), \cref{lem: Small torso counting small k} when (D) and (E) do not both hold (even when swapping $A$ and $B$) and (F) holds, \cref{lem: Small Torso Counting} when (D) and (E) do not both hold and (F) does not hold, and \cref{lem: Janson Counting 1} when (B) fails, (C) holds, and (D) and (E) both hold.

Note that \cref{lem: large k far threshold} and \cref{lem: large k near threshold} give the weakest bounds of all of the lemmas, so we have
\begin{align*}
    \frac{|\mathcal T_{\delta_0, n, d}|}{|\mathcal G_{n,d}^\text{bip}|}
    &\leq1 + \sum_{k_A + k_B = 1}^{\delta_0dn/2}\sum_{r_A, r_B \geq 1}^{\overbar{r}_{A}, \overbar{r}_{B, }}\frac{\binom{n}{|A|}}{\binom{n}{n/2}}\exp\left[-{\varepsilon^3}\left((1 - \varepsilon)^{r_A}k_A\log\frac{dn}{2k_A} + (1 - \varepsilon)^{r_B}k_B\log\frac{dn}{2k_B}\right)\right]\\
    &\leq1 +  \sum_{k_A + k_B = 1}^{\delta_0dn/2}\exp\left[-2\varepsilon\left(k_A\log\log\frac{dn}{2k_A} + k_B\log\log\frac{dn}{2k_B}\right) + O\left( \log\log\log n\right)\right]\\
    &= 1 + o(1)
\end{align*}
where the second inequality holds by (\ref{eq: r1}) and (\ref{eq: r asymtotics}). \qed

\section{Large defects: proofs of \cref{lem: large k far threshold} and \cref{lem: large k near threshold} } \label{sec: Large k}

Before we begin these two proofs, we introduce general counting results for the number of ways to choose defect edges and cross-edges that will be used throughout the remainder of this paper.

In our calculations, it is important to consider choosing graphs with degree constraints, but it is not important to consider the precise degree sequence constraints that arise. The lemma below allows us to consider only the worst case degree sequence, which is when the degree sequence is regular. Even when there is no regular degree sequence with the correct degree sum, the lemma below allows us to bound the number of degree sequences by considering binomial coefficients with non-integer entries.

\begin{lem}\label{cor: max sequence}
    Assume~\eqref{Assumptions}. Let $U,V$ be disjoint vertex sets where $||U| - |V|| \leq (|U| + |V|)/2$. Let $\mathbf d$ be a degree sequence on $U \cup V$ with maximum degree at most $d$ and where $\sum_{x \in U \cup V}d_x \geq 2(|U| + |V|)$. Then, there are at most 
    \begin{equation}\notag
        O\left[{|U||V| \choose M}^{-1}{|V| \choose M/|U|}^{|U|}{|U| \choose M/|V|}^{|V|}\right]
    \end{equation}
    bipartite graphs with bipartition $(U,V)$ and degree sequence $\mathbf d$.
    

\end{lem}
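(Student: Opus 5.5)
The plan is to reduce to the near-regular case, which \cref{thm: bip enum} handles, and then to pass from integer degrees to the non-integral binomial coefficients in the statement by concavity. Throughout, $M$ denotes the number of edges, so $M = \sum_{u\in U} d_u = \sum_{v\in V} d_v$. Write $N(\mathbf d)$ for the number of bipartite graphs with bipartition $(U,V)$ and degree sequence $\mathbf d$.

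\textbf{Step 1 (balancing preserves or increases the count).} Suppose $u_1,u_2\in U$ have $d_{u_1}\ge d_{u_2}+2$. Let $\mathbf d'$ agree with $\mathbf d$ except that $d'_{u_1}=d_{u_1}-1$ and $d'_{u_2}=d_{u_2}+1$. I claim $N(\mathbf d')\ge N(\mathbf d)$.
\begin{itemize}
\item Fix everything in the graph except the edges at $u_1$ and $u_2$. Also fix the set $C$ of common neighbours of $u_1,u_2$ and the set $W\subseteq V$ of vertices adjacent to exactly one of them. The degree constraints on $V$ are then already met, whichever way $W$ is split.
\item The number of completions is $\binom{|W|}{d_{u_1}-|C|}$, where $|W| = d_{u_1}+d_{u_2}-2|C|$.
\item This binomial is symmetric and unimodal in its lower index. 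Moving the lower index one step toward $|W|/2$ therefore does not decrease it.
\item Summing over all such data gives $N(\mathbf d')\ge N(\mathbf d)$.
\end{itemize}
The same argument applies on the side $V$. Transfers on $U$ leave the degrees on $V$ unchanged, and vice versa, so we may iterate on both sides. The result is a sequence $\mathbf d^*$ with $N(\mathbf d^*)\ge N(\mathbf d)$ in which the degrees on $U$ lie in $\{\lfloor M/|U|\rfloor,\lceil M/|U|\rceil\}$ and those on $V$ lie in $\{\lfloor M/|V|\rfloor,\lceil M/|V|\rceil\}$. The maximum degree stays at most $d$, and all degree sums are unchanged. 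If $\mathbf d^*$ is not graphical, then $N(\mathbf d)=0$ and there is nothing to prove.

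\textbf{Step 2 (apply \cref{thm: bip enum}).} Check its hypotheses for $\mathbf d^*$:
\begin{itemize}
\item $||U|-|V||\le (|U|+|V|)/2$ holds by assumption.
\item The edge density $M/(|U||V|)$ is $O(d/n)=o(1)$ under \eqref{Assumptions}. This uses that $|U|,|V|$ are comparable and that $M\le d\min(|U|,|V|)$, so the density is below $\mu_0$.
\item Each vertex degree is within $1$ of the side average, and $1\le\sqrt{\text{average}}$ because the average degree is at least $2$.
\end{itemize}
Hence
\[N(\mathbf d^*)=\Theta\!\left(\binom{|U||V|}{M}^{-1}\prod_{u\in U}\binom{|V|}{d^*_u}\prod_{v\in V}\binom{|U|}{d^*_v}\right).\]

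\textbf{Step 3 (concavity).} Define $\binom{m}{x}$ via gamma functions as in the appendix. The map $x\mapsto\log\binom{m}{x}=\log\Gamma(m+1)-\log\Gamma(x+1)-\log\Gamma(m-x+1)$ is concave in $x$, since $\log\Gamma$ is convex. The $d^*_u$ average exactly $M/|U|$, so Jensen gives
\[\prod_{u\in U}\binom{|V|}{d^*_u}\le\binom{|V|}{M/|U|}^{|U|},\]
and likewise on $V$. Combining the three steps yields the claimed $O(\cdot)$ bound.

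The main point needing care is Step 1: checking that the local two-vertex re-split is legitimate. It must preserve the degree of every vertex in $V$ and respect the maximum degree $d$. It must also genuinely give a count comparison, which means summing over the fixed outside data rather than constructing an explicit injection. The remaining steps are routine.
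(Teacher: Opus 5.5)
Your proposal is correct and follows essentially the paper's proof. The paper's rebalancing claim fixes the residual sequence $\mathbf r$ on $V$ induced by the two vertices and compares $\binom{\beta}{d_x-\alpha}$ with $\binom{\beta}{d_x-1-\alpha}$, which is exactly your $\binom{|W|}{d_{u_1}-|C|}$ comparison, and it then applies \cref{thm: bip enum} to the near-regular sequence as you do. The only difference is the final averaging step $\prod_i\binom{x}{y_i}\le\binom{x}{y}^r$: the paper proves it with H\"older's inequality on Beta integrals (\cref{prop: binom average}), while you use log-convexity of $\Gamma$ and Jensen, which is equally valid.

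One caveat applies to both proofs: your density check tacitly uses $|U|+|V|=\Omega(n)$, which is not among the lemma's hypotheses but holds in every application.
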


Next, we give a definition that allows us to easily reference and classify the defect graphs we are considering in $\mathcal T_{k_A, r_A, k_B, r_B}$.

\begin{definition} Let $1 \le r_A \leq \overbar{r}_{A}$.
    A graph $G$ is a $(k_A, r_A, A, D_A)$-graph if
    \begin{itemize}
        \item $V(G) = A$,
        \item $|E(G)| = k_A$.
        \item If $r_A < \overbar r_A$, $G$ has a $D_A$-torso $T$ where $k_{A, r_A-1} \leq |E(T)| < k_{A, r_A}$,
        \item If $r_A = \overbar r_A$, $G$ has a $D_A$-torso $T$ where $k_{A, r_A - 1} \leq |E(T)|$.
    \end{itemize}
    Let $(k_B, r_B, B, D_B)$ be defined symmetrically.
\end{definition}
Notice that given any $(k_B, r_B)$, if $G\in \mathcal T_{k_A, r_A, k_B, r_B}$ then $G[A]$ is a $(k_A, r_A, A, D_A)$-graph. Therefore, we may upper bound $|\mathcal T_{k_A, r_A, k_B, r_B}|$ by bounding the number of $(k_A, r_A, A, D_A)$-graphs and the number of $(k_B, r_B, B, D_B)$-graphs. The following proposition is used many times to approximate the number of defect graphs.


\begin{prop}\label{cor: counting defects}
    Suppose $|A| \leq \frac{n}{2}(1 + \delta_0)$. For $1 \leq r_A \leq \overbar{r}_{A}$, there are at most
    \[
        \exp\left[k\log\frac{n}{d} + (1 + \varepsilon^2)k_{A, r_A-1}\log\frac{dn}{2k_A}\right]
    \]
    $(k_A, r_A, A, D_{A})$-graphs.    
\end{prop}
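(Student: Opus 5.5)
We count $(k_A,r_A,A,D_A)$-graphs $G$ by first choosing a $D_A$-torso $T$ and then the remaining edges $E(G)\setminus E(T)$. The point of the torso is that edges outside it are forced to live near high-degree vertices, so they are cheap to specify. Throughout, $k$ denotes $k_A$.

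\textbf{Choosing the torso.} Since $|E(T)|\le k_{A,r_A-1}=:k'$, the number of choices for $T$ is at most
\[
\sum_{t\le k'}\binom{\binom{|A|}{2}}{t}\le \exp\Big[k'\log\frac{e|A|^2}{2k'}+O(\log n)\Big].
\]
We rewrite the exponent as
\[
k'\log\frac{e|A|^2}{2k'}=k'\log\frac{n}{d}+k'\log\frac{dn}{2k}+k'\log\frac{k}{k'}+O(k').
\]
This uses $|A|\le (1+\delta_0)n/2$, hence $|A|^2/2\le (1+O(\delta_0))\, n^2/8$. The two error terms are controlled as follows.
\begin{itemize}
\item Since $k/k'=(1-\varepsilon)^{-(r_A-1)}$ and $r_A\le\overbar r_A=O(\log\log(dn/2k))$, we have $k'\log(k/k')=O(\varepsilon\, r_A k')$.
\item By \eqref{eq:k'bound}, both $O(\varepsilon r_A k')$ and $O(k')$ are $o(k'\log(dn/2k))$, so they are absorbed into $\varepsilon^2 k'\log(dn/2k)$.
\end{itemize}
Finally, $k'\log(n/d)\le k\log(n/d)$. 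So the torso contributes at most $\exp[k\log(n/d)+(1+\varepsilon^2/2)k'\log(dn/2k)]$.

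\textbf{Choosing the non-torso edges.} By edge-maximality of $T$, every edge $e\in E(G)\setminus E(T)$ has an endpoint of $T$-degree exactly $D_A$. Otherwise $e$ could be added to $T$, contradicting maximality. Let $S$ be the set of vertices of $T$-degree $D_A$. Then $|S|\le 2k'/D_A$, and every remaining edge meets $S$. The number of ways to choose the at most $k$ remaining edges is therefore at most
\[
\binom{|S|\,|A|}{k-|E(T)|}\le \Big(\frac{e\,|S|\,|A|}{k-|E(T)|}\Big)^{k-|E(T)|}.
\]
Using $|S|\,|A|\lesssim k' n/D_A$, we get $\log(|S|\,|A|)\le \log k'+\log(n/d)+\log\big(2\varepsilon^{-4}\log(dn/2k)\big)$.

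\textbf{Main obstacle.} The difficulty is showing that
\[
(k-k')\log\frac{e k' n\log(dn/2k)}{\varepsilon^4 d\,(k-k')}\le (k-k')\log\frac nd+\frac{\varepsilon^2}{2}\,k'\log\frac{dn}{2k}.
\]
This amounts to bounding $(k-k')\log\big(k'\log(dn/2k)/(k-k')\big)$ by a small multiple of $k'\log(dn/2k)$, and it splits into two cases.
\begin{itemize}
\item If $k-k'\ge \sqrt{k'}$, or more generally when $k-k'$ is comparable to $k$, the logarithm is $O(\log\log(dn/2k))$. The resulting term is $O(k\log\log(dn/2k))$, which by \eqref{eq:k'bound} and the definition of $\overbar r_A$ is at most $\varepsilon^2 k'\log(dn/2k)$.
\item If $k-k'$ is tiny compared with $k'$, we use that $x\log(C/x)$ is increasing for $x<C/e$. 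The term is then at most $k'/\log(dn/2k)\cdot O(1)$, which is negligible.
\end{itemize}
The delicate part is the exact use of \eqref{eq: r1}. It gives $k'\log(dn/2k)\ge 4\varepsilon^{-2}k\log\log(dn/2k)$ only up to the factor $(1-\varepsilon)^{\overbar r_A-r_A}$, and this factor is needed precisely to absorb the $\log\log$ loss from $D_A$. If that absorption fails, I would refine the step by first choosing the degrees of vertices in $S$ and then counting the non-torso edges with those degrees fixed.

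Multiplying the torso and non-torso counts gives the bound stated in \cref{cor: counting defects}.
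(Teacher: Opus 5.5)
\textbf{There is a gap in how you combine the torso count with the count of the remaining edges.} The number of edges outside $T$ is $k-|E(T)|$, not $k-k'$. For $r_A<\overbar{r}_A$ the torso only satisfies $k_{A,r_A}<|E(T)|\le k'$, so up to $k-k'+\varepsilon k'$ edges lie outside it. For $r_A=\overbar{r}_A$ the torso can have anywhere from $0$ to $k'$ edges.

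Since $j\log(eN/j)$ is increasing for $j<N$, replacing $k-|E(T)|$ by $k-k'$ in your ``main obstacle'' moves the upper bound in the wrong direction. You also cannot fix this by multiplying the worst torso count (at $|E(T)|=k'$) by the worst non-torso count (at the smallest $|E(T)|$). The coefficient of $\log\frac nd$ would then be $k+\varepsilon k'$ or even $k+k'$. An excess such as $\varepsilon k'\log\frac nd$ is not absorbed by $\varepsilon^2k'\log\frac{dn}{2k}$ when $d\approx\sqrt{n\log n}$. As written, you also spend $k\log\frac nd$ on the torso and then $(k-k')\log\frac nd$ again on the other edges.

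The paper avoids this coupling via Proposition~11 of \cite{osthus2003densities}, i.e.\ the bound $a^s\binom{as}{k-k'}\binom{\binom{a}{2}}{k'}$ with $s=2k'/D_A$. Read it as: choose a spine, then \emph{exactly} $k-k'$ edges meeting it, then $k'$ arbitrary edges. Every $(k_A,r_A,A,D_A)$-graph arises this way, because at least $k-k'$ of its edges meet the spine. So the split into $k-k'$ and $k'$ edges is built in, at the cost of a negligible factor $a^s$.

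\textbf{Your torso-first route does work once you count separately for each $t=|E(T)|\le k'$.} It even avoids the factor $a^s$, since $S$ is determined by $T$. For fixed $t$ the number of graphs is at most $\binom{\binom{a}{2}}{t}\binom{2k'a/D_A}{k-t}$, whose logarithm is at most
\[
k\log\frac nd+t\log\frac{ea^2d}{2nt}+(k-t)\log\frac{2ek'ad}{nD_A(k-t)}.
\]
Now the coefficient of $\log\frac nd$ is exactly $k$. Bound $a\le(1+\delta_0)\frac n2$ and $ad/(nD_A)\le(1+\delta_0)\varepsilon^{-4}\log\frac{dn}{2k}$.
\begin{itemize}
\item The middle term is increasing in $t$, so it is at most $k'\log\frac{dn}{2k}+k'\log\frac{k}{k'}+O(k')$.
\item The last term is increasing in $k-t$, so it is largest at $t=0$. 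Using $k'\le k$, it is then at most $k\log\log\frac{dn}{2k}+O(k\log\frac1\varepsilon)$.
\end{itemize}
Both error terms are absorbed, as in the paper, by $k'\log\frac{dn}{2k}\ge4\varepsilon^{-2}k\log\log\frac{dn}{2k}$.

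This inequality holds for \emph{every} $r_A\le\overbar{r}_A$, because $k'=(1-\varepsilon)^{r_A-1}k\ge(1-\varepsilon)^{\overbar{r}_A}k$. So your worry about losing a factor $(1-\varepsilon)^{\overbar{r}_A-r_A}$ is unfounded.

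Finally, your sub-case ``$k-k'\ge\sqrt{k'}$'' does not give an $O(\log\log\frac{dn}{2k})$ logarithm. The right dichotomy is $r_A=1$ (so $k'=k$) versus $r_A\ge2$ (so $k-k'\ge\varepsilon k$). With the per-$t$ count this case split is not needed at all.
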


\cref{cor: max sequence} and \cref{cor: counting defects} are proved in \cref{sec: Defects}. \medskip


\subsection{Proof of \cref{lem: large k far threshold}}
Let $a=|A| $, $b=|B| $, $k'_A = k_{A, r_A - 1}$, and $k'_B=k_{B, r_B - 1}$. 
    By~\eqref{eq:AB} and the assumption that $k_A + k_B \leq \delta_0dn/2$ from the lemma, it is sufficient to consider that $|a-b| \leq \delta_0dn < \varepsilon^2dn/2$ for sufficiently small $\varepsilon$, as otherwise $|\mathcal T_{k_A, r_A, k_B, r_B}|=0$ and the lemma holds trivially. 
    
Let $\mathcal G'_{k_A, r_A, k_B, r_B}$ denote the set of graphs $G$ on $A \cup B$ with exactly $dn/2$ edges where $G[X]$ is a $(k_X, r_X, X, D_{X})$-graph for both $X \in \{A, B\}$. Let $\mathcal T'_{k_A, r_A, k_B, r_B}$ denote the set of triangle-free graphs in $\mathcal G'_{k_a, r_A, k_B, r_B}$. Notice that $\mathcal T'_{k_A, r_A, k_B, r_B}$ is a superset of $\mathcal T_{k_A, r_A, k_B, r_B}$ since each $d$-regular graph on $n$ vertices has $dn/2$ edges and both sets have the same restrictions for defect graphs. The size of $\mathcal T'_{k_A, r_A, k_B, r_B}$ has been bounded in~\cite{osthus2003densities}, by using Lemma 12 of \cite{osthus2003densities} to bound $|\mathcal G'_{k_A, r_A, k_B, r_B}|$ and then taking the weakest bound among Lemma 17, Lemma 18, and Lemma 19 of \cite{osthus2003densities} to bound the probability that a graph chosen uniformly at random from $\mathcal G'_{k_A, r_A, k_B, r_B}$ is in $\mathcal T'_{k_A, r_A, k_B, r_B}$. We restate the bound in the claim below.

\begin{claim}\label{clm: no degree constraints}\cite{osthus2003densities} 
     \[
     |\mathcal T'_{ k_A, r_A,k_B, r_B}| \leq \binom{n^2/4}{dn/2}\exp\left[-\varepsilon^2\left(k'_A\log \frac{dn}{2k_A} + k'_B\log\frac{dn}{2k_B}\right)\right].
     \]
\end{claim}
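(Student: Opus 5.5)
This claim is a restatement of results from \cite{osthus2003densities}, so the plan is to assemble their lemmas rather than reprove them. The quantity is $|\mathcal T'_{k_A,r_A,k_B,r_B}|=|\mathcal G'_{k_A,r_A,k_B,r_B}|\cdot \P[H\text{ triangle-free}]$, where $H$ is uniform on $\mathcal G'_{k_A,r_A,k_B,r_B}$. Fix the partition $(A,B)$ with $|a-b|$ small, as guaranteed by \eqref{eq:AB}. I would bound the two factors separately: the first by Lemma 12 of \cite{osthus2003densities}, the second by Lemmas 17--19 there.

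\textbf{Counting $\mathcal G'$.} A graph in $\mathcal G'$ is specified by two pieces of data. The first is the defect graphs $G[A]$, $G[B]$, which are $(k_X,r_X,X,D_X)$-graphs. The second is a set of $dn/2-k_A-k_B$ cross edges chosen from the $ab\le n^2/4$ pairs. By \cref{cor: counting defects} (equivalently Lemma 12 of \cite{osthus2003densities}), the number of defect pairs is at most
\[
\exp\left[(k_A+k_B)\log\frac nd+(1+\eps^2)\Big(k'_A\log\frac{dn}{2k_A}+k'_B\log\frac{dn}{2k_B}\Big)\right].
\]
The number of cross-edge choices is $\binom{ab}{dn/2-k_A-k_B}$. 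Dividing by $\binom{n^2/4}{dn/2}$ gives a factor of roughly $(2d/n)^{k_A+k_B}$ times a factor coming from $ab\le n^2/4$. The term $(2d/n)^{k_A+k_B}$ cancels the $\exp[(k_A+k_B)\log(n/d)]$ term up to $e^{O(k_A+k_B)}$. By \eqref{eq:k'bound}, that error is $o(k'_X\log(dn/2k_X))$ and is therefore absorbed. This yields
\[
|\mathcal G'|\le\binom{n^2/4}{dn/2}\exp\left[(1+2\eps^2)\Big(k'_A\log\frac{dn}{2k_A}+k'_B\log\frac{dn}{2k_B}\Big)\right].
\]

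\textbf{Probability of being triangle-free.} Condition on the defect graphs. The cross edges then form a uniform bipartite graph with $m'=dn/2-k_A-k_B$ edges. A triangle is created exactly when both endpoints of some defect edge receive a common cross neighbour. I would then split according to the torso index, as in \cite{osthus2003densities}. For a large torso ($r_X<\bar r_X$), the torso has bounded maximum degree $D_X$, so Janson's inequality applies (Lemma 17 there). Each torso edge is destroyed with probability about $(1-(2d/n)^2)^{n/2}\approx e^{-2d^2/n}$. With $d\ge(1+\eps)\frac{\sqrt3}{2}\sqrt{n\log n}$ this gives a penalty of roughly $\exp[-(1+\eps)\cdot\tfrac32 k'_X\log n]$, which beats $(1+2\eps^2)k'_X\log(dn/2k_X)$ by a margin of at least $\eps^2 k'_X\log(dn/2k_X)$. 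For a small torso, the defect graph has many vertices of degree greater than $D_X$. The expansion argument of Lemmas 18--19 there (many expanding $x$ with $N(x)\cap V(x)=\emptyset$) gives an even stronger penalty. Taking the weakest of the three bounds and combining it with the count of $\mathcal G'$ gives the claim.

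\textbf{Main obstacle.} The hardest step is checking that the parameter choices here ($D_X$, $k_{X,r}$, $\bar r_X$, and the range $k_A+k_B\le\delta_0dn/2$) match the hypotheses of Lemmas 17--19 of \cite{osthus2003densities}. In particular, those lemmas allow $|a-b|$ to be as large as about $\delta_0 n$, whereas their setting assumes a near-balanced cut. I would verify that the $O(\delta_0)$ change in cross-density alters the exponent only by a $(1+O(\delta_0))$ factor, which is absorbed into $\eps^2$.
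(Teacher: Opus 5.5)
Your proposal takes the same route as the paper, which likewise bounds $|\mathcal G'_{k_A,r_A,k_B,r_B}|$ via Lemma 12 of \cite{osthus2003densities} and the probability that a uniform member of $\mathcal G'_{k_A,r_A,k_B,r_B}$ is triangle-free via the weakest of Lemmas 17--19 there, and then multiplies the two bounds. One wording slip: $(1-(2d/n)^2)^{n/2}\approx e^{-2d^2/n}$ is the heuristic probability that a torso edge lies in \emph{no} triangle, not the probability that it is destroyed.
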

Hence,
    \begin{equation}\label{eq: trivial bound}
        |\mathcal T_{k_A, r_A, k_B, r_B}| \leq \binom{n^2/4}{dn/2}\exp\left[-\varepsilon^{2}\left(k'_A\log\frac{dn}{2k_A} + k'_B \log\frac{dn}{2k_B}\right)\right].
   \end{equation}
    Then, using \eqref{eq: trivial bound} to bound $\mathcal T_{k_A, r_A, k_B, r_B}$ and  \eqref{eq:reg bip count2} to enumerate $\mathcal G_{n,d}^\text{bip}$, we have
    \[
    \frac{|\mathcal T_{k_A, r_A, k_B, r_B}|}{|\mathcal G_{n,d}^{\text{bip}}|} \leq \frac{2}{\binom{n}{n/2}}\frac{\binom{n^2/4}{dn/2}^2}{\binom{n/2}{d}^{n}}\exp\left[-\varepsilon^{2}\left(k'_A\log\frac{dn}{2k_A} + k'_B \log\frac{dn}{2k_B}\right)\right].
    \]
    We then approximate $\frac{\binom{n^2/4}{dn/2}^2}{\binom{n/2}{d}^{n}}$ by applying \cref{cor: choose} to each binomial coefficient. This gives 
    \begin{align}
        \frac{\binom{n^2/4}{dn/2}^2}{\binom{n/2}{d}^{n}} &= \frac{(n/2)^{2dn}}{(dn/2)^{dn + 1}(1 - 2d/n)^{n^2/2 - dn}} \cdot \frac{d^{dn + n/2}(1 - 2d/n)^{n^2/2 - dn}}{(n/2)^{dn}}\exp[O(n\log d)]\notag\\
        &=\exp[O(n\log d)].\notag
    \end{align}
    
    It follows that
    \begin{align*}
        \frac{|\mathcal T_{k_A, r_A, k_B, r_B}|}{|\mathcal G_{n,d}^{\text{bip}}|}&\leq \frac{1}{\binom{n}{n/2}}\exp\left[-\varepsilon^{2}\left(k'_A\log\frac{dn}{2k_A} + k'_B \log\frac{dn}{2k_B}\right) + O(n\log d)\right]\\
        &\leq \frac{1}{\binom{n}{n/2}}\exp\left[-\varepsilon^{3}\left(k'_A\log\frac{dn}{2k_A} + k'_B \log\frac{dn}{2k_B}\right)\right],
    \end{align*}
    where the final inequality holds since $n\log d = O(k_A + k_B) = o(k'_A\log\frac{dn}{2k_A} + k'_B\log\frac{dn}{2k_B})$ by~\eqref{eq:k'bound}. This complete the proof of the lemma. \qed

\smallskip

\subsection{Proof of \cref{lem: large k near threshold}} \label{sec:large k near threshold}
We may assume that $(A,B)$ satisfies~\eqref{eq:AB}.
    Let $a = |A|$, $b = |B|$, $k'_A = k_{A, r_A - 1}$, and $k'_B = k_{B, r_B- 1}$. By~\eqref{eq:AB}, $|a-b| \leq \delta_0dn < \varepsilon^2dn/2$ for sufficiently small $\varepsilon$. Let $\lambda  = \lambda (n)=\beta/\log n$ where $\beta>0$ is an arbitrary fixed constant such that $\lambda  dn/2$ is an integer.
    \begin{claim}\label{clm: k'Ak'B} Suppose that $k_A,k_B=o(dn)$.
     \[
     k'_A\log\frac{dn}{2k_A} \geq k'_B\log\frac{dn}{2k_B}(1 + o(1)).
     \]
    \end{claim}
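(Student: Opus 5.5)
The plan is to compare both sides through the function $f(x)=x\log\frac{dn}{2x}$, using that the torso parameters $k'_X=k_{X,r_X-1}$ differ from $k_X$ by at most a polylogarithmic factor. Throughout I use the standing hypotheses of \cref{lem: large k near threshold}: $k'_A\ge k'_B$, $\eps^4 n\le k'_A\le \delta_0 dn/2$, $k_A,k_B\le n\log d$, and $(1+\eps)\frac{\sqrt3}{2}\sqrt{n\log n}\le d$.

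\emph{Step 1 (monotonicity on the $B$ side).} Since $k'_B=(1-\eps)^{r_B-1}k_B\le k_B$, we have $\log\frac{dn}{2k_B}\le\log\frac{dn}{2k'_B}$. Hence $k'_B\log\frac{dn}{2k_B}\le f(k'_B)$.

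\emph{Step 2 (monotonicity of $f$).} The derivative is $f'(x)=\log\frac{dn}{2x}-1$, which is positive for $x<\frac{dn}{2e}$. Both $k'_B\le k'_A\le\delta_0dn/2$ lie in this range, since $\delta_0<1/e$. So $f(k'_B)\le f(k'_A)=k'_A\log\frac{dn}{2k'_A}$.

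\emph{Step 3 (replacing $k'_A$ by $k_A$ inside the logarithm).} This is the only step with real content. Write
\[
\log\frac{dn}{2k'_A}=\log\frac{dn}{2k_A}+\log\frac{k_A}{k'_A}.
\]
It suffices to show $\log\frac{k_A}{k'_A}=o\big(\log\frac{dn}{2k_A}\big)$.

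For the numerator, $r_A\le\bar r_A$ and \eqref{eq: r1} give $k_A/k'_A\le(1-\eps)^{-\bar r_A}=O\big(\log\frac{dn}{2k_A}\big)$. Therefore $\log\frac{k_A}{k'_A}=O\big(\log\log\frac{dn}{2k_A}\big)$.

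For the denominator, I need $\log\frac{dn}{2k_A}\to\infty$, which is exactly where the hypothesis $k_A=o(dn)$ enters. Quantitatively, $k_A\le n\log d$ and $d=\Omega(\sqrt{n\log n})$ give $\log\frac{dn}{2k_A}\ge\log\frac{d}{2\log d}=(\tfrac12-o(1))\log n$.

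Combining the two estimates, $\log\frac{dn}{2k'_A}=(1+o(1))\log\frac{dn}{2k_A}$, since $\log\log x=o(\log x)$.

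\emph{Conclusion.} Chaining Steps 1–3 gives
\[
k'_B\log\frac{dn}{2k_B}\le(1+o(1))\,k'_A\log\frac{dn}{2k_A},
\]
which is the claim after rearranging the $(1+o(1))$ factor.

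The one point to watch is Step 3. If $k_A$ were comparable to $dn$, the logarithm $\log\frac{dn}{2k_A}$ would be bounded. The polylogarithmic gap between $k_A$ and $k'_A$ would then not be negligible, which is why the hypothesis $k_A,k_B=o(dn)$ is needed.
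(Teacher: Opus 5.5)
Your proof is correct and follows essentially the paper's route. Both arguments use the monotonicity of $x\mapsto x\log\frac{dn}{2x}$ for $x<\frac{dn}{2e}$, together with the fact that $k_X/k'_X$ is only polylogarithmic in $\frac{dn}{2k_X}$, so swapping $k_X$ and $k'_X$ inside the logarithm costs a factor $1+o(1)$. The only minor difference is that the paper proves this asymptotic equivalence for both $X\in\{A,B\}$ via \eqref{eq:k'bound}, whereas you note that on the $B$ side the trivial inequality $\log\frac{dn}{2k_B}\le\log\frac{dn}{2k'_B}$ suffices, and you derive the $A$-side estimate directly from \eqref{eq: r1}.
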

    \begin{proofclaim}
       By \eqref{eq:k'bound}, $k_X = o\left(k'_X{\log\frac{dn}{2k_X}}\right)$, and $k_X\ge k'_X$, for $X \in \{A,B\}$. Thus, $k'_X\log\frac{dn}{2k_X} = k'_X\log\frac{dn}{2k'_X}(1 + o(1))$  for $X \in \{A,B\}$, so
    \begin{align}
        k'_B\log\frac{dn}{2k_B} = k'_B\log\frac{dn}{2k'_B}(1 + o(1)) \leq k'_A\log\frac{dn}{2k'_A}(1 + o(1)) = k'_A\log\frac{dn}{2k_A}(1 + o(1)),\nonumber
    \end{align}
    where the inequality above holds because $k'_A \geq k'_B$ and $y\log\frac{C}{y}$ is monotone increasing when $y< C/e$. 
    \end{proofclaim}
    
Let $\mathcal T'$ denote the set of triangle-free graphs $G$ on $A \cup B$ with exactly $(1 - \lambda )dn/2$ edges where $G[X]$ is a $(k_X, r_X, X, D_{X})$-graph for both $X \in \{A, B\}$.
    We bound $|\mathcal T_{ k_A, r_A, k_B, r_B}|$ by first choosing a graph in $\mathcal T'$, and then choosing the remaining $\lambda  dn/2$ edges to obtain a regular graph. By \cref{clm: no degree constraints} (notice the difference in the number of edges in  $\mathcal{T}'$ compared to the original application of \cref{clm: no degree constraints}; this does not affect the validity of the claim),
    \begin{equation}
    |\mathcal T'| \leq \binom{n^2/4}{(1-\lambda )dn/2}\exp\left[-\varepsilon^2\left(k'_A\log\frac{dn}{2k_A} + k'_B\log\frac{dn}{2k_B}\right)\right]. \label{eq:no degree constraints 2}
    \end{equation}

For convineince, let $m' = \lambda  dn/2$, $a^* = m'/a$, and $b^* = m'/b$.
    Observe that 
    \begin{itemize}
        \item $a = \frac{n}{2}\left(1 + \frac{2k_A - 2k_B}{dn}\right)$,
        \item $b = \frac{n}{2}\left(1 - \frac{2k_A - 2k_B}{dn}\right)$,
        \item $a^* = \lambda  d\left(1 - \frac{2k_A - 2k_B}{dn} + O\left(\frac{(k_A + k_B)^2}{d^2n^2}\right)\right)$,
        \item $b^* = \lambda  d\left(1 + \frac{2k_A - 2k_B}{dn} + O\left(\frac{(k_A + k_B)^2}{d^2n^2}\right)\right)$.
    \end{itemize}

By \cref{cor: max sequence}, there are at most, up to a constant factor, 
$
\binom{b}{a^*}^a\binom{a}{b^*}^b/\binom{ab}{m'}
$
ways to choose the remaining $m'$ edges to complete any graph in ${\mathcal T}'$ into a graph in $\mathcal T_{k_A, r_A, k_B, r_B}$. On the other hand, every graph in $\mathcal T_{k_A, r_A, k_B, r_B}$ has at least $\binom{dn/2 - k_A - k_B}{m'}$ subgraphs that are members of ${\mathcal T}'$, and so can be constructed in at least $\binom{dn/2 - k_A - k_B}{m'}$ different ways by the process we have described. It follows that
\[
|\mathcal T_{k_A, r_A, k_B, r_B}|\le \frac{\binom{n^2/4}{(1 - \lambda )dn/2}\binom{b}{a^*}^a\binom{a}{b^*}^b}{\binom{ab}{m'}\binom{dn/2 - k_A - k_B}{m'}}\exp\left[-{\varepsilon^2}\left(k'_A\log \frac{dn}{2k_A} + k'_B\log\frac{dn}{2k_B}\right)+O(1)\right].
\]
    Then, by~\eqref{eq:reg bip count2},
    \begin{align}
        \frac{|\mathcal T_{k_A, r_A, k_B, r_B}|\binom{n}{n/2}}{|\mathcal G_{n,d}^{\text{bip}}|} &\leq \frac{\binom{n^2/4}{dn/2}\binom{n^2/4}{(1 - \lambda )dn/2}\binom{b}{a^*}^a\binom{a}{b^*}^b}{\binom{n/2}{d}^n\binom{ab}{m'}\binom{dn/2 - k_A - k_B}{m'}}\notag\\
        &\times\exp\left[-{\varepsilon^2}\left(k'_A\log \frac{dn}{2k_A} + k'_B\log\frac{dn}{2k_B}\right)+O(1)\right]. \label{eq: large k initial ineq}
    \end{align}

By \cref{prop: choose approx ub top} and  \cref{prop: choose approx ub bot}  we obtain 
\begin{align}
    \binom{b}{a^*}^a\binom{a}{b^*}^b &= \binom{n/2}{\lambda  d}^n\left(2\lambda  d/n\right)^{2k_A - 2k_B}(2 \lambda  d/n)^{-2k_A + 2k_B}\notag\\
    &\times\exp\left[O\left(k_A + k_B + \frac{(k_A + k_B)^2}{dn}\log\frac{n}{d}\right)\right]\notag\\
    &= \binom{n/2}{\lambda  d}^n\exp[O(k_A + k_B)].\label{eq: large k approx 1}
\end{align}
Likewise, by applying \cref{prop: choose approx ub top} (with $x = n^2/4$ to $\binom{ab}{m'}$ and $x = dn/2$ to \newline $\binom{dn/2 - k_A - k_B}{m'}$), we obtain
\begin{equation}\label{eq: large k approx 2}
    \binom{ab}{m'}\binom{dn/2 - k_A - k_B}{m'} = \binom{n^2/4}{m'}\binom{dn/2}{m'}\exp[O(k_A + k_B)].
\end{equation}
Substituting \eqref{eq: large k approx 1} and \eqref{eq: large k approx 2} into \eqref{eq: large k initial ineq}, we have 
\begin{equation}\label{eq: large k simplified binom coeff}
    \frac{\binom{n^2/4}{dn/2}\binom{n^2/4}{(1 - \lambda )dn/2}\binom{b}{a^*}^a\binom{a}{b^*}^b}{\binom{n/2}{d}^n\binom{ab}{m'}\binom{dn/2 - k_A - k_B}{m'}} = \frac{\binom{n^2/4}{dn/2}\binom{n^2/4}{(1 - \lambda )dn/2}\binom{n/2}{\lambda  d}^n}{\binom{n/2}{d}^n\binom{n^2/4}{\lambda  dn/2}\binom{dn/2}{\lambda  dn/2}}\exp[O(k_A + k_B)]. 
\end{equation}
We now evaluate each binomial coefficient by applying \cref{cor: choose}. We again do not give the complete details of this computation as it is not too difficult. We use that $n = O(k_A + k_B)$ to absorb many terms into $\exp[O(k_A + k_B)]$ below. This gives that the right hand side of \eqref{eq: large k simplified binom coeff} is 
\begin{align}
   & \lambda ^{-n/2}\frac{\left(1 - \frac{2(1-\lambda )d}{n}\right)^{n^2/4 - (1 - \lambda )dn/2}\left(1 - \frac{2\lambda d}{n}\right)^{n^2/4 - \lambda dn/2}}{\left(1 - \frac{2d}{n}\right)^{n^2/4 - dn/2 }}\exp[O(k_A + k_B)] \notag \\
   &= \lambda ^{-n/2}\exp[O(\lambda  d^2) + O(k_A + k_B)].\notag
\end{align}
That is, 
\begin{align*}
    \frac{|\mathcal T_{k_A, r_A, k_B, r_B}|\binom{n}{n/2}}{|\mathcal G_{n,d}^{\text{bip}}|} \leq \exp\left[-\varepsilon^2\left(k'_A\log\frac{dn}{2k_A} + k'_B\log\frac{dn}{2k_B}\right)(1 + o(1))\right.\\ \left.+ (n/2)\log\frac{1}{\lambda } + O(\lambda  d^2 + k_A + k_B)\right].
\end{align*}

 To complete the proof, it suffices to show that  $\lambda  d^2 +\frac{n}{2}\log \frac{1}{\lambda } = o(k'_A\log \frac{dn}{k_A})$. Since $d \leq \eps^{-4}\sqrt{n\log n}$,
    \begin{align*}
        \lambda  d^2 = O(n) = O(k'_A) = o\left(k'_A\log\frac{dn}{2k_A}\right).
    \end{align*}
    Also, we have 
    \[
    \frac{n}{2}\log\frac{1}{\lambda } = \frac{n}{2}(\log\log n - \log\beta) \leq \frac{k'_A}{2\eps^4}(\log\log n  - \log \beta) = o\left(k'_A\log\frac{dn}{2k}\right).
    \]
\qed

\subsection{Proof of \cref{cor: max sequence} and \cref{cor: counting defects}}\label{sec: Defects}

\noindent\textit{Proof of \cref{cor: max sequence}.}
We first show that the number of graphs with degree sequence $\mathbf d$ is upper bounded by the number of graphs with degree sequence $\mathbf d'$, where $\mathbf d'$ is ``as regular as possible'' over $U$ and $V$. The following claim allows us to do this iteratively.

\begin{claim}\label{clm: iterative step}
    If there exist $x,y \in U$ or $x,y \in V$ where $d_x \geq d_y + 2$, then for $\mathbf d'$ defined by $d'_x = d_x - 1$, $d'_y = d_y + 1$, and $d'_w = d_w$ for all $w \in U\cup V \setminus \{x,y\}$, there are more bipartite graphs with bipartition $(U,V)$ and degree sequence $d'$ than with degree sequence $d$. 
\end{claim}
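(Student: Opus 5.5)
The plan is a switching argument between the two classes of bipartite graphs on $(U,V)$. Let $\mathcal G(\mathbf d)$ be the set of bipartite graphs with bipartition $(U,V)$ and degree sequence $\mathbf d$, and define $\mathcal G(\mathbf d')$ similarly. Throughout, $x,y$ lie on the same side, say $U$, and $d_x \ge d_y+2$.

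\emph{The switching.} For $G \in \mathcal G(\mathbf d)$, choose a vertex $w \in N_G(x)\setminus N_G(y)$. Delete the edge $xw$ and add the edge $yw$. Since $w\in V$ and $w\notin N_G(y)$, the result is a simple bipartite graph on $(U,V)$. Its degree sequence is exactly $\mathbf d'$: $x$ loses one, $y$ gains one, and every other vertex, including $w$, keeps its degree.

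\emph{The inverse switching.} For $G' \in \mathcal G(\mathbf d')$, choose $w \in N_{G'}(y)\setminus N_{G'}(x)$ and move the edge $yw$ to $xw$. This maps $G'$ into $\mathcal G(\mathbf d)$, and the two operations invert each other.

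\emph{Stratifying by common neighbourhood.} Let $c(G)=|N_G(x)\cap N_G(y)|$. The switching moves $w$ from $N(x)\setminus N(y)$ to $N(y)\setminus N(x)$, so it leaves the common neighbourhood unchanged, and the same holds for the inverse. Hence it suffices to compare $\mathcal G_c(\mathbf d)$ with $\mathcal G_c(\mathbf d')$ for each fixed $c$, where the subscript $c$ restricts to graphs with $c(G)=c$.

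\emph{Counting the switchings within a stratum.} For $G\in\mathcal G_c(\mathbf d)$ the number of forward switchings is $|N_G(x)\setminus N_G(y)| = d_x - c$, which is exact. For $G'\in\mathcal G_c(\mathbf d')$ the number of inverse switchings is $|N_{G'}(y)\setminus N_{G'}(x)| = d'_y - c = d_y+1-c$. Double counting the pairs $(G,G')$ related by a switching gives
\[
|\mathcal G_c(\mathbf d)|\,(d_x-c) = |\mathcal G_c(\mathbf d')|\,(d_y+1-c).
\]
Since $c \le d_y$, the factor $d_y+1-c$ is positive. Since $d_x \ge d_y+2$, we also have $d_x-c > d_y+1-c$. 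Therefore $|\mathcal G_c(\mathbf d')| \ge |\mathcal G_c(\mathbf d)|$, with strict inequality whenever $\mathcal G_c(\mathbf d)\neq\emptyset$.

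\emph{Conclusion.} Summing over $c$ gives $|\mathcal G(\mathbf d')| \ge |\mathcal G(\mathbf d)|$, strictly if $\mathcal G(\mathbf d)$ is nonempty. If $\mathcal G(\mathbf d)$ is empty, the claim holds trivially in the weak sense, and the weak form is all that is needed for the upper bound in \cref{cor: max sequence}. The only delicate point is keeping the counts exact rather than bounded, which is why I stratify by $c$. Without the stratification the forward and reverse counts depend on $c$ in different ways, and a crude comparison such as (forward $\ge 2$, reverse $\le d$) loses too much.
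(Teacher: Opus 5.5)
Your proof is correct, and it takes a different route from the paper.

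\emph{The paper's argument.} The paper fixes the vector $\mathbf r$ on $V$ recording how many of $x,y$ each $v\in V$ is joined to. It observes that the number of ways to complete the graph depends only on $\mathbf r$, and is the same for $\mathbf d$ and $\mathbf d'$ since they agree off $\{x,y\}$. It then compares the number of ways to realise $\mathbf r$, namely $\binom{\beta}{d_x-\alpha}$ versus $\binom{\beta}{d_x-1-\alpha}$, where $\alpha$ and $\beta$ count the entries of $\mathbf r$ equal to $2$ and $1$. The inequality follows from unimodality of binomial coefficients, because $d_x-1-\alpha\ge\beta/2$.

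\emph{Comparison.} Your switching replaces this conditional enumeration with exact double counting on the coarser strata $c=|N(x)\cap N(y)|$. It is really the same local computation: with $c=\alpha$ and $\beta=d_x+d_y-2c$,
\[
\binom{\beta}{d_x-1-\alpha}\Big/\binom{\beta}{d_x-\alpha}=\frac{d_x-c}{d_y+1-c},
\]
which is exactly your ratio. Your version never needs the observation that completions depend only on $\mathbf r$. It also gives the strict inequality (matching the word ``more'' in the claim) explicitly. The paper's version is a direct count on a finer conditioning.

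\emph{One small precision.} In $\mathcal G(\mathbf d')$ the common neighbourhood can have size $d_y+1$ (when $N(y)\subseteq N(x)$), and that stratum admits no inverse switchings. So your sentence ``since $c\le d_y$'' should explicitly be restricted to $c\le d_y$. For $c=d_y+1$ you have $\mathcal G_c(\mathbf d)=\emptyset$, so the comparison on that stratum is trivial and the summation over $c$ goes through.
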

\begin{proofclaim}
    Without loss of generality, suppose $x,y \in U$.
    We count the number of bipartite graphs with bipartition $(U,V)$ and degree sequence $\mathbf d$ by first exposing the neighbourhoods of $x$ and $y$ and then choosing all remaining edges. Note that the number of choices for the remaining edges depend only on the degree sequence in $V$ obtained by exposing the neighbourhoods of $x$ and $y$. Thus, it suffices to show that for every degree sequence $\mathbf r$ on $V$ that could be obtained by exposing the neighbourhoods of $x$ and $y$, there are at least as many ways to obtain that degree sequence when exposing $d'_x$ neighbours of $x$ and $d'_y$ neighbours of $y$ than when exposing $d_x$ neighbours of $x$ and $d_y$ neighbours of $y$.

     Fix a degree sequence $\mathbf r$ on $B$ that can be obtained as above. Observe that $0 \leq r_v \leq 2$ for all $v \in B$ and $\sum_{v \in V} r_v = d_x + d_y$. Let $N$ denote the number of ways to expose $d_x$ neighbours of $x$ and $d_y$ neighbours of $y$ that induce degree sequence $r$ on $V$ and $N'$ denote the number of ways to expose $d'_x$ neighbours of $x$ and $d'_y$ neighbours of $d_y$ that induce degree sequence $r$ on $B$. It remains to show that $N \leq N'$. 

    Let $\alpha$ be the number of vertices with degree $2$ in $\mathbf r$. If $N = 0$, $N \leq N'$ holds trivially, so assume $N > 0$. Then, $\alpha \leq s_k$. Let $\beta$ be the number of vertices in $V$ that have degree $1$ in $\mathbf r$. Note that $\beta = d_x + d_y - 2\alpha$ and $d_x + d_y \leq 2d_x - 2$, so 
    \begin{equation}\label{eq: half}
        d_x - 1 - \alpha \geq \beta/2
    \end{equation}
    Also ,
    \begin{equation}\label{eq: N and N'}
        N = {\beta \choose d_x - \alpha} \quad \text{and} \quad N' = {\beta \choose d_x - 1 - \alpha}.
    \end{equation}
     Thus, by~\eqref{eq: half} and~\eqref{eq: N and N'}, $N' \geq N$, which completes the proof.
\end{proofclaim}

Let $u = |U|$ and $v = |V|$. Let $s = M/u$ and $t = M/v$.
By repeatedly applying \cref{clm: iterative step}, we have that the maximum is obtained when $\mathbf d|_U$ and $\mathbf d|_V$ are as regular as possible. Let $s' = \lceil s \rceil $ and $t' = \lceil t \rceil$. When $\mathbf d|_U$ and $\mathbf d|_V$ are as regular as possible, we will certainly be able to apply \cref{thm: bip enum}. Thus, there are at most 
\begin{equation}\label{eq: max sequence 1}
    \binom{uv}{M}^{-1}\binom{v}{s'-1}^{u - q}\binom{v}{s'}^q\binom{u}{t'-1}^{v - r}\binom{u}{t'}^r\exp[O(1)]
\end{equation}
bipartite graphs on degree sequence $\mathbf s, \mathbf t$ for some $0 \leq q \leq u$ and $0 \leq r \leq v$ where $s = s' - 1 + q/u$ and $t = t' - 1 + r/v$. The desired result follows immediately from \cref{prop: binom average}.


\qed

\medskip

\noindent\textit{Proof of \cref{cor: counting defects}.}
Let $a = |A|$, $k = k_A$, and $k' = k_{A, r_A - 1}$. Recall that $k'\log\frac{dn}{2k} \geq \frac{4}{\varepsilon^2}k\log\log\frac{dn}{2k}$ by (\ref{eq: r1}) and take $s = \frac{2k'}{D_{U}}$ if $r_A > 1$ and $s = 0$ if $r_A = 1$. Then, by Proposition~11 of \cite{osthus2003densities}, there are at most
\begin{equation}\label{eq: counting defects}
    a^s\binom{as}{k - k'}\binom{\binom{a}{2}}{k'}
\end{equation}
$(k_A, r_A, A, D_A)$-graphs (when $r_A = 1$ we define $s = 0$, so this holds trivially).

Observe that $a^s = \exp\left[\frac{k'\log\frac{dn}{2k}\log\frac
     {n}{2}}{d}O(1)\right]$ and $d = \omega\left(\log\frac{dn}{2k}\log\frac
     {n}{2}\right)$. Hence, $a^s = \exp\left[o\left(k'\log\frac{dn}{k}\right)\right]$. Moreover, observe that ${as \choose k - k'}{{a \choose 2} \choose k'}$ is bounded from above by
    \begin{align}
         & \frac{(as)^{k - k'}a^{2k'}}{(k - k')^{k - k'}(2k')^{k'}}\exp(O(k))\notag\\
        &= \exp\left[ k\log\frac{as}{k-k'} + k'\log\frac{a(k - k')}{2k's} + O(k)\right]\notag\\
        &=\exp\left[k\log\frac{nk'\log\frac{dn}{2k}}{d(k-k')} + k'\log\frac{dn(k - k')}{k'^2\log\frac{dn}{2k}} + O(k)\right]\notag\\
        &= \exp\left[k\log \frac{n}{d} + k\log\log\frac{dn}{2k} + k'\log\frac{dn}{k'} + k'\log\frac{(k - k')}{k'}+ O(k + k'\log\log n)\right]\notag\\
        &\leq \exp\left[k\log \frac{n}{d} + \frac{\varepsilon^2}{4}k'\log\frac{dn}{2k} + k'\log\frac{dn}{k'} + O(k + k'\log\log n)\right]\label{eq: ineq a}\\
        &= \exp\left[k\log \frac{n}{d} + (1 + \varepsilon^2/4)k'\log\frac{dn}{2k} + O(k + k'\log\log n)\right]\notag\\
        &\leq \exp\left[k\log\frac{n}{d} + (1 + \varepsilon^2)k'\log\frac{dn}{2k}\right],\notag
    \end{align}
    where~\eqref{eq: ineq a} holds by~\eqref{eq: r1}. Hence, by \eqref{eq: counting defects}, the proposition holds.
\qed

\section{Large torsos: proofs of \cref{lem: Janson Counting 1} and \cref{lem: Janson Counting 2}}\label{sec: Janson}


\subsection{Proof of \cref{lem: Janson Counting 1}}\label{sec: Janson Counting 1}
 As before,     let $a = |A|$, $b = |B|$, $k'_A = k_{A, r_A - 1}$, and $k'_B = k_{B, r_B - 1}$ and we may assume that $(A,B)$ satisfies \eqref{eq:AB}. Observe that if $k'_A = 0$, then $k'_A\log\frac{dn}{2k_A} = 1$ (by convention), and so $k'_B\log\frac{dn}{2k_B} = 1$, implying $k'_B = 0$. But then $k_A = k_B = 0$, so $k_A + k_B < 1$. Hence, $k'_A > 0$.
Let $\eps^7/\log n < \lambda  = \lambda (n) < \eps^6/\log n$. 
We will bound $|\mathcal T_{k_A, r_A, k_B, r_B}|$ by first specifying a triangle-free $(k_A, r_A, A, D_{A})$-graph $G_A$ and a triangle-free $(k_B, r_B, B, D_{B})$-graph $G_B$, and then estimating the number of ways to complete $(G_A,G_B)$ to a $d$-regular triangle-free graph with $G_A$ and $G_B$ being the defect graphs.
By~\cref{cor: counting defects},
 the number of choices for $(G_A,G_B)$ is at most
    \begin{equation}
\exp\left[(k_A+k_B)\log\frac{n}{d} + (1 + \varepsilon^2)\left(k'_A\log\frac{dn}{2k_A}+k'_B\log\frac{dn}{2k_B}\right)\right]\label{eq:defectbounds}
    \end{equation}
Fix $(G_A,G_B)$, an arbitrary $D_A$-torso $T_A$ of $G_A$, and an arbitrary $D_B$-torso $T_B$ of $G_B$. Let $S_A \subseteq A$ be chosen arbitrarily such that 
\begin{equation}
|S_A| = k'_A\ \text{and every vertex in $A' = A \setminus S_A$ has degree $0$ in $T_A$}.\label{eq:sizeSA}
\end{equation}
Here, the error created by assuming $k'_A$ is an integer is negligible for us. Note that $S_A$ exists because $T_A$ has at most $k'_A$ edges.
Let $S_B = \emptyset$ if $r_B = \overbar{r}_{B}$, and otherwise let $S_B \subseteq B$ be arbitrarily chosen such that $|S_B| = k'_B$ (again, assuming $k'_B$ to be an integer creates a negligible error) and every vertex in $B' = B \setminus S_B$ has degree $0$ in $T_B$. Let $s_A=|S_A| $ and $s_B=|S_B| $. Notice that $(S_A,S_B)$ depends on $(G_A,G_B)$. However, $s_A,s_B$ do not.

Let $\mathcal H_{G_A, G_B, T_A, T_B, S_A, S_B}$ be the set of $d$-regular graphs $H$ on vertex set $[n]$ where $H[A] = G_A$, $H[B] = G_B$, and no edge in $T_A \cup T_B$ is contained in a triangle in $H$. Notice that $\mathcal H_{G_A, G_B, T_A, T_B, S_A, S_B}$ is independent of  $(S_A, S_B)$. However, we will bound $|\mathcal H_{G_A, G_B, T_A, T_B, S_A, S_B}|$ by counting the number of ways to construct a member $H$ of $\mathcal H_{G_A, G_B, T_A, T_B, S_A, S_B}$ by adding edges into $H$ in multiple stages, where the sets of edges added in each stage depends on $(S_A,S_B)$. Hence, it is convenient to keep $S_A,S_B$ in the subscript in the notation $\mathcal H_{G_A, G_B, T_A, T_B, S_A, S_B}$.      Then,
\begin{equation}
|\mathcal T_{k_A, r_A, k_B, r_B}|\le \sum_{G_A,G_B} \min_{T_A,T_B,S_A,S_B}|{\mathcal H}_{G_A,G_B,T_A,T_B,S_A,S_B}|.\label{eq:insideGAGB}
\end{equation}

Next, we fix $(G_A,G_B,T_A,T_B,S_A,S_B)$ and we upper bound $|\mathcal H|$, where \newline${\mathcal H}={\mathcal H}_{G_A,G_B,T_A,T_B,S_A,S_B}$,
by counting graphs that can be constructed following the procedure below. 

    \begin{itemize}
        \item[1.](local $(S_A, S_B)$ sprinkling) Choose $m_1$ such that $0 \leq m_1 \leq \min\{s_As_B, ds_A,ds_B\}$.
 Include $m_1$ edges from $S_A \times S_B$ into $H$ such that no vertex in $H$ has degree more than $d$ and no edge in $T_A \cup T_B$ is contained in a triangle in $H$.
        \item[2.](local $(S_X, Y')$ sprinkling for distinct $X,Y \in \{A,B\}$) Let $m_A = ds_A  - \sum_{u \in S_A}d_{G_A}(u) - m_1$ and $m_B= ds_B -  \sum_{v \in S_B}d_{G_B}(v)- m_1$.  Include $(1 - \lambda )m_A$ edges from $S_A \times B'$ and $(1 - \lambda )m_B$ edges from $A' \times S_B$ into $H$ such that no triangle in $H$ contains an edge of $T_A \cup T_B$.
        \item[3.](saturating $S_A$ and $S_B$) For each $u \in S_A \cup S_B$, let $d'_u$ denote the degree of $u$ in the currently constructed $H$. For each $u \in S_A$, choose a set $\mathcal{U}$ of $d - d'_u$ vertices in $B'$ and add edges $\{ux: x\in{\mathcal U}\}$ to $H$.  Likewise, for each $v \in S_B$, choose a set $\mathcal{V}$ of $d - d'_v$ vertices in $A'$ and add edges $\{vy: y\in{\mathcal V}\}$ to $H$. All choices are made such that the resulting $H$ is a simple graph (i.e.\ no edge was chosen to be added when it was already in $H$) and no vertex in $H$ has degree more than $d$. Notice that there are precisely $\lambda  m_A$ edges incident to $S_A$ added in this step and $\lambda  m_B$ edges incident to $S_B$ added in this step.
        \item[4.](degree completion) Complete $H$ to be a (simple) $d$-regular graph on $[n]$ by including $dn/2 - k_A - k_B - m_1 - m_A - m_B$ edges in $A'\times B'$.
    \end{itemize}

    A visual representation of this procedure is given in Figure~\ref{fig: large torso}.

    \begin{figure}[h]
        \centering{
        \resizebox{0.8\textwidth}{!}{\fontsize{30pt}{30pt}\selectfont
\begingroup%
  \makeatletter%
  \providecommand\color[2][]{%
    \errmessage{(Inkscape) Color is used for the text in Inkscape, but the package 'color.sty' is not loaded}%
    \renewcommand\color[2][]{}%
  }%
  \providecommand\transparent[1]{%
    \errmessage{(Inkscape) Transparency is used (non-zero) for the text in Inkscape, but the package 'transparent.sty' is not loaded}%
    \renewcommand\transparent[1]{}%
  }%
  \providecommand\rotatebox[2]{#2}%
  \newcommand*\fsize{\dimexpr\f@size pt\relax}%
  \newcommand*\lineheight[1]{\fontsize{\fsize}{#1\fsize}\selectfont}%
  \ifx\svgwidth\undefined%
    \setlength{\unitlength}{626.70302023bp}%
    \ifx\svgscale\undefined%
      \relax%
    \else%
      \setlength{\unitlength}{\unitlength * \real{\svgscale}}%
    \fi%
  \else%
    \setlength{\unitlength}{\svgwidth}%
  \fi%
  \global\let\svgwidth\undefined%
  \global\let\svgscale\undefined%
  \makeatother%
  \begin{picture}(1,0.91074336)%
    \lineheight{1}%
    \setlength\tabcolsep{0pt}%
    \put(0,0){\includegraphics[width=\unitlength,page=1]{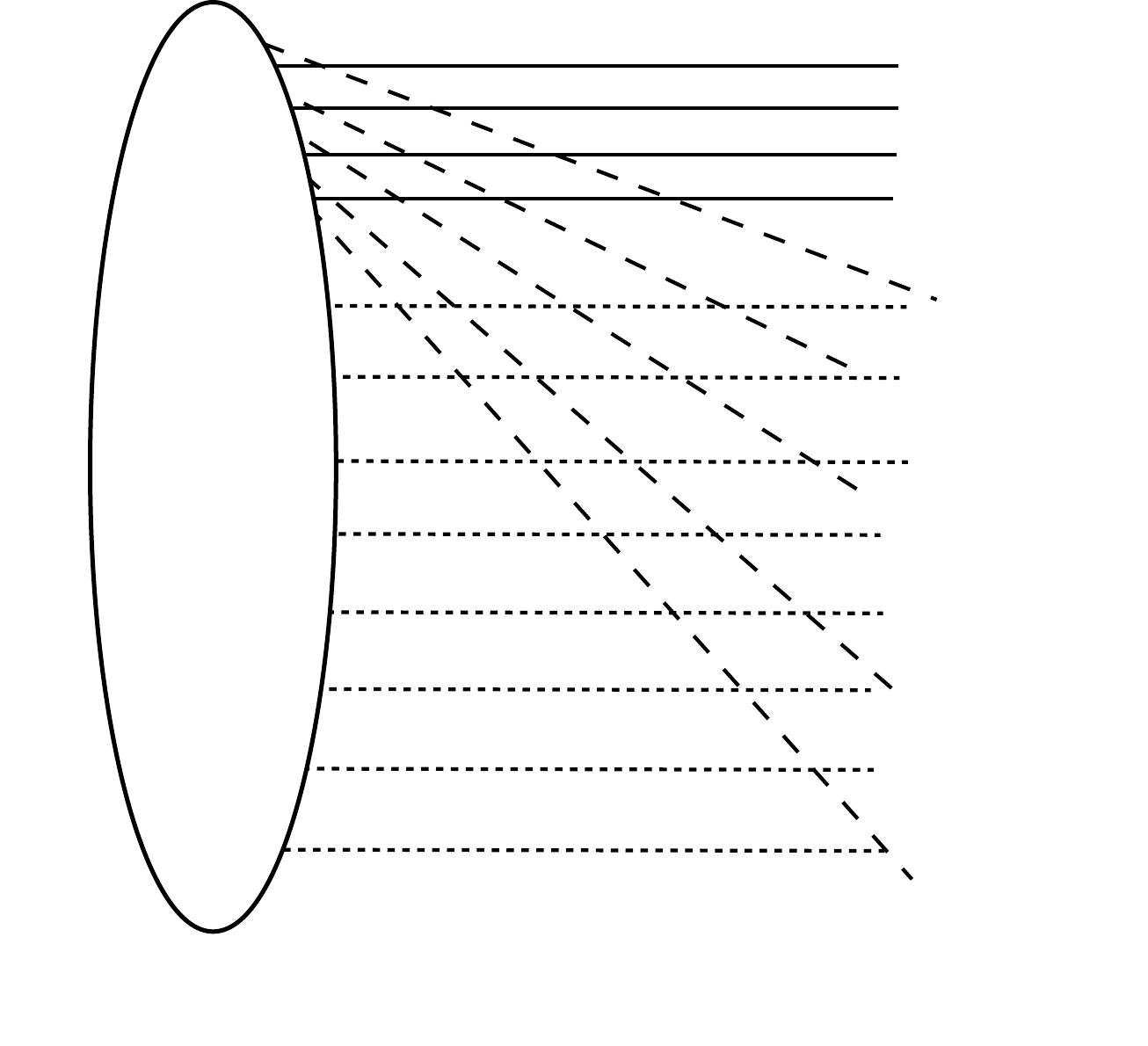}}%
    \put(0.16459814,0.04388433){\color[rgb]{0,0,0}\makebox(0,0)[lt]{\lineheight{1.25}\smash{\begin{tabular}[t]{l}$A$\end{tabular}}}}%
    \put(0.79628497,0.04355083){\color[rgb]{0,0,0}\makebox(0,0)[lt]{\lineheight{1.25}\smash{\begin{tabular}[t]{l}$B$\end{tabular}}}}%
    \put(0,0){\includegraphics[width=\unitlength,page=2]{Large_torso_thesis.pdf}}%
    \put(0.03488326,0.78991048){\color[rgb]{0,0,0}\makebox(0,0)[lt]{\lineheight{1.25}\smash{\begin{tabular}[t]{l}$S_A$\end{tabular}}}}%
    \put(0.90611351,0.79232245){\color[rgb]{0,0,0}\makebox(0,0)[lt]{\lineheight{1.25}\smash{\begin{tabular}[t]{l}$S_B$\end{tabular}}}}%
    \put(0,0){\includegraphics[width=\unitlength,page=3]{Large_torso_thesis.pdf}}%
  \end{picture}%
\endgroup%
}}
        \label{fig: large torso}
        \caption[Procedure for constructing graphs in \cref{lem: Janson Counting 1}]{Representation of above procedure. The solid cross-edges are added in step 1 of the procedure and only consider triangle-free constraints. The large dashed cross-edges are added in steps 2 and 3. The edges added in step 2 only consider triangle-free constraints and the edges added in step 3 only consider degree constraints. The small dashed cross edges are added in step 4 and only consider degree constraints.}
\end{figure}

Observe that every graph in ${\mathcal H}$ can be constructed by the procedure above (in multiple ways). Thus, we can estimate $|{\mathcal H}|$ by upper bounding the number of choices in each of the steps in the procedure, and by lower bounding the number of ways a graph in ${\mathcal H}$ can be constructed by the procedure.

Thus, we first upper bound the number of choices in steps 1 and 2. Notice that the restrictions 
\begin{equation}
m_A = ds_A  - \sum_{u \in S_A}d_{G_A}(u) - m_1,\ m_B= ds_B -  \sum_{v \in S_B}d_{G_B}(v)- m_1
\label{eq:oldconditions}
\end{equation}
depend on $(G_A,G_B)$ and we aim to obtain bounds that are independent of them so that the summation in~\eqref{eq:insideGAGB} is simpler. Thus, we relax these conditions to 
\begin{align}
    0 \leq m'_1 &\leq \min\{s_As_B, ds_A,ds_B\},\  ds_A - 2k_A-m'_1 \leq m'_A \leq ds_A - m'_1,\notag\\
    &\max\{ds_B - 2k_B - m_1', 0\} \leq m'_B \leq ds_B - m_1'.\label{eq:relaxedconditions}
\end{align}
Observe that any choices of $(m_1,m_A,m_B)=(m_1',m_A',m_B')$ that satisfy~\eqref{eq:oldconditions} must satisfy~\eqref{eq:relaxedconditions}. Next we estimate the number of choices in each step given parameters $(m_1',m_A',m_B')$ for $(m_1,m_A,m_B)$ that satisfies the relaxed conditions~\eqref{eq:relaxedconditions}.

\begin{claim}\label{clm: janson 1 step 3}
Given $(m_1,m_A,m_B)=(m_1',m_A',m_B')$,
the number of choices in steps 1 and 2 is at most
\[
   \binom{s_As_B}{m'_1}{s_A|B'| \choose (1-\lambda )m'_A} {s_B|A'| \choose (1-\lambda )m'_B}\exp\left[-(1 + 8\varepsilon^2) \left(k'_A \log\frac{dn}{2k_A}+k'_B \log\frac{dn}{2k_B} \right)\right].
    \]
\end{claim}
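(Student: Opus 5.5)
We plan to drop the degree constraints in steps 1 and 2 (they only reduce the number of choices) and view the choices probabilistically. Let $R_1$ be a uniformly random set of $m'_1$ edges of $S_A\times S_B$, let $R_A$ be a uniformly random set of $(1-\lambda)m'_A$ edges of $S_A\times B'$, and let $R_B$ be a uniformly random set of $(1-\lambda)m'_B$ edges of $A'\times S_B$, all independent. The number of admissible choices is at most the product of the three binomial coefficients times
\[
\P\bigl[\text{no edge of } T_A\cup T_B \text{ lies in a triangle of } G_A\cup G_B\cup R_1\cup R_A\cup R_B\bigr].
\]
A triangle containing $xy\in T_A$ (so $x,y\in S_A$) must use two cross edges $xz,yz$ with $z\in B$. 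If $z\in B'$, both edges lie in $R_A$; if $z\in S_B$, both lie in $R_1$. Triangles through $T_B$ edges arise symmetrically from $R_B$ and $R_1$. We will therefore bound the probability that none of these ``cherries'' $\{xz,yz\}$ is fully present.

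The main step is an application of Janson's inequality in the hypergeometric (uniform $m$-edge) setting, or equivalently in the binomial model with $p=m'/|\text{ground set}|$ followed by the standard transfer with polynomial loss. We first handle $T_A$ alone via $R_A$. The target sets are the pairs $\{xz,yz\}$ with $xy\in T_A$ and $z\in B'$. Here $p_A=(1-\lambda)m'_A/(s_A|B'|)$, and since $m'_A\ge ds_A-2k_A-m'_1$ with $m'_1\le ds_B$ and $s_B, k_A$ negligible compared with $ds_A$ up to the relevant error, we get $p_A=(2d/n)(1-O(\lambda+\eps^2))$. Hence
\[
\mu=|T_A|\,|B'|\,p_A^2 \ge (1-\eps)k'_A\cdot\frac{n}{2}\cdot\frac{4d^2}{n^2}(1-o(1)) = (1-\eps)k'_A\frac{2d^2}{n}.
\]
Because $d\ge(1+\eps)\frac{\sqrt3}{2}\sqrt{n\log n}$, we have $2d^2/n\ge(1+\eps)^2\frac32\log n$. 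Meanwhile $\log\frac{dn}{2k_A}\le\log n$, since $dn/(2k_A)\le n^{3/2}\log n$ roughly, giving $\log\le(\tfrac32+o(1))\log n$. The $\eps$-gain therefore yields $\mu\ge(1+2\eps-O(\eps))k'_A\log\frac{dn}{2k_A}$, and more generously $\mu\ge(1+8\eps^2)k'_A\log\frac{dn}{2k_A}$ after accounting for the $(1-\eps)$ loss from the torso size class.

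The dependency term is $\Delta=\sum p^3$ over pairs of cherries sharing one edge $xz$. Since $T_A$ has maximum degree $D_A=\eps^4 d/(2\log\frac{dn}{2k_A})$, we get $\Delta\le \mu\cdot D_A p_A=O(\mu\,\eps^4 d^2/(n\log))=O(\eps^4\mu)$, using $d\le\eps^{-4}\sqrt{n\log n}$. This bounded-degree property is exactly the role of the torso. Janson then gives $\P\le\exp(-\mu+\Delta)\le\exp(-(1-O(\eps^4))\mu)$.

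We treat $T_B$ identically via $R_B$ when $r_B<\bar r_B$, which is where $S_B\neq\emptyset$; the events are independent because the ground sets are disjoint. If instead $r_B=\bar r_B$, we use hypothesis (E): $k'_B\log\frac{dn}{2k_B}\le\frac{\eps^2}{2}k'_A\log\frac{dn}{2k_A}$. The slack in $\mu_A$ (a factor $1+\Theta(\eps)$ beyond $1$) then absorbs the $B$-term, giving $\exp[-(1+8\eps^2)(\text{both terms})]$. For $R_1$ we simply bound the probability by $1$.

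The main obstacle is the precise constant bookkeeping: checking that $p_A$ is genuinely $(2d/n)(1-o(\eps))$ under the relaxed range \eqref{eq:relaxedconditions} (the $\lambda$ and $k_A/(ds_A)$ losses), that $\log\frac{dn}{2k_A}\le(\tfrac32+o(1))\log n$, and that the hypergeometric-to-binomial transfer costs only $e^{o(k'_A\log)}$. The last point uses $m'_A\gg k'_A\log n$, or alternatively a direct hypergeometric form of Janson's inequality.
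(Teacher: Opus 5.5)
\textbf{Gap: large $m'_1$.} The proof fails at the assertion that $p_A=(2d/n)(1-O(\lambda+\eps^2))$ for every admissible $(m'_1,m'_A,m'_B)$.

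Under \eqref{eq:relaxedconditions} you only know $m'_A\ge ds_A-2k_A-m'_1$, and $m'_1$ can be as large as $\min\{s_As_B,ds_A,ds_B\}$. Nothing makes $m'_1$ small compared with $ds_A$. When $r_B<\bar r_B$ you have $s_B=k'_B$, which may be as large as $s_A$ and as large as $\eps^4 n\gg d$.

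For instance, take $s_A=s_B$ and $m'_1=ds_A$. Then $m'_A,m'_B=O(k_A+k_B)$, so $R_A$ and $R_B$ are essentially empty. Since you bounded the $R_1$-probability by $1$, you get no decay at all. Even $m'_1=c\,ds_A$ for a fixed small constant $c$ multiplies $\mu_A$ by $(1-c)^2$, which wipes out the $\Theta(\eps)$ slack you rely on.

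The claim must hold for each fixed $m'_1$, so this regime cannot be discarded. Note also that your argument never uses the hypothesis $k'_A,k'_B\le\eps^4 n$ of \cref{lem: Janson Counting 1}; that is a symptom of the missing case.

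\textbf{Missing idea and secondary slip.} When many edges go between $S_A$ and $S_B$, they have density $p_1=m'_1/(s_As_B)$, far above $2d/n$, and triangles through them must be exploited rather than discarded. The paper applies Janson to $R_1$ with the cherries $\{xz,yz\}$, where $xy\in T_A$ and $z\in S_B$ (and symmetrically). This gives $\mu_1\ge 2(1-\eps)m_1'^2/(s_As_B)$. If $m'_1\ge\eps^2 ds_\sigma$, this is large precisely because $s_\tau\le\eps^4 n$ (\cref{clm: big m1}).

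The paper then splits into three cases, with $k'_\sigma\ge k'_\tau$:
\begin{itemize}
\item $m'_1>\eps^2 ds_\sigma$: use $R_1$ alone.
\item $\eps^2 ds_\tau<m'_1\le\eps^2 ds_\sigma$: use $R_1$ for the $\tau$-side and $R_\sigma$ for the $\sigma$-side, since now $m'_\sigma\ge(1-2\eps^2)ds_\sigma$.
\item $m'_1\le\eps^2 ds_\tau$: this is the only case your argument actually covers.
\end{itemize}
Your handling of $s_B=0$ is fine, since then $m'_1=0$.

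Separately, the estimate $\Delta=O(\eps^4\mu)$ is wrong over the stated range. We have $D_Ap_A\approx\eps^4d^2/(n\log\frac{dn}{2k_A})$, which is of order $\eps^{-4}$ when $d\approx\eps^{-4}\sqrt{n\log n}$. It is $O(\eps^4)$ only for $d=O(\sqrt{n\log n})$, using $\log\frac{dn}{2k_A}\ge(\frac12-o(1))\log n$. For larger $d$ you need two further ingredients, as in \cref{clm: small m1}:
\begin{itemize}
\item the second Janson bound $\exp(-\mu^2/(2\Delta))$ when $\Delta>\mu$;
\item the observation that $\mu/2$ alone suffices once $d^2/n$ is a large multiple of $\log n$.
\end{itemize}
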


\begin{claim}\label{clm: Janson 1 step 4}
Given $(m_1,m_A,m_B)=(m_1',m_A',m_B')$,
the number of choices in step 3 is at most
\[
    \binom{|B'|}{\lambda m'_A/s_A}^{s_A}\binom{|A'|}{\lambda m'_B/s_B}^{s_B},
    \]
where we take the convention that $\binom{|A'|}{\lambda m'_B/s_B}^{s_B} = 1$ if $s_B = 0$.
\end{claim}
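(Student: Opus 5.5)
The plan is to drop the constraints in step~3 that only reduce the count, and then use log-concavity of the binomial coefficient in its lower argument.

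\textbf{Reduction to a product of binomials.} Fix everything chosen in steps~1 and~2, with $(m_1,m_A,m_B)=(m_1',m_A',m_B')$. For each $u\in S_A$, let $t_u:=d-d'_u$ be the number of new neighbours $u$ must receive in step~3. These residual degrees are determined by the choices already made. As noted in the description of step~3, exactly $\lambda m'_A$ edges incident to $S_A$ are added there, so
\[
\sum_{u\in S_A}t_u=\lambda m'_A .
\]
Similarly, with $t_v:=d-d'_v$ for $v\in S_B$, we have $\sum_{v\in S_B}t_v=\lambda m'_B$. Step~3 also requires that no edge is repeated and that no vertex exceeds degree $d$. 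I would simply discard these requirements, since they can only decrease the number of choices. After that, the choices for distinct $u$ are independent: each $u\in S_A$ picks any $t_u$-subset of $B'$, and each $v\in S_B$ picks any $t_v$-subset of $A'$. The number of choices in step~3 is therefore at most
\[
\prod_{u\in S_A}\binom{|B'|}{t_u}\prod_{v\in S_B}\binom{|A'|}{t_v}.
\]
If $s_B=0$, the second product is empty and equals $1$, which matches the stated convention.

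\textbf{Averaging via log-concavity.} Binomial coefficients are defined through the Gamma function, as in \cref{sec: preleminaries}. Then
\[
x\mapsto \log\binom{N}{x}=\log\Gamma(N+1)-\log\Gamma(x+1)-\log\Gamma(N-x+1)
\]
is concave on $[0,N]$, because $\log\Gamma$ is convex. Apply Jensen's inequality to the $s_A$ values $t_u\in[0,|B'|]$, whose mean is $\lambda m'_A/s_A$:
\[
\prod_{u\in S_A}\binom{|B'|}{t_u}\le \binom{|B'|}{\lambda m'_A/s_A}^{s_A}.
\]
The same argument for $S_B$, with mean $\lambda m'_B/s_B$, gives the second factor. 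This is presumably the content of \cref{prop: binom average} in the appendix, which I would cite directly. Multiplying the two bounds gives the claim.

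\textbf{Main point to check.} The only step needing care is that $\sum_u t_u$ is exactly $\lambda m'_A$, and not merely at most it, for every admissible history of steps~1 and~2. This holds because $u\in S_A$ has degree $d_{G_A}(u)$ inside $A$ and receives edges only in steps~1--3. Steps~1 and~2 contribute $m_1$ and $(1-\lambda)m_A$ edges respectively, so step~3 must supply the rest, which is $\lambda m_A$ by the definition of $m_A$ in step~2. Even if one only had the inequality $\sum_u t_u\le\lambda m'_A$, the bound would still follow, as long as $\lambda m'_A/s_A\le|B'|/2$: in that range the binomial coefficient is increasing in its lower argument. This is easily satisfied, since $\lambda d=o(n)$.
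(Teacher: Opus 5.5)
Your proposal is correct and follows the paper's proof: you bound the step-3 count by $\prod_{u\in S_A}\binom{|B'|}{d-d'_u}\prod_{v\in S_B}\binom{|A'|}{d-d'_v}$, check that the residual degrees sum to exactly $\lambda m'_A$ and $\lambda m'_B$, and then average using \cref{prop: binom average}. Your Jensen argument from log-convexity of $\Gamma$ is the same fact the paper proves via H\"older. Your ``at most'' for the product is slightly more careful than the paper's ``precisely,'' since the simplicity and degree constraints can only remove choices.
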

\begin{proofclaim}
    Fix choices for steps 1 and 2 and recall that $d'_u$ is the degree of $u \in [n]$ in the currently constructed graph. Observe that there are precisely
    \[
    \prod_{u \in S_A}\binom{|B'|}{d - d'_u}\prod_{v \in S_B}\binom{|A'|}{d - d'_v}
    \]
    choices in step 3. Moreover, $\sum_{u \in S_A}(d - d'_u) = \lambda m'_A$ and $\sum_{v \in S_B}(d - d'_u) = \lambda m'_B$, so \cref{prop: binom average} gives the desired result.
\end{proofclaim}

Given $(m_1,m_A,m_B)=(m_1',m_A',m_B')$,
let $m' = dn/2 - k_A - k_B - m'_A - m'_B - m'_1$. By Corollary~\ref{cor: max sequence}, the number of choices in step 4 is at most
\begin{equation}
O\left(\binom{|B'|}{m'/|A'|}^{|A'|}\binom{|A'|}{m'/|B'|}^{|B'|}\binom{|A'||B'|}{m'}^{-1}\right). \label{eq:step5}
\end{equation}

On the other hand, similarly to the argument in subsection~\ref{sec:large k near threshold} (right above~\eqref{eq: large k initial ineq}),  given $(m_1,m_A,m_B)=(m_1',m_A',m_B')$, every graph in $\mathcal H$ that can be constructed in the procedure with parameter $(m_1,m_A,m_B)=(m_1',m_A',m_B')$  can be obtained in at least $\binom{m'_A}{(1 - \lambda )m'_A}\binom{m'_B}{(1 - \lambda )m'_B}$ different ways. Therefore, combining \cref{clm: janson 1 step 3},  \cref{clm: Janson 1 step 4}, and~\eqref{eq:step5},
\begin{align*}
|{\mathcal H}| \le &\sum_{m_1'}\max_{(m'_A, m'_B)}\left\{\frac{\binom{s_As_B}{m'_1}\binom{s_A|B'|}{(1 - \lambda )m'_A}\binom{s_B|A'|}{(1 - \lambda )m'_B}\binom{|B'|}{\lambda m'_A/s_A}^{s_A}\binom{|A'|}{\lambda m'_B/s_B}^{s_B}\binom{|B'|}{m'/|A'|}^{|A'|}\binom{|A'|}{m'/|B'|}^{|B'|}}                {\binom{|A'||B'|}{m'}\binom{m_A'}{(1 - \lambda )m_A'}\binom{m'_B}{(1 - \lambda )m'_B}}\right\}\notag\\
&\times\exp\left[(1 + \eps^2)\left(k'_A\log\frac{dn}{2k_A} + k'_B\log\frac{dn}{2k_B}\right) + O(1) \right],
\end{align*} 
where the summation of $m_1'$ is over all nonnegative integers that are at most \newline$\min\{s_As_B, ds_A, ds_B\}$, whereas the maximization above is over all choices of $(m'_A,m'_B)$ satisfying~\eqref{eq:relaxedconditions}. Notice that there are $O(s_As_B)$ choices for $m_1'$ and $s_As_B=\exp(O(k_A + k_B))$. Thus, by~\eqref{eq:insideGAGB} and observing that $s_A,s_B$ are independent of $S_A,S_B$, we immediately obtain that $|\mathcal T_{k_A, r_A, k_B, r_B}|$ is bounded from above by 
\begin{align*}
  \sum_{G_A,G_B}&\max\left\{\frac{\binom{s_As_B}{m'_1}\binom{s_A|B'|}{(1 - \lambda )m'_A}\binom{s_B|A'|}{(1 - \lambda )m'_B}\binom{|B'|}{\lambda m'_A/s_A}^{s_A}\binom{|A'|}{\lambda m'_B/s_B}^{s_B}\binom{|B'|}{m'/|A'|}^{|A'|}\binom{|A'|}{m'/|B'|}^{|B'|}}                {\binom{|A'||B'|}{m'}\binom{m_A'}{(1 - \lambda )m_A'}\binom{m'_B}{(1 - \lambda )m'_B}}\right\}\notag\\
&\times\exp\left[(1 + \eps^2)\left(k'_A\log\frac{dn}{2k_A} + k'_B\log\frac{dn}{2k_B}\right) + O(k_A + k_B) \right],
\end{align*}
where the maximization above is over all choices of $(m'_1,m'_A,m'_B)$ satisfying~\eqref{eq:relaxedconditions}. Thus, by~\eqref{eq:reg bip count2} and~\eqref{eq:defectbounds}, we have that $|\mathcal T_{k_A, r_A, k_B, r_B}|/|\mathcal G_{n,d}^\text{bip}|$ is bounded from above by 
\begin{align}
     \max\left\{\frac{\binom{s_As_B}{m'_1}\binom{s_A|B'|}{(1 - \lambda )m'_A}\binom{s_B|A'|}{(1 - \lambda )m'_B}\binom{|B'|}{\lambda m'_A/s_A}^{s_A}\binom{|A'|}{\lambda m'_B/s_B}^{s_B}\binom{|B'|}{m'/|A'|}^{|A'|}\binom{|A'|}{m'/|B'|}^{|B'|}\binom{n^2/4}{dn/2}}                {\binom{n}{n/2}\binom{|A'||B'|}{m'}\binom{n/2}{d}^{n}\binom{m_A'}{(1 - \lambda )m_A'}\binom{m'_B}{(1 - \lambda )m'_B}}\right\}\notag\\\times\exp\left[(k_A + k_B)\log\frac{n}{d} -7\varepsilon^2\left(k'_A\log\frac{dn}{2k_A} + k'_B\log\frac{dn}{2k_B}\right) + O(k_A + k_B)\right].\label{eq: big equation janson 1}
\end{align}
Simplifying~\eqref{eq: big equation janson 1} we obtain the following bound, which completes the proof of the lemma.
\begin{claim}\label{clm: final counting janson 1}
    \[
        \frac{|\mathcal T_{k_A, r_A, k_B, r_B}|}{\mathcal |\mathcal G_{n,d}^\text{bip}|} \leq\frac{1}{\binom{n}{n/2}}\exp\left[-\varepsilon^{2}\left(k'_A \log\frac{dn}{2k_A} + k'_B \log\frac{dn}{2k_B}\right)\right].
    \]
\end{claim}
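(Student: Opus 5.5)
The plan is to show that the binomial ratio inside the maximum in~\eqref{eq: big equation janson 1} is at most
\[
\frac{1}{\binom{n}{n/2}}\exp\left[O(k_A+k_B)\log\log n\right],
\]
up to a small correction discussed below. Since \eqref{eq:k'bound} gives $k_A+k_B=o\big(k'_A\log\frac{dn}{2k_A}+k'_B\log\frac{dn}{2k_B}\big)$, this error and the $(k_A+k_B)\log\frac nd$ and $O(k_A+k_B)$ terms can be absorbed, provided we also check $(k_A+k_B)\log\frac nd$ against the torso term. The $-7\eps^2$ term then leaves at least $-\eps^2$ after absorption.

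The first step is to rewrite every binomial coefficient with the approximations of \cref{cor: choose}, \cref{prop: choose approx ub top} and \cref{prop: choose approx ub bot}. Use $|A'|=a-s_A$, $|B'|=b-s_B$ and $a,b=\frac n2(1+O(\frac{k_A+k_B}{dn}))$ from~\eqref{eq:AB}, and write $m'=dn/2-k_A-k_B-m'_1-m'_A-m'_B$. The key grouping is the following. The product
\[
\binom{s_A|B'|}{(1-\lambda)m'_A}\binom{|B'|}{\lambda m'_A/s_A}^{s_A}\Big/\binom{m'_A}{(1-\lambda)m'_A}
\]
should be compared with $\binom{|B'|}{m'_A/s_A}^{s_A}$. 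Splitting a row of $m'_A/s_A\approx d$ chosen neighbours into a global sprinkled part and a per-vertex saturated part costs roughly the factor $\binom{m'_A}{\lambda m'_A}$, which is what we divide by. The per-vertex row factor $\binom{b}{d}^{s_A}$ then matches the corresponding factors of $\binom{n/2}{d}^n$. The same holds for the $B$ side and for $\binom{s_As_B}{m'_1}$, which is tiny because $s_As_B\le \eps^8n^2$ is small.

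Next, the step-4 term is handled by \cref{cor: max sequence}. The factor $\binom{|B'|}{m'/|A'|}^{|A'|}\binom{|A'|}{m'/|B'|}^{|B'|}/\binom{|A'||B'|}{m'}$, multiplied by $\binom{n^2/4}{dn/2}$ and divided by the remaining $\binom{n/2}{d}^{|A'|+|B'|}$, should be $\exp[O(k_A+k_B)+O(\lambda d^2+s_Ad^2/n)]$. This uses the computation already done in the proof of \cref{lem: large k near threshold}. The loss $\lambda d^2$ is $O(\eps^{-2} n)$ only because $d\le\eps^{-4}\sqrt{n\log n}$ and $\lambda<\eps^6/\log n$, so it is $O(\eps^{-2}\, n)$. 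Careful: this term must be $o(k'_A\log\frac{dn}{2k_A})$, but $k'_A$ may be much smaller than $n$ here. I therefore expect the correct bookkeeping to track only $\lambda m'_A$ and $\lambda m'_B$, i.e.\ the $\lambda$-split applies only to edges at $S_A\cup S_B$. This gives errors of order $\lambda d^2 s_A/n$, which is at most $\eps^{-2}s_A=\eps^{-2}k'_A=o(k'_A\log\frac{dn}{2k_A})$.

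Finally, the $\lambda$-split costs $\binom{s_A|B'|}{(1-\lambda)m'_A}$ versus $\binom{s_A|B'|}{m'_A}$, roughly $(n/(2d))^{\lambda m'_A}$ times entropy terms. Together with $\binom{m'_A}{\lambda m'_A}^{-1}$ this gives $\exp[O(\lambda m'_A\log\frac{n}{\lambda d})]=\exp[O(\eps^6 s_A d\log n/\log n)]$. This step is the main obstacle: it is not obviously $o(k'_A\log\frac{dn}{2k_A})$ unless $\lambda d\log n$ is small relative to $\log\frac{dn}{2k_A}$. I would check this via $\lambda d\le \eps^{6}d/\log n$ and compare it with the $\log(dn/2k_A)\gtrsim \tfrac12\log n$ available since $k_A\le n\log d$. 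The exact matching of the per-vertex factors, and the choice of maximizing $(m'_1,m'_A,m'_B)$ near $m'_A\approx ds_A$, will be the delicate computation. After it, the claim follows by collecting the exponents.
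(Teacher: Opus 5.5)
Your plan has a genuine gap at the absorption step. A second gap is that the step you call the main obstacle is never resolved.

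\textbf{The $(k_A+k_B)\log\frac nd$ term must cancel, not be absorbed.} You bound the binomial ratio in \eqref{eq: big equation janson 1} by $\frac{1}{\binom{n}{n/2}}\exp[O(k_A+k_B)\log\log n]$. You then absorb the prefactor $\exp[(k_A+k_B)\log\frac nd]$ into $\exp[-7\eps^2(k'_A\log\frac{dn}{2k_A}+k'_B\log\frac{dn}{2k_B})]$, and this cannot work. In this regime $\log\frac nd=(\frac12+o(1))\log n$, while $\log\frac{dn}{2k_X}\le(\frac32+o(1))\log n$. So even in the best case $k'_X=k_X$ you would need $\frac{21}{2}\eps^2\ge\frac12$, and when $r_X$ is near $\bar r_X$ the torso term is only of order $\eps^{-2}k_X\log\log\frac{dn}{2k_X}$.

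The missing idea is that this term is \emph{cancelled}. Only $dn/2-k_A-k_B$ cross edges are placed. Each missing cross edge removes one unit of degree on each side, costing about $\log\frac{n}{2d}$ per side in the row factors, while $\binom{|A'||B'|}{m'}^{-1}$ gives back about one $\log\frac{n}{2d}$. Hence the ratio carries a factor $(2d/n)^{k_A+k_B}e^{O(k_A+k_B)}$, which exactly offsets the $(n/d)^{k_A+k_B}$ from \cref{cor: counting defects}. This is what the paper extracts in \cref{clm: s_B = 0} and \cref{clm: s_B > 0}: there the ratio is written as $(d/n)^{k_A+k_B}\lambda^{-(s_A+s_B)/2}\,\xi\,\zeta\,e^{O(k_A+k_B)}$. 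Here $\xi\le1$ after maximizing over $m'_1$ (the maximum is at $m'_1=2ds_As_B/n$), and $\log\zeta=O(\lambda d^2(s_A+s_B)/n)$. Your target bound and your step-4 estimate $\exp[O(k_A+k_B)+\cdots]$ discard exactly this factor.

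The same bookkeeping problem affects your $\log\log n$ error. \eqref{eq:k'bound} gives only $k_X=o(k'_X\log\frac{dn}{2k_X})$, and $O(k_A+k_B)\log\log n$ is of the same order as the torso term near $\bar r_X$. The actual $\log\log n$ loss is $\lambda^{-(s_A+s_B)/2}=\exp[O((k'_A+k'_B)\log\log n)]$, which is charged to $k'$ since $s_X\le k'_X$.

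\textbf{The $\lambda$-split is left open, and your proposed check fails.} Your estimate $\lambda m'_A\log\frac{n}{\lambda d}$ is of order $\eps^6 d s_A$. Since $d\gg\eps^{-6}\log n$, this is far larger than $k'_A\log\frac{dn}{2k_A}\le(\frac32+o(1))s_A\log n$. Also, $\binom{s_A|B'|}{(1-\lambda)m'_A}/\binom{s_A|B'|}{m'_A}\approx(2d/n)^{\lambda m'_A}$ is a gain, not a cost.

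What rescues the argument is a first-order cancellation, which your second-paragraph heuristic anticipates but you never verify. Set $p=m'_A/(s_A|B'|)$ and $H(x)=-x\log x-(1-x)\log(1-x)$. Consider the logarithm of
$\binom{s_A|B'|}{(1-\lambda)m'_A}\binom{|B'|}{\lambda m'_A/s_A}^{s_A}\binom{m'_A}{\lambda m'_A}^{-1}\binom{|B'|}{m'_A/s_A}^{-s_A}$.
It equals $s_A|B'|\bigl(H((1-\lambda)p)+H(\lambda p)-pH(\lambda)-H(p)\bigr)$, up to Stirling corrections of order $s_A\log\frac1\lambda$. The entropy combination equals $\lambda(1-\lambda)p^2(1+O(p))$. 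The net cost is therefore $O(\lambda d^2 s_A/n)+O(s_A\log\log n)=o(k'_A\log\frac{dn}{2k_A})$.

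This must be done with the precise expansions of \cref{cor: choose}, as in the paper's computation of $\zeta$, not by bounding the factors separately. Similarly, $\binom{s_As_B}{m'_1}$ is not negligible on its own. It has to be balanced against the $S_A$ and $S_B$ row factors, which is the role of the optimization of $\xi$ over $m'_1$.
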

\cref{clm: janson 1 step 3} is proved in~\cref{sec: janson args} and \cref{clm: final counting janson 1} is proved below in this subsection.

 \qed

\medskip
\noindent\textit{Proof of \cref{clm: final counting janson 1}.}
We prove this by evaluating the ratio in~\eqref{eq: big equation janson 1} separately when $s_B = 0$ and $s_B > 0$. 

\begin{claim}\label{clm: s_B = 0}
    Under the assumptions of either \cref{lem: Janson Counting 1} (and taking $\eps^7/\log n < \lambda < 
    \eps^6/\log n$) or \cref{lem: Janson Counting 2} (and taking $\lambda = 0$), 
    \begin{align*}
        &\max\left\{\frac{\binom{s_Ab}{(1 - \lambda )m'_A}\binom{b}{\lambda m'_A/s_A}^{s_A}\binom{b}{m'/|A'|}^{|A'|}\binom{|A'|}{m'/b}^{b}\binom{n^2/4}{dn/2}}                {\binom{|A'||B'|}{m'}\binom{n/2}{d}^{n}\binom{m_A'}{(1 - \lambda )m_A'}}\right\}\\
        &\leq\begin{cases}
            \exp\left[-(k_A + k_B)\log\frac{n}{d} + O\left(\eps^3k'_A\log\frac{dn}{2k_A} + \eps^3k'_B\log\frac{dn}{2_B}\right)\right] &\text{when}\quad \lambda > 0\\
            \exp\left[-(k_A + k_B)\log\frac{n}{d} + 4\left(k'_A\log\frac{dn}{2k_A} + k'_B\log\frac{dn}{2k_B}\right)\right] & \text{when} \quad \lambda = 0
        \end{cases} .
    \end{align*}
\end{claim}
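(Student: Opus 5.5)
The plan is a direct Stirling computation. When $s_B=0$ we have $B'=B$, $m_1'=0$ and $m_B'=0$. So the ratio depends only on $m'_A$, which lies in $[ds_A-2k_A,\,ds_A]$, and on $m'=dn/2-k_A-k_B-m'_A$. I will write each binomial coefficient with \cref{cor: choose} and the approximations \cref{prop: choose approx ub top} and \cref{prop: choose approx ub bot}, as in the proofs of \cref{lem: large k far threshold} and \cref{lem: large k near threshold}. I then group the factors by the vertex set they come from: the $s_A$ vertices of $S_A$, the $|A'|=a-s_A$ vertices of $A'$, and the $b$ vertices of $B$.

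The key comparison is between the "bipartite $d$-regular" reference count $\binom{n/2}{d}^n\binom{n^2/4}{dn/2}^{-1}$ and our product.

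\textbf{Step 1: the $A'$ and $B$ factors.} I first treat $\binom{b}{m'/|A'|}^{|A'|}\binom{|A'|}{m'/b}^{b}\binom{|A'|b}{m'}^{-1}$. Here $m'/|A'|$ is close to $d$ but slightly smaller. Every vertex of $A'$ has full degree $d$ minus its defect degree, so the total deficit is roughly $2k_A-\sum_{S_A}d_{G_A}+2k_B$. The degrees in $B$ are also lowered, because $m'$ is smaller than $db$ by about $m'_A+2k_B$.

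Using $\binom{x}{y-t}\approx\binom{x}{y}(y/(x-y))^{t}$, each unit of missing degree at a vertex costs a factor about $2d/n$, i.e. $\exp(-\log\frac{n}{2d})$. The missing $\approx k_A+k_B$ edge-ends of defect degree, counted once per edge after cancelling against the $\binom{|A'|b}{m'}^{-1}$ term, should produce exactly $\exp[-(k_A+k_B)\log\frac nd+O(k_A+k_B)]$. This is the same bookkeeping as in the $0$-statement calculation and in \eqref{eq: large k approx 1}. The imbalance $|a-b|=2|k_A-k_B|/d$ contributes only $O((k_A+k_B)^2/(dn)\cdot\log\frac nd)=O(k_A+k_B)$, as in \eqref{eq: large k approx 1}.

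\textbf{Step 2: the $S_A$ factors.} Next I compare the $S_A$ factors $\binom{s_Ab}{(1-\lambda)m'_A}\binom{b}{\lambda m'_A/s_A}^{s_A}/\binom{m'_A}{(1-\lambda)m'_A}$ with the reference $\binom{b}{d}^{s_A}$.

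For $\lambda=0$ this ratio is $\binom{s_Ab}{m'_A}/\binom{b}{d}^{s_A}$. Since $m'_A\approx ds_A$, Stirling gives $\exp[O(s_A\log d)]$. This is at most $O(k'_A\log(dn/2k_A))$, because $s_A=k'_A$ and $d\le n$, and $\log d\le 2\log(dn/2k_A)$ when $k_A\le n\log d\ll dn/\mathrm{polylog}$. The constant $4$ should absorb this, together with the $O(k_A+k_B)$ from Step 1, which is $o(k'_A\log(dn/2k_A))$ by \eqref{eq:k'bound}.

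For $\lambda>0$ the split into a $(1-\lambda)$ part and a $\lambda$ part, divided by $\binom{m'_A}{(1-\lambda)m'_A}$, loses a factor $\exp[O(s_A\cdot\lambda d\log(1/\lambda))+O(s_A\log d)]$ relative to $\binom{b}{d}^{s_A}$. Here $\lambda d\log(1/\lambda)\le \eps^6 d\log\log n/\log n$. I would check that $\eps^6 d\log\log n/\log n\le\eps^3\log(dn/2k_A)$ in the regime $d\le\eps^{-4}\sqrt{n\log n}$. Since $\log(dn/2k_A)\gtrsim\frac12\log n$ and $d\log\log n/\log n=o(\sqrt n)$, this needs care. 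I expect the real saving instead to come from $s_A\log d$-type terms already dominated by $\log(dn/2k_A)$, so that the overall error is $O(\eps^3 k'_A\log\frac{dn}{2k_A})$.

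\textbf{Main obstacle and endgame.} The delicate step is this $\lambda>0$ bookkeeping. I would first try expanding $\log\binom{s_Ab}{(1-\lambda)m}+s_A\log\binom{b}{\lambda m/s_A}-\log\binom{m}{(1-\lambda)m}-s_A\log\binom bd$ to first order in $\lambda$. If the leading terms cancel, the remainder should be $O(s_A+\lambda ds_A)$ times logarithms, with $\lambda d=O(\eps^2\sqrt{n/\log n})$. Finally I maximize over the admissible $m'_A$: the exponent is monotone or concave in $m'_A$ over a window of width $2k_A$, so the endpoints suffice and the maximum changes only $O(k_A\log\frac nd)$-order terms already accounted for.
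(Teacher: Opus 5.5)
\textbf{The $\lambda=0$ case is fine; the gap is $\lambda>0$.} For $\lambda=0$ you follow essentially the paper's route. You expand everything with \cref{cor: choose}, the defect deficits give $(d/n)^{k_A+k_B}e^{O(k_A+k_B)}$, and the Stirling prefactor $d^{O(s_A)}$ is absorbed by the constant $4$. The case $\lambda>0$ you flag but leave open, and both error sizes you propose there are too large.

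There the claim allows only $O(\eps^3 k'_A\log\frac{dn}{2k_A})$, because this error must lose to the $-7\eps^2(\cdots)$ term in \eqref{eq: big equation janson 1}.
\begin{itemize}
\item Since $s_A=k'_A$ and $\log d$ is a constant fraction of $\log\frac{dn}{2k_A}$, an $O(s_A\log d)$ loss is of the same order as $k'_A\log\frac{dn}{2k_A}$ and is not absorbed. Your hope that the saving comes from such terms is backwards: this is exactly why the $\lambda=0$ case carries the weak term $4(\cdots)$.
\item A remainder of ``$O(\lambda d s_A)$ times logarithms'' also fails, because $\lambda d\ge \eps^7 d/\log n\gg \log n$.
\end{itemize}

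\textbf{What is missing is the exact computation.} Write $m=m'_A$ and $p=m/(s_Ab)$, and consider
$\log\binom{s_Ab}{(1-\lambda)m}+s_A\log\binom{b}{\lambda m/s_A}-\log\binom{m}{(1-\lambda)m}-s_A\log\binom{b}{d}$.
\begin{itemize}
\item \emph{Power parts.} In \cref{cor: choose} the parts $x^y/y^y$ multiply to $(s_Ab/m)^m$ for every $\lambda$. So the $\Theta(\lambda m\log\frac1\lambda)$ contributions cancel identically, and what remains cancels against $\binom{b}{d}^{-s_A}$.
\item \emph{Square-root prefactors.} The $\sqrt{y}$ factors of $\binom{b}{\lambda m/s_A}^{s_A}$ and $\binom{b}{d}^{-s_A}$ cancel up to $\lambda^{-s_A/2}e^{O(s_A)}=\exp[O(k'_A\log\log n)]$. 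Removing the $d^{s_A}$ factor is the whole purpose of the $\lambda$-split between steps 2 and 3.
\item \emph{The $(1-y/x)^{x-y}$ factors.} Expanded by \cref{prop: expand log}, they leave $s_Ab\sum_{i\ge 2}\frac{p^i}{i(i-1)}\bigl(1-(1-\lambda)^i-\lambda^i\bigr)=O(\lambda d^2 s_A/n)$. So the $\lambda d s_A$ term carries an extra density factor $p\approx 2d/n$, not a logarithm.
\end{itemize}
Only at this point is the hypothesis $d\le\eps^{-4}\sqrt{n\log n}$ used. Together with $\lambda<\eps^6/\log n$ it gives $O(\eps^{-2}s_A)=o(\eps^3k'_A\log\frac{dn}{2k_A})$.

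\textbf{Maximization over $m'_A$.} This step cannot afford an $O(k_A\log\frac nd)$ change. The coefficient of $(k_A+k_B)\log\frac nd$ must be exactly $-1$ to cancel the defect count in \eqref{eq:defectbounds}. The bound is uniform in $m'_A$ only because the change in $\binom{s_Ab}{m'_A}$ is exactly offset, through $m'$, by the $A'\times B$ factors.
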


\begin{claim}\label{clm: s_B > 0}
When $s_B > 0$,
    \begin{align*}
        \max&\left\{\frac{\binom{s_As_B}{m'_1}\binom{s_A|B'|}{(1 - \lambda )m'_A}\binom{s_B|A'|}{(1 - \lambda )m'_B}\binom{|B'|}{\lambda m'_A/s_A}^{s_A}\binom{|A'|}{\lambda m'_B/s_B}^{s_B}\binom{|B'|}{m'/|A'|}^{|A'|}\binom{|A'|}{m'/|B'|}^{|B'|}\binom{n^2/4}{dn/2}}                {\binom{|A'||B'|}{m'}\binom{n/2}{d}^{n}\binom{m_A'}{(1 - \lambda )m_A'}\binom{m'_B}{(1 - \lambda )m'_B}}\right\}\\ &\leq \exp\left[-(k_A + k_B)\log\frac{n}{d} +O\left(\eps^3k'_A\log\frac{dn}{2k_A} + \eps^3k'_B\log\frac{dn}{2k_B}\right)\right].
    \end{align*}
\end{claim}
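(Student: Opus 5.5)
The claim is a pure Stirling-type estimate. I would take logarithms of every binomial coefficient using \cref{cor: choose}. Then I would compare the expression with the reference quantity $\binom{n^2/4}{dn/2}/\bigl(\binom{n/2}{d}^n\bigr)\cdot\binom{n/2}{d}^{n}\binom{n^2/4}{dn/2}^{-1}=1$, that is, treat the whole ratio as a perturbation of the regular bipartite count.

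Write $a'=|A'|=a-s_A$ and $b'=|B'|=b-s_B$, with $a,b=\frac n2(1+O(\delta_0))$ by \eqref{eq:AB}. I would group the factors by the edges they place.

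\textbf{Bulk group.} $\binom{b'}{m'/a'}^{a'}\binom{a'}{m'/b'}^{b'}\binom{a'b'}{m'}^{-1}$ together with $\binom{n^2/4}{dn/2}\binom{n/2}{d}^{-n}$. Using \cref{prop: choose approx ub top} and \cref{prop: choose approx ub bot} as in the derivation of \eqref{eq: large k approx 1}, this group should equal
\[
\Bigl(\frac{2d}{n}\Bigr)^{-(\text{edges not in }A'\times B')}\exp\bigl[O(k_A+k_B)+O(\text{number of edges incident to } S_A\cup S_B)\cdot O(1)\bigr],
\]
up to $(1-2d/n)$-type corrections of size $O(d\,s/n)$. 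Heuristically, each of the $k_A+k_B+m_1'+m_A'+m_B'$ missing edges saves a factor of roughly $n/(2d)$ relative to the regular count. The $k_A+k_B$ defect edges therefore produce the factor $\exp[-(k_A+k_B)\log\frac nd]$ claimed.

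\textbf{Local groups.} For the edges at $S_A$ and $S_B$, the remaining factors should give roughly $(2d/n)^{m_1'+m_A'+m_B'}$ times entropy terms. Here I use $\binom{s_A b'}{(1-\lambda)m_A'}\binom{b'}{\lambda m_A'/s_A}^{s_A}/\binom{m_A'}{(1-\lambda)m_A'}\le \binom{b'}{m_A'/s_A}^{s_A}e^{O(\dots)}$. The $\binom{m_A'}{(1-\lambda)m_A'}$ in the denominator cancels the overcount from splitting into rounds, and what is left is about $\exp[s_A\cdot O(\lambda d\log(1/\lambda))]$. With $\lambda\le\eps^6/\log n$, this is $\exp[O(\eps^6 s_A d\log\log n/\log n)]$. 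Since $s_A=k_A'$ and $d\le\eps^{-4}\sqrt{n\log n}$, this is negligible against $\eps^3 k'_A\log\frac{dn}{2k_A}$. The $\binom{s_As_B}{m_1'}$ factor is handled the same way. I would use $m_1'\le s_As_B$ and $s_B\le k_B'\le \eps^4 n$, so this factor contributes $\exp[O(m_1')]$ relative to $(n/2d)^{m_1'}$. That is again absorbed, because $m_1'\le d s_A$ and each $S_A$-vertex's entropy is compared with its share of $\binom{n/2}{d}$.

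The main obstacle is bookkeeping the vertex-level comparison. Each vertex $u\in S_A$ loses its factor $\binom{n/2}{d}$ from the regular count and regains $\binom{b'}{\cdot}$-type factors split between rounds. The ratio per vertex is $\exp[O(\lambda d\log(1/\lambda)+d_{G_A}(u)\log\frac nd)]$. Summed over $S_A$, the second part is at most $2k_A\log\frac nd$, which must cancel exactly against the global defect saving and not double count. I would handle this by first maximizing over $(m_1',m_A',m_B')$ in the range \eqref{eq:relaxedconditions}. Concavity of log-binomials shows that the maximum sits at the extreme $m_A'=ds_A-m_1'$, $m_B'=ds_B-m_1'$. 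There the exponents can be matched term by term, leaving an error of $O(\eps^3(k'_A\log\frac{dn}{2k_A}+k'_B\log\frac{dn}{2k_B}))$, which uses \eqref{eq:k'bound} to absorb $O(k_A+k_B)$.
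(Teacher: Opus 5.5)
There is a genuine gap. Your overall route (Stirling-expand every binomial and compare with the regular bipartite count) is the paper's. However, you never control errors at the scale this claim requires, and several estimates you state are false.

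Since $s_A=k'_A$ and $s_B=k'_B$, the allowed error in the exponent is only $O(\eps^3(s_A+s_B)\log n)$. Three of your error terms are larger than this by a factor of order $d/\log n$:
\begin{itemize}
\item the bulk-group error $O(d(s_A+s_B))$, i.e.\ the number of edges at $S_A\cup S_B$;
\item your ``$\exp[O(m'_1)]$'' for $\binom{s_As_B}{m'_1}$, where $m'_1$ may be as large as $\min\{ds_A,ds_B\}$;
\item your leftover $\exp[O(\eps^6 s_A d\log\log n/\log n)]$.
\end{itemize}
The last one is \emph{not} negligible: $d\ge\frac{\sqrt3}{2}\sqrt{n\log n}$ gives $\eps^6 d\log\log n/\log n\gg\eps^3\log n$. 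That leftover is also miscomputed. The split entropy $-(1-\lambda)m'_A\log(1-\lambda)-\lambda m'_A\log\lambda$ cancels exactly against $\binom{m'_A}{(1-\lambda)m'_A}$. What survives is $\lambda^{-s_A/2}$ from the square-root Stirling factors, times $\exp[O(\lambda d^2 s_A/n)]$ from the quadratic terms. Separately, your exponents of $2d/n$ have the wrong sign. The bulk group is smaller than the regular count by roughly $(2d/n)^{dn/2-m'}$, and the local groups are larger by roughly $(n/2d)^{m'_1+m'_A+m'_B}$.

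The missing idea is exact cancellation at the natural density, obtained by optimizing over $m'_1$. Your ``$(1-2d/n)$-type corrections of size $O(ds/n)$'' are in fact individually of size $\Theta(d^2(s_A+s_B)/n)=\Theta((s_A+s_B)\log n)$. That is exactly the scale on which the threshold $d^2\approx\frac34 n\log n$ lives, so these terms must cancel, not merely be bounded. The same holds for the $\Theta(d(s_A+s_B))$ first-order terms.

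Your concavity remark concerns the wrong variables. $m'_A$ and $m'_B$ only range over windows of width $O(k_A)$ and $O(k_B)$, which cost $e^{O(k_A+k_B)}$, because edges merely move between local and bulk blocks.

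The paper proceeds as follows:
\begin{itemize}
\item It collects the $m'_1$-dependent first-order factors into one quantity $\xi$: namely $(2ds_As_B/(nm'_1))^{m'_1}$, $(1-m'_1/(ds_A))^{-ds_A+m'_1}$, $(1-m'_1/(ds_B))^{-ds_B+m'_1}$, and the bulk-density factor.
\item It shows $\log\xi$ is concave in $m'_1$, with stationary point $m'_1=2ds_As_B/n$, where $\xi=1$.
\item It shows the second-order factor $\zeta$ is maximized at $m'_1=\frac{2ds_As_B}{n}(1-\beta)$ with $0<\beta\le 16\lambda$.
\item At that point all quadratic terms cancel up to $O(\lambda d^2(s_A+s_B)/n)$.
\end{itemize}
Without this step, your sentence ``the exponents can be matched term by term'' is an unproved assertion, and it is precisely the content of the claim.
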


If $s_B > 0$, the desired result follows by combining or \cref{clm: s_B > 0} with the exponent in~\eqref{eq: big equation janson 1}.
If $s_B = 0$, the maximization term in~\eqref{eq: big equation janson 1} is equivalent to the maximization term in \cref{clm: s_B = 0}, and so combining \cref{clm: s_B = 0} and the exponent in~\eqref{eq: big equation janson 1} gives the desired result.

\cref{clm: s_B = 0} and \cref{clm: s_B > 0} are proved in \cref{sec: Janson simplifications}.
\qed

\subsection{Proof of \cref{lem: Janson Counting 2}}\label{sec: Janson counting 2}

As before,     let $a = |A|$, $b = |B|$, $k'_A = k_{A, r_A - 1}$, and $k'_B = k_{B, r_B - 1}$ and we may assume that $(A,B)$ satisfies \eqref{eq:AB}. We again have $k'_A > 0$ by the same argument as in the beginning of the proof of \cref{lem: Janson Counting 1}.
We bound $|\mathcal T_{k_A, r_A, k_B, r_B}|$ in the same approach as in the proof of \cref{lem: Janson Counting 1}, by first specifying a triangle-free $(k_A, r_A, A, D_{A})$-graph $G_A$ and a triangle-free $(k_B, r_B, B, D_{B})$-graph $G_B$, and then estimating the number of ways to complete $(G_A,G_B)$ to a $d$-regular triangle-free graph with $G_A$ and $G_B$ being the defect graphs.
By~\cref{cor: counting defects},
 the number of choices for $(G_A,G_B)$ is bounded in~\eqref{eq:defectbounds}.
Fix $(G_A,G_B)$ and an arbitrary $D_A$-torso $T_A$ of $G_A$. Let $S_A \subseteq A$ be chosen arbitrarily such that $|S_A| = \min\{\eps^{1/2}k'_A, \eps^{1/2}n\}$ and $|E(T_A[S_A])| \geq \eps|E(T_A)|$. 
    \begin{claim}\label{clm: dense set}
        There exists a set $S_A \subseteq A$ that satisfies the constraints above.
    \end{claim}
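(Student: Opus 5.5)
We need a set $S_A$ of size $s:=\min\{\eps^{1/2}k'_A,\eps^{1/2}n\}$ with $|E(T_A[S_A])|\ge \eps|E(T_A)|$. We will use a first-moment (averaging) argument over uniformly random $s$-subsets of the non-isolated vertices of $T_A$, and fall back on a greedy choice of high-degree vertices when $T_A$ has few vertices.

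First record the facts we use: $T_A$ has maximum degree at most $D_A$, and $|E(T_A)|\le k'_A$. Also $r_A<\bar r_A$, so $|E(T_A)|> k_{A,r_A}=(1-\eps)k'_A$. Let $W$ be the set of non-isolated vertices of $T_A$, so $|E(T_A)|/D_A\le |W|\le 2|E(T_A)|\le 2k'_A$. If $s\ge |W|$, take $S_A\supseteq W$ (padding with arbitrary vertices of $A$ if needed). Then $T_A[S_A]=T_A$ and we are done. Note $|A|\ge n/3 > s$, so the padding is possible.

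Otherwise $s<|W|$, and we choose $S_A$ uniformly among $s$-subsets of $W$. Each edge of $T_A$ survives with probability $\frac{s(s-1)}{|W|(|W|-1)}\ge (s/|W|)^2(1-o(1))$. Hence some choice has
\[
|E(T_A[S_A])|\ge \tfrac12\,(s/|W|)^2\,|E(T_A)|.
\]
In the regime $s=\eps^{1/2}k'_A$ with $|W|\le 2k'_A$, this gives ratio $\ge \eps/4\cdot\frac12$, which misses $\eps$ by a constant factor. We have two remedies.

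The first remedy is to sharpen the vertex count using edge density. Since $|E(T_A)|>(1-\eps)k'_A$, the graph $T_A$ is nearly a matching only if $|W|\approx 2k'_A$; in general it is better to weight vertices by degree. Sample each vertex $v\in W$ with probability proportional to $\deg_{T_A}(v)$, normalized so the expected size is $s$. Equivalently, run a greedy procedure: order $W$ by decreasing degree and take the top $s$ vertices. The edges in the top-degree set are then at least an $(s/|W|)^2$-fraction by a Chebyshev-sum (rearrangement) argument.

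The second remedy, which is cleaner and the one I would use, applies if the statement's constant tolerates it. The later arguments use only that $|E(T_A[S_A])|\ge c\,\eps|E(T_A)|$ for an absolute constant $c$, so we would allow replacing $\eps$ by $\eps/8$, or equivalently take $|S_A|=\min\{(8\eps)^{1/2}k'_A,\cdot\}$.

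The other regime is $s=\eps^{1/2}n<\eps^{1/2}k'_A$, which means $k'_A>n$. Here $|W|\le |A|\le n$ (approximately $n/2$), so $s/|W|\ge 2\eps^{1/2}(1-o(1))$. The averaging then gives $|E(T_A[S_A])|\ge \frac12\cdot 4\eps\,|E(T_A)|(1-o(1))\ge\eps|E(T_A)|$. This case is immediate.

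The main obstacle is the constant in the first regime. When $T_A$ is close to a perfect matching on $2k'_A$ vertices, a random $\eps^{1/2}k'_A$-set captures only about an $\eps/4$ fraction of the edges. A deterministic choice does better there: pick $\frac{s}{2}$ whole edges of the matching, which gives $\frac{s}{2}\ge \eps\,|E(T_A)|$ edges. This suggests the right argument is structural, not random. Take a maximal matching $M$ of $T_A$. If $|M|\ge s/2$, choose $s/2$ matching edges whose endpoints carry the most $T_A$-edges. Otherwise the vertex cover $V(M)$ has fewer than $s$ vertices, and every edge of $T_A$ meets $V(M)$. In that case, add to $V(M)$ the highest-degree neighbours. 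Making this second branch reach the $\eps$ fraction, rather than a constant multiple of it, is the step I expect to need care. If it fails, I would adjust the constant as indicated above.
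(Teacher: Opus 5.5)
Your treatment of the case $s\ge|W|$ is fine, and so is the regime $s=\eps^{1/2}n$, where $|W|\le|A|=\frac{n}{2}(1+o(1))$ and averaging gives roughly a $4\eps$ fraction. The gap is the remaining case $s=\eps^{1/2}k'_A<|W|\le 2k'_A$: there you never actually produce the set, and none of your three remedies is carried through.
\begin{itemize}
\item The top-degree remedy is false. If $T_A$ is a disjoint union of $q$ paths with two edges and $s\le q$, the $s$ vertices of largest degree are path centres. These are pairwise non-adjacent, so they induce no edges. Moreover, even if an $(s/|W|)^2$ fraction did hold, $|W|\le 2k'_A$ makes that only $\eps/4$ again.
\item Replacing $\eps$ by $\eps/8$ proves a different statement. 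It is probably harmless for the later Janson estimate, where a constant factor in $|E(T_A[S_A])|$ is absorbed, but it is not the claim.
\item The maximal-matching idea is the right one, but you stop exactly at the branch that needs an argument.
\end{itemize}

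The missing observation is that the edges of $M$ lie inside $V(M)$, so you should split at $|M|\ge s/4$ instead of $s/2$.

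If $|M|\ge s/4$, take the endpoints of $\lfloor s/4\rfloor$ edges of $M$ and pad arbitrarily in $A$. This induces $\lfloor s/4\rfloor\ge\eps k'_A\ge\eps|E(T_A)|$ edges, since $\eps^{1/2}/4\ge\eps$ (up to rounding).

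If $|M|<s/4$, then $|V(M)|<s/2$. By maximality, $I:=W\setminus V(M)$ is independent, and each vertex of $I$ sends all of its (at least one) edges into $V(M)$. Hence $|I|\le e(V(M),I)$ and $|E(T_A)|=e(T_A[V(M)])+e(V(M),I)$. Add to $V(M)$ the $\min\{|I|,s/2\}$ vertices of $I$ of largest degree, and pad. These capture at least $\min\{e(V(M),I),s/2\}$ cross edges. Therefore $|E(T_A[S_A])|\ge\min\{|E(T_A)|,s/2\}\ge\eps|E(T_A)|$, using $|E(T_A)|\le k'_A$.

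For comparison, the paper's proof is only the averaging over a uniformly random $\eps^{1/2}a$-subset of $A$. After padding, this gives a set of the prescribed size only when $k'_A\ge a$, a range that contains your second regime. It does not address $k'_A<a$ at all. So your diagnosis that the small-$k'_A$ case needs a separate structural argument is correct; you just have to supply one, e.g.\ the one above.
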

    \begin{proofclaim}
    Consider choosing a set $S'_A \subseteq A$ with $\eps^{1/2} a$ vertices uniformly at random. Then, (assuming that $\eps^{1/2} a$ is an integer) each edge is included in $T_A[S'_A]$ with probability
    \begin{align*}
        \binom{a}{\eps^{1/2} a - 2}/\binom{a}{\eps^{1/2} a} &= (\eps^{1/2} a)(\eps^{1/2} a-1)/(a - \eps^{1/2} a + 2)(a - \eps^{1/2} a + 1) \\
        &= \eps \frac{1 - 1/\eps^{1/2} a}{(1 - \eps^{1/2} + 2/a)(1 - \eps^{1/2} + 1/a) } \geq \eps .
    \end{align*}

    Thus, the expected number of edges in $T_A[S'_A]$ is at least $\eps|E(T_A)|$, so there is a valid choice of $S_A$.
    \end{proofclaim}

Let $s_A=|S_A|$. Let $A' = A \setminus S_A$. Notice that $S_A$ depends on $G_A$.  However, $s_A$ does not. 
Notice also that $S_A$ does not necessarily contain all the vertices in $T_A$ with positive degrees, which is different from the case in the proof of \cref{lem: Janson Counting 1}.

Let $\mathcal H_{G_A, G_B, T_A, S_A}$ be the set of $d$-regular graphs $H$ where $H[A] = G_A$, $H[B] = G_B$, and no edge in $T_A[S_A]$  is contained in a triangle in $H$. Then,
\begin{equation}
|\mathcal T_{k_A, r_A, k_B, r_B}|\le \sum_{G_A,G_B} \min_{T_A, S_A}|{\mathcal H}_{G_A,G_B,T_A,S_A}|.\label{eq:insideGAGB2}
\end{equation}

Next, we fix $(G_A,G_B,T_A,S_A)$ and we upper bound $|\mathcal H|$, where ${\mathcal H}={\mathcal H}_{G_A,G_B,T_A,S_A}$,
by counting graphs that can be constructed following the procedure below. 

    \begin{itemize}
        \item[1.](local $(S_A, B)$ sprinkling) Let $m_A = ds_A  - \sum_{u \in S_A}d_{G_A}(u)$. Include $m_A$ edges from $S_A \times B$ into $H$ such that no triangle in $H$ contains an edge in $T_A[S_A]$ and no vertex in $H$ has degree more than $d$.
        \item[2.](degree completion) Complete $H$ to be a (simple) $d$-regular graph on $[n]$ by including $dn/2 - k_A - k_B - m_A$ edges in $A'\times B$.
    \end{itemize}

    Once $S_A$ has been chosen, this procedure is identical to the procedure given in \cref{sec: Janson Counting 1} when $S_B$ is empty.
    
    Observe that every graph in ${\mathcal H}$ can be constructed by the procedure above (in multiple ways). Unlike the proof of \cref{lem: Janson Counting 1}, though, there is not any large over-counting. In fact, we can obtain a sufficient bound on $|{\mathcal H}|$ by upper bounding the number of choices in each of the steps in the procedure, and by lower bounding the number of ways a graph in ${\mathcal H}$ can be constructed by the procedure by $1$.

As before, it is convenient to relax the condition $m_A = ds_A - \sum_{u \in S_A} d_{G_A}(u)$ to 
\begin{equation}\label{eq: relaxed condition}
    ds_A - 2k_A \leq m'_A \leq ds_A,
\end{equation}
so that the bounds on $m_A'$ no longer depend on $G_A$.
 Next, we estimate the number of choices in each step given parameter $m'_A$ for $m_A$ that satisfy~\eqref{eq: relaxed condition}.

\begin{claim}\label{clm: janson 2 step 2}
Given $m_A = m'_A$, the number of choices in step 1 is at most
\[
     {s_Ab \choose m_A'}\exp\left[-\frac{1}{2\eps^2} k'_A \log\frac{dn}{2k_A}\right].
    \]
\end{claim}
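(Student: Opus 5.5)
The plan is to view step 1 as choosing a uniformly random bipartite graph $R$ with exactly $m'_A$ edges from $S_A\times B$, and to bound the probability that no edge of $T_A[S_A]$ lies in a triangle. Since degree constraints only shrink the set of admissible choices, we may drop them. For fixed $G_B$, an edge $uw\in T_A[S_A]$ lies in a triangle exactly when $R$ contains both $uv$ and $wv$ for some $v\in B$; triangles through edges of $G_B$ only add further constraints, which we also discard. The number of choices in step 1 is then at most $\binom{s_Ab}{m'_A}\,\P[X=0]$, where $X$ counts pairs $(uw,v)$ with $uw\in E(T_A[S_A])$, $v\in B$ and $uv,wv\in R$. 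To bound $\P[X=0]$ we use Janson's inequality in the $G_{n,p}$-type model on $S_A\times B$ with $p=m'_A/(s_Ab)$, and transfer to the fixed-size model by the usual monotonicity argument ($X=0$ is decreasing), losing only a factor $O(\sqrt{s_Ab\,p})$, which is negligible.

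\emph{First moment.} By \eqref{eq: relaxed condition} and $k_A\le n\log d$, we have $m'_A\ge ds_A-2k_A=(1-o(1))ds_A$, since $s_A=\eps^{1/2}\min\{k'_A,n\}$ and $k_A=o(dk'_A)$ in the relevant range. Hence $p=(2-o(1))d/n$. Claim~\ref{clm: dense set} gives $|E(T_A[S_A])|\ge \eps k'_A$, so
\[
\mu=\mathbb E X\ge \eps k'_A\cdot b\cdot p^2=(1-o(1))\,\eps k'_A\frac{2d^2}{n}.
\]
Because $d\ge \eps^{-4}\sqrt{n\log n}$, we get $d^2/n\ge \eps^{-8}\log n\ge \eps^{-8}\log\frac{dn}{2k_A}$. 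This yields $\mu\ge \eps^{-7}k'_A\log\frac{dn}{2k_A}$, comfortably more than the target exponent $\frac{1}{2\eps^2}k'_A\log\frac{dn}{2k_A}$.

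\emph{Second moment term.} Pairs of distinct triples share an edge of $R$ only when they have the form $(uw,v),(uw',v)$ with $uw,uw'\in T_A[S_A]$, both using the edge $uv$. Since $T_A$ has maximum degree at most $D_A$, we get
\[
\Delta\le \sum_{u}\deg_{T_A}(u)^2\, b\, p^3\le D_A\cdot 2|E(T_A[S_A])|\, b p^3 = O\!\left(\mu D_A p\right).
\]
Now $D_Ap=\frac{\eps^4 d}{2\log(dn/2k_A)}\cdot\frac{2d}{n}=\frac{\eps^4 d^2}{n\log(dn/2k_A)}$, which may be large. So we use the second form of Janson's inequality, $\P[X=0]\le \exp(-\mu^2/(2(\mu+\Delta)))$, when $\Delta\ge\mu$. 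This gives an exponent of order
\[
\frac{\mu}{D_Ap}\gtrsim \frac{\eps k'_A\,d^2/n}{\eps^4 d^2/(n\log(dn/2k_A))}=\eps^{-3}k'_A\log\frac{dn}{2k_A}.
\]
When $\Delta\le\mu$, the first form $\exp(-\mu+\Delta)$ gives at least $\mu/2$ in the exponent. In both cases the exponent is at least $\frac{1}{2\eps^2}k'_A\log\frac{dn}{2k_A}$, once the constants are tracked.

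\emph{Main obstacle.} The delicate step is the constant bookkeeping in the $\Delta$ bound. We must check that $\mu/(D_Ap)$ really gains the factor $\eps^{-3}$ from the choice of $D_A$ in \eqref{eq: D} against the loss of $\eps$ from Claim~\ref{clm: dense set}, and that $p$ stays $\Theta(d/n)$ uniformly over the relaxed range \eqref{eq: relaxed condition} (the lower end uses $k_A\le n\log d$ and $s_A\ge \eps^{1/2}k'_A$). The case $k'_A>n$, where $s_A=\eps^{1/2}n$, is handled identically, since only $|E(T_A[S_A])|\ge \eps k'_A$ enters $\mu$.
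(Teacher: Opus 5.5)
Your proposal is correct and is essentially the paper's own proof. Both bound the step-1 choices by $\binom{s_Ab}{m'_A}\P[X=0]$, pass to the binomial model with $p=m'_A/(s_Ab)$ by monotonicity (the paper's \cref{clm: m to p}, which in fact loses only a constant factor), and apply both forms of Janson's inequality with $\mu\gtrsim 2\eps d^2k'_A/n$ and $\Delta=O(\mu D_Ap)$. Only the constants need tidying: \cref{clm: dense set} gives $|E(T_A[S_A])|\ge\eps|E(T_A)|>\eps(1-\eps)k'_A$ (not $\ge\eps k'_A$), and in the case $\Delta\le\mu$ you should use $\exp(-\mu+\Delta/2)$ from \cref{thm: Janson}, with $\Delta$ summed over ordered pairs, rather than $\exp(-\mu+\Delta)$, in order to get the exponent $\mu/2$.
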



Given $m_A = m'_A$, let $m' = dn/2 - k_A - k_B - m'_A$. By Corollary~\ref{cor: max sequence}, the number of choices in step 3 is at most
\begin{equation}
O\left(\binom{b}{m'/|A'|}^{|A'|}\binom{|A'|}{m'/b}^{b}\binom{|A'|b}{m'}^{-1}\right). \label{eq: jan 2 step5}
\end{equation}

Therefore, combining~\Cref{clm: janson 2 step 2}  and~\eqref{eq: jan 2 step5},
\begin{align*}
|{\mathcal H}| \le &\exp\left[-\eps^{-2}k'_A\log\frac{dn}{2k_A} + O(1) \right]\max\left\{\frac{\binom{s_Ab}{m'_A}\binom{b}{m'/|A'|}^{|A'|}\binom{|A'|}{m'/|b}^{b}}                {\binom{|A'|b}{m'}}\right\},
\end{align*} 
where the maximization is over all integers $m'_A$ that satisfy~\eqref{eq: relaxed condition}. Thus, by~\eqref{eq:insideGAGB2}, we immediately obtain that 

\begin{align*}
|\mathcal T_{k_A, r_A, k_B, r_B}| \le &\sum_{G_A, G_B}\max\left\{\frac{\binom{s_Ab}{m'_A}\binom{b}{m'/|A'|}^{|A'|}\binom{|A'|}{m'/|b}^{b}}                {\binom{|A'|b}{m'}}\right\} \exp\left[-\frac{1}{2\eps^2}k'_A\log\frac{dn}{2k_A}  + O(1) \right],
\end{align*} 
where the maximization is again over all choices of $m'_A$ that satisfy~\eqref{eq: relaxed condition}. Recall that~\eqref{eq:defectbounds} gives an upper bound for the number of choices of $G_A$ and $G_B$.
Observe that 
\[
-\frac{1}{2\eps^{2}}k'_A\log\frac{dn}{2k_A} + (1 + \eps^2)\left(k'_A\log\frac{dn}{2k_A} + k'_B\log\frac{dn}{2k_B}\right) \leq -\frac{1}{3\eps^2}\left(k'_A\log\frac{dn}{2k_A}+ k'_B\log\frac{dn}{2k_B}\right),
\]
since $k'_A\log\frac{dn}{2k_A} \geq k'_B\log\frac{dn}{2k_B}$. Thus, we have that 
\begin{align}
|\mathcal T_{k_A, r_A, k_B, r_B}| \le &\max\left\{\frac{\binom{s_Ab}{m'_A}\binom{b}{m'/|A'|}^{|A'|}\binom{|A'|}{m'/|b}^{b}}                {\binom{|A'|b}{m'}}\right\}\notag\\& \times\exp\left[(k_A + k_B)\log\frac
n d -\frac{1}{3\eps^2}\left(k'_A\log\frac{dn}{2k_A} + k'_B\log\frac{dn}{2k_B}\right) + O(1) \right].\label{eq:janson 2 ugly}
\end{align} 
Simplifying~\eqref{eq:janson 2 ugly} we obtain the following bound, which completes the proof.

\begin{claim}\label{clm: final counting janson 2}
    \[
        \frac{|\mathcal T_{k_A, r_A, k_B, r_B}|}{\mathcal |\mathcal G_{n,d}^\text{bip}|} \leq\frac{1}{\binom{n}{n/2}}\exp\left[-\eps^{-1}\left(k'_A \log\frac{dn}{2k_A} + k'_B\log\frac{dn}{2k_B}\right)\right]
    \]
\end{claim}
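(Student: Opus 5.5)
Dividing~\eqref{eq:janson 2 ugly} by $|\mathcal G^{\text{bip}}_{n,d}|$ from~\eqref{eq:reg bip count2}, the task reduces to showing that
\[
\max_{m'_A}\frac{\binom{s_Ab}{m'_A}\binom{b}{m'/|A'|}^{|A'|}\binom{|A'|}{m'/b}^{b}\binom{n^2/4}{dn/2}}{\binom{|A'|b}{m'}\binom{n/2}{d}^{n}} \le \exp\left[-(k_A+k_B)\log\frac nd + 4\left(k'_A\log\frac{dn}{2k_A}+k'_B\log\frac{dn}{2k_B}\right)\right].
\]
The maximum ranges over $m'_A$ satisfying~\eqref{eq: relaxed condition}. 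Once this holds, the exponent in~\eqref{eq:janson 2 ugly} becomes at most $-(\frac{1}{3\eps^2}-4)(k'_A\log\frac{dn}{2k_A}+k'_B\log\frac{dn}{2k_B})$. For $\eps$ small this is below $-\eps^{-1}(\cdots)$, and we are done.

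The displayed ratio is exactly the maximization term in \cref{clm: s_B = 0} with $\lambda=0$. When $\lambda=0$, we have $\binom{b}{\lambda m'_A/s_A}^{s_A}=1$ and $\binom{m'_A}{m'_A}=1$. Also $S_B=\emptyset$, so $B'=B$. The second case of \cref{clm: s_B = 0} is stated under the assumptions of \cref{lem: Janson Counting 2}, and we are in that lemma. So I would simply invoke \cref{clm: s_B = 0} in the $\lambda=0$ case.

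If that claim has to be verified directly, I would proceed as follows. Apply \cref{cor: choose} to every binomial coefficient, and use \cref{prop: choose approx ub top} and \cref{prop: choose approx ub bot} to compare $\binom{b}{m'/|A'|}^{|A'|}\binom{|A'|}{m'/b}^{b}$ with $\binom{n/2}{d}^n$ adjusted by the missing $S_A$ rows. The sizes $a,b$ are fixed by~\eqref{eq:AB}, and $m'=dn/2-k_A-k_B-m'_A$.

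The key bookkeeping is this. The $s_A$ vertices of $S_A$ each contribute roughly $\binom{b}{d}$ through $\binom{s_Ab}{m'_A}\approx\binom{n/2}{d}^{s_A}$. The deficit of about $k_A+k_B$ cross edges yields a factor $(2d/n)^{k_A+k_B}$ up to $e^{O(k_A+k_B)}$; this is exactly the $\exp[-(k_A+k_B)\log\frac nd]$ term. The remaining quadratic corrections of order $d^2/n$ times $s_A$ are bounded by $O(s_A d^2/n)$. Since $s_A\le\eps^{1/2}k'_A$ and $d^2/n$ is compared against $\log\frac{dn}{2k_A}$, these corrections must be absorbed into $4k'_A\log\frac{dn}{2k_A}$.

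The main obstacle is precisely controlling these $d^2/n$-type corrections in the regime $d\ge\eps^{-4}\sqrt{n\log n}$, where $d^2/n$ can greatly exceed $\log n$. I would handle it by showing that the row-size mismatch between $\binom{s_Ab}{m'_A}$ and $\binom{b}{d}^{s_A}$ is only a polynomial factor $\exp[O(s_A\log d)]$. The $\exp[\pm d^2/n]$ terms from $|A'|$ versus $n/2$ cancel against $\binom{n^2/4}{dn/2}/\binom{|A'|b}{m'}$, as in the computation leading to~\eqref{eq: N_1/N_0}.

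Then $O(s_A\log d)\le O(\eps^{1/2}k'_A\log n)$. Because $k_A\le n\log d$, we have $\log\frac{dn}{2k_A}=\Omega(\log n)$, so this is at most $4k'_A\log\frac{dn}{2k_A}$. If the cancellation leaves a residual $e^{O(s_Ad^2/n)}$, I would fall back on exploiting the slack $\frac{1}{3\eps^2}$ versus $\eps^{-1}$, which has room for an $O(\eps^{-3/2})$ loss.
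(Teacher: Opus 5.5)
Your proposal is correct and follows the paper's proof. The paper divides \eqref{eq:janson 2 ugly} by \eqref{eq:reg bip count2} and observes that, with $\lambda=0$ (so $B'=B$ and the $\lambda$-dependent binomials equal $1$), the remaining ratio is exactly the maximization in \cref{clm: s_B = 0}; its second case then gives the exponent $-(\frac{1}{3\eps^2}-4)(\cdots)\le-\eps^{-1}(\cdots)$. Your fallback sketch for verifying that claim directly is not needed for this step.
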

\cref{clm: janson 2 step 2} is proved in~\cref{sec: janson args} an \cref{clm: final counting janson 2} is proved below.
\qed
\medskip

    \medskip
    \noindent\textit{Proof of \cref{clm: final counting janson 2}.}
    Observe that by~\eqref{eq:reg bip count2}, 
    \begin{align}
        \frac{|\mathcal T_{k_A, r_A, k_B, r_B}|}{|\mathcal G_{n,d}^\text{bip}|} \leq &\max\left\{\frac{\binom{s_Ab}{m'_A}\binom{b}{m'/|A'|}^{|A'|}\binom{|A'|}{m'/b}^{b}\binom{n^2/4}{dn/2}}                {\binom{n}{n/2}\binom{n/2}{d}^n\binom{|A'|b}{m'}}\right\}\notag\\& \times\exp\left[(k_A + k_B)\log\frac
n d -\frac{1}{3\eps^2}\left(k'_A\log\frac{dn}{2k_A} + k'_B\log\frac{dn}{2k_B}\right) + O(1) \right].
    \end{align}
    Moreover, taking $\lambda = 0$, the ratio of binomial coefficients in this maximization is equivalent to the ratio of binomial coefficients in \cref{clm: s_B = 0}. The desired result then follows from \cref{clm: s_B = 0}.
    \qed

    \subsection{Proofs of \cref{clm: janson 1 step 3} and \cref{clm: janson 2 step 2}}\label{sec: janson args}

We reduce these problems to bounding the probability that randomly chosen cross-edges do not form triangles with the fixed defect edges. Since the torsos we are considering are small here, these defect graphs are fairly regular. As such, the correlation inequalities in the theorem below, established by Janson, {\L}uczak, and Ruci\'nski in \cite{janson1988exponential}, give meaningful bounds on the probability that no triangle is created when adding cross-edges. The calculations in this subsection are very similar to the calculations in~subsection 6 of \cite{osthus2003densities}, but we consider only cross-edges incident to one of $S_A$ and $S_B$ while all cross-edges are considered in \cite{osthus2003densities}.

\begin{thm}\label{thm: Janson}\cite{janson1988exponential}
    Let $\{I_j\}_{j \in \mathcal J}$ be independent 0-1 variables where $\mathcal J$ is an arbitrary index set. For every subset $\alpha$ of $\mathcal J$, let $I_\alpha = \prod_{j \in \alpha}I_j$. Let $\mathcal S$ be a collection of subsets of $\mathcal J$ and set $X = \sum_{\alpha \in \mathcal S}I_\alpha$. Let $\mu = \mathbb E[X]$ and $\Delta = \sum_{\alpha \sim \beta} \mathbb E[I_\alpha I_\beta]$, where the sum is over all ordered pairs $(\alpha, \beta)$ of elements of $\mathcal S$ where $\alpha \cap \beta \neq \emptyset$ and $\alpha \neq \beta$. Then,
\begin{equation}\label{eq: Janson 1}
    \mathbb P[X = 0] \leq \exp\left[ -\mu + \frac{\Delta}{2} \right]
\end{equation}
and if $\Delta > \mu$, 
\begin{equation}\label{eq: Janson 2}
    \mathbb P[X = 0] \leq \exp\left[ -\frac{\mu^2}{2\Delta} \right].
\end{equation}
\end{thm}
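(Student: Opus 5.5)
We prove \cref{thm: Janson} by the standard Boppana--Spencer style argument: a chain-rule decomposition of $\mathbb P[X=0]$, Harris' inequality on the product space of the $I_j$, and, for \eqref{eq: Janson 2}, a random thinning of $\mathcal S$.

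\emph{Step 1: chain rule.} Write $B_\alpha$ for the event $\{I_\alpha=1\}$ and fix an ordering of $\mathcal S$. Then
\[
\mathbb P[X=0]=\prod_{\alpha\in\mathcal S}\Big(1-\mathbb P\big[B_\alpha \,\big|\, \textstyle\bigcap_{\beta<\alpha}\overline{B_\beta}\big]\Big).
\]

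\emph{Step 2: lower bound on each conditional probability.} Fix $\alpha$. Split the earlier indices into two groups:
\[
D_1=\bigcap_{\beta<\alpha,\ \beta\sim\alpha}\overline{B_\beta},\qquad D_0=\bigcap_{\beta<\alpha,\ \beta\cap\alpha=\emptyset}\overline{B_\beta}.
\]
Then
\[
\mathbb P[B_\alpha\mid D_0D_1]\ \ge\ \mathbb P[B_\alpha D_1\mid D_0]=\mathbb P[B_\alpha\mid D_0]\,\mathbb P[D_1\mid B_\alpha D_0].
\]
\begin{itemize}
\item $D_0$ depends only on coordinates outside $\alpha$, while $B_\alpha$ depends only on coordinates in $\alpha$. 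Hence $\mathbb P[B_\alpha\mid D_0]=\mathbb P[B_\alpha]$.
\item By the union bound, $\mathbb P[D_1\mid B_\alpha D_0]\ge 1-\sum_{\beta<\alpha,\beta\sim\alpha}\mathbb P[B_\beta\mid B_\alpha D_0]$.
\item Conditioning on $B_\alpha$ fixes the coordinates in $\alpha$ to $1$ and leaves a product measure on the rest. Under that measure $B_\beta$ is increasing and $D_0$ is decreasing. Harris' inequality therefore gives $\mathbb P[B_\beta\mid B_\alpha D_0]\le \mathbb P[B_\beta\mid B_\alpha]$.
\end{itemize}
Combining these,
\[
\mathbb P\big[B_\alpha\mid \textstyle\bigcap_{\beta<\alpha}\overline{B_\beta}\big]\ \ge\ \mathbb P[B_\alpha]-\sum_{\beta<\alpha,\ \beta\sim\alpha}\mathbb E[I_\alpha I_\beta].
\]

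\emph{Step 3: proof of \eqref{eq: Janson 1}.} Insert the bound from Step 2 into the chain rule and use $1-x\le e^{-x}$. Summing over $\alpha$, the correction terms range over unordered dependent pairs. Since $\Delta$ counts ordered pairs, their total is exactly $\Delta/2$, which gives \eqref{eq: Janson 1}.

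\emph{Step 4: proof of \eqref{eq: Janson 2}.} Now suppose $\Delta>\mu$ and set $q=\mu/\Delta\in(0,1)$. Keep each $\alpha\in\mathcal S$ independently with probability $q$, giving a random subfamily $\mathcal S_q$, and let $X_q=\sum_{\alpha\in\mathcal S_q}I_\alpha$.
\begin{itemize}
\item For every realisation of $\mathcal S_q$, the event $X=0$ implies $X_q=0$.
\item Applying \eqref{eq: Janson 1} to a fixed subfamily gives $-\log\mathbb P[X=0]\ge \mu(\mathcal S_q)-\Delta(\mathcal S_q)/2$.
\item Taking expectation over the thinning, $\mathbb E\,\mu(\mathcal S_q)=q\mu$ and $\mathbb E\,\Delta(\mathcal S_q)=q^2\Delta$, because each pair survives with probability $q^2$.
\item A deterministic subfamily attains at least this average, so $-\log\mathbb P[X=0]\ge q\mu-q^2\Delta/2=\mu^2/(2\Delta)$.
\end{itemize}

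The only delicate step is the Harris/FKG application in Step 2. The point is that conditioning on $B_\alpha$ still leaves a product measure on the remaining coordinates, and that $D_0$ involves only sets disjoint from $\alpha$, so it is a decreasing event there. Once this monotonicity structure is set up correctly, everything else is bookkeeping.
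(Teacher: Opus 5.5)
Your proof is correct. The paper does not prove this theorem; it imports it as a black box from \cite{janson1988exponential}, so there is no internal argument to compare against.

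What you give is the standard Boppana--Spencer proof of \eqref{eq: Janson 1}:
\begin{itemize}
\item the chain rule over an ordering of $\mathcal S$;
\item exact independence for the block $D_0$ of sets disjoint from $\alpha$;
\item Harris' inequality after conditioning on $B_\alpha$, which does leave a product measure on which $B_\beta$ is increasing and $D_0$ is decreasing.
\end{itemize}
For \eqref{eq: Janson 2} you use the Alon--Spencer random-thinning argument. Your ordered-versus-unordered bookkeeping yields exactly the constants $\Delta/2$ and $\mu^2/(2\Delta)$ that the paper uses.

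The other classical route works with $\Psi(s)=\mathbb E e^{-sX}$. Writing $-\Psi'(s)=\sum_\alpha \mathbb E[I_\alpha e^{-sX}]$, Harris and Jensen give $-(\log\Psi)'(s)\ge \mu\exp\bigl(-s(\mu+\Delta)/\mu\bigr)$. Integrating over $s\in(0,\infty)$ gives $\mathbb P[X=0]\le\exp\bigl(-\mu^2/(\mu+\Delta)\bigr)$ with no hypothesis on $\Delta$. This implies \eqref{eq: Janson 2}, since $\mu+\Delta<2\Delta$ when $\Delta>\mu$.

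Without the final Jensen averaging over $\alpha$, the same computation gives only $-\log\mathbb P[X=0]\ge\sum_\alpha \mathbb E[I_\alpha]/(1+\delta_\alpha)\ge\mu-\Delta$, where $\delta_\alpha=\sum_{\beta\sim\alpha}\mathbb E[I_\beta\mid I_\alpha=1]$. That loses the factor $\tfrac12$. So your chain-rule argument is the natural way to obtain \eqref{eq: Janson 1} exactly as stated, and the Laplace route is the natural way to get the stronger second bound.

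The points you leave implicit are harmless:
\begin{itemize}
\item $\mathcal S$ and each $\alpha\in\mathcal S$ should be finite, which holds in every application in the paper.
\item A conditioning event of probability zero forces $\mathbb P[X=0]=0$.
\item If $\mu=0$, then \eqref{eq: Janson 2} is trivial, so taking $q\in(0,1)$ is legitimate.
\end{itemize}
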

\medskip
\noindent\textit{Proof of \cref{clm: janson 1 step 3}.}
We bound the number of choices in these steps by including edges uniformly at random and bounding the probability that the resulting graph does not contain triangles that include edges in $T_A \cup T_B$. 

Let $H_A$ be a random bipartite graph with bipartition $(S_A, B')$ where $(1 - \lambda )m'_A$ edges are uniformly chosen.  Analogously, let  $H_B$ be a random bipartite graph with bipartition $(S_B, A')$ where $(1 - \lambda )m'_B$ edges are uniformly chosen. Let  $H_1$ be a random bipartite graph with bipartition $(S_A, S_B)$ where $m'_1$ edges are uniformly chosen. Moreover $H_A$, $H_B$ and $H_1$ are mutually independent. 
Let ${\mathcal E}$ denote the event that $H_A\cup H_B\cup H_1\cup T_A\cup T_B$ is triangle-free. It follows immediately that the number of choices in steps 1 and 2 is at most (since we ignore the condition that $H_A\cup H_B\cup H_1\cup G_A\cup G_B$ has maximum degree at most $d$)
\[
\binom{s_As_B}{m_1'}\binom{s_A|B'|}{(1 - \lambda )m'_A}\binom{s_b|A'|}{(1-\lambda )m'_B}\P({\mathcal E}).
\]
It only remains to estimate $\P({\mathcal E})$. Let $X_1$, $X_A$, and $X_B$ denote the number of triangles in $H_1 \cup T_A \cup T_B$, $H_A \cup T_A$, and $H_B \cup T_B$ respectively. By the independence of $H_1$, $H_A$ and $H_B$, we have that $X_1$, $X_A$ and $X_B$ are independent. Hence,
\[
\P({\mathcal E})=\P(X_1=0)\P(X_A=0)\P(X_B=0).
\]
We prove the following bounds on these probabilities depending on the range of $m'_1$. Recall that $s_A > 0$.

\begin{claim}\label{clm: big m1}
If $s_A, s_B > 0$ and $m'_1 \geq \varepsilon^2ds_\sigma$ for some $\sigma \in \{A,B\}$, then
    \[
    \P[X_1= 0] \leq \exp\left[-\eps^{-1}(1 - 3\eps)k'_\sigma\log\frac{dn}{2k_\sigma} + O(1)\right].
    \]
\end{claim}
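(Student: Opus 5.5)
Without loss of generality take $\sigma = A$, so $m'_1 \ge \eps^2 d s_A$ and $s_A = k'_A$. We only use the triangles formed by an edge $xy$ of $T_A$, which lies inside $S_A$ (every edge of $T_A$ has both ends in $S_A$ by~\eqref{eq:sizeSA}), together with a common neighbour $w\in S_B$ joined to both $x$ and $y$ in $H_1$. Discarding the other triangles only increases $\P[X_1=0]$. So let $Y$ count pairs $(xy,w)$ with $xy\in E(T_A)$, $w\in S_B$, and $xw,yw\in H_1$; it suffices to bound $\P[Y=0]$. Since $H_1$ has a fixed number $m'_1$ of edges, we pass to the binomial model $H_1'\sim G(S_A,S_B,p)$ with $p = m'_1/(s_As_B)$. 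The event $\{Y=0\}$ is monotone decreasing, and the number of edges of $H_1'$ equals $m'_1$ with probability $\Omega((s_As_B)^{-1/2})$. Hence $\P_{H_1}[Y=0]\le O(\sqrt{s_As_B})\,\P_{H_1'}[Y=0]$. (Alternatively, one can use the standard monotonicity comparison, paying only an $O(1)$ factor in $\log$; any polynomial loss $\exp[O(\log n)]$ should be absorbed by the main term, since that term is at least of order $k'_A\log\log n$ and $k'_A \ge 1$.)

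Apply \cref{thm: Janson} in the binomial model. Here $\mu = |E(T_A)|\, s_B p^2$ and $|E(T_A)| > k_{A,r_A}=(1-\eps)k'_A$, since $r_A<\bar r_A$. For $\Delta$, two distinct pairs $(xy,w)$ and $(x'y',w')$ share an edge only if $w=w'$ and the torso edges share a vertex. Each vertex has $T_A$-degree at most $D_A$, so $\Delta \le 2|E(T_A)|\,s_B\cdot 2D_A\, p^3$, giving $\Delta/\mu \le 4D_A p$. Using $p = m'_1/(s_As_B)$ together with $m'_1\le ds_B$, we get $p\le d/s_A$; but it is cleaner to count per vertex of $S_B$. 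Each $w\in S_B$ has expected degree $m'_1/s_B$ into $S_A$, and $m'_1/s_B \le d$. So $\Delta/\mu\le 4D_A d/s_A$, which may be large. Consequently we use~\eqref{eq: Janson 2} when $\Delta>\mu$, yielding $\P[Y=0]\le \exp[-\mu^2/(2\Delta)] \le \exp[-\mu/(8D_Ap)]$. This gives
\[
\frac{\mu}{8D_Ap} = \frac{|E(T_A)|\, s_B p}{8D_A} = \frac{|E(T_A)|\, m'_1}{8 D_A s_A} \ge \frac{(1-\eps)k'_A\,\eps^2 d s_A}{8 s_A}\cdot\frac{2\log\frac{dn}{2k_A}}{\eps^4 d} = \frac{(1-\eps)k'_A\log\frac{dn}{2k_A}}{4\eps^2},
\]
which is much stronger than the required $\eps^{-1}(1-3\eps)k'_A\log\frac{dn}{2k_A}$. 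In the case $\Delta\le\mu$, \eqref{eq: Janson 1} gives $\exp[-\mu/2]$. This is then also at least as strong, because $\mu/2 \ge \mu/(8D_Ap)$ whenever $4D_Ap\le 1$.

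The main obstacle is the bookkeeping of the constants and the model transfer. I expect the factor-$2$ and ordered-pair conventions in $\Delta$ to matter only by constants, which are swallowed by the $\eps^{-2}$ versus $\eps^{-1}$ slack. I also expect the $O(\log n)$ loss from conditioning on the edge count to be dominated. If $k'_A\log\frac{dn}{2k_A}$ could be as small as $O(1)$, I would instead use the exact hypergeometric version of Janson's inequality, which holds for uniform random edge sets by negative correlation, so that only the stated $O(1)$ error appears.
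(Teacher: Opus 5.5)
Your large-$\Delta$ case is correct, and it is a cleaner route than the paper's. Discarding the $T_B$-triangles removes the cross term $8k'_Ak'_Bp^3$ and the detour through $\log\frac{dn}{2k_\tau}\ge\eps\log\frac{dn}{2k_\sigma}$. Then $\Delta\le 4D_Ap\mu$ with \eqref{eq: Janson 2} gives $\mu/(8D_Ap)\ge\frac{1-\eps}{4\eps^2}k'_AL_A$, where $L_A=\log\frac{dn}{2k_A}$.

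The gap is the case $\Delta\le\mu$. Your sentence ``$\mu/2\ge\mu/(8D_Ap)$ whenever $4D_Ap\le1$'' is backwards. When $4D_Ap\le 1$ we have $\mu/(8D_Ap)\ge\mu/2$, so $e^{-\mu/2}$ is the \emph{weaker} bound. This is exactly the sub-case in which your estimate forces $\Delta\le\mu$. There you must show $\mu/2\ge\eps^{-1}(1-3\eps)k'_AL_A$ directly. The hypotheses only give $\mu\ge(1-\eps)m_1'^2/(s_As_B)\ge(1-\eps)\eps^4d^2k'_A/s_B\ge(1-\eps)d^2k'_A/n$, using $s_B=k'_B\le\eps^4n$. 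For $d$ at the lower end of the range this is only about $\frac34k'_A\log n$. Meanwhile $L_A\ge\frac12\log n-O(\log\log n)$ because $k_A\le n\log d$, so you are short by a factor of order $\eps^{-1}$.

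This is not a bookkeeping issue: the claim fails in that regime. Let $G_A=T_A$ and $G_B=T_B$ be disjoint unions of $C_4$'s with $k_A=k_B=k'_A=k'_B=s_A=s_B=\eps^4n$, so $r_A=r_B=1$. Take $d=(1+\eps)\frac{\sqrt3}{2}\sqrt{n\log n}$ and $m_1'=\eps^2ds_A=\eps^6dn$. Then:
\begin{itemize}
\item $p=\eps^{-2}d/n$ and $4D_Ap\approx3\eps^2$;
\item $\mu_1\le2s_As_Bp^2=2\eps^4d^2\approx\frac32k'_A\log n$;
\item $L_A=\log\frac{d}{2\eps^4}\approx\frac12\log n$, so the claimed exponent is about $\frac{1-3\eps}{2\eps}k'_A\log n$.
\end{itemize}
Harris's inequality in the binomial model with edge probability $(1+\delta)p$ gives $\P[X_1=0]\ge e^{-(1+\delta)^2\mu_1(1+o(1))}$. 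A binomial edge count falls below $m_1'$ with probability $e^{-\Omega(m_1')}$, and $m_1'\gg\mu_1$. So the monotonicity proved in \cref{clm: m to p} transfers this to the $m_1'$-edge model, giving $\P[X_1=0]\ge e^{-2\mu_1}$, which contradicts the claim.

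The paper's own proof has the same hole. Its first step, $\mu_1/2\ge(1-\eps)\eps^2d^2s_\sigma/s_\tau$, does not follow from \eqref{eq: mu1} and $m_1'\ge\eps^2ds_\sigma$: these give $\eps^4$ in place of $\eps^2$. With $\eps^4$ the chain only yields $\mu_1/2\gtrsim s_\sigma\log n$. A correct version must weaken the conclusion when $\Delta_1$ is small, keeping only $e^{-\mu_1+\Delta_1/2}$ there. Its use in Cases 1 and 2 of the proof of \cref{clm: janson 1 step 3} would then have to be re-examined.
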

\begin{claim}\label{clm: small m1}
If $m'_\sigma \geq (1 - 2\eps^2)ds_\sigma$ and $s_\sigma > 0$ for some $\sigma \in \{A,B\}$, then
    \[
    \P[X_\sigma = 0] \leq \exp\left[-(1 + 9\eps^2)k'_\sigma\log\frac{dn}{2k_\sigma}+ O(1)\right].
    \]
\end{claim}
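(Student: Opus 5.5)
We apply Janson's inequality (\cref{thm: Janson}) to $H_\sigma\cup T_\sigma$. Take $\sigma=A$; the case $\sigma=B$ is symmetric. $H_A$ is a uniform bipartite graph on $(S_A,B')$ with $(1-\lambda)m'_A$ edges, so we first pass to the binomial model. Let $p=(1-\lambda)m'_A/(s_A|B'|)$ and let $\tilde H_A$ include each pair of $S_A\times B'$ independently with probability $p$. The event $X_A=0$ is monotone decreasing. Hence $\P[X_A=0]\le \P[\tilde X_A=0]/\P[e(\tilde H_A)\ge (1-\lambda)m'_A]\le 2\P[\tilde X_A=0]$, which costs only the $O(1)$ in the exponent. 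By $m'_A\ge(1-2\eps^2)ds_A$ and $|B'|\le b=\frac n2(1+o(1))$, we have $p\ge (1-\lambda)(1-2\eps^2)\frac{2d}{n}(1-o(1))\ge(1-3\eps^2)\frac{2d}{n}$, using $\lambda=O(\eps^6/\log n)$.

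Next apply Janson. The index set is $S_A\times B'$. The bad sets $\alpha$ are pairs $\{ux,vx\}$ with $uv\in E(T_A)$ and $x\in B'$. Every $T_A$-edge has both ends in $S_A$ by \eqref{eq:sizeSA}, so $\mu=|E(T_A)||B'|p^2$. With $|E(T_A)|>k'_{A}\cdot(1-\eps)=k_{A,r_A}$ (as $r_A<\bar r_A$, writing $k'_A=k_{A,r_A-1}$), this gives $\mu\ge (1-\eps)k'_A\cdot\frac n2\cdot(1-3\eps^2)^2\frac{4d^2}{n^2}(1-o(1))=(1-\eps)(1-7\eps^2)\frac{2d^2}{n}k'_A$. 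Since $d\ge(1+\eps)\frac{\sqrt3}{2}\sqrt{n\log n}$, we have $\frac{2d^2}{n}\ge \frac32(1+\eps)^2\log n$. We also have $\log\frac{dn}{2k_A}\le \log n^{3/2}\cdot(1+o(1))=\frac32\log n(1+o(1))$, because $d\le\eps^{-4}\sqrt{n\log n}$ and $k_A\ge1$. Therefore $\mu\ge(1+\eps)^2(1-\eps)(1-7\eps^2)k'_A\log\frac{dn}{2k_A}\ge(1+\tfrac{\eps}{2})k'_A\log\frac{dn}{2k_A}$. This comfortably beats the target $1+9\eps^2$. (If $\log\frac{dn}{2k_A}$ is smaller, the bound only improves.)

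The main obstacle is bounding $\Delta$, and this is exactly where the torso degree bound $D_A$ enters. Two distinct bad sets intersect only if they share a cross-edge $ux$. They then have the form $\{ux,vx\},\{ux,wx\}$ with $v,w\in N_{T_A}(u)$. So $\Delta\le \sum_{u}\deg_{T_A}(u)^2|B'|p^3\le 2|E(T_A)|D_A\cdot\frac n2 p^3$. Comparing with $\mu$ gives $\Delta/\mu\le 2D_Ap(1+o(1))=O\!\left(\frac{\eps^4 d^2}{n\log(dn/2k_A)}\right)$. Near the threshold, $d^2/n=\Theta(\log n)$ up to $\eps^{-8}$ factors. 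The choice $D_A=\eps^4 d/(2\log\frac{dn}{2k_A})$ was made precisely so that this ratio is $O(\eps^4)\cdot\frac{2d^2}{n\log(dn/2k_A)}$, and I expect it to be at most $\eps^2$ after using $\frac{2d^2}{n}\le 2\eps^{-8}\log n$ and $\log\frac{dn}{2k_A}\ge \log(\eps^{-4}\cdot d/\eps^{0})$. The delicate point is that the $\eps^{-8}$ from condition (C) must be absorbed. If it is not, I would instead use $\Delta\le \mu\cdot 2D_Ap$ directly as $\Delta/2\le \eps^4 d p\mu/\log\frac{dn}{2k_A}$ and exploit $k_A\le\eps^4 n$ to get $\log\frac{dn}{2k_A}\ge\log(d/\eps^4)$, which is of order $\log n$. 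In either case $\Delta/2\le \eps^2\mu$, so \eqref{eq: Janson 1} yields $\P[X_A=0]\le 2\exp[-(1-\eps^2)\mu]\le\exp[-(1+9\eps^2)k'_A\log\frac{dn}{2k_A}+O(1)]$, as claimed.
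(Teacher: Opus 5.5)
\textbf{Gap: the bound $\Delta/2\le\eps^2\mu$ fails on part of the range.} Several of your steps are fine: the reduction to the binomial model, the lower bound $\mu\ge(1-\eps)(1-7\eps^2)\frac{2d^2}{n}k'_A\ge(1+\frac{\eps}{2})k'_A\log\frac{dn}{2k_A}$, and $\Delta\le 2D_Ap\,\mu$. The problem is your final assertion that $\Delta/2\le\eps^2\mu$ ``in either case''. Your own bound gives $\Delta/\mu\le 2D_Ap\approx\frac{2\eps^4d^2}{n\log(dn/2k_A)}$. The only upper bound on $d$ available is $d\le\eps^{-4}\sqrt{n\log n}$, i.e.\ $d^2/n\le\eps^{-8}\log n$, while $\log\frac{dn}{2k_A}\le(\frac32+o(1))\log n$. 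So at the top of the allowed range the ratio is of order $\eps^{-4}$, not $O(\eps^2)$. Both of your variants compute this same quantity $\eps^4dp/\log\frac{dn}{2k_A}$, so neither absorbs the $\eps^{-8}$.

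The estimate is not merely lossy. Take $T_A=G_A$ to be a disjoint union of copies of $K_{D_A,D_A}$: it is triangle-free, has maximum degree $D_A$, has $r_A=1$, and all its non-isolated vertices fit inside $S_A$. Then $\Delta=2(D_A-1)p\,\mu$ exactly, and the first Janson bound $\exp[-\mu+\Delta/2]$ exceeds $1$. Also, $k_A\le\eps^4n$ is not a hypothesis. Only $k'_A\le\eps^4n$ and $k_A+k_B\le n\log d$ are assumed, though the latter still gives $\log\frac{dn}{2k_A}\ge(\frac12-o(1))\log n$.

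\textbf{Missing ingredient: the paper's case split.}
\begin{itemize}
\item If $\Delta\ge\mu$, use the second Janson inequality. Since $\Delta\le 2D_Ap\,\mu$ and $|B'|p=(1-\lambda)m'_A/s_A\ge(1-3\eps^2)d$, you get $\frac{\mu^2}{2\Delta}\ge\frac{|E(T_A)||B'|p}{4D_A}\ge\frac{(1-\eps)(1-3\eps^2)}{2\eps^4}k'_A\log\frac{dn}{2k_A}$.
\item If $\Delta<\mu$ and $d\ge3(1+\eps)\sqrt{n\log n}$, use $\exp[-\mu/2]$; any $c\sqrt{n\log n}$ with fixed $c^2>\frac32$ works as the cutoff. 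Here $\mu/2\ge(1-\eps)(1-7\eps^2)\frac{d^2}{n}k'_A\ge 8k'_A\log n$, which beats $(1+9\eps^2)k'_A\log\frac{dn}{2k_A}$.
\item Only in the remaining range $d=O(\sqrt{n\log n})$ does your argument apply. There $\log\frac{dn}{2k_A}\ge(\frac12-o(1))\log n$ gives $\Delta/\mu=O(\eps^4)$, and $\exp[-(1-\eps^2)\mu]$ finishes.
\end{itemize}
The paper additionally splits on whether $k_\sigma\ge dn^{1/4}/2$, but given $k_A\le n\log d$ that split is not essential.

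\textbf{Minor slip.} In the de-Poissonization you should condition on $e(\tilde H_A)\le(1-\lambda)m'_A$, not $\ge$, because $\P[X_A=0\mid e(\tilde H_A)=r]$ is non-increasing in $r$. The resulting factor $2$ is unaffected.
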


Suppose $s_A, s_B > 0$. Recall that this implies that $s_A = k'_A$ and $s_B = k'_B$ by~\eqref{eq:sizeSA}. We use these claims to show that 
\begin{equation}\label{eq: desired prob}
    \P[X_1 = 0]\P[X_A = 0]\P[X_B = 0] \leq \exp\left[-(1 + 8\eps^2)\left(k'_A\log\frac{dn}{2k_A} + k'_B\log\frac{dn}{2k_B}\right)\right]
\end{equation}
for all possible values of $m'_1$, $m'_A$, and $m'_B$. 

Suppose $\sigma,\tau \in \{A,B\}$ such that $\sigma\neq \tau$ and 
 $k'_\sigma \geq k'_\tau$. We consider the following three cases in terms of $m'_1$.

 {\em Case 1}: $m_1'>\eps^2 ds_{\sigma}$. In this case,  \cref{clm: big m1} gives that 
\begin{align*}
    \P[X_1 = 0] &\leq \exp\left[-\eps^{-1}(1 - 3\eps)k'_\sigma\log\frac{dn}{2k_\sigma}+ O(1)\right]\\
    &\leq \exp\left[-\frac{1 + o(1)}{3\eps}\left( k'_\sigma\log\frac{dn}{2k_\sigma} + k'_\tau\log\frac{dn}{2k_\tau} \right)+ O(1)\right]\\
    &\leq \exp\left[-(1 + 8\eps^2 )\left(k'_A\log\frac{dn}{2k_A} + k'_B\log\frac{dn}{2k_B}\right)\right],
\end{align*}
where the second inequality above holds since
$k'_\sigma\log\frac{dn}{2k_\sigma} \geq k'_\tau\log\frac{dn}{2k_\tau}(1 + o(1))$ by \newline\cref{clm: k'Ak'B}.

{\em Case 2}: $\eps^2 ds_{\tau}< m_1'\le \eps^2 ds_{\sigma}$. In this case, we use \cref{clm: big m1} to bound $\P[X_1 = 0]$ and \cref{clm: small m1} to bound $\P[X_\sigma = 0]$. We first justify that the assumption of \cref{clm: small m1} is satisfied for $\sigma$. Recall that $k'_\sigma = \omega( k_\sigma/\log\frac{dn}{2k_\sigma}) $ by \eqref{eq:k'bound} and $s_{\sigma}=k'_\sigma$. Then,  $\frac{2k_\sigma}{ds_\sigma} = \frac{2k_\sigma}{dk'_\sigma} = o(1)$, and so by~\eqref{eq:relaxedconditions}, 
\begin{equation}
m'_\sigma \geq ds_\sigma\left(1 - \frac{m_1'}{ds_\sigma} - o(1)\right).\label{eq:mprimebound}
\end{equation}
Since $m_1' < \eps^2ds_\sigma$ in this case, we have $m_\sigma' \geq ds_\sigma(1 - 2\eps^2)$, which verifies the assumptions in \cref{clm: small m1}. Hence,

\begin{align*}
    \P[X_1 = 0]\P[X_\sigma = 0] &\leq \exp\left[-(1 + 9\eps^2)k'_\sigma\log\frac{dn}{2k_\sigma} - \eps^{-1}(1 - 3\eps)k'_\tau\log\frac{dn}{2k_\tau}+ O(1)\right]\\
    &\leq \exp\left[-(1 + 8\eps^2)\left(k'_A\log\frac{dn}{2k_A} + k'_B\log\frac{dn}{2k_B}\right)\right].
\end{align*}

{\em Case 3}: $m_1'\le \eps^2 ds_{\tau}$. As in case 2, the assumption $m_1'\le \eps^2 ds_{\tau}$ implies that the hypotheses of \cref{clm: small m1} hold for both $\sigma$ and $\tau$, i.e.\ $A$ and $B$. Thus, we may use \cref{clm: small m1} to bound both $\P[X_A = 0]$ and $\P[X_B = 0]$. This gives 
\begin{align*}
    \P[X_A= 0]\P[X_B = 0] &\leq  \exp\left[-(1 + 9\eps^2)\left(k'_A\log\frac{dn}{2k_A} + k'_B\log\frac{dn}{2k_B}\right)+ O(1)\right]\\
    &= \exp\left[-(1 + 8\eps^2)\left(k'_A\log\frac{dn}{2k_A} + k'_B\log\frac{dn}{2k_B}\right)\right],
\end{align*}
as desired. By~\eqref{eq: desired prob} this completes the proof for \cref{clm: janson 1 step 3} when $s_B>0$.

 Assume $s_B = 0$. Then, by~\eqref{eq:relaxedconditions}, $m'_1=0$ and by the same argument leading to~\eqref{eq:mprimebound}, $m'_A = ds_A(1 +o(1))$. By \cref{clm: small m1} for $\sigma=A$,  
\begin{equation}
\P[X_A = 0] \leq \exp\left[-(1 + 9\eps^2)k'_A\log\frac{dn}{2k_A} + O(1)\right].\label{eq:XA0}
\end{equation}
We consider two cases. 
If $ k'_B = 0$, then the probability bound above immediately implies that
\[
\P[X_A = 0] \leq \exp\left[-(1 + 8\eps^2)\left(k'_A\log\frac{dn}{2k_A} + k'_B\log\frac{dn}{2k_B}\right)\right],
\]
which yields~\eqref{eq: desired prob} as desired. In the other case where $ k'_B>0$, we have $k'_A\log\frac{dn}{2k_A} \geq \frac{2}{\eps^2}k'_B\log\frac{dn}{2k_B}$ by the assumption of \cref{lem: Janson Counting 1}. By~\eqref{eq:XA0}, 
\[
\P[X_A = 0]\leq \exp\left[-(1 + 8\eps^2)\left(k'_A\log\frac{dn}{2k_A} + k'_B\log\frac{dn}{2k_B}\right)\right].
\]
\cref{clm: big m1} and \cref{clm: small m1} are proved later in this subsection, which completes this proof.
\qed

Recall that the probability spaces in \cref{clm: big m1} and \cref{clm: small m1} are $H_1, H_A$ and $H_B$. We introduce $H_1',H_A'$ and $H_B'$, the ``binomial version'' of them where edges appear independently, which is convenient when applying Janson's inequalities, and with properly chosen parameters they approximate $H_1,H_A$ and $H_B$ well.  To do this,  set 
\begin{align*}
p_1 &= \frac{m_1'}{s_As_B},\quad \text{if $s_A, s_B > 0$}\\
p_\sigma &= \frac{m'_\sigma}{s_\sigma|\tau'|}\quad \text{if $s_\sigma > 0$}.
\end{align*}
Let $H_1'$ be a random graph obtained by including each edge in $S_A \times S_B$ independently with probability $p_1$, and $H_\sigma'$ be a random graph obtained by including each edge in $S_\sigma \times \tau'$ independently with probability $p_\sigma$.

The following claim allows us to translate probability bounds in the random graphs $H_1', H_A', H_B'$ to $H_1,H_A$ and $H_B$.


\begin{claim}\label{clm: m to p}
    For $C \in \{1,A,B\}$,
    \[
    \P_{H_C'}[X_C = 0] =\Omega(\P_{H_C}[X_C = 0]).
    \]
\end{claim}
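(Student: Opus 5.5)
The plan is the standard comparison between the uniform model with a fixed number of edges and the binomial model, using monotonicity of the event $\{X_C=0\}$. Fix $C\in\{1,A,B\}$ and write $N$ for the number of available pairs: $N=s_As_B$ for $C=1$, and $N=s_\sigma|\tau'|$ for $C=\sigma$. Write $M$ for the number of edges in $H_C$: $M=m_1'$ for $C=1$, and $M=(1-\lambda)m'_\sigma$ for $C=\sigma$. The probability $p_C$ satisfies $p_C N = M$ for $C=1$; for $C=\sigma$, $p_\sigma N = m'_\sigma\ge M$. (If the intended normalisation is $p_\sigma N=(1-\lambda)m'_\sigma$, the argument below is unchanged.) The event $\{X_C=0\}$ is decreasing, because adding cross-edges to a graph that is triangle-free with $T_A\cup T_B$ can only create triangles. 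Hence $f(t):=\P[X_C=0\mid |E(H'_C)|=t]$ is non-increasing in $t$. Conditionally on $|E(H'_C)|=t$, the graph $H'_C$ is uniform on $t$-edge subsets, so $f(M)=\P_{H_C}[X_C=0]$.

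We write
\[
\P_{H'_C}[X_C=0]=\sum_{t}\P\big[\mathrm{Bin}(N,p_C)=t\big]f(t)\ \ge\ \P\big[\mathrm{Bin}(N,p_C)\le M\big]\, f(M).
\]
It therefore suffices to show $\P[\mathrm{Bin}(N,p_C)\le M]=\Omega(1)$. When $p_CN=M$ (the case $C=1$, or the alternative normalisation for $C=\sigma$), this follows from the classical fact that a binomial variable is at most its integer mean with probability bounded below by a constant. Indeed, the median of $\mathrm{Bin}(N,p)$ lies in $\{\lfloor Np\rfloor,\lceil Np\rceil\}$, so $\P[\mathrm{Bin}\le \lceil Np\rceil]\ge 1/2$. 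Rounding by one edge changes $f$ negligibly, and in any case monotonicity lets us compare against $f(\lceil M\rceil)$ and absorb the integer rounding that the paper already declares negligible. This works uniformly in $N$ and $p$, including $M=0$ where everything is trivial.

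The main obstacle is the mismatch $p_\sigma N=m'_\sigma$ versus $M=(1-\lambda)m'_\sigma$. In this case the binomial mean exceeds $M$ by $\lambda m'_\sigma$. With $\lambda\asymp \eps^{6}/\log n$ and $m'_\sigma\approx ds_\sigma$, this excess is far larger than $\sqrt{m'_\sigma}$, so $\P[\mathrm{Bin}\le M]$ is exponentially small and the simple argument fails. I would handle it by reinterpreting: define $H'_\sigma$ with $p_\sigma=(1-\lambda)m'_\sigma/(s_\sigma|\tau'|)$. This is harmless for the later Janson computations, since the factor $1-\lambda=1-o(1)$ only alters $\mu$ and $\Delta$ by $1+o(1)$ factors, and those are absorbed in the $\eps^2$ slack of \cref{clm: small m1}. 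An alternative is to apply monotone coupling the other way: sample $H_\sigma$ as a uniformly random $M$-subset of a uniform $m'_\sigma$-subset. Monotonicity then gives $\P_{H_\sigma}[X_\sigma=0]\ge \P_{\mathrm{unif}(m'_\sigma)}[X_\sigma=0]$, but this inequality points the wrong way. So the reinterpretation of $p_\sigma$ is the route I would commit to, followed by the median argument above.
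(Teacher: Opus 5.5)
Your proposal is correct and is essentially the paper's proof. The paper also bounds $\P[|E(H'_C)|\le \text{mean}]$ below by $\Omega(1)$, and shows $\P[X_C=0\mid |E(H'_C)|=r]$ is non-increasing in $r$ by a double count of single-edge deletions, which is the same as your decreasing-event (delete-a-random-edge) argument. Your normalisation check is also right: the paper's proof tacitly takes the binomial mean to equal the edge count of $H_C$. Its Janson computation in \cref{clm: small m1} already carries the factors $(1-\lambda)^2$ and $(1-\lambda)^3$, so it effectively uses your $p_\sigma=(1-\lambda)m'_\sigma/(s_\sigma|\tau'|)$ rather than the stated $p_\sigma=m'_\sigma/(s_\sigma|\tau'|)$.
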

\begin{proof}
    Let $(U,V)$ be the bipartition of $H_C$. Since the number of edges in $H'_C$ is a binomial random variable with expectation $m'_C$, we have 
    \[
    \P_{H'_C}[|E(H'_C)|\leq m'_C] = \Omega(1).
    \] 
    Thus, 
    \begin{align*}
        \P_{H'_C}[X_C = 0] &\geq \P_{H'_C}[X_C = 0 \mathrel ||E(H'_C)| \leq m'_C] \cdot\P_{H'_C}[|E(H'_C)| \leq m'_C]\\
        &= \Omega(\P_{H'_C}[X_C = 0 \mathrel ||E(H'_C)| \leq m'_C]).
    \end{align*}
We conclude this proof by showing that for any $r \geq 1$ where $\P[|E(H'_C)| = r] \neq 0$, 
    \[
        \P_{H'_C}[X_C = 0 \mathrel | |E(H'_C)| = r] \leq \P_{H'_C}[X_C = 0 \mathrel||E(H'_C)| = r-1],
    \]
    since it would follow that 
    \begin{align*}
        \P_{H'_C}[X_C = 0\mathrel | |E(H'_C)| \leq m'_C] &= \Omega(\P_{H'_C}[X_C = 0\mathrel | |E(H'_C)| = m'_C])\\
        &= \Omega(\P_{H_C}[X_C = 0]).
    \end{align*}
    For any such $r$ (or $r = 0$), let $\mathcal G_r$ denote the total number of bipartite graphs on bipartition $(U,V)$ with $r$ edges and let $\mathcal T_r \subseteq \mathcal G_r$ be the set of graphs $G \in \mathcal G_r$ where $G \cup T_A \cup T_B$ is triangle-free. Let $t_r = |\mathcal T_r|$ and $g_r = |\mathcal G_r|$. It suffices to show that $t_r/g_r \leq t_{r-1}/g_{r-1}$. That is,
    \[
    \frac{t_{r}}{t_{r-1}} \leq \frac{g_{r}}{g_{r-1}} = \frac{\binom{|U||V|}{r}}{\binom{|U||V|}{r-1}} = \frac{|U||V| - r + 1}{r}.
    \]
    We prove this using a simple double counting argument. Consider a relation $\sim $ on $\mathcal T_r \times \mathcal T_{r-1}$ where $G \sim G'$ if $G' = G -e$ for some $e \in E(G)$. Let $\mathscr G$ be the bipartite graph with bipartition $(T_{r} , \mathcal T_{r-1})$ and edges defined by $\sim$. Let $d_r$ denote the minimum degree of graphs in $\mathcal T_{r}$ and $d_{r-1}$ denote the maximum degree of graphs in $\mathcal T_{r-1}$ in $\mathscr G$. Then, $d_rt_r \leq d_{r-1}t_{r-1}$, so it suffices to show that 
    \[
    \frac{d_{r-1}}{d_{r}} \leq \frac{|U||V| - r+1}{r}.
    \]
    Note that deleting any edge of a graph in $\mathcal T_{r}$ will result in a graph in $\mathcal T'_{r-1}$, so we have 
    \[
    d_r \geq r.
    \]
    Also, for any graph $T \in \mathcal T'_{H, V, m}$, at most $|U||V| - r + 1$ edges can be added to $T$ to obtain a bipartite graph with bipartition $(U, V)$. That is,  $d_{r-1} \leq |U||V| - r+1$. This completes the proof.
\end{proof}

\noindent\textit{Proof of \cref{clm: big m1}. } 
By \cref{clm: m to p}, it suffices to show that
    \[
    \P_{H_\sigma'}[X_\sigma = 0] \leq \exp\left[-\eps^{-1}(1 - 3\eps)k'_\sigma\log\frac{dn}{2k_\sigma}\right].
    \]

We apply \cref{thm: Janson} to bound $\P_{H_1'}[X_1 = 0]$. Here, $\mathcal J$ is $S_A \times S_B$ and and $\mathcal S$ is the set of pairs of edges $(xz,yz)$ where $x$ and $y$ are adjacent in $T_A$ or $T_B$. Let $\mu_1 = \mathbb E_{H'_1}[X_1]$ and $\Delta_1$ be as described in \cref{thm: Janson}. 

Let $\tau\in\{A,B\}\setminus \{\sigma\}$. Observe that 
    \begin{align}
        & \mu_1 \geq (1 - \eps)(k'_As_Bp^2_1 + k'_Bs_Ap_1^2) = \frac{2(1 - \eps)m_1'^2}{s_As_B},\label{eq: mu1}\\
        &\Delta_1 \leq 2k'_AD_As_Bp_1^3 + 2k'_BD_Bs_Ap_1^3 + 8k'_Ak'_Bp_1^3 \leq \left(\frac{\eps^4dm_1'^3}{s_A^2s_B^2\log\frac{dn}{2k_A}} + \frac{\eps^4dm_1'^3}{s_A^2s_B^2\log\frac{dn}{2k_B}}\right)(1 + o(1))\label{eq: Delta1}.
    \end{align}
    Then,
    \[
    \mu_1/2 \geq \frac{(1 - \eps)\eps^2d^2s_\sigma}{s_\tau}\geq (1 - \eps)(1 + \eps)\eps^2\frac{3}{4}\frac{n}{s_\tau}s_\sigma\log n\geq \frac{(1 - \eps^2)\eps^2}{\eps^4}s_\sigma\log n \geq \eps^{-1}k'_\sigma\log\frac{dn}{2k_\sigma}
    \]
    where the first inequality holds by~\eqref{eq: mu1}, the second holds because $d \geq (1 + \eps)\sqrt 3/2\sqrt{n\log n}$, the third holds because $s_Y \leq \eps^4 n$, and the fourth holds because $n^{1/\eps} \gg dn$. Hence, \eqref{eq: Janson 1} gives the desired result if $\mu_1/2 \geq \Delta_1$. 
    Likewise, notice that 
    \begin{equation}\label{eq: mu1/delta1}
            \frac{\mu_1^2}{2\Delta_1} \geq \frac{2(1 - \eps)^2s_\sigma\log\frac{dn}{2k_A}\log\frac{dn}{2k_B}}{\eps^2(1 + \eps^2)\left(\log\frac{dn}{2k_A} + \log\frac{dn}{2k_B}\right)}.
    \end{equation}
    Note that $\log\frac{dn}{2k_\tau} \geq \eps\log\frac{dn}{2k_\sigma}$, as otherwise $k_\tau > (dn)^{1 - \eps}k_\sigma$, contradicting that $k_A + k_B \leq n\log d$. Then, $\eps\left(\log\frac{dn}{2k_A} + \log\frac{dn}{2k_B}\right) \leq(1  + \eps)\log\frac{dn}{2k_\tau}$. That is, the right hand side of \eqref{eq: mu1/delta1} is bounded from below by 
    \[
    \eps^{-1}(1 - 3\eps)k'_\sigma\log\frac{dn}{2k_\sigma}.
    \]
    Thus, \eqref{eq: Janson 2} gives the desired bound, completing the proof.
\qed

\noindent\textit{Proof of \cref{clm: small m1}.} By \cref{clm: m to p}, it suffices to show that 
\[
\P_{H_\sigma'}[X_\sigma = 0] \leq \exp\left[-(1 + 9\eps^2)k'_\sigma\log\frac{dn}{2k_\sigma}\right].
\]
    
    Let $\tau \in \{A, B\} \setminus \sigma$. This proof follows a similar structure to the proof of \cref{clm: big m1}, where we again use \cref{thm: Janson} to bound $\P_{H'_\sigma}[X_\sigma = 0]$. Here, $\mathcal J$ is $S_\sigma \times \tau'$ and and $\mathcal S$ is the set of pairs of edges $(xz,yz)$ where $x$ and $y$ are adjacent in $T_\sigma$. Let $\mu = \mathbb E_{H'_\sigma}[X_1]$ and $\Delta$ be as described in \cref{thm: Janson}. 
    Recall that $k'_\sigma = s_\sigma$ and $|\tau'| = (n/2)(1 + O(\eps^2))$. We emphasize here that $O(\eps^2)$ denotes a quantity whose absolute value is at most $C\eps^2$ for some absolute constant $C>0$ independent of $n$ or $\eps$. Observe that 
    \begin{align}
        \mu &\geq (1 - \varepsilon)|\tau'|k'_\sigma p_\sigma^2 = \frac{(1 - \varepsilon)(1 - \lambda )^2m_\sigma'^2|\tau'|k'_\sigma}{(s_\sigma|\tau'|)^2} \geq \frac{2(1 - \varepsilon)(1 + O(\eps^2))d^2}{n}k'_\sigma\label{eq: Janson 1 mu}
    \end{align}
     since $\lambda  \leq \varepsilon^2$. Likewise, 
    \begin{align}
        \Delta &= \sum_{xy, yz \in E(T_\sigma)}\sum_{w \in \tau'} p_\sigma^3 \leq 2k'_\sigma D_{\sigma}|\tau'|p_\sigma^3 = \frac{2(1 - \lambda )^3k'_\sigma D_{\sigma}|\tau'|m_\sigma'^3}{(s_\sigma|\tau'|)^3}  \leq 8(1 + O(\eps^2))\frac{k'_\sigma D_{\sigma}d^3}{n^2}\label{eq: Janson 1 Del}
    \end{align}
    If $\mu < \Delta$, (\ref{eq: Janson 2}), (\ref{eq: Janson 1 mu}), and (\ref{eq: Janson 1 Del}) give
    \begin{align*}
        \mathbb P[X_\sigma = 0] &\leq \exp\left[ -\frac{\mu^2}{2\Delta} \right] \leq \exp\left[ -\frac{(1 - \varepsilon)^2(1 - O(\varepsilon^2))dk'_\sigma}{4(1 + O(\varepsilon^2))D_{\sigma}} (1 + o(1))\right]\\ 
        &= \exp\left[ -\frac{(1 - \varepsilon)^2(1 - O(\varepsilon^2))}{\varepsilon^{4}(1 + O(\varepsilon^2))}k'_\sigma\log \frac{dn}{2k_\sigma} \right].\\
        &\leq \exp\left[-2k'_\sigma\log\frac{dn}{2k_\sigma}\right]. 
    \end{align*}
    Hence, it suffices to consider $\mu \geq \Delta$. Suppose $k_\sigma\geq dn^{1 - \frac{3}{4}}/2$. That is, $n^{\frac{3}{4}} \geq \frac{dn}{2k_\sigma}$. Then, by (\ref{eq: Janson 1 mu}),
    \begin{align*}
        \mu/2 &\geq \frac{(1 - \varepsilon)(1 - O(\varepsilon^2))d^2k'_\sigma}{n}\\
        &\geq (1 + \varepsilon)^2(1 - \varepsilon)(1 - O(\varepsilon^2))k'_\sigma\log n^{3/4}\\
        &\geq (1 + 10\varepsilon^2)k'_\sigma\log\frac{dn}{2k_\sigma}
    \end{align*}
    for sufficiently small $\varepsilon > 0$. Thus, (\ref{eq: Janson 1}) gives the desired result, so assume $k_\sigma < dn^{1 - \frac{3}{4}}/2$. Here, it will be the case that $\Delta$ is much smaller than $\mu$, so we may still use (\ref{eq: Janson 1}) to obtain the desired result. Specifically, when $d\leq 3(1 + \varepsilon)\sqrt {n\log n}$, (\ref{eq: Janson 1 mu}) and (\ref{eq: Janson 1 Del}) give 
    \begin{align}
        \frac{\Delta}{\mu} &\leq \frac{4(1 + O(\varepsilon^2))D_{\sigma}d}{(1 - \varepsilon)n}\notag\\
        &= \frac{\varepsilon^4(1 + O(\varepsilon^2))d^2}{(1 - \varepsilon)n\log\frac{dn}{2k_\sigma}}\notag\\
        &\leq \frac{9\varepsilon^4(1 + O(\varepsilon^2))}{(1 - \varepsilon)}\frac{\log n}{\log\frac{dn}{2k_\sigma}}\notag\\
        &\leq 18\varepsilon^3. \label{eq: small Delta}
    \end{align}
    Then, applying (\ref{eq: small Delta}) in the second line below, (\ref{eq: Janson 1 mu}) gives
    \begin{align*}
        -\mu + \Delta/2 &= -\mu(1 - \Delta/(2\mu))\\
        &\leq -\frac{2(1 - \varepsilon)(1 - O(\varepsilon^2))d^2}{n}k'_\sigma\\
        &\leq -(1 - \varepsilon)(1 - O(\varepsilon^2))(1 + \varepsilon)^2k'_\sigma\log n^{3/2} \\
        &\leq -(1 + \varepsilon/2)(1 - O(\varepsilon^2))k'_\sigma\log n^{3(1 + \varepsilon/2)/2}  \\
        &\leq -(1 + 10\varepsilon^2)k'_\sigma\log\frac{dn}{2k_\sigma}
    \end{align*}
    where the final inequality holds considering $d \leq 3(1 + \varepsilon)\sqrt{n\log n}$, so $dn = O(n^{3/2}\log n) \leq n^{3(1 + \varepsilon/2)/2}$. Thus, (\ref{eq: Janson 1}) again gives the desired result. It remains to consider $d \geq (1 + \varepsilon)3\sqrt{n \log n}$. In this case, (\ref{eq: Janson 1 mu}) gives 
    \begin{align*}
        \mu/2 &\geq 3(1 - \varepsilon)(1 - O(\varepsilon^2))k'_\sigma\log n^{9/4}(1 + o(1))\\
        &\geq 2k'_\sigma\log\frac{dn}{2k_\sigma}.
    \end{align*}
    Thus, (\ref{eq: Janson 1}) gives the desired result, which completes the proof.
\qed

\medskip 
\noindent\textit{Proof of \cref{clm: janson 2 step 2}.}
We again consider choosing a random graph $H$ and bounding the probability that that $H$ avoids triangles in $T_A[S_A]$. Let $H$ be a random bipartite graph with bipartition $(S_A, B)$ where $m'_A$ edges chosen uniformly. Let $X$ be the number of triangles in $H \cup T_A$. It immediately follows that the number of choices in step 1 is at most 
\[
\binom{s_Ab}{m'_A}\P[X = 0].
\]
It then suffices to show that 
\begin{equation}\label{eq: desired prob 2}
    \P[X = 0] \leq \exp\left[-\frac{1}{2\eps^2}k'_A\log\frac{dn}{2k_A}\right].
\end{equation}
We again show that this holds for a random graph with edges chosen independently. Let $p = \frac{m'_A}{s_Ab}$ and $H'$ be a random graph obtained by including each edge in $S_A \times B$ independently with probability $p$. Notice that the proof of \cref{clm: m to p} when $\sigma = A$ is also valid under our current conditions, so we have
    \[
    \P_{H'}[X = 0] = \Omega(\P_{H}[X = 0]).
    \]
Thus, it only remains to show that 
\[
\mathbb P_{H'}[X = 0] \leq \exp\left[ -\eps^{-2}k'_A\log\frac{dn}{2k_A} \right].
\]
 Here, $\mathcal J$ is $S_A \times B$ and $\mathcal S$ is the set of pairs $(xz,yz)$ of edges in $\mathcal J$ where $x$ and $y$ are adjacent in $T_A$. Let $\mu = \mathbb E[X]$ and $\Delta$ be as described in \cref{thm: Janson}. Observe that 
    \begin{equation}\label{eq: Janson 2 mu}
        \mu = \mathbb E[X] \geq (1 - \eps)\eps k'_Abp^2 = \frac{(1 - \eps)\eps m_A'^2k'_A}{bs_A^2} = \frac{2\eps(1 + O(\eps))d^2k'_A}{n}
    \end{equation}
    where the final equality holds by observing that $b = n/2(1 + o(1))$ and $m_A = ds_A(1 + O(k_A/ds_A)) = ds_A(1 + O(\log n/(d\log\log n))) = ds_A(1 + o(1))$.
    Also, 
    \begin{equation}\label{eq: Janson 2 Del}
        \Delta = \sum_{xy, yz \in E(T_A[S_A])}\sum_{w \in B} p^3 \leq 2\eps k'_AD_{A}bp^3 = \frac{2\eps D_{A}m_A'^3k'_A}{b^2s_A^3} = \frac{8\eps (1 + O(\eps))D_Ad^3k'_A}{n^2}.
    \end{equation}

    If $\mu \geq \Delta$, (\ref{eq: Janson 1}) gives 
    \begin{align*}
        \mathbb P[X = 0] &\leq \exp[-\mu/2] = \exp\left[ -\frac{\eps(1 + O(\eps))d^2k'_A}{n} \right] \\
        &\leq \exp\left[ -\eps^{-3}(1 + O(\eps))k'_A\log n \right] \\
        &\leq  \exp\left[ -\eps^{-2}k'_A\log\frac{dn}{2k_A} \right],
    \end{align*}
as desired.
    If $\mu < \Delta$, \cref{eq: Janson 2} gives
    \begin{align*}
        \mathbb P[X = 0] &\leq \exp\left[ -\frac{\mu^2}{2\Delta} \right] \leq \exp\left[ -\frac{(1 + O(\eps))}{8\eps^3}k'_A\log n \right]\\ 
        &\leq \exp\left[ -\eps^{-2}k'_A\log\frac{dn}{2k_A} \right], 
    \end{align*}
    which completes the proof.
    \qed

\subsection{Proofs of \cref{clm: s_B = 0} and \cref{clm: s_B > 0}}\label{sec: Janson simplifications}

\noindent\textit{Proof of \cref{clm: s_B = 0}.}
We first expand each binomial coefficient in the ratio using \cref{cor: choose}. When $\lambda > 0$, Observe that
 \begin{align}
     &\binom{s_Ab}{(1 - \lambda )m'_A} = \frac{s_A^{(1 - \lambda )m'_A}b^{(1 - \lambda )m'_A}\exp[O(k_A + k_B)]}{(1 - \lambda )^{(1 - \lambda )m'_A}m'^{(1 - \lambda )m'_A + 1/2}_A(1 - (1 - \lambda )m'_A/s_Ab)^{s_Ab - (1 - \lambda )m'_A}};\notag\\
    &\binom{b}{\lambda m'_A/s_A}^{s_A} = \frac{b^{\lambda m'_A}}{\lambda ^{\lambda m'_A + s_A/2}(m'_A/s_A)^{\lambda m'_A + s_A/2}(1 - \lambda m'_A/bs_A)^{bs_A - \lambda m'_A}}\exp[O(k_A + k_B)];\label{eq: lambda cases 1}\\
     &\binom{b}{m'/|A'|}^{|A'|} = \frac{b^{m'}(1 - m'/|A'|b)^{-|A'|b + m' - |A'|/2}\exp[O(k_A + k_B)]}{(2\pi)^{|A'|/2} (m'/|A'|)^{m' + |A'|/2}\big(1 - |A'|/(12m') + O(|A'|^2/m'^2)\big)^{|A'|}};\notag\\
     &\binom{|A'|b}{m'} = \frac{|A'|^{m'}b^{m'}}{m'^{m' + 1/2}(1 - m'/|A'|b)^{|A'|b - m'}}\exp[O(k_A + k_B)];\notag\\
     &\binom{m'_A}{(1 - \lambda )m'_A} = \frac{1}{(1 - \lambda )^{(1 - \lambda )m'_A}m'^{1/2}_A\lambda ^{\lambda m'_A}}\exp[O(k_A + k_B)];\label{eq: lambda cases 2}\\
     &\binom{n^2/4}{dn/2}   = \frac{(n/2)^{dn}}{(dn/2)^{dn/2 + 1/2}(1 - 2d/n)^{n^2/4 - dn/2}}\exp[O(1)]\notag\\
     &\binom{n/2}{d}^n = \frac{(n/2)^{dn}}{(2\pi)^{n/2}d^{dn + n/2}(1 - 1/12d + O(1/d^2))^{n}(1 - 2d/n)^{n^2/2 - dn + n/2}}\exp[O(1)].\notag
 \end{align}
 When $\lambda = 0$, we use the same expansions apart from \cref{eq: lambda cases 1} and \cref{eq: lambda cases 2}, which are both $1$.
  Notice that 
 \begin{align}
    (1 - |A'|/(12m') + O(|A'|/m'^2))^{|A'|} &=\left(1 - \frac{1}{12d}\left(1 +O\left(\frac{s_A + s_B}{n}\right)\right)\right)^{|A'|}\notag\\
    &= (1 - 1/12d)^{n/2}\exp[O(k_A + k_B)],\label{eq: stirling error fine}
\end{align}
and likewise when replacing $|A'|$ by $b$. We now combine each expanded binomial coefficient and write the result in terms of $n/2$, $d$, and $s_A$. That is, for any $m'_A$ that satisfies~\eqref{eq: relaxed condition},
\begin{align}
    &\frac{\binom{s_Ab}{(1 - \lambda )m'_A}\binom{b}{\lambda m'_A/s_A}^{s_A}\binom{b}{m'/|A'|}^{|A'|}\binom{|A'|}{m'/b}^{b}\binom{n^2/4}{dn/2}}                {\binom{|A'||B'|}{m'}\binom{n/2}{d}^{n}\binom{m_A'}{(1 - \lambda )m_A'}} = 
    (d/n)^{k_A + k_B} \cdot \Lambda \cdot \zeta \cdot \exp\left[O(k_A + k_B)\right],\label{eq: simplified equation}
\end{align}
where 
\begin{align*}
   \Lambda = \begin{cases}
        d^{s_A} &\text{if}\quad \lambda = 0\\
        \lambda^{-s_A}&\text{otherwise}
    \end{cases}
\end{align*}
and 
\begin{align}
    \zeta =& (1 - (1 - \lambda )m'_A/s_Ab)^{s_Ab - (1 - \lambda )m'_A}(1 - \lambda m'_A/s_Ab)^{s_Ab-\lambda m'_A}\notag\\
    &\times(1 - m'/|A'|b)^{-|A'|b + m' - |A'|/2 - b/2}(1 - 2d/n)^{n^2/4 - dn/2 + n/2}.\label{eq: janson 0 bad zeta}
\end{align}
If $\lambda >0$, recall that $\lambda  = \Omega(1/\log n)$, and so we have 
\begin{equation}\label{eq: bound lambda term}
    \lambda ^{-s_A/2} = \exp[O(k'_A\log\log n)] = \exp\left[o\left(k'_A\log\frac{dn}{2k_A}\right)\right].
\end{equation}
If $\lambda = 0$, we have 
\begin{equation}\label{eq: Gamma when gamma 0}
    \Gamma \leq \exp\left[3k_A\log\frac{dn}{2k_A}\right].
\end{equation}
It remains to approximate $\zeta$. By \cref{prop: expand log} and expanding $m'_A$, $m'$, and $|A'|$, we have
\begin{align}
    \log \zeta =& \sum_{i = 2}^w  \left[\frac{d^i}{i(i-1)(n/2)^{i-2}}\left(1 - (1 - 2s_A/n) -((1 - \lambda )^i + \lambda ^i) (2s_A/n)^i\right)\right] \notag\\
    &+\sum_{i = 1}^w\left[\frac{d^i}{i(n/2)^{i-1}}\left(-1 + (1 - 2s_A/n)^i\right)\right]+ O(k_A + k_B) = O\left(\frac{\lambda d^2s_A}{n}\right).\notag
\end{align}
Thus, since $\lambda  = O(\eps^6/\log n)$ and $k_A \leq n\log d$, 
\begin{equation}\label{eq: final zeta}
    \zeta = \exp\left[O\left(\eps^6k'_A\log n\right)\right] = \exp\left[O\left(\eps^5k'_A\log\frac{dn}{2k_A}\right)\right].
\end{equation}

Substituting~\eqref{eq: bound lambda term} and~\eqref{eq: final zeta} into~\eqref{eq: simplified equation} then gives
\begin{align*}
\max&\left\{\frac{\binom{s_Ab}{(1 - \lambda )m'_A}\binom{b}{\lambda m'_A/s_A}^{s_A}\binom{b}{m'/|A'|}^{|A'|}\binom{|A'|}{m'/b}^{b}\binom{n^2/4}{dn/2}}                {\binom{|A'||B'|}{m'}\binom{n/2}{d}^{n}\binom{m_A'}{(1 - \lambda )m_A'}}\right\}\\& \leq \Gamma \cdot\exp\left[-(k_A + k_B)\log\frac{n}{d}  + O\left(\eps^5k'_A\log\frac{dn}{2k_A}\right)\right].
\end{align*}
The claim then follows quickly by substituting~\eqref{eq: bound lambda term} (in the setting of \cref{lem: Janson Counting 1}) or~\eqref{eq: Gamma when gamma 0} (in the setting of \cref{lem: Janson Counting 2}) for $\Gamma$ in the above upper bound.
\qed

\medskip

\noindent\textit{Proof of \cref{clm: s_B > 0}.}
We prove this by first expanding each binomial coefficient and then maximizing with respect to $m_1'$. As in the proof of \cref{clm: s_B = 0}, it will suffice to consider $m'_A = ds_A - m_1' + O(k_A)$ and $m'_B = d_B - m_1' + O(k_B)$, and so we do not need to maximize with respect to $m'_A$ or $m'_B$. Applying \cref{cor: choose} to each binomial coefficient gives the following.
 \begin{align}
      &\binom{s_As_B}{m_1'} = \frac{(s_As_B)^{m_1'}}{m_1'^{m_1'}(1-m_1'/s_As_B)^{s_As_B- m_1'}}\exp[O(k_A + k_B)];\notag\\
     &\binom{s_A|B'|}{(1 - \lambda )m'_A} = \frac{s_A^{(1 - \lambda )m'_A}|B'|^{(1 - \lambda )m'_A}\exp[O(k_A + k_B)]}{(1 - \lambda )^{(1 - \lambda )m'_A}m'^{(1 - \lambda )m'_A + 1/2}_A(1 - (1 - \lambda )m'_A/s_A|B'|)^{s_A|B'| - (1 - \lambda )m'_A}};\label{eq: janson 1 choose 2}\\
    &\binom{|B'|}{\lambda m'_A/s_A}^{s_A} = \frac{|B'|^{\lambda m'_A}}{\lambda ^{\lambda m'_A + s_A/2}(m'_A/s_A)^{\lambda m'_A + s_A/2}(1 - \lambda m'_A/|B'|s_A)^{|B'|s_A - \lambda m'_A}}\exp[O(k_A + k_B)];\label{eq: janson 1 choose 3}\\
     &\binom{|B'|}{m'/|A'|}^{|A'|} = \frac{|B'|^{m'}(1 - m'/|A'||B'|)^{-|A'||B'| + m' - |A'|/2}\exp[O(k_A + k_B)]}{(2\pi)^{|A'|/2} (m'/|A'|)^{m' + |A'|/2}\big(1 - |A'|/(12m') + O(|A'|^2/m'^2)\big)^{|A'|}};\notag\\
     &\binom{|A'||B'|}{m'} = \frac{|A'|^{m'}|B'|^{m'}}{m'^{m' + 1/2}(1 - m'/|A'||B'|)^{|A'||B'| - m'}}\exp[O(k_A + k_B)];\notag\\
     &\binom{m'_A}{(1 - \lambda )m'_A} = \frac{1}{(1 - \lambda )^{(1 - \lambda )m'_A}m'^{1/2}_A\lambda ^{\lambda m'_A}}\exp[O(k_A + k_B)];\label{eq: janson 1 choose 6}\\
     &\binom{n^2/4}{dn/2}   = \frac{(n/2)^{dn}}{(dn/2)^{dn/2 + 1/2}(1 - 2d/n)^{n^2/4 - dn/2}}\exp[O(1)]\notag\\
     &\binom{n/2}{d}^n = \frac{(n/2)^{dn}}{(2\pi)^{n/2}d^{dn + n/2}(1 - 1/12d + O(1/d^2))^{n}(1 - 2d/n)^{n^2/2 - dn + n/2}}\exp[O(1)].\notag
 \end{align}

Note that bounds for $\binom{s_B|A'|}{(1 - \lambda )m'_A}$, $\binom{|A'|}{\lambda m'_B/s_B}^{s_B}$, and $\binom{m'_B}{(1 - \lambda )m'_B}$ can be obtained by swapping $A$ and $B$ in~\eqref{eq: janson 1 choose 2},~\eqref{eq: janson 1 choose 3}, and~\eqref{eq: janson 1 choose 6} respectively. Also, note that~\eqref{eq: stirling error fine} holds in this setting as well for either $A'$ or $B'$.
 Next, we combine the expansions of the binomial coefficients and write everything in terms of $n/2$, $d$, $s_A$, $s_B$, and $m_1'$ (this is possible because $m_A' = ds_A - m_1' + O(k_A)$ and $m_B' = ds_B - m_1' + O(k_B)$). For any $(m_1', m_A', m_B')$ that satisfy~\eqref{eq:relaxedconditions}, we have
 \begin{align}\label{eq: simple main terms}
     &\frac{\binom{s_As_B}{m'_1}\binom{s_A|B'|}{(1 - \lambda )m'_A}\binom{s_B|A'|}{(1 - \lambda )m'_B}\binom{|B'|}{\lambda m'_A/s_A}^{s_A}\binom{|A'|}{\lambda m'_B/s_B}^{s_B}\binom{|B'|}{m'/|A'|}^{|A'|}\binom{|A'|}{m'/|B'|}^{|B'|}\binom{n^2/4}{dn/2}}                {\binom{|A'||B'|}{m'}\binom{n/2}{d}^{n}\binom{m_A'}{(1 - \lambda )m_A'}\binom{m'_B}{(1 - \lambda )m'_B}} \notag \\
     & = \left(d/n\right)^{k_A + k_B}\lambda ^{-s_A/2 - s_B/2}\cdot\xi\cdot\zeta\cdot\exp[O(k_A + k_B)], 
 \end{align}
 where 
 \begin{align}
     \xi = \left(\frac{2ds_As_B}{nm'_1}\right)^{m'_1}\left(1 - \frac{2s_A}{n}\right)^{dn/2 - ds_A}\left(1 - \frac{2s_B}{n}\right)^{dn/2 -ds_B}\left(1 - \frac{m_1'}{ds_B}\right)^{-ds_B + m_1'}\notag\\\times \left(1 - \frac{m_1'}{ds_A} \right)^{-ds_A + m_1'}\left(1 - \frac{2s_A}{n} - \frac{2s_B}{n} + \frac{2m_1'}{dn}\right)^{-dn/2 + ds_A + ds_B -m_1'}\label{eq: Janson 1 xi}
 \end{align}
 and 
 \begin{align}
     \zeta &= \left(1 - \frac{m_1'}{s_As_B}\right)^{-s_As_B + m_1'}
      \left(1 - \frac{(1 - \lambda )m'_A}{s_A|B'|}\right)^{-s_A|B'| + (1 - \lambda )m'_A}\notag \\ &\times\left(1 - \frac{(1 - \lambda )m'_B}{s_B|A'|}\right)^{-s_B|A'| + (1 - \lambda )m'_B} \left(1 - \frac{\lambda  m'_A}{s_A|B'|}\right)^{-s_A|B'| + \lambda m'_A}
      \left(1 - \frac{\lambda m'_B}{s_B|A'|}\right)^{-s_B|A'| + \lambda m'_B}\notag \\ &\times\left(1 - \frac{m'}{|A'||B'|}\right)^{-|A'||B'| + m' - |A'|/2 - |B'|/2}\left(1 - \frac{2d}{n}\right)^{n^2/4 - dn/2 + n/2}.\label{eq: Janson 1 zeta}
 \end{align}
 Similarly to~\eqref{eq: bound lambda term}, observe that 
 $$\lambda ^{-s_A/2 - s_B/2} = \exp[O((k'_A + k'_B)\log\log n)]= \exp\left[o\left(k'_A\log \frac{dn}{2k_A} + k'_B\log\frac{dn}{2k_B}\right)\right].$$
 That is, the right hand side of~\eqref{eq: simple main terms} can be simplified to 
 \begin{equation}\label{eq: simplified main term 2}
     (d/n)^{k_A + k_B}\cdot \xi \cdot \zeta \cdot \exp\left[o\left(k'_A\log\frac{dn}{2k_A} + k'_B\log\frac{dn}{2k_B}\right)\right].
 \end{equation}
 We next show that~\eqref{eq: Janson 1 xi} is maximized when $m_1' = \frac{2ds_As_B}{n}$.
 To do so, we evaluate the first and second derivative of $\log \xi$ with respect to $m_1'$. Observe that 
\begin{align}
    \frac{d\log \xi}{dm_1'} &=\log\frac{2ds_As_B}{nm_1'} + \log\left(1 - \frac{m_1'}{ds_B}\right)  + \log\left(1 - \frac{m_1'}{ds_A}\right)  - \log\left(1 - \frac{2s_A}{n} - \frac{2s_B}{n}  +\frac{2m_1'}{dn}\right).\label{eq: xi der}
\end{align}
When $m_1' = \frac{2ds_As_B}{n}$, the right hand side of~\eqref{eq: xi der} is 
\[
    \log\frac{(1 - 2s_A/n)(1 - 2s_B/n)}{(1 - 2s_A/n - 2s_B/n + 4s_As_B/n^2)} = 0.
\]
Moreover, notice that the derivative of~\eqref{eq: xi der} with respect to $m_1'$ is negative, so $m_1' = \frac{2ds_As_B}{n}$ maximizes $\log \xi$, and hence also maximizes $\xi$. By substituting this into~\eqref{eq: Janson 1 xi}, we have $\xi \leq 1$.
Thus,~\eqref{eq: simplified main term 2} can again be simplified to
\begin{equation}
    (d/n)^{k_A + k_B}\cdot\zeta\cdot\exp\left[o\left(k'_A\log\frac{dn}{2k_A} + k'_B\log\frac{dn}{2k_B}\right)\right].
\end{equation}

Next, we estimate $\log\zeta$ in a similar way. Observe that the first derivative of $\log \zeta$ with respect to $m_1'$ is
\begin{align}
    & \log\left(1 - \frac{m_1'}{s_As_B}\right) -(1 - \lambda ) \log\left(1 - \frac{(1 - \lambda )m'_A}{s_A|B'|}\right) - (1 - \lambda )\log\left(1 - \frac{(1 - \lambda )m'_B}{s_B|A'|}\right) \notag\\
    & -\lambda \log\left(1 - \frac{\lambda m'_A}{s_A|B'|}\right)  - \lambda \log\left(1 - \frac{\lambda m'_B}{s_B|A'|}\right)  + \log\left(1 - \frac{m'}{|A'||B'|}\right) \notag\\
    &+ \frac{1}{2|B'|(1 - m'/|A'||B'|)} + \frac{1}{2|A'|(1 - m'/|A'||B'|)}.\label{eq: zeta der}
\end{align}

Notice that the derivative of~\eqref{eq: zeta der} is negative, so it suffices to find where~\eqref{eq: zeta der} is zero.
If $m_1'= \frac{2ds_As_B}{n}$, the right hand side of~\eqref{eq: zeta der} is 
\begin{align*}
    \log(1 - 2d/n) - (1 - \lambda )\log(1 - (1-\lambda )2d/n) - (1 - \lambda )\log(1 - (1 - \lambda )2d/n )\\
    -\lambda \log(1 - \lambda 2d/n) - \lambda \log(1 - \lambda 2d/n) + \log\left(1 - 2d/n\right) + O(1/n)
    \leq 0,
\end{align*}
since $\frac{(1 - 2d/n)^2}{(1 - (1 - \lambda )2d/n)^{2 - 2\lambda }(1 - \lambda 2d/n)^{2\lambda }} \leq \exp[-8\lambda d/n + O(\lambda d^2/n^2)]$. In fact, since each of the first six terms in~\eqref{eq: zeta der} increases as $m_1'$ decreases and the first term increases by $16\lambda d/n + O(\lambda d^2/n^2)$ when $m_1'$ is decreased to $\frac{2ds_As_B}{n}(1 - 16\lambda )$, it follows that~\eqref{eq: zeta der} is positive when $m'_1 = \frac{2ds_As_B}{n}(1 - 16\lambda )$. Thus, $\zeta$ is maximized when $m_1' = \frac{2ds_As_B}{n}(1 - \beta)$ for some $0 < \beta \leq 16\lambda $. 
It remains to estimate $\log\zeta$ when $m_1' = \frac{2ds_As_B}{n}(1 - \beta)$. To do so, we use \cref{prop: expand log} to expand the each term in~\eqref{eq: Janson 1 zeta}. The second sum below is the expansion of the $|A'|/2 - |B'|/2$ and $n/2$ exponents of the final two terms in~\eqref{eq: Janson 1 zeta}. This gives that $\log \zeta$ is equivalent to 
\begin{align}
    &\sum_{i = 2}^w\left[\frac{d^i}{i(i-1)(n/2)^{i-2}}\left(1 - (1 - \beta)^i\frac{4s_As_B}{n^2} - \frac{(1 - 2s_A/n - 2s_B / n + 4s_As_B(1 - \beta)/n^2)^i}{(1 - 2s_A/n - 2s_B/n + 4s_As_B/n^2)^{i-1}}  \right. \right.\notag\\&\left.\left. -(\lambda ^i + (1 - \lambda )^i)\frac{2s_A(1-2(1 - \beta)s_B/n)^i}{n(1 - 2s_B/n)^{i-1}} - (\lambda ^i + (1 - \lambda )^i)\frac{2s_B(1-2(1 - \beta)s_A/n)^i}{n(1 - 2s_A/n)^{i-1}}\right)\right] \notag\\
    &+ \sum_{i = 1}^{w}\left[\frac{d^i}{(n/2)^{i-1}i}\left(-1 + \frac{1 - 2s_A/n - 2s_B/n + 4(1 - \beta)s_As_B/n^2}{1 - 2s_B/n}\right.\right.\notag\\&+ \left.\left.\frac{1 - 2s_A/n - 2s_B/n + 4(1 - \beta)s_As_B/n^2}{1 - 2s_A/n}\right)\right] + O(k_A + k_B)\notag\\
    &= O\left(\frac{\lambda d^2(s_A + s_B)}{n}\right).\notag
\end{align}
Recall that $\lambda  = O(\eps^6 / \log n)$ and $k_A + k_B \leq n\log d$, so \[
\zeta = \exp\left[O\left(\eps^6(k'_A + k'_B)\log n\right)\right] = \exp\left[O\left(\eps^5k'_A\log\frac{dn}{2k_A} + \eps^5k'_B\log\frac{dn}{2k_B}\right)\right].
\]
Thus, again, we can simplify~\eqref{eq: simplified main term 2} to 
\[
(d/n)^{k_A + k_B}\exp\left[O\left(\eps^3k'_A\log\frac{dn}{2k_A} + \eps^3k'_B\log\frac{dn}{2k_B}\right)\right],
\]
which completes the proof.
\qed

\section{Small torsos: proofs of \cref{lem: Small torso counting small k} and \cref{lem: Small Torso Counting}}\label{sec: Small Torso}
\subsection{Proof of \cref{lem: Small torso counting small k}}\label{sec: small torso simple}
As before, let $a = |A|$, $b = |B|$, $k'_A = k_{A, r_A - 1}$, and $k'_B = k_{B, r_B - 1}$ and assume $(A,B)$ satisfies \eqref{eq:AB}. We bound $|\mathcal T_{k_A, r_A, kB, r_B}|$ in a similar manner to the methods used in \cref{sec: Janson}. That is, we first fix a $(k_A, r_A, A, D_A)$-graph $G_A$ and a $(k_B, r_B, B, D_B)$-graph $G_B$ with maximum degree at most $d/2$ as defect graphs and then upper bound the number of ways to complete these graphs to be $d$-regular without creating any triangles by adding edges between $A$ and $B$. To bound the number of choices for cross-edges that do not create triangles, we require that there is a relatively small set $S$ of vertices in $B$ that have large defect degree, and in fact $S$ is incident to almost all defect edges in $B$. Thus, we consider the notion of spines that was introduced in \cite{osthus2003densities}.

\begin{definition}\label{def: spine}
    Given a graph $G$ let $T$ be a $D$-torso of $G$. A set $S \subseteq V(G)$ is a spine of $G$ if every edge in $E(G) \setminus E(T)$ is incident to a vertex in $S$. 
\end{definition}
As was shown in~Proposition 11 of \cite{osthus2003densities}, every $(k_B, \bar r_B, B, D_B)$-graph has a spine with size at most $s := 2k'_B/D_B$. 

Recall that by \cref{cor: counting defects}, there are at most 
\begin{equation}\label{eq: GAGB}
    \exp\left[(k_A + k_B)\log\frac{n}{d} + (1 + \eps^2)\left(k'_A\log\frac{dn}{2k_A} +k'_B\log\frac{dn}{2k_B}\right)\right]
\end{equation}
choices for $G_A$ and $G_B$. Also, there are at most 
\begin{equation}\label{eq: bounding spines 1}
    sb^{s} = O(k'_Bd^{-1}\log n + \log(k'_B/d)) = o(k_B)
\end{equation}
spines of $G_B$ with size at most $s$.

Fix a triangle-free $(k_A, r_A, A, D_A)$-graph $G_A$ and a triangle-free $(k_B, r_B, B, D_B)$-graph $G_B$ and let $S$ be a spine of $G_B$ with size at most $s$. Let $\mathcal H_{G_A, G_B, S}$ be the set of $d$-regular graphs $H$ where $H[A] = G_A$, $H[B] = G_B$, and no edge in $G_B$ is contained in a triangle in $H$. Similarly to $S_A$ and $S_B$ in the main proofs in \cref{sec: Janson}, $\mathcal H_{G_A, G_B, S}$ does not depend on $S$, but it is useful to fix $S$ when counting the number of ways to add cross-edges without creating triangles. However, here $S$ is much smaller, and so summing over all choices of $S$ is negligible. Thus,
\begin{equation}\label{eq: small torso 1 sum}
    |\mathcal T_{k_A, r_A, k_B, r_B}| \leq \sum_{G_A, G_B, S}|\mathcal H_{G_A, G_B, S}|.
\end{equation}
Let $\mathcal H = \mathcal H_{G_A, G_B, S}$. 
Similarly to the main arguments in \cref{sec: Janson}, we bound $|\mathcal H |$ by counting the number of ways to construct a member $H\in\mathcal{H}$ by adding edges into $H$ in multiple steps that we describe below.

\begin{itemize}
    
    \item[1.](1st stage of choosing cross-edges for $S'$) Let 
    \[
    D' = \frac{\varepsilon^7d}{16\log\log\left(\frac{dn}{2k_B}\right)}.
    \]
    Let $S' = \{u \in S \mathrel : d_{G_B}(u) \geq D'\}$. We will have $|S'| \geq 1$, which is justified below in \cref{clm: small torso subset}. For each $u \in S'$, choose a set $A_u$ of $d_{G_B}(u)$ vertices in $A$ and add edges $\{ux \mathrel : x \in  A_u\}$ to $H$. Moreover, ensure that no vertex in $H$ has degree more than $d$.
    \item[2.](2nd stage of choosing cross-edges for $S'$) For each $u \in S'$, choose a set $\mathcal U$ of $d - 2d_{G_B}(u)$ vertices in $A$ and add edges $\{ux \mathrel : x \in \mathcal U\}$ to $H$. Moreover, ensure that $H$ is simple (i.e. no edge that was already included in $H$ is chosen to be added) and no vertex in $H$ has degree more than $d$. Let $m$ denote the number of edges that were added in steps 1 and 2. Note that $m$ is determined by $G_A,G_B$ and $S$.
    \item[3.](degree completion) Let $B' = B \setminus S'$. Complete $H$ to be a (simple) $d$-regular graph by including $dn/2 - k_A - k_B - m$ edges in $A \times B'$ such that no edge in $G_B$ is contained in a triangle in $H$.
\end{itemize}

A visual representation of this procedure is given in Figure~\ref{fig: small torso}.

    \begin{figure}[h]
        \centering{
        \resizebox{0.7\textwidth}{!}{\fontsize{30pt}{30pt}\selectfont
\begingroup%
  \makeatletter%
  \providecommand\color[2][]{%
    \errmessage{(Inkscape) Color is used for the text in Inkscape, but the package 'color.sty' is not loaded}%
    \renewcommand\color[2][]{}%
  }%
  \providecommand\transparent[1]{%
    \errmessage{(Inkscape) Transparency is used (non-zero) for the text in Inkscape, but the package 'transparent.sty' is not loaded}%
    \renewcommand\transparent[1]{}%
  }%
  \providecommand\rotatebox[2]{#2}%
  \newcommand*\fsize{\dimexpr\f@size pt\relax}%
  \newcommand*\lineheight[1]{\fontsize{\fsize}{#1\fsize}\selectfont}%
  \ifx\svgwidth\undefined%
    \setlength{\unitlength}{522.43325061bp}%
    \ifx\svgscale\undefined%
      \relax%
    \else%
      \setlength{\unitlength}{\unitlength * \real{\svgscale}}%
    \fi%
  \else%
    \setlength{\unitlength}{\svgwidth}%
  \fi%
  \global\let\svgwidth\undefined%
  \global\let\svgscale\undefined%
  \makeatother%
  \begin{picture}(1,1.06662731)%
    \lineheight{1}%
    \setlength\tabcolsep{0pt}%
    \put(0,0){\includegraphics[width=\unitlength,page=1]{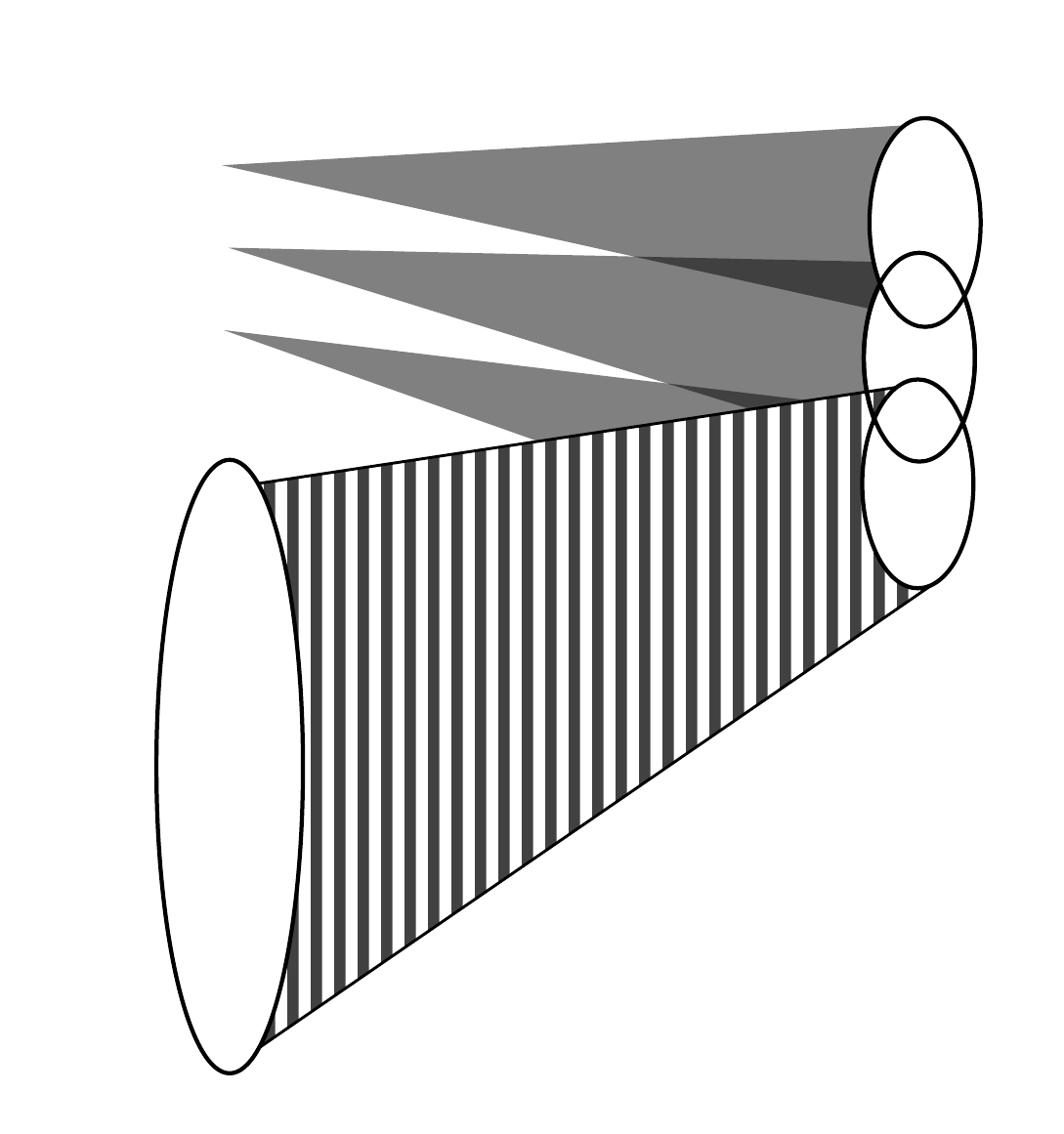}}%
    \put(0.0339417,0.8110937){\color[rgb]{0,0,0}\makebox(0,0)[lt]{\lineheight{1.25}\smash{\begin{tabular}[t]{l}$S'$\end{tabular}}}}%
    \put(0,0){\includegraphics[width=\unitlength,page=2]{expansion_thesis.pdf}}%
    \put(0.19194133,1.00776983){\color[rgb]{0,0,0}\makebox(0,0)[lt]{\lineheight{1.25}\smash{\begin{tabular}[t]{l}$A$\end{tabular}}}}%
    \put(0,0){\includegraphics[width=\unitlength,page=3]{expansion_thesis.pdf}}%
    \put(0.84346956,1.00593417){\color[rgb]{0,0,0}\makebox(0,0)[lt]{\lineheight{1.25}\smash{\begin{tabular}[t]{l}$B$\end{tabular}}}}%
    \put(0,0){\includegraphics[width=\unitlength,page=4]{expansion_thesis.pdf}}%
  \end{picture}%
\endgroup%
}}
        \label{fig: small torso}
        \caption[Procedure for constructing graphs in \cref{lem: Small torso counting small k}]{Representation of above procedure. The shaded cross-edges are added in steps 1 and 2 of the procedure, where some special subset is chosen in step 1. All other cross-edges, including the dashed cross-edges, are added in step 3 of the procedure. The key fact here is that many of the special neighbourhoods chosen in step 1 will have large cross-neighbourhoods chosen in step 3, indicated by the striped cross-edges.}
\end{figure}

\begin{claim}\label{clm: small torso subset}
    For any given $(k_B, \bar r_B, B, D_B)$-graph $H_B$, there exists a set $S' \subseteq B$ where every vertex in $S'$ is incident to at least $D'$ edges in $H_B$, and the total number of edges in $H_B$ that are incident to at least one vertex in $S'$ is at least $(1 - \varepsilon)k_B$.
\end{claim}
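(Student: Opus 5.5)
The natural choice is $S' = \{u \in B : d_{H_B}(u) \ge D'\}$, the set of all high-degree vertices. The task is then to show that the edges of $H_B$ with both endpoints of degree below $D'$ number at most $\varepsilon k_B$. Write $L = B\setminus S'$ for the set of low-degree vertices; $H_B[L]$ has maximum degree less than $D'$. The key observation is that $D' \le D_B$. Indeed, $D' = \varepsilon^7 d/(16\log\log\frac{dn}{2k_B})$ while $D_B = \varepsilon^4 d/(2\log\frac{dn}{2k_B})$, and $\varepsilon^3\log x \le 8\log\log x$ fails only for very large $x$; but $\frac{dn}{2k_B}\le n^2$, so $\log\log\frac{dn}{2k_B}$ and $\log \frac{dn}{2k_B}$ are comparable only in the regime we should check. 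On reflection the ratio $D'/D_B = \frac{\varepsilon^3\log x}{8\log\log x}$ can exceed $1$, so we cannot simply say $H_B[L]\subseteq$ a torso. Instead we argue by maximality of the torso as follows.

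Let $T$ be the $D_B$-torso of $H_B$ with $|E(T)| \le k_{B,\bar r_B-1}$, which holds since $r_B=\bar r_B$. Consider the subgraph $F = H_B[L]$, of maximum degree less than $D'$. Take a random subgraph of $F$ keeping each edge with probability $q=\min\{1,D_B/(2D')\}$, and delete edges at vertices whose degree exceeds $D_B$. Standard concentration (degrees at most $D'$, expected degree at most $D_B/2$) shows that the result is a subgraph of maximum degree at most $D_B$ with at least $(q/2)|E(F)|$ edges. By edge-maximality of $T$,
\[
|E(F)| \le \frac{2}{q}|E(T)| \le \frac{4D'}{D_B}\,k_{B,\bar r_B-1}.
\]
More simply and deterministically, one can instead use Vizing's theorem: $F$ decomposes into at most $D'+1$ matchings, and the union of the $\lfloor D_B\rfloor$ largest of them has maximum degree at most $D_B$ and contains at least $\frac{D_B}{D'+1}|E(F)|$ edges. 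This gives the same bound, and I would use it.

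Now insert \eqref{eq: r1}: $k_{B,\bar r_B-1} = \frac{4\log\log x}{(1-\varepsilon)\varepsilon^2\log x}k_B$ with $x=\frac{dn}{2k_B}$. Then
\[
\frac{4D'}{D_B}k_{B,\bar r_B-1}\le 4\cdot\frac{\varepsilon^3\log x}{8\log\log x}\cdot\frac{4\log\log x}{(1-\varepsilon)\varepsilon^2\log x}\,k_B=\frac{2\varepsilon}{1-\varepsilon}k_B .
\]
This is slightly worse than $\varepsilon k_B$ by a constant factor. Here the Vizing bound $D_B/(D'+1)$ is nearly exact, so the factor $4$ becomes roughly $1$, giving $|E(F)|\le \frac{(1+o(1))\varepsilon}{2(1-\varepsilon)}k_B\le \varepsilon k_B$. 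Every edge not in $F$ is incident to $S'$, which proves the claim.

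The main obstacle is exactly this constant bookkeeping: matching $D'$, $D_B$ and the definition of $\bar r_B$ so that the leftover is at most $\varepsilon k_B$. Two degenerate cases also need handling. If $D'<1$ the claim is trivial. If $D_B<1$, we must separately argue that $D_B\ge 1$ holds under \eqref{Assumptions}, which it does since $d\gg\log n$.
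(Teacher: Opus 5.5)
Your argument is correct, but it takes a different route from the paper.

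\textbf{The paper's route.} It fixes a spine $S_B$ of $H_B$ with $|S_B|\le 2k'_B/D_B$, a result it imports from Osthus, Pr\"omel and Taraz. It takes $S'$ to be the vertices of $S_B$ of degree at least $D'$. It then bounds the uncovered edges by $|E(T_B)|+|S_B|D'$.

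\textbf{Your route.} You bypass spines entirely. You use only the edge-maximality of the torso, applied to the union of $\lfloor D_B\rfloor$ colour classes of a Vizing colouring of $H_B[L]$.

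\textbf{What yours buys.} It is self-contained and sharper. It leaves at most $(1+o(1))\frac{D'}{D_B}k'_B=(1+o(1))\frac{\varepsilon}{2(1-\varepsilon)}k_B$ edges uncovered. By the definition of $\bar r_B$, one has exactly $2k'_BD'/D_B=\frac{\varepsilon}{1-\varepsilon}k_B$, not $\frac{\varepsilon}{2}k_B$ as the paper writes. So the paper's count, taken literally, only gives $(1-\frac{\varepsilon}{1-\varepsilon}-o(1))k_B$, marginally below $(1-\varepsilon)k_B$. This is harmless downstream, where $(1-O(\varepsilon))k_B$ suffices. Non-emptiness of $S'$ is also automatic for you, without the paper's average-degree step.

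\textbf{What the paper's buys.} Its $S'$ lies inside a prescribed small spine $S$. That is how the claim is used in step 1 of both small-torso procedures, where $S'=\{u\in S: d(u)\ge D'\}$ and $s'\le s$. Your $S'$ is the set of all vertices of degree at least $D'$, which need not lie in $S$. Invoking your version there therefore means redefining $S'$. This is harmless, since $|S'|\le 2k_B/D'=\frac{2(1-\varepsilon)}{\varepsilon}s$.

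\textbf{Clean-up for a write-up.}
Drop the opening assertion $D'\le D_B$. It fails for large $n$, since $D'/D_B=\frac{\varepsilon^3\log x}{8\log\log x}$ with $x=\frac{dn}{2k_B}\to\infty$.
Drop the random-sparsification detour.
Write $\lfloor D_B\rfloor$ rather than $D_B$ in the Vizing count. This costs only a factor $1+o(1)$, because $D_B\ge \varepsilon^4 d/(4\log n)\to\infty$.
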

\begin{proofclaim}
    Fix a $D_B$-torso $T_B$ of $H_B$. As mentioned before, $H_B$ has a spine $S_B$ with $|S_B| \leq s$. Note that $k_B \geq |D_B|$, as otherwise $k_B = k'_B$, contradicting our assumption that $r_B = \overbar r_B$. Let $D''$ be the average degree of vertices in $S_B$ in $H_B$. Then, 
    \begin{align*}
        D''|S_B| \geq k_B - k'_B
    \end{align*}
    implying that 
    \begin{align}
        D'' &\geq D_{B} \left(\frac{k_B}{2k'_B} - \frac{1}{2}\right) \notag \\
        &= \frac{\varepsilon^4d}{2\log\left(\frac{dn}{2k_B}\right)}\left(\frac{(1 - \varepsilon)\varepsilon^2\log\left(\frac{dn}{2k_B}\right)}{8\log\log\left(\frac{dn}{2k_B}\right)} - \frac{1}{2}\right) \notag \\
        &\geq \frac{\varepsilon^7d}{16\log\log\left(\frac{dn}{2k_B}\right)}.\notag
    \end{align}
    Thus, since the average degree of vertices in $S_B$ is at least $D'$, some vertex has degree at least $D'$ in every $(k_B, r_B, B, D_B)$-graph.

    Let $S'_B \subseteq S_B$ denote the set of vertices in $S_B$ that have degree at least $D'$. Vertices in $S_B\setminus S'_B$ are incident to at most $|S_B|D' \leq 2k'_BD'/D_B \leq \frac{\varepsilon}{2}k_B $ edges in $G_B$. Also, since $r_B = \overbar r_{B}$, $k'_B=o(k_B)$. Thus, vertices in $S$ are incident to at least $(1 - o(1))k_B$ edges in $G_B$, so vertices in $S'$ must be incident to at least $(1 - \varepsilon/2 - o(1))k_B > (1 - \varepsilon)k_B$ edges in $G_B$.
    
    \end{proofclaim}

    We now approximate the number of choices in each step. 
    Recall that $G_A,G_B$ and $S$ determine both $S'$ and $m$. Let $s'=|S'|$ which is also determined by $G_A,G_B$ and $S$.
    The
number of choices for  steps 1 and 2 together is obviously bounded by 
\begin{equation}\label{eq: steps 1 and 2 1}
        \binom{as'}{m}.
    \end{equation} 
 Also by the observation that $S'\subseteq S$ where $|S|\le s$, and the construction of the $m$
 edges in the first two steps, we obtain
\begin{equation}\label{eq: mS' relaxed 1}
       1 \leq s' \leq s \quad\text{and} \quad  \max\{0,ds' - 2k_B\} \leq m \leq ds'.
    \end{equation}

    Bounding the number of choices in step 3 is more difficult, and requires us to consider choices in step 1 that satisfy an extra condition (P) defined below, and the choices without condition (P) separately. We show that the number of choices in step 1 where (P) fails is sufficiently small and then consider only choices where (P) holds. To define (P), we first introduce the following parameters.
    \begin{equation}\label{eq: y',y''}
        y' = \frac{a}{\log^{4000/\eps^4}\frac{dn}{2k_B}}, \quad y'' = \min\{k_B/16, y'\}.
    \end{equation}
\begin{definition}\label{def: well-spread}
    A collection of sets $\{A_i \subseteq A\}_{i \in \mathcal I}$ is \textit{well-spread} if for all $\mathcal J \subseteq \mathcal I$ with $\sum_{j \in \mathcal J}|A_j| \geq k_B/8$, there exists $\mathcal J' \subseteq \mathcal J$ and a collection $\{A'_j\}_{j \in J'}$ of disjoint sets where for all $j \in \mathcal J'$, $A'_j \subseteq A_j$, $|A'_j| \geq |A_j|/2$, and $\sum_{j \in \mathcal J}|A'_j| \geq y''/2$.
\end{definition}
    
    Notice that here, $y'' = k_B/8$ by the assumptions of the lemma. We use the definition of well-spread in the proof of \cref{lem: Small Torso Counting} as well, and so define it more generally here. Now, we define condition (P) as follows.
    \begin{enumerate}
        \item[(P)] $\{A_u\}_{u \in S'}$ is well-spread.
    \end{enumerate}
    Let $m' = dn/2 - k_A - k_B - m$. Together, the following two claims help us to bound $|\mathcal H|$.

    \begin{claim}\label{clm: P fails}
        The number of choices in steps 1-3 such that (P) fails is at most

\[
        \exp\left[-12\eps^{-2}k'_B\log\frac{dn}{2k_B}\right]\max_{s', m}\left\{ \binom{as'}{m}\binom{b - s'}{m'/a}^a\binom{a}{m'/(b - s')}^{b-s'}\binom{a(b - s')}{m'}^{-1} \right\},
        \]
where the maximization is over all $(s',m)$ that satisfies~\eqref{eq: mS' relaxed 1}.
        
    \end{claim}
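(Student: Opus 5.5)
Since condition (P) depends only on the choices made in step 1, the plan is to bound the count as a product. The first factor is the number of step-1 choices for which (P) fails. The second factor is the maximal number of continuations in steps 2 and 3, where step 3 is bounded with no triangle constraint at all. For the continuations, fix $S'$ and $m$. The choices in steps 1 and 2 together form a set of $m$ cross-edges from $S'$ to $A$, so they are counted by $\binom{as'}{m}$ as in \eqref{eq: steps 1 and 2 1}. Step 3 chooses a bipartite graph on $(A,B')$ with $m'$ edges and residual degrees at most $d$. By \cref{cor: max sequence} it contributes at most $O\bigl(\binom{b-s'}{m'/a}^a\binom{a}{m'/(b-s')}^{b-s'}\binom{a(b-s')}{m'}^{-1}\bigr)$. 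The claim then reduces to showing that, as a fraction of all ways to pick the neighbourhoods $\{A_u\}_{u\in S'}$ in step 1 (at most $\prod_{u}\binom{a}{d_{G_B}(u)}$), the non-well-spread ones make up at most $\exp[-13\eps^{-2}k'_B\log\frac{dn}{2k_B}]$. Here I would bound the step-1 choices separately and then compare with $\binom{as'}{m}$, noting that splitting $m$ edges into the two stages overcounts $\binom{as'}{m}$ by at most $\binom{m}{k_B}\le \exp[O(k_B\log d)]$. This factor should be absorbed once the failure fraction is shown to be $\exp[-\Omega(k_B\log\frac{dn}{2k_B})]$, which is far smaller than needed.

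To bound the failure probability, I would sample each $A_u$ independently and uniformly among the $d_{G_B}(u)$-subsets of $A$. Every $u\in S'$ has $d_{G_B}(u)\ge D'$, and $D'$ is much larger than $1$. Suppose (P) fails for some $\mathcal J$ with $\sum_{j\in\mathcal J}|A_j|\ge k_B/8$. Run a greedy procedure over $j\in\mathcal J$: take $A'_j=A_j\setminus\bigcup_{i<j}A'_i$ whenever this has size at least $|A_j|/2$. If the union of the accepted $A'_j$ stays below $y''/2=k_B/16$, then each rejected $A_j$ has at least half of its elements inside a fixed set $W$ with $|W|<k_B/16$. Since the total size is at least $k_B/8$, the rejected sets also carry total size at least $k_B/16$, so at least $k_B/32$ chosen elements land in $W$. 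The union bound then runs over the possible $W$ (at most $\binom{a}{k_B/16}$ choices) and over $\mathcal J$ (at most $2^{s}$ choices, and $s=2k'_B/D_B$ is tiny). For a fixed $W$, each element of each $A_j$ lies in $W$ with probability about $|W|/a\le y'/a=\log^{-4000/\eps^4}\frac{dn}{2k_B}$. A Chernoff/hypergeometric bound on having $k_B/32$ hits in $W$ gives probability at most $\bigl(e\,|W|\cdot(\text{total size})/(a\,k_B/32)\bigr)^{k_B/32}$. Combined with the $\binom{a}{k_B/16}$ choices of $W$, the outcome is roughly $\exp[-\frac{k_B}{32}(\frac{4000}{\eps^4}-O(1))\log\log\frac{dn}{2k_B}]$, up to factors involving $a/k_B$ that need to be handled.

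The main obstacle is this last balancing. The entropy $\binom{a}{k_B/16}\approx (16ea/k_B)^{k_B/16}$ has to be beaten by the hit probability. I would pair them by choosing $W$ only among the elements that were actually selected, so there are at most $\binom{\text{total size}}{k_B/16}$ choices of $W$ instead of $\binom{a}{k_B/16}$. The hit probability per element is at most $k_B/(16a)\le y'/a$, because here $k_B/16< y'$ by the hypothesis of \cref{lem: Small torso counting small k}. This should give failure probability $\exp[-\Omega(\eps^{-4}k_B\log\log\frac{dn}{2k_B})]$. By \eqref{eq: r1}, with $r_B=\bar r_B$, we have $k'_B\log\frac{dn}{2k_B}=\frac{4}{\eps^2}(1-\eps)^{-1}k_B\log\log\frac{dn}{2k_B}$. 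The failure probability is therefore at most $\exp[-13\eps^{-2}k'_B\log\frac{dn}{2k_B}]$, since $4000\eps^{-4}/C$ comfortably exceeds $52\eps^{-4}$. The sum over $s',m$ and the spine counting then contribute only polynomial factors, which are absorbed into the gap between $13\eps^{-2}$ and $12\eps^{-2}$.
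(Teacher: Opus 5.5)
Your route differs from the paper's in the probabilistic core. The paper fixes $S^*\subseteq S'$ with $\sum_{u\in S^*}d'_u\ge k_B/8$ and quotes Corollary~24 of \cite{osthus2003densities} for the probability that no disjoint family of halves $A'_u\subseteq A_u$ reaches total size $k_B/32$. It then takes a $2^s$ union bound over $S^*$, and it simply writes the number of step-1 choices as $\binom{as'}{m}\P[\mathcal E]$. You prove the probability bound from scratch by greedy disjointification. This is self-contained and shows exactly where the hypothesis $k_B/16<y'$ enters: each forced hit into the small set $W$ costs a factor of order $\log^{-4000/\eps^4}\frac{dn}{2k_B}$. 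You also correctly notice that merging steps 1 and 2 into $\binom{as'}{m}$ loses a factor $\prod_{u\in S'}\binom{d-d_u}{d_u}$, which the paper passes over. The argument works, but three points need repair.

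(i) Your absorption of that overcount fails as written. Your own computation yields only $\P[\mathcal E]\le\exp[-\Theta(\eps^{-4}k_B\log\log\frac{dn}{2k_B})]$, not $\exp[-\Omega(k_B\log\frac{dn}{2k_B})]$, and this does not beat $\exp[O(k_B\log d)]$. Use $d_u\ge D'$ instead: $\prod_u\binom{d-d_u}{d_u}\le(ed/D')^{\sum_u d_u}\le\exp[O(k_B\log(\eps^{-7}\log\log\frac{dn}{2k_B}))]$, which is negligible.

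(ii) Under the lemma's hypothesis, $y''=\min\{k_B/16,y'\}=k_B/16$, so the target is $y''/2=k_B/32$. This is also what the paper's proof of this claim uses; the text's ``$y''=k_B/8$'' is a slip. With your threshold $|W|<k_B/16$, the accepted sets may carry total size up to $2|W|<k_B/8$, so it does not follow that the rejected sets carry $k_B/16$. With $|W|<k_B/32$ it does follow, giving more than $k_B/32$ hits.

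(iii) ``Choosing $W$ among the selected elements'' must be set up so that $W$ is independent of the sets doing the hitting. Fix $\mathcal J$ and its split into accepted and rejected indices (at most $4^{s}$ choices). Then condition on the accepted sets, which alone determine $W$. The rejected $A_j$ are then still independent and uniform, with $\P[|A_j\cap W|>d_j/2]\le(2e|W|/a)^{d_j/2}$. No union bound over $W$ is needed. Using $s=o(k_B)$, you get $\P[\mathcal E]\le 4^s\bigl(ek_B/(16a)\bigr)^{k_B/32}\le\exp[-(125-o(1))\eps^{-4}k_B\log\log\frac{dn}{2k_B}]$. This beats the required $13\eps^{-2}k'_B\log\frac{dn}{2k_B}=\frac{52}{\eps^4(1-\eps)}k_B\log\log\frac{dn}{2k_B}$, with room to spare for the factor in (i).
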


    \begin{claim}\label{clm: P holds}
        The number of choices in steps 1-3 such that (P) holds is at most 


\[
        \exp\left[-\eps^{-3}k'_B\log\frac{dn}{2k_B}\right]\max_{s', m}\left\{ \binom{as'}{m}\binom{b - s'}{m'/a}^a\binom{a}{m'/(b - s')}^{b-s'}\binom{a(b - s')}{m'}^{-1} \right\}.
        \]
where the maximization is over all $(s',m)$ that satisfies~\eqref{eq: mS' relaxed 1}.

    \end{claim}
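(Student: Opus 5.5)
We prove \cref{clm: P holds}, and we do it by conditioning on the choices in steps 1 and 2. We then bound the number of completions in step 3 that create no triangle through an edge of $G_B$.

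\textbf{Reduction to a probability.} Fix the choices of steps 1 and 2 such that (P) holds. By \cref{cor: max sequence}, the number of step-3 completions with no triangle constraint is at most
\[
O\!\left(\binom{b-s'}{m'/a}^a\binom{a}{m'/(b-s')}^{b-s'}\binom{a(b-s')}{m'}^{-1}\right),
\]
and the number of step-1/2 choices is at most $\binom{as'}{m}$ by~\eqref{eq: steps 1 and 2 1}. It therefore suffices to show the following. Among the degree-constrained completions in step 3, the fraction in which no edge of $G_B$ lies in a triangle is at most $\exp[-2\eps^{-3}k'_B\log\frac{dn}{2k_B}]$. Summing over $(s',m)$ costs only a polynomial factor, which is absorbed.

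\textbf{Where triangles must be avoided.} Let $u\in S'$ and let $w$ be a $G_B$-neighbour of $u$ in $B'$. The edge $uw$ is then in no triangle only if $w$ has no cross-neighbour in $A_u\subseteq N(u)$. By \cref{clm: small torso subset}, the vertices of $S'$ cover at least $(1-\eps)k_B$ defect edges. Hence the total number of such ``forbidden'' pairs $\{(w,x): x\in A_u,\ w\in N_{G_B}(u)\}$ is large. By (P), we can extract disjoint sets $A'_u\subseteq A_u$ with $|A'_u|\ge |A_u|/2$ and total size at least $y''/2=k_B/16$. The pairs $\{w\}\times A'_u$ over $u\in S'$ and $w\in N_{G_B}(u)\cap B'$ then give a forbidden edge set $F\subseteq A\times B'$. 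The disjointness of the $A'_u$ is what keeps $F$ well-behaved: every $x\in A$ lies in $O(\Delta(G_B))=O(d)$ pairs, but in fact in at most one $A'_u$. Each $w$ has at most $d_{G_B}(w)\cdot$(its share) forbidden partners. This lets us check the degree hypotheses of \cref{thm: bip edge probs}. Since $d_{G_B}(u)\ge D'$ for $u\in S'$, we get $|F|\gtrsim D'\cdot k_B/16$.

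\textbf{Switching bound.} Expose the edges of $F$ one at a time in the uniformly random bipartite graph on $(A,B')$ with the residual degree sequence. Apply \cref{thm: bip edge probs}, with $H_2$ the previously exposed non-edges and $H_1=\emptyset$, to bound the conditional probability that each edge is present by $\frac{2d}{n}(1+O(\mu_0))$. Taking complements requires a lower bound, so instead we use the symmetric switching estimate that the conditional edge probability is at least $\frac{2d}{n}(1-O(\mu_0))$. This estimate comes from the same switching argument behind \cref{thm: bip edge probs}, and the degree caps $d_u^{H_2}\le\mu_0 n$ are ensured by truncating $F$ so that each vertex lies in at most $\mu_0 n$ pairs. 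Multiplying over the exposed pairs gives
\[
\P[F\cap E=\emptyset]\le \exp\left[-(1-O(\mu_0))\tfrac{2d}{n}|F|\right]\le \exp\left[-c\,\tfrac{d D' k_B}{n}\right].
\]
Substituting $D'=\eps^7 d/(16\log\log\frac{dn}{2k_B})$ and $d^2\ge \tfrac34(1+\eps)^2 n\log n$ gives an exponent of order $\eps^7 k_B\log n/\log\log n$. Because $r_B=\bar r_B$, \eqref{eq: r1} gives $k'_B\log\frac{dn}{2k_B}=\Theta(\eps^{-2}k_B\log\log\frac{dn}{2k_B})$. Hence our exponent exceeds $2\eps^{-3}k'_B\log\frac{dn}{2k_B}$ by a factor $\log n/(\log\log n)^2\to\infty$.

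\textbf{Main obstacle.} The main obstacle is the switching step. Specifically, we must verify that after truncation $F$ still has size $\Omega(D'k_B)$ while respecting the per-vertex bound $\mu_0 n$, and that the lower-bound version of the edge-probability estimate holds under the conditioning. If the lower bound is unavailable directly, we would instead run a direct switching: map a completion that contains a forbidden edge $wx$ to one that does not, by rotating along an alternating 4-cycle $w x' , x y'$. The forward-to-backward switching counts then give a ratio of order $2d/n$ per forbidden edge.
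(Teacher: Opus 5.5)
Your reduction matches the paper's. You fix steps 1--2 and bound the fraction of degree-constrained step-3 completions by $\P[\mathcal E]$ for a uniform bipartite graph on $(A,B')$ with residual degrees $\mathbf d^*$. Your forbidden set, built from the disjoint sets $A'_u$ supplied by (P), is also the paper's; there it is $X=\bigcup_{u\in S^*}A^*_u\times(N_{G_B}(u)\cap B')$, with $|X|\ge(1+o(1))k_BD'/16$. Your final comparison of $dD'k_B/n$ with $\eps^{-3}k'_B\log\frac{dn}{2k_B}$ is fine.

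\textbf{The gap is the probability estimate.} Your chain-rule argument needs a \emph{lower} bound on $\P[wx\in E(G)\mid\text{earlier pairs of }F\text{ absent}]$. Nothing available provides one: \cref{thm: bip edge probs} is a one-sided upper bound, and a lower bound is not a ``symmetric'' consequence of it. As stated, your bound $\frac{2d}{n}(1-O(\mu_0))$ is also false. The conditional probability is governed by $d^*_wd^*_x$, and $d^*_x=d-d_{G_A}(x)-|N_H(x)\cap S'|$ can be a constant factor below $d$, since $G_A$ may have vertices of degree near $d/2$. At best one gets $\Omega(d/n)$, and only for pairs whose endpoints have residual degree $\Omega(d)$. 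That is why the paper deletes the set $A^*$ of vertices of $A$ with $d^*$-degree below $d/4$, a step your proposal omits.

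Even the corrected $\Omega(d/n)$ bound is real work, not a routine switching. For a given graph, the switchings $wx',w'x\mapsto wx,w'x'$ (with $x'\in N(w)$, $w'\in N(x)$) can mostly be blocked, because $w'x'$ may already be an edge or a conditioned non-edge. So no graph-by-graph count works; one must average over the conditioned space.

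That averaging is exactly what \cref{lem: switching}, stated just before this claim, provides. You should invoke it directly with $U=B'$, $V=A$, $X=F$ (after removing $A^*$) and $\alpha=1/4$, checking (1)--(4) as the paper does. Its proof avoids conditional lower bounds altogether:
\begin{itemize}
\item It switches globally between the classes $\mathcal G_i$ of graphs with exactly $i$ edges in $X$.
\item It restricts to configurations in which no vertex meets more than $\mu_0\alpha^{-4}d$ edges of $E(G)\cap X$ (\cref{clm: chi degree}).
\item It takes the vertex $z$ from the $X$-free set $V'$, so the other switched pairs are not in $X$.
\item It lower bounds the \emph{average} number of reverse switchings using only the upper bound of \cref{thm: bip edge probs} (\cref{clm: switching expectation}).
\end{itemize}
This gives $|\mathcal G_i|/|\mathcal G_{i-1}|\ge\frac{2\alpha^3d|X|}{in'}(1-i/|X|)$, and hence $\P[E(G)\cap X=\emptyset]\le\exp[-\alpha^3d|X|/(2n)]$. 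Your fallback alternating-cycle switching points in this direction. Without these ingredients, though, the claimed ratio of order $2d/n$ per forbidden edge is unsupported.
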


    Obviously the bound in~\cref{clm: P holds} is negligible compared to the bound in \newline\cref{clm: P fails}. Thus, adding these two bounds together yields the following upper bound on $|\mathcal H|$.
    \begin{equation}
        |\mathcal H| \leq \exp\left[-11\eps^{-2}k'_B\log\frac{dn}{2k_B}\right]\max_{s', m}\left\{ \binom{as'}{m}\binom{b - s'}{m'/a}^a\binom{a}{m'/(b - s')}^{b-s'}\binom{a(b - s')}{m'}^{-1} \right\}.
    \end{equation}
    Thus, by~\eqref{eq: small torso 1 sum}, we have 
    \begin{align}
    |\mathcal T_{k_A, r_A, k_B, r_B}| \leq& \sum_{G_A, G_B, S}\max_{s', m}\left\{ \binom{as'}{m}\binom{b - s'}{m'/a}^a\binom{a}{m'/(b - s')}^{b-s'}\binom{a(b - s')}{m'}^{-1} \right\}\notag\\
    &\times \exp\left[-11\eps^{-2}k'_B\log\frac{dn}{2k_B}\right].\label{eq: small torso 1 T}
    \end{align}
    Simplifying this leads to the following claim, which completes the proof.
    \begin{claim}\label{clm: small torso final 1}
        \[
        \frac{|\mathcal T_{k_A, r_A, k_B, r_B}|}{|\mathcal G_{n,d}^\text{bip}|}\leq\frac{1}{\binom{n}{n/2}}\exp\left[ -2\left( k'_A\log\frac{dn}{2k_A} + k'_B\log\frac{dn}{2k_B}\right) \right].
        \]
    \end{claim}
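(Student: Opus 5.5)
We start from~\eqref{eq: small torso 1 T}, divide by $|\mathcal G_{n,d}^\text{bip}|$ using~\eqref{eq:reg bip count2}, and control the three factors separately: the number of $(G_A,G_B,S)$, the maximum of the binomial ratio, and the gain $\exp[-11\eps^{-2}k'_B\log\frac{dn}{2k_B}]$.

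\textbf{Counting $(G_A,G_B,S)$.} By~\eqref{eq: GAGB}, there are at most
\[
\exp\left[(k_A+k_B)\log\tfrac nd+(1+\eps^2)\left(k'_A\log\tfrac{dn}{2k_A}+k'_B\log\tfrac{dn}{2k_B}\right)\right]
\]
choices of $(G_A,G_B)$. By~\eqref{eq: bounding spines 1}, the factor coming from the spines $S$ is $\exp[o(k_B)]$, which is absorbed into an $\exp[O(k_A+k_B)]$ error.

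\textbf{The binomial ratio.} We claim that, uniformly over $(s',m)$ satisfying~\eqref{eq: mS' relaxed 1},
\[
\frac{\binom{as'}{m}\binom{b-s'}{m'/a}^a\binom{a}{m'/(b-s')}^{b-s'}\binom{n^2/4}{dn/2}}{\binom{a(b-s')}{m'}\binom{n/2}{d}^n}
\le \exp\left[-(k_A+k_B)\log\tfrac nd+O(s'\log n)+O(k_A+k_B)\right].
\]
This is the same computation as in \cref{clm: s_B = 0} with $\lambda=0$, but with the roles of $A$ and $B$ swapped ($S'\subseteq B$ plays the role of $S_A$). We expand each binomial via \cref{cor: choose}. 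Then \cref{prop: expand log} shows that the $\zeta$-type factor is $\exp[O(d^2s'/n+k_A+k_B)]$, and that the $\Lambda$-type factor is at most $d^{s'}$.

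\textbf{Absorbing the error terms.} Since $s'\le s=2k'_B/D_B$, we have
\[
s'\log n \;=\; O\!\left(\eps^{-4}\,\frac{k'_B\log\frac{dn}{2k_B}\,\log n}{d}\right) \;=\; o\!\left(k'_B\log\tfrac{dn}{2k_B}\right),
\]
because $d=\omega(\log n)$. Similarly, $d^2s'/n=O(\eps^{-4}k'_B\log\frac{dn}{2k_B}\cdot d/n)=o(k'_B\log\frac{dn}{2k_B})$, since $d=o(n)$. Finally, $O(k_A+k_B)=o(k'_A\log\frac{dn}{2k_A}+k'_B\log\frac{dn}{2k_B})$ by~\eqref{eq:k'bound}.

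\textbf{Combining.} The factors $(k_A+k_B)\log\frac nd$ from the defect count and from the binomial ratio cancel. The remaining exponent is at most
\[
(1+\eps^2)\left(k'_A\log\tfrac{dn}{2k_A}+k'_B\log\tfrac{dn}{2k_B}\right)-11\eps^{-2}k'_B\log\tfrac{dn}{2k_B}+o(\cdots).
\]
By the hypothesis $k'_A\log\frac{dn}{2k_A}\le 2\eps^{-2}k'_B\log\frac{dn}{2k_B}$, the positive part is at most
\[
(1+\eps^2)(1+2\eps^{-2})\,k'_B\log\tfrac{dn}{2k_B}\le 3\eps^{-2}k'_B\log\tfrac{dn}{2k_B}.
\]
The total exponent is therefore at most $-7\eps^{-2}k'_B\log\frac{dn}{2k_B}$. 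Using the same hypothesis once more,
\[
7\eps^{-2}k'_B\log\tfrac{dn}{2k_B}\ \ge\ 2\left(k'_A\log\tfrac{dn}{2k_A}+k'_B\log\tfrac{dn}{2k_B}\right),
\]
because $2\cdot(2\eps^{-2}+1)\le 7\eps^{-2}$. This yields the claim, with the $\binom{n}{n/2}^{-1}$ factor inherited from~\eqref{eq:reg bip count2}.

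The main obstacle is the binomial-ratio step, specifically verifying that the Stirling errors and the imbalance between $|A|$ and $|B|$ (from~\eqref{eq:AB}) contribute only $O(k_A+k_B)$. This parallels \cref{clm: s_B = 0}, so we expect it to go through by reusing those expansions with $a$ and $b-s'$ in place of $|A'|$ and $b$.
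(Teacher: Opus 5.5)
Your proposal is correct and follows essentially the same route as the paper: combine \eqref{eq: GAGB}, the spine count, and \eqref{eq: small torso 1 T}, normalize by \eqref{eq:reg bip count2}, bound the binomial ratio as in \cref{clm: small torso max} (your $d^{s'}$ and $d^2s'/n$ errors are absorbed just as the paper absorbs its $d^{s'}$ factor), and use the hypothesis $k'_A\log\frac{dn}{2k_A}\le 2\eps^{-2}k'_B\log\frac{dn}{2k_B}$ to turn the $-11\eps^{-2}k'_B\log\frac{dn}{2k_B}$ gain into the stated bound. Your version is slightly more careful than the paper's: you explicitly carry the $(1+\eps^2)\bigl(k'_A\log\frac{dn}{2k_A}+k'_B\log\frac{dn}{2k_B}\bigr)$ term from \eqref{eq: GAGB}, which the paper's displayed bound drops, and you check that the $-11\eps^{-2}$ gain still leaves enough room.
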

\qed

\cref{clm: P holds} is proved in \cref{sec: expansion}. \cref{clm: P fails} and \cref{clm: small torso final 1} are proved below in this subsection.

\medskip

\noindent\textit{Proof of \cref{clm: P fails}.}
Let $G$ be a random bipartite graph with bipartition $(A,S')$ obtained by choosing sets $A_u \subseteq A$ for each $u \in S'$ with $|A_u| = d_{G_B}(u)$ uniformly and independently at random and including edges $\{ux \mathrel x \in A_u\}$. Let $\mathcal E$ be the event that $\{A_u\}_{u \in S'}$ is not well-spread. Then, for some $(s',m')$ that satisfy~\eqref{eq: mS' relaxed 1},~\eqref{eq: steps 1 and 2 1} gives that there are at most 
        \begin{equation}\label{eq: step 1 and 2 P}
        \binom{as'}{m'}\P[\mathcal E]
        \end{equation}
        choices in step 1. Moreover, using \cref{cor: max sequence} to crudely bound the number of choices in step 3 gives that there are at most 
        \begin{equation}\label{eq: step 3 P}
            O\left(\binom{b - s'}{m'/a}^a\binom{a}{m'/(b - s')}^{b-s'}\binom{a(b - s')}{m'}^{-1}\right)
        \end{equation}
        choices in step 3. Combining~\eqref{eq: step 1 and 2 P} and~\eqref{eq: step 3 P} gives that there are at most 
        \[
        \max_{s', m}\left\{ \binom{as'}{m}\binom{b - s'}{m'/a}^a\binom{a}{m'/(b - s')}^{b-s'}\binom{a(b - s')}{m'}^{-1} \right\}O(\P[\mathcal E])
        \]
        total choices. Hence, it suffices to show that 
        \begin{equation}
            \P[\mathcal E] \leq \exp\left[-13\eps^-2k'_B\log\frac{dn}{2k_B}\right].\notag
        \end{equation}

    Fix $S^* \subseteq S'$ where $\sum_{u \in S^*}d'_u \geq k_B/8$. Then, Corollary~24 in \cite{osthus2003densities} implies that there does not exist $S^{**} \subseteq S^*$ and sets $A'_u \subseteq u$ for all $u \in S^{**}$ where $|A'_u| \geq |A_u|/2$ and $\sum_{u \in S^{**}}|A'_u| \geq k/32$ with probability at most 
    \[
    \exp\left[-(1 +\varepsilon^2)14\varepsilon^{-2}k'_{B}\log\frac{dn}{2k_B}\right].
    \]
    We then use the union bound to bound the probability that any $S^*$ has the above property. That is, the probability that there exists any set $S^*$ such that there do not exist $S^{**} \subseteq S^*$ and sets $A'_u \subseteq u$ for all $u \in S^{**}$ where $|A'_u| \geq |A_u|/2$ and $\sum_{u \in S^{**}}|A'_u| \geq k/32$ is bounded by 
    \[
    2^{s}\exp\left[-(1 +\varepsilon^2)14\varepsilon^{-2}k'_{B}\log\frac{dn}{2k_B}\right] \leq \exp\left[-13\varepsilon^{-2}k'_B\log\frac{dn}{2k_B}\right],
    \]
    as desired, where the inequality holds because $|S| = o(k_B) = o\left(k'_B\log\frac{dn}{2k_B}\right)$.
    \qed

    \medskip

\noindent\textit{Proof of \cref{clm: small torso final 1}.}
By~\eqref{eq: GAGB},~\eqref{eq: bounding spines 1}, and~\eqref{eq: small torso 1 T}, we have
\begin{align}
    |\mathcal T_{k_A, r_A, k_B, r_B}| \leq& \max_{s', m}\left\{ \binom{as'}{m}\binom{b - s'}{m'/a}^a\binom{a}{m'/(b - s')}^{b-s'}\binom{a(b - s')}{m'}^{-1} \right\}\notag\\
    &\times \exp\left[(k_A + k_B)\log\frac{n}{d} -11\eps^{-2}k'_B\log\frac{dn}{2k_B} + O(k_A + k_B)\right].\notag
\end{align}
Then, by~\eqref{eq:reg bip count2}, we have 
\begin{align}
    \frac{|\mathcal T_{k_A, r_A, k_B, r_B}|}{|\mathcal G_{n,d}^\text{bip}|} \leq& \max_{s', m}\left\{\frac{ \binom{as'}{m}\binom{b - s'}{m'/a}^a\binom{a}{m'/(b - s')}^{b-s'}\binom{n^2/4}{dn/2}}{\binom{a(b - s')}{m'}\binom{n/2}{d}^n\binom{n}{n/2} }\right\}\notag\\
    &\times \exp\left[(k_A + k_B)\log\frac{n}{d} -11\eps^{-2}k'_B\log\frac{dn}{2k_B} + O(k_A + k_B)\right].\label{eq: T/G}
\end{align}
The following claim evaluates the maximization in a similar manner to \cref{clm: s_B = 0} and \cref{clm: s_B > 0}.

\begin{claim}\label{clm: small torso max}
Under the conditions of \cref{lem: Small torso counting small k} or \cref{lem: Small Torso Counting},
    \[
    \max_{s', m}\left\{\frac{ \binom{as'}{m}\binom{b - s'}{m'/a}^a\binom{a}{m'/(b - s')}^{b-s'}\binom{n^2/4}{dn/2}}{\binom{a(b - s')}{m'}\binom{n/2}{d}^n }\right\} \leq \exp\left[-(k_A + k_B)\log\frac{n}{d} + O(k_A + k_B)\right].
    \]
\end{claim}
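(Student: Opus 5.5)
The bound in \cref{clm: small torso max} comes from expanding every binomial coefficient with \cref{cor: choose}, exactly as in the proofs of \cref{clm: s_B = 0} and \cref{clm: s_B > 0}. We then read off three kinds of contribution: the power-type factors, which should collapse to $(d/n)^{k_A+k_B}$; the square-root and Stirling correction factors; and the $(1-x)^{y}$ factors, which we treat as a single quantity $\zeta$. Finally we maximize over the admissible $(s',m)$ from \eqref{eq: mS' relaxed 1}.

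\emph{Step 1: normalization.} Write $a=\frac n2+\frac{k_A-k_B}{d}$ and $b=\frac n2-\frac{k_A-k_B}{d}$ as in \eqref{eq:AB}, and set $m = ds' - \theta$ with $0\le\theta\le 2k_B$. Then
\[
m' = \frac{dn}{2}-k_A-k_B-ds'+\theta .
\]
The power terms of $\binom{as'}{m}$ are $(as')^m m^{-m}$. Combined with the power terms of the $|A'|=a$, $|B'|=b-s'$ completion factors and of the normalization $\binom{n^2/4}{dn/2}\binom{n/2}{d}^{-n}$, they should give
\[
(n/2)^{m+m'-dn/2}\,d^{\,dn/2-m-m'}\times(\text{lower order}) = (d/n)^{k_A+k_B}\,e^{O(k_A+k_B)} .
\]
This parallels \eqref{eq: simplified equation} with $\lambda=0$. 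One difference matters. In \cref{clm: s_B = 0} with $\lambda=0$ there was an extra factor $d^{s_A}$, coming from the $\binom{s_Ab}{m'_A}$ term, which has no compensating split. Here the $m$ edges incident to $S'$ play the role of the $S_A$ edges. The analogous leftover factor is therefore of the form $(e n/(2m/s'))^{\ldots}$ against $\binom{n/2}{d}^{s'}$. Its size is $\exp[O(s'\log d)]$, which is $\exp[O(s\log n)]$.

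\emph{Step 2: controlling the leftover.} I would bound this leftover using $s'\le s=2k'_B/D_B = O(k'_B\log(\frac{dn}{2k_B})/d)$. Since $d\ge \sqrt{n\log n}$, this gives $s'\log n = O(k'_B\log^2 n/\sqrt{n\log n}) = o(k_B)$, so the leftover is absorbed into $e^{O(k_A+k_B)}$. The $\sqrt{\cdot}$ and $(1-1/12d)$ Stirling corrections are handled as in \eqref{eq: stirling error fine}. They contribute $\exp[O(k_A+k_B+s')]$, and the $2\pi$ factors cancel because the number of vertices on each side totals $n$ up to $O(s'+(k_A+k_B)/d)$.

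\emph{Step 3: the $\zeta$ factor and maximization.} This is the step I expect to be the main obstacle. We collect the factors $(1-m/(as'))^{-(as'-m)}$, $(1-m'/(a(b-s')))^{\ldots}$ and $(1-2d/n)^{n^2/4-dn/2+n/2}$. As in \eqref{eq: Janson 1 zeta}, I would expand their logarithms via \cref{prop: expand log} in powers of $d/n$. The main terms should cancel, because the edge density $m/(as')\approx 2d/n$ matches the global density. The remaining error terms come from $\theta\le 2k_B$, from the shift $(k_A-k_B)/d$ in $a,b$, and from $s'$, and are of size $O(d\,(k_A+k_B+ds')/n)$, which is $O(k_A+k_B)$. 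Treating $\log$ of the whole expression as a function of $\theta$ (or of $m$), one checks, as done for $\xi$ in \cref{clm: s_B > 0}, that the derivative does not force a growth beyond $e^{O(k_A+k_B)}$ over the admissible range. The maximum over $(s',m)$ is then attained up to $e^{O(k_A+k_B)}$, and there are only polynomially many pairs. The care needed is to verify that no term of order $s'\,d\log(n/d)$ survives uncancelled; I would check the first-order coefficients explicitly before bounding everything else crudely.
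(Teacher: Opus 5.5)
Your proposal is correct and follows essentially the paper's route: expand each binomial coefficient via \cref{cor: choose}, check that the power terms collapse to $(d/n)^{k_A+k_B}$ and that the $(1-x)^{y}$ factors cancel up to $e^{O(k_A+k_B)}$, then absorb the leftover $d^{O(s')}$ using $s'\le 2k'_B/D_B$ and $d=\Omega(\sqrt{n\log n})$. The only difference is that the paper bounds $\binom{as'}{m}$ by $(eas'/m)^m$ rather than expanding it, and it treats every error term as uniformly $e^{O(k_A+k_B)}$ over the admissible $(s',m)$, so it needs no separate derivative or maximization check in $m$.
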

By assumption of the lemma, $2k'_A\log\frac{dn}{2k_A} \leq 4\eps^{-2}k'_B\log\frac{dn}{2k_B}$, and so 
\[
\exp\left[-11\eps^{-2}k'_B\log\frac{dn}{2k_B}\right] \leq \exp\left[-2\left(k'_A\log\frac{dn}{2k_A} + k'_B\log\frac{dn}{2k_B}\right)\right].
\]
Thus, combining~\eqref{eq: T/G} and \cref{clm: small torso max} gives the desired result. \cref{clm: small torso max} is proved in \cref{sec: small torso max}.
\qed

\subsection{Proof of \cref{lem: Small Torso Counting}}\label{sec: small torso proof 1}
Again, let $a = |A|$, $b = |B|$, $k'_A = k_{A, r_A - 1}$, and $k'_B = k_{B, r_B - 1}$ and assume $(A,B)$ satisfies \eqref{eq:AB}. This proof follows a similar structure to the proof of \cref{lem: Small torso counting small k}, but here we consider choosing $G_B$ to avoid triangles rather than choosing the cross-edges to avoid triangles. Since the cross-edges are dependent on the degree sequence of the defect graphs, we have to consider all possible degree sequences for $G_B$. Hence, a natural way to bound $|\mathcal T_{k_A, r_A, k_B, r_B}|$ would be to first fix a $(k_A, r_A, A, D_A)$-graph $G_A$ and a degree sequence $\mathbf d'$ on $B$ (which will be the degree sequence for the defect edges in $B$), then estimate the number of bipartite graphs with bipartition $(A,B)$ on degree sequence $(d-d_{G_A}(1),d - d_{G_A}(2),\ldots, d - d_{G_A}(n)) - \mathbf d'$, and then estimate the number of choices for a $(k_B, r_B, B, D_B)$-graph $G_B$ with degree sequence ${\mathbf d}'$ such that no edge in $G_B$ is contained in a triangle in the resulting graph. However, getting a sufficiently good bound for the number of choices for $G_B$ given ${\mathbf d}'$ is difficult. As a result, we count the choices for $G_B$ based on the spine of $G_B$ (see \cref{def: spine}).

Instead, we bound $|\mathcal T_{k_A, r_A, k_B, r_B}|$ by first fixing a $(k_A, r_A, A, D_A)$-graph $G_A$ with maximum degree at most $d/2$, and then fixing a subset of vertices $S\subseteq B$ as well as a degree sequence $\mathbf d' \leq (d/2, d/2, \ldots, d/2)$ on $B$, then estimating the number of bipartite graphs with bipartition $(A,B)$ on degree sequence $(d-d_{G_A}(1),d - d_{G_A}(2),\ldots, d - d_{G_A}(n)) - \mathbf d'$, and finally estimating the number of choices for a $(k_B, r_B, B, D_B)$-graph $G_B$ with degree sequence ${\mathbf d}'$ such that $S$ is a spine of $G_B$, and no edge in $G_B$ is contained in a triangle in the resulting graph.

Observe that by \cref{cor: counting defects}, there are at most 
\begin{equation}\label{eq: GA}
    \exp\left[k_A\log\frac{n}{d} + (1 + \eps^2)k'_A\log\frac{dn}{2k_A}\right]
\end{equation}
choices for $G_A$. 
Given a set $S \subseteq B$ with $|S|$ small, the claim below bounds the number of degree sequences ${\mathbf d}'$ such that there is a $(k_B, r_B, B, D_B)$-graph $G_B$ with degree sequence $\mathbf{d}'$ and $S$ is a spine of $G_B$. Recall that every $(k_B, r_B, B, D_B)$-graph has a spine with size at most $s := 2k'_B/D_B$.

\begin{claim}\label{clm: num deg sequences}
    For fixed $S \subseteq B$ with $|S| \leq s$, there are at most 
    \[
   \binom{k_B-k'_B + s - 1}{s-1}\binom{k_B + k'_B + b - 1}{b-1} \leq {  \exp\left[(1 + o(1))k'_B\log\frac{dn}{2k_B}\right]}
    \]
    degree sequences on $B$ that admit a $(k_B, r_B, B, D_B)$-graph with spine $S$.
\end{claim}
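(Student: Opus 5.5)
The plan is to encode every admissible degree sequence by a pair of integer vectors and count the pairs by stars and bars. Then I bound the two binomial coefficients with Stirling-type estimates, using the hypotheses of \cref{lem: Small Torso Counting}.

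\emph{Step 1: the encoding.} Let $G_B$ be a $(k_B,\bar r_B,B,D_B)$-graph with spine $S$, where $|S|\le s$. Fix a $D_B$-torso $T$ of $G_B$ and put $R=E(G_B)\setminus E(T)$. Since $r_B=\bar r_B$, we have $|E(T)|\le k'_B$, so $|R|=k_B-|E(T)|\ge k_B-k'_B$. By \cref{def: spine}, every edge of $R$ has an endpoint in $S$. I assign each edge of $R$ to one such endpoint, breaking ties by a fixed order on $S$. For $u\in S$, let $c_u$ be the number of $R$-edges assigned to $u$, and set $c_u=0$ for $u\in B\setminus S$. Every endpoint incidence not recorded in $\mathbf c$ is then counted in a second vector $\mathbf e$ on $B$. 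The vector $\mathbf e$ records the torso degrees together with the unassigned endpoints of $R$-edges, so
\[
\mathbf d'=\mathbf c+\mathbf e,\qquad \sum_{u\in S}c_u=|R|,\qquad \sum_{x\in B}e_x=2|E(T)|+|R|=k_B+|E(T)|\le k_B+k'_B .
\]
The map $G_B\mapsto(\mathbf c,\mathbf e)$ determines $\mathbf d'$. Hence the number of admissible $\mathbf d'$ is at most the number of pairs of nonnegative vectors with these constraints. For a fixed value of $|E(T)|$, stars and bars gives at most $\binom{|R|+s-1}{s-1}\binom{k_B+|E(T)|+b-1}{b-1}$ pairs. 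This expression is maximised at the extreme allowed values, and the paper writes these as $k_B-k'_B$ and $k_B+k'_B$ respectively.

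There are at most $k'_B+1$ possible values of $|E(T)|$. Summing over them costs a factor $k'_B+1=\exp[o(k'_B\log\frac{dn}{2k_B})]$, which I absorb into the $(1+o(1))$ of the second inequality. Strictly, the first displayed inequality then holds only up to this polynomial factor. As in the paper's convention of treating $k'_A$ and $\bar r_A$ as integers, I will treat this discrepancy as negligible, since only the exponential bound is used afterwards.

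\emph{Step 2: the spine factor.} Using $\binom{N+s}{s}\le(e(N+s)/s)^s$ and $s=2k'_B/D_B=4k'_B\log\frac{dn}{2k_B}/(\eps^4 d)$, the first binomial is at most $\exp[O(s\log n)]$. This equals $\exp[O(k'_B\log^2 n/(\eps^4 d))]=\exp[o(k'_B)]$, because $d\ge\sqrt{n\log n}$.

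\emph{Step 3: the main factor, and the expected obstacle.} For the second binomial, with $N=k_B+k'_B=(1+o(1))k_B$, I use
\[
\binom{N+b-1}{b-1}\le\exp\Bigl[N\log\frac{e(N+b)}{N}\Bigr].
\]
Here I need the hypothesis of \cref{lem: Small Torso Counting} that (F) fails, namely $k_B\ge 16a\log^{-4000/\eps^4}\frac{dn}{2k_B}$. Together with $a,b=(1+o(1))n/2$, this gives $b/N\le\log^{O(\eps^{-4})}\frac{dn}{2k_B}$. Therefore the exponent is $O(\eps^{-4})\,k_B\log\log\frac{dn}{2k_B}$. The definition \eqref{eq: r1} with $r_B=\bar r_B$ gives
\[
k'_B\log\frac{dn}{2k_B}=\frac{4}{\eps^2(1-\eps)}\,k_B\log\log\frac{dn}{2k_B},
\]
so the second binomial is $\exp[O(\eps^{-2})\,k'_B\log\frac{dn}{2k_B}]$.

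The main obstacle is sharpening this constant to $1+o(1)$. The crude estimate of $\log(1+b/N)$ loses the factor $4000/\eps^4$ against $4/\eps^2$. To recover it, I would first try not to spread $\mathbf e$ over all of $B$. Only the $2|E(T)|\le 2k'_B$ torso incidences, plus the at most $|R|$ unassigned endpoints of $R$-edges, contribute to $\mathbf e$. I would therefore encode the unassigned endpoints as a multiset of size $|R|$ taken from neighbourhoods of the spine vertices, rather than as an arbitrary vector on $B$. The remaining torso part is a vector of total at most $2k'_B$ on $b$ coordinates, which contributes only $\exp[2k'_B\log(eb/k'_B)]=\exp[(1+o(1))k'_B\log\frac{dn}{2k_B}]$ up to lower-order terms. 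If that refinement cannot be made to work, the fallback is to carry the $\exp[O(\eps^{-2})k'_B\log\frac{dn}{2k_B}]$ bound forward. This should still be dominated by the $\exp[-\eps^{-3}k'_B\log\frac{dn}{2k_B}]$-type savings in \cref{lem: Small Torso Counting}.
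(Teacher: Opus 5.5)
Your Step~3 does not establish the $(1+o(1))$ bound, and the refinement you sketch cannot work, because the second inequality of the claim is false in part of the range of \cref{lem: Small Torso Counting}. Here is a counterexample. Take $|S|=\lfloor k'_B/D_B\rfloor\le s$. Let $G_B$ be a star forest with centre set $S$, where each centre has degree about $k_BD_B/k'_B=\eps^6(1-\eps)d/(8\log\log\frac{dn}{2k_B})\le d/2$, and the leaves form an arbitrary $k_B$-set $Y\subseteq B\setminus S$. Any subgraph of maximum degree at most $D_B$ has at most $|S|D_B\le k'_B$ edges. So $G_B$ is a $(k_B,\bar r_B,B,D_B)$-graph with spine $S$, and different $Y$ give different degree sequences. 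Hence at least $\binom{b-|S|}{k_B}\ge((b-|S|)/k_B)^{k_B}$ degree sequences are admissible. When (F) fails with near equality, $k_B\approx 16a\log^{-4000/\eps^4}\frac{dn}{2k_B}$, and by \eqref{eq: r1} with $r_B=\bar r_B$ this count is $\exp[(4000\eps^{-4}-o(1))k_B\log\log\frac{dn}{2k_B}]=\exp[(1000(1-\eps)\eps^{-2}-o(1))k'_B\log\frac{dn}{2k_B}]$. Your idea of drawing the unassigned endpoints from neighbourhoods of spine vertices cannot beat this. Those neighbourhoods are not determined by $S$, and specifying them costs exactly the $\binom{|R|+b-1}{|R|}$ you were trying to avoid.

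The paper uses the same stars-and-bars count as you. It splits off exactly $k_B-k'_B$ units on $S$, which is legitimate since $\sum_{u\in S}d'_u\ge k_B-k'_B$, so its first inequality holds without your extra factor $k'_B+1$. It then reaches $1+o(1)$ only through an invalid step. It bounds $\binom{k_B-k'_B+s-1}{k_B-k'_B}$ by $(\frac{s-1}{k_B-k'_B})^{k_B-k'_B}e^{O(k_B)}$, which is far below $1$ since $s=o(k_B)$. This spurious factor $(s/k_B)^{k_B}$ is what cancels the $(b/k_B)^{k_B}$ from the second binomial before $D_Bk_B\ge n$ is invoked. Your Step~2 estimate $e^{o(k'_B)}$ for that binomial is the correct one. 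So what is actually true is your weaker bound $\exp[O(\eps^{-2})k'_B\log\frac{dn}{2k_B}]$, and the example above shows it is sharp up to the constant.

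Your fallback is also not automatic. The $\eps^{-3}$ saving you cite comes from \cref{clm: P holds}, which serves \cref{lem: Small torso counting small k}, not this lemma. In \cref{lem: Small Torso Counting} the margins from \cref{clm: P4}, \cref{clm: P5} and \cref{clm: P3} are only about $10\eps^{-2}k'_B\log\frac{dn}{2k_B}$. For example, \cref{clm: P3} gains $(y/b)^{k_B/8}=\exp[-50\eps^{-4}k_B\log\log\frac{dn}{2k_B}]$. That is far short of the roughly $1000\eps^{-2}$ that summing over $\mathbf d'$ can cost. To carry the correct bound forward, you would have to change the parameters (e.g.\ the exponent defining $y$ relative to the $4000/\eps^4$ in (F)) or the structure of the count.
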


Next, we fix $(G_A, \mathbf d', S)$ where $S \subseteq B$ and $|S| \leq s:=2k'_B/D_B$. Let $\mathcal H_{G_A, \mathbf d', S}$ be the set of $d$-regular graphs $H$ on vertex set $[n]$ where $H[A] = G_A$, $H[B]$ is a $(k_B, r_B, B, D_B)$-graph with degree sequence $\mathbf d'$ where $S$ is a spine of $H[B]$, and no edge in $H[B]$ is contained in a triangle in $H$.  Then,

\begin{equation}\label{eq: T leq H small torso}
    |\mathcal T_{k_A, r_A, k_B, r_B}| \leq \sum_{G_A, \mathbf d', S} |\mathcal H_{G_A, \mathbf d', S}|.
\end{equation}

Similarly to the main arguments in \cref{sec: Janson}, we bound $|\mathcal H |$ where $\mathcal H :=\mathcal H_{G_A, \mathbf d', S}$ by counting the number of ways to construct a member $H\in\mathcal{H}$ by adding edges into $H$ in multiple steps that we describe below.

\begin{itemize}
    
    \item[1.](1st stage of choosing cross edges for $S'$) Let 
    \[
    D' = \frac{\varepsilon^7d}{16\log\log\left(\frac{dn}{2k_B}\right)}.
    \]
    Let $S' = \{u \in S \mathrel : d'_u \geq D'\}$. Note that $\sum_{u \in S'}d'_u \geq (1 - \eps)k_B$ by \cref{clm: small torso subset}. For each $u \in S'$, choose a set $A_u$ of $d'_u$ vertices in $A$ and add edges $\{ux \mathrel : x \in  A_u\}$ to $H$. Moreover, ensure that no vertex in $H$ has degree more than $d$.
    \item[2.](2nd stage of choosing cross-edges for $S'$) For each $u \in S$, choose a set $\mathcal U$ of $d - 2d'(u)$ vertices in $A$ and add edges $\{ux \mathrel : x \in \mathcal U\}$ to $H$. Moreover, ensure that $H$ is simple (i.e. no edge that was already included in $H$ is chosen to be added) and no vertex in $H$ has degree more than $d$. Let $m$ denote the number of edges added in steps 1 and 2.
    \item[3.](degree completion) Let $B' = B \setminus S'$. Complete $H$ to be a (simple) graph on degree sequence $(d,\ldots, d) - \mathbf d'$ and vertex set $[n]$ by including $dn/2 - k_A - k_B - m$ edges in $A \times B'$.
    \item[4.](choosing $G[B]$) Generate a $(k_B, r_B, B, D_B)$-graph $G_B$ on degree sequence $\mathbf d'$ where $S$ is a spine of $G_B$ and no edge of $G_B$ is contained in a triangle when $G_B$ is included in $H$. Include the edges of $G_B$ in $H$.
\end{itemize}

Let $s' := |S'|$.
    As in the proof of \cref{lem: Small torso counting small k}, observe that
    \begin{equation}\label{eq: mS' relaxed}
       1 \leq s' \leq s \quad\text{and } \quad  \max\{0, ds' - 2k_B\} \leq m \leq ds'.
    \end{equation}
Next, we crudely bound the number of choices for edges in steps 1 and 2 combined by 
\begin{equation}\label{eq: steps 1 and 2}
    \binom{as'}{m}.
\end{equation} 
Let $m' = dn/2 - k_A - k_B - m'$. By \cref{cor: max sequence}, there are at most 

\begin{equation}\label{eq: step 3}
    O\left(\binom{|B'|}{m'/a}^{a}\binom{a}{m'/|B'|}^{|B'|}\binom{a|B'|}{m'}^{-1}\right)
\end{equation}
choices in step 3. Bounding the number of choices in step 4 requires counting the choices of $G_B$ satisfying the following three properties.
\begin{enumerate}
    \item[(P1)] $G_B$ has degree sequence ${\mathbf d}'$;
    \item[(P2)] $S$ is a spine of $G_B$;
    \item[(P3)] no edge in $G_B$ is contained in a triangle.
\end{enumerate}
Bounding graphs with properties (P1)-(P3) is quite complicated. To do so we impose two more conditions (P4) and (P5) on the choices in steps 1 to 3 above, and we show that the number of choices such that (P4) or (P5) fails (in steps 1--3) and (P1)--(P3) are satisfied (in step 4) is sufficiently small. 

Recall the definition of a well-spread collection of sets in \cref{def: well-spread}. Here, observe that $y'' = y'$. We now define (P4) similarly to (P) in \cref{sec: small torso proof 1}.

\begin{enumerate}
    \item[(P4)] $\{A_u\}_{u\in S'}$ is well-spread. 
\end{enumerate}

To define (P5), we specify one additional parameter and one new definition.
\begin{equation}
    y = \frac{b}{\log^{400/\varepsilon^4}\frac{dn}{2k_B}}
\end{equation}
Let $H_3$ be the set of edges added in step 3.
\begin{definition}
    A set $A_u \subseteq A$ is \textit{expanding} if $|N_{H_3}(A_u)| \geq b - y$.
\end{definition}
Let $S'' = \{u \in S' \mathrel : A_u \text{ is expanding}\}$. 

\begin{enumerate}
    \item[(P5)] $\sum_{u \in S''}d'_u \geq k_B/8$. 
\end{enumerate}
The following two claims show that very few graphs in $H$ fail to satisfy (P4) or (P5).
\begin{claim}\label{clm: P4}
    The number of choices of steps 1--4 such that (P1)--(P3) are satisfied and (P4) fails is at most
    \[
    \exp\left[k_B\log\frac{n}{d}-11\eps^{-2}k'_B\log\frac{dn}{2k_B}\right]\max_{s', m}\left\{\binom{as'}{m}\binom{b - s'}{m'/a}^a\binom{a}{m'/(b - s')}^{b - s'}\binom{a(b - s')}{m'}^{-1} \right\}.
    \]
\end{claim}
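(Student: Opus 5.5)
The plan mirrors the proof of \cref{clm: P fails}. The one new ingredient is that step 4 must be bounded crudely, and this produces the extra factor $\exp[k_B\log\frac{n}{d}]$ in the statement.

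\textbf{Step 4 bound.} Given $\mathbf d'$ and $S$, I ignore (P3) and count all $(k_B, r_B, B, D_B)$-graphs with degree sequence $\mathbf d'$ and spine $S$. I bound this by the count from \cref{cor: counting defects}, restricted to $B$:
\[
\exp\left[k_B\log\frac{n}{d} + (1+\eps^2)k'_B\log\frac{dn}{2k_B}\right].
\]
The count is valid because this quantity does not depend on the choices in steps 1--3. Hence the number of choices in steps 1--4 with (P4) failing is at most this factor times the number of choices in steps 1--3 with (P4) failing.

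\textbf{Steps 1--3 with (P4) failing.} I bound steps 1--2 together by $\binom{as'}{m}$ times the probability $\P[\mathcal E]$. Here $\mathcal E$ is the event that $\{A_u\}_{u\in S'}$ is not well-spread, where each $A_u$ is a uniformly random $d'_u$-subset of $A$, chosen independently. Step 3 is bounded by \cref{cor: max sequence}, exactly as in \eqref{eq: step 3}. Maximizing over $(s',m)$ satisfying \eqref{eq: mS' relaxed} then produces the displayed maximum. Note that $B'=B\setminus S'$, so $|B'|=b-s'$.

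\textbf{Main obstacle: bounding $\P[\mathcal E]$.} It remains to show
\[
\P[\mathcal E]\le \exp\left[-(12+2\eps^2)\eps^{-2}k'_B\log\frac{dn}{2k_B}\right],
\]
so that the loss $(1+\eps^2)k'_B\log\frac{dn}{2k_B}$ from step 4 is absorbed and the exponent $-11\eps^{-2}$ remains. As in \cref{clm: P fails}, I would fix $S^*\subseteq S'$ with $\sum_{u\in S^*}d'_u\ge k_B/8$. I would then invoke Corollary~24 of \cite{osthus2003densities}, now with target $y''=y'$ in place of $k_B/16$. It bounds by roughly $\exp[-14(1+\eps^2)\eps^{-2}k'_B\log\frac{dn}{2k_B}]$ the probability that no disjoint half-subsets of total size $y'/2$ exist. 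Finally I take a union bound over the at most $2^{s}$ choices of $S^*$, using $s=2k'_B/D_B=o(k'_B\log\frac{dn}{2k_B})$.

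The delicate point is checking that Corollary~24 still yields the required exponent when $y''=y'$ rather than $k_B/16$. This is exactly the regime $k_B/16\ge y'$ of \cref{lem: Small Torso Counting}. The intuition is that the sets $A_u$ have size at least $D'$, and $y'$ is only polylogarithmically smaller than $a$, so large overlaps are very unlikely. If the corollary's constant were weaker, I would rerun its proof: expose the $A_u$ greedily, and note that each $A_u$ of size $d'_u\ge D'$ must fall half inside a set of size at most $y'$. This happens with probability at most $(2y'/a)^{d'_u/2}$, and these bounds multiply to $\exp[-\Omega(k_B\log\log\frac{dn}{2k_B})]$, which by \eqref{eq:k'bound} dominates $\eps^{-2}k'_B\log\frac{dn}{2k_B}$.
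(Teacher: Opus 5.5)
Your proposal is correct and follows the paper's proof: steps 1--2 are bounded by $\binom{as'}{m}\P[\mathcal E]$ via the argument of \cref{clm: P fails} (Corollary~24 of \cite{osthus2003densities} plus a $2^{s}$ union bound over $S^*$), step 3 by \cref{cor: max sequence}, and step 4 crudely by \cref{cor: counting defects}; the paper's $-13\eps^{-2}$ (your $-(12+2\eps^2)\eps^{-2}$) then absorbs the $(1+\eps^2)k'_B\log\frac{dn}{2k_B}$ loss from step 4. Your explicit check that the well-spread target is now $y''=y'$ rather than $k_B/16$ covers a point the paper skips with ``the same argument immediately gives.'' One correction to your greedy fallback: the comparison with $\eps^{-2}k'_B\log\frac{dn}{2k_B}$ needs the upper bound on $k'_B$ from \eqref{eq: r1} with $r_B=\bar r_B$, not \eqref{eq:k'bound}, which goes the wrong way. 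With that bound the fallback still works, because the exponent $4000/\eps^4$ in $y'$ gives roughly $125\eps^{-4}k_B\log\log\frac{dn}{2k_B}$, which beats the roughly $48\eps^{-4}k_B\log\log\frac{dn}{2k_B}$ you need.
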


\begin{claim}\label{clm: P5}
        The number of choices of steps 1--4 such that (P1)--(P4) are satisfied and (P5) fails is at most
        \[
    \exp\left[k_B\log\frac{n}{d}-7\eps^{-2}k'_B\log\frac{dn}{2k_B}\right]\max_{s', m}\left\{\binom{as'}{m}\binom{b - s'}{m'/a}^a\binom{a}{m'/(b - s')}^{b - s'}\binom{a(b - s')}{m'}^{-1} \right\}.
    \]
\end{claim}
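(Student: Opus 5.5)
The plan is to bound the number of choices in steps 1--4 with (P1)--(P4) holding and (P5) failing as a product of two factors. The first is the number of choices in steps 1--3 for which (P4) holds but (P5) fails. The second is the number of choices of $G_B$ in step 4 satisfying (P1)--(P3), which we bound trivially. For step 4, ignoring (P3), we count $(k_B,r_B,B,D_B)$-graphs with degree sequence $\mathbf d'$ and spine $S$. The edges outside the torso are incident to $S$, and $|S|\le s=2k'_B/D_B$. Using Proposition~11 of \cite{osthus2003densities} as in the proof of \cref{cor: counting defects}, this count is at most $\exp[k_B\log\frac{n}{d}+(1+\eps^2)k'_B\log\frac{dn}{2k_B}]$. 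This factor is absorbed into the $k_B\log\frac nd$ term and loses at most $2k'_B\log\frac{dn}{2k_B}$ against the target exponent $-7\eps^{-2}$. It therefore suffices to show that steps 1--3 with (P4) holding and (P5) failing number at most $\exp[-8\eps^{-2}k'_B\log\frac{dn}{2k_B}]$ times $\max_{s',m}\{\cdots\}$.

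To do this, fix the choices in steps 1 and 2, at most $\binom{as'}{m}$ of them, with $\{A_u\}_{u\in S'}$ well-spread. Then estimate the probability that a uniformly random completion $H_3$ in step 3 fails (P5). Since (P5) fails, the non-expanding sets $A_u$, $u\in S'\setminus S''$, satisfy $\sum d'_u\ge (1-\eps)k_B-k_B/8\ge k_B/8$. By (P4) there are $\mathcal J'\subseteq S'\setminus S''$ and disjoint sets $A'_u\subseteq A_u$ with $\sum|A'_u|\ge y''/2=y'/2$. For each such $u$, $N_{H_3}(A_u)$ misses a set $Y_u\subseteq B'$ of size at least $y$. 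Take a union bound over the choices of $\mathcal J'$ and the sets $A'_u$, which costs at most $2^{s}\binom{a}{\le y'}$-type factors, and over the sets $Y_u$. The resulting event is that $H_3$ has no edges between the disjoint union $\bigcup A'_u$ and the corresponding $Y_u$, i.e.\ it avoids at least $\sum_u |A'_u|\,y\ge y'y/2$ prescribed pairs of $A\times B'$.

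The probability of avoiding a forbidden edge set of this size is bounded with \cref{thm: bip edge probs}. This is the switching step from \cref{sec: subswitching}: edges are revealed one at a time, and each forbidden pair is absent with conditional probability at most $1-\frac{2d}{n}(1-O(\mu_0))$. The degree hypotheses hold because each vertex meets at most $\mu_0 n$ forbidden pairs once the $Y_u$ are trimmed and the $A'_u$ are split into pieces. The probability is thus at most $\exp[-(1-O(\mu_0))\frac{2d}{n}\cdot\frac{y'y}{2}]=\exp[-\Omega(dn/\log^{C/\eps^4}\frac{dn}{2k_B})]$. Under $k_B/16\ge |A|\log^{-4000/\eps^4}\frac{dn}{2k_B}$ and $k_B\le n\log d$, this exponent dominates $\eps^{-2}k'_B\log\frac{dn}{2k_B}$. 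It also dominates the union-bound entropy, about $y'\log\frac{a}{y'}+s\,y\log\frac{b}{y}$. For the latter we use $s=O(k'_B\log(\cdot)/d)$ and the polylog gap between $y$ and $y'$: the exponents $400$ versus $4000$ are chosen exactly so that $y\,s\log$ is much smaller than $d\,y'y/n$.

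The main obstacle is the last step: justifying the edge-probability bound of \cref{thm: bip edge probs} uniformly while conditioning on previously revealed edges and non-edges within step 3. This must respect the fixed residual degree sequence, so the $H_1$ degrees must stay below $\mu_0 d$ throughout. I would handle this by revealing only an $\eps$-fraction of each vertex's forbidden pairs before stopping. A second difficulty is that the union over the $Y_u$ could overwhelm the bound when $s$ is comparatively large. If it does, I would instead choose a single common $Y$, using that the complements of large neighbourhoods of disjoint sets can be intersected after averaging.
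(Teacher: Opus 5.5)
Your outline follows the paper's structure: a trivial bound on step~4 via \cref{cor: counting defects}, fixing steps~1--2, well-spreadness giving disjoint $A'_u$, a union bound over the missed sets, and an avoidance probability. However, two steps do not go through as written.

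\emph{The entropy of the union bound over the $Y_u$.} Write $L=\log\frac{dn}{2k_B}$. You bound the number of indices by $s=2k'_B/D_B$ and assert that $s\,y\log\frac by\ll dyy'/n$ because of the gap between the exponents $400$ and $4000$. That gap plays no role in this comparison. Since $r_B=\bar r_B$, we have $s\approx 16\eps^{-6}k_B\log L/d$; also $n/y'\approx 2L^{4000/\eps^4}$ and $\log\frac by=\frac{400}{\eps^4}\log L$. Hence
\[
\frac{s\,y\log(b/y)}{dyy'/n}\asymp\frac{k_B\,L^{4000/\eps^4}\log^2L}{\eps^{10}d^2},
\]
which is not $o(1)$ in general. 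For example, take $k_B\asymp n$ and $d\asymp\sqrt{n\log n}$. This lies in the range of \cref{lem: Small Torso Counting}: there $k'_B=O(n\log L/(\eps^2L))<\eps^4n$, so \cref{lem: large k near threshold} does not cover it. In this case the ratio is at least of order $L^{4000/\eps^4}/\log n\to\infty$.

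Your fallback of a single common $Y$ does not help. The $Y_u$ are unrelated subsets of $B'$ of density about $L^{-400/\eps^4}$ and need not share any vertex. What is missing is \emph{trimming} $\mathcal J'$ to a minimal subfamily with $y'/2\le\sum_{u\in\mathcal J'}|A'_u|\le y'/2+d$. Since $|A'_u|\ge d'_u/2\ge D'/2$, this gives $|\mathcal J'|\le (y'+2d)/D'=O(\eps^{-7}(y'+d)\log L/d)$. The entropy is then $O\big(\frac{n\log^2L}{\eps^{11}d^2}(1+\frac{d}{y'})\big)\cdot\frac{dyy'}{n}=o(dyy'/n)$, and this is exactly where $d^2\ge\frac34(1+\eps)^2n\log n$ is used. (The paper's text writes $|S^{**}|\ge y'/d$ at this point, but the union bound needs an upper bound on $|S^{**}|$, and it is this trimming that supplies one.)

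\emph{The avoidance probability.} \cref{thm: bip edge probs} gives only an \emph{upper} bound $\frac{2d}{n}(1+O(\mu_0))$ on conditional edge probabilities. Your claim that each forbidden pair is absent with probability at most $1-\frac{2d}{n}(1-O(\mu_0))$ needs a \emph{lower} bound, which that theorem does not give. The rate $2d/n$ is in fact false at vertices $v\in A$ whose residual degree $d^*_v$ (after $G_A$ and steps~1--2) is small or zero, and such $v$ can lie in $\bigcup A'_u$.

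The paper handles this in two steps. First it discards $A^*=\{v\in A:d^*_v<d/4\}$. Since $\Delta(G_A)\le d/2$, each such $v$ absorbs more than $d/4$ of the at most $ds'$ step-1--2 edges, so $|A^*|<4s'=o(y')$. It then applies \cref{lem: switching} with $U=B'$, $V=A$, $\alpha=1/4$. That lemma's switching argument lower-bounds $|\mathcal G_i|/|\mathcal G_{i-1}|$ and uses \cref{thm: bip edge probs} only to upper-bound error terms.

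Hypothesis~(4) of that lemma requires $X$ to meet at most $2\mu_0n$ vertices of $A$. This again needs the trimming, since an untrimmed $\sum|A'_u|$ can be of order $k_B\gg\mu_0n$. The obstacle you flagged, keeping $H_1$-degrees below $\mu_0 d$, is not the real one: avoiding $X$ conditions only on absences, so $H_1=\emptyset$. Finally, a minor point: a non-expanding $A_u$ is only guaranteed to miss more than $y-s'$ vertices of $B'$, not $y$.
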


Now we bound the number of choices of steps 1--4 such that (P1)--(P5) hold simultaneously. 

\begin{claim}\label{clm: P3}
    The number of choices of steps 1--4 such that (P1)--(P5) are satisfied is at most
    \[
    \exp\left[k_B\log\frac{n}{d} - 10\eps^{-2}k'_b\log\frac{dn}{2k_B}\right]\max_{s', m}\left\{\binom{as'}{m}\binom{b - s'}{m'/a}^a\binom{a}{m'/(b - s')}^{b - s'}\binom{a(b - s')}{m'}^{-1} \right\}.
    \]
\end{claim}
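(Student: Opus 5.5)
We must bound the number of choices in steps 1--4 with (P1)--(P5) by the product of the step 1--3 counts and a bound on step 4 counts. Steps 1--3 are bounded by $\binom{as'}{m}$ and by \cref{cor: max sequence}, exactly as in the max expression, with the maximum taken over $(s',m)$ satisfying~\eqref{eq: mS' relaxed}. The whole task is therefore to show that, once steps 1--3 are fixed with (P4) and (P5) holding, the number of $G_B$ satisfying (P1)--(P3) is at most $\exp[k_B\log\frac nd - 10\eps^{-2}k'_B\log\frac{dn}{2k_B}]$.

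\textbf{Counting $G_B$ through the spine.} Every edge of $G_B$ outside the torso is incident to $S$. So I count $G_B$ by first choosing the torso part $T$, which has at most $k'_B$ edges inside $B$. This gives at most $\binom{\binom b2}{k'_B}$ choices, that is $\exp[(1+o(1))k'_B\log\frac{dn}{2k_B}+k'_B\log\frac nd]$, following the computation in \cref{cor: counting defects}. For each $u\in S$, I then choose the $B$-neighbourhood of $u$ of size at most $d'_u$.

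Triangle-freeness (P3) forbids $u\in S''$ from being adjacent in $G_B$ to any vertex of $N_{H_3}(A_u)$. This holds because $A_u$ lies in the cross-neighbourhood of $u$, and any $x\in A_u$ adjacent to $v$ would close the triangle $u,x,v$. Since $A_u$ is expanding, every $u\in S''$ must choose its $d'_u$ defect neighbours from a set of at most $y=b/\log^{400/\eps^4}\frac{dn}{2k_B}$ vertices, rather than from $b$ vertices.

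Comparing with the unrestricted count $\prod_u\binom{b}{d'_u}$, which is what underlies the $k_B\log\frac nd$ term, we gain a factor of roughly $(y/b)^{d'_u}$ for each such $u$. By (P5), the total gain is at least $\exp[-\frac{k_B}{8}\cdot\frac{400}{\eps^4}\log\log\frac{dn}{2k_B}]$.

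\textbf{Comparing the gain with the losses.} By \eqref{eq: r1} with $r_B=\bar r_B$, we have $k'_B\log\frac{dn}{2k_B}=\Theta(\eps^{-2}k_B\log\log\frac{dn}{2k_B})$. More precisely, $k'_B\log\frac{dn}{2k_B}\le 4\eps^{-2}k_B\log\log\frac{dn}{2k_B}$ up to factors of $(1-\eps)$. The gain $\frac{50}{\eps^4}k_B\log\log\frac{dn}{2k_B}$ therefore exceeds $12\eps^{-2}k'_B\log\frac{dn}{2k_B}$.

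The remaining overhead is small. Choosing $T$ costs $(1+o(1))k'_B\log\frac{dn}{2k_B}$. The degree sequences are handled by \cref{clm: num deg sequences}. The normalisation of the unrestricted spine count against $k_B\log\frac nd$ must still be checked: the count $\prod\binom{b}{d'_u}$ over the $\le(1-\eps)^{-1}k_B$ spine edges gives about $(eb/\bar d)^{k_B}$, with $\bar d\ge D'$. Since $\frac{b}{D'}\approx\frac nd\cdot\eps^{-7}\log\log$, the extra cost is $O(k_B\log\log\frac{dn}{2k_B})$ with a constant of order $\log(1/\eps)$. This is still dominated by the $\eps^{-4}$ gain. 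Vertices of $S\setminus S'$ contribute at most $\eps k_B$ edges, and I bound their choices trivially.

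\textbf{Main obstacle.} The delicate step is the double counting. $S''$ and the sets $A_u$ are determined by steps 1--3, while the degrees $d'_u$ and the spine $S$ are fixed beforehand, so the restriction for $u\in S''$ is legitimate. The only exponent needing care is the spine-edge normalisation just described. I would verify it by writing $\prod_{u\in S'}\binom{y}{d'_u}\le\exp[\sum_u d'_u\log\frac{ey}{D'}]$ and checking that $\log\frac{ey}{D'}\le\log\frac nd-\frac{300}{\eps^4}\log\log\frac{dn}{2k_B}$ for sufficiently small $\eps$.
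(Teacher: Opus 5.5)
Your plan matches the paper's. For $u\in S''$, triangle-freeness confines $N_{G_B}(u)$ to $B\setminus N_{H_3}(A_u)$, a set of at most $y$ vertices (the paper's $S'\cup B_u$). This saves a factor $(y/b)^{k_B/8}=\exp[-50\eps^{-4}k_B\log\log\frac{dn}{2k_B}]$ against the count of all defect graphs, and \eqref{eq: r1} turns that saving into the $-10\eps^{-2}k'_B\log\frac{dn}{2k_B}$ term.

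The step that fails is your normalisation. Choosing the full $B$-neighbourhood of every $u\in S$ counts each edge with both ends in $S$ twice. Moreover, $\sum_{u\in S}d'_u$ is not bounded by $(1-\eps)^{-1}k_B$; it can be close to $2k_B$. Nothing in (P1)--(P5) excludes this. With $r_B=\bar r_B$ you have $s=2k'_B/D_B=\frac{16k_B\log\log(dn/2k_B)}{(1-\eps)\eps^6d}$, so $sd/2\gg k_B$. For $d$ near $\sqrt{n\log n}$ and $k_B$ close to $n\log d$ you also have $\binom{s}{2}\ge k_B$. A triangle-free $G_B$ supported on $S$ is then a legitimate $(k_B,\bar r_B,B,D_B)$-graph with spine $S$. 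For such $\mathbf d'$ your unrestricted product $\prod_{u\in S}\binom{b}{d'_u}$ is about $\exp[2k_B\log\frac nd]$. After the saving, your bound is still $\exp[2k_B\log\frac nd-O(\eps^{-4}k_B\log\log\frac{dn}{2k_B})]$. Since $\log\frac nd\approx\frac12\log n$ in this range, this exceeds the target $\exp[k_B\log\frac nd-10\eps^{-2}k'_B\log\frac{dn}{2k_B}]$ by a factor $\exp[\Omega(k_B\log n)]$.

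The repair is to count each non-torso edge exactly once, as an element of $S\times B$. This is the baseline $b^s\binom{bs}{k_B-k'_B}\binom{\binom{b}{2}}{k'_B}\le\exp[k_B\log\frac nd+(1+\eps^2)k'_B\log\frac{dn}{2k_B}]$ that the paper compares against, with the restricted edges drawn from a pool of size at most $sy$. It also handles $S\setminus S'$, where a per-vertex bound such as $b^{d'_u}$ could cost up to $\eps k_B\log d$ extra.

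One point then needs care. An edge with both ends in $S''$ contributes $2$ to $\sum_{u\in S''}d'_u\ge k_B/8$ but is a single edge. A saving of only $(y/b)^{k_B/16}$ gives $25\eps^{-4}k_B\log\log\frac{dn}{2k_B}$, which does not beat $10\eps^{-2}k'_B\log\frac{dn}{2k_B}=\frac{40}{(1-\eps)\eps^4}k_B\log\log\frac{dn}{2k_B}$.

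To fix this, split the edges incident to $S''$ into two kinds. There are $f_{\mathrm{in}}$ edges with both ends in $S$, drawn from only $\binom{s}{2}$ pairs. There are $f_{\mathrm{out}}$ edges going to $B\setminus S$, drawn from a pool of at most $sy$ pairs. With $L=\log\frac{dn}{2k_B}$,
$$y^2/(bs)=(1+o(1))\frac{(1-\eps)\eps^6e^{L}}{16L^{800/\eps^4}\log L}\to\infty,$$
so an inside edge costs a factor at most $s/b<(y/b)^2$ relative to the pool $S\times B$. Since $\sum_{u\in S''}d'_u\le 2f_{\mathrm{in}}+f_{\mathrm{out}}$, the restricted count is at most $(y/b)^{k_B/8}$ times the baseline. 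The remaining factors are absorbed by the margin of roughly $10\eps^{-4}k_B\log\log\frac{dn}{2k_B}$. Your comparison via \eqref{eq: r1} then goes through.
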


 Combining \cref{clm: P4}, \cref{clm: P5}, and \cref{clm: P3} gives that $|\mathcal H|$ is bounded from above by 
\begin{equation}
     \exp\left[k_B\log\frac{n}{d}-6\eps^{-2}k_B'\log\frac{dn}{2k_B}\right]\max_{s', m}\left\{\binom{as'}{m}\binom{b - s'}{m'/a}^a\binom{a}{m'/(b - s')}^{b - s'}\binom{a(b - s')}{m'}^{-1} \right\}.\notag
\end{equation}
Thus, by~\eqref{eq: T leq H small torso}, we have 
\begin{align}
    |\mathcal T_{k_A, r_A, k_B, r_B}| \leq \sum_{G_A,\mathbf d', S} &\max_{s', m}\left\{\binom{as'}{m}\binom{b - s'}{m'/a}^a\binom{a}{m'/(b - s')}^{b - s'}\binom{a(b - s')}{m'}^{-1} \right\}\notag \\
    &\times \exp\left[k_B\log\frac{n}{d}-6\eps^{-2}k_B'\log\frac{dn}{2k_B}\right].\label{eq: small torso T}
\end{align}
Simplifying~\eqref{eq: small torso T} gives the following claim, which completes the proof.
\begin{claim}\label{clm: small torso simplification}
    \[
    \frac{|\mathcal T_{k_A, r_A, k_B, r_B}|}{|\mathcal G^{\text{bip}}_{n,d}|} \leq \frac{1}{\binom{n}{n/2}} \exp\left[ -2\left( k'_A\log\frac{dn}{2k_A} + k'_B\log\frac{dn}{2k_B}\right) \right].
    \]
\end{claim}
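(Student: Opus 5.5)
The plan is to reproduce the proof of \cref{clm: small torso final 1} almost verbatim, with the sum over $(G_A,G_B,S)$ replaced by the sum over $(G_A,\mathbf d',S)$.

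\emph{Step 1: count the outer choices.} By \eqref{eq: GA} there are at most $\exp[k_A\log\frac nd+(1+\eps^2)k'_A\log\frac{dn}{2k_A}]$ choices of $G_A$. The spine $S\subseteq B$ has $|S|\le s=2k'_B/D_B$, so there are at most $\sum_{j\le s}\binom{b}{j}\le s\,b^{s}$ choices of $S$. Since $s\log b=O(k'_B\log n\log(dn/2k_B)/d)$ and $d=\omega(\log^2 n)$, this factor is $\exp[o(k'_B\log\frac{dn}{2k_B})]$. For each fixed $S$, \cref{clm: num deg sequences} bounds the number of $\mathbf d'$ by $\exp[(1+o(1))k'_B\log\frac{dn}{2k_B}]$.

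\emph{Step 2: combine with \eqref{eq: small torso T}.} The summand in \eqref{eq: small torso T} does not depend on $(G_A,\mathbf d',S)$ beyond the ranges of $(s',m)$ in \eqref{eq: mS' relaxed}. The sum is therefore at most the number of triples times the maximum, which gives
\[
|\mathcal T_{k_A, r_A, k_B, r_B}|\le \max_{s',m}\{\cdots\}\exp\Big[(k_A+k_B)\log\tfrac nd+(1+\eps^2)k'_A\log\tfrac{dn}{2k_A}-(6\eps^{-2}-2)k'_B\log\tfrac{dn}{2k_B}\Big].
\]
Now divide by $|\mathcal G^{\text{bip}}_{n,d}|$ using \eqref{eq:reg bip count2}. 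This introduces the factors $\binom{n^2/4}{dn/2}\binom{n/2}{d}^{-n}\binom{n}{n/2}^{-1}$ inside the maximum. Then apply \cref{clm: small torso max}, which is valid under the hypotheses of \cref{lem: Small Torso Counting}. It bounds that maximum by $\exp[-(k_A+k_B)\log\frac nd+O(k_A+k_B)]$, so the $(k_A+k_B)\log\frac nd$ terms cancel.

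\emph{Step 3: absorb the error terms.} The remaining error $O(k_A+k_B)$ is $o(k'_A\log\frac{dn}{2k_A}+k'_B\log\frac{dn}{2k_B})$ by \eqref{eq:k'bound}. The hypothesis $k'_A\log\frac{dn}{2k_A}\le 2\eps^{-2}k'_B\log\frac{dn}{2k_B}$ gives
\[
(1+\eps^2)k'_A\log\tfrac{dn}{2k_A}+2k'_A\log\tfrac{dn}{2k_A}\le (3+\eps^2)2\eps^{-2}k'_B\log\tfrac{dn}{2k_B},
\]
and the right-hand side is well below $(6\eps^{-2}-4)k'_B\log\frac{dn}{2k_B}$ only after care with constants. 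Here $6\eps^{-2}$ against roughly $6.2\eps^{-2}$ is tight and fails as written. The fix is to spend only part of the $k'_A$ budget: it suffices to show the total exponent is at most $-2(k'_A\log\frac{dn}{2k_A}+k'_B\log\frac{dn}{2k_B})$. That amounts to
\[
(3+\eps^2)k'_A\log\tfrac{dn}{2k_A}\le (6\eps^{-2}-4-o(1))k'_B\log\tfrac{dn}{2k_B}.
\]
Using the hypothesis, this holds provided $2(3+\eps^2)\eps^{-2}\le 6\eps^{-2}-5$, which is false by about $2+5=7$ units.

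\emph{Expected obstacle.} The main obstacle is this constant bookkeeping. I would resolve it by taking the slack from \cref{clm: P4}, \cref{clm: P5} and \cref{clm: P3}. Their exponents are $-11\eps^{-2}$, $-7\eps^{-2}$ and $-10\eps^{-2}$, so their sum is really at most $\exp[-(7\eps^{-2}-o(1))k'_B\log\frac{dn}{2k_B}]$ times the common maximum, not just $-6\eps^{-2}$. With $7\eps^{-2}$ in place of $6\eps^{-2}$ the inequality becomes $6\eps^{-2}+2\le 7\eps^{-2}-4$, which holds for small $\eps$. If that slack is unavailable, the fallback is to sharpen \cref{clm: num deg sequences} or the $G_A$ count so the needed margin appears.
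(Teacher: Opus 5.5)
Your proposal is correct and follows the paper's route. You count $G_A$ via \eqref{eq: GA} and spines and degree sequences via \cref{clm: num deg sequences}, combine with \eqref{eq: small torso T}, divide by \eqref{eq:reg bip count2}, apply \cref{clm: small torso max}, and close with the hypothesis $k'_A\log\frac{dn}{2k_A}\le 2\eps^{-2}k'_B\log\frac{dn}{2k_B}$. Your extra care with constants is warranted. The paper's displayed exponent $-5\eps^{-2}k'_B\log\frac{dn}{2k_B}$ drops the $(1+\eps^2)k'_A\log\frac{dn}{2k_A}$ cost from \eqref{eq: GA}, and it states the hypothesis with $\ge$ where $\le$ is meant. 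Once that cost is included, the stated $-6\eps^{-2}$ bound on $|\mathcal H|$ falls short by a constant. Taking the $-7\eps^{-2}$ actually delivered by \cref{clm: P4}, \cref{clm: P5} and \cref{clm: P3} is exactly the repair needed.
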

\cref{clm: P5} is proven in \cref{sec: expansion}. \cref{clm: num deg sequences}, \cref{clm: P4}, and \cref{clm: small torso simplification} are proved below in this subsection. 
\qed

\medskip 

\noindent\textit{Proof of \cref{clm: num deg sequences}.}
The existence of a spine of $G_B$ with size at most $s$ is proved in Lemma~11 in \cite{osthus2003densities}. It remains to bound the number of degree sequences that admit a $(k_B, r_B, B, D_B)$-graph with spine $S$. The total degree of vertices in $S$ will be at least $k_B - k'_B$ by the definition of a spine, so we bound the number of degree sequences by first distributing $k_B - k'_B$ indistinguishable ``balls'' among $|S|$ ``bins'' and then distributing the remaining $k_B + k'_B$ indistinguishable ``balls'' among $a$ ``bins''. Thus, there are at most 
\begin{equation}\label{eq: stars and bars}
    \binom{k_B-k'_B + s - 1}{k_B-k'_B}\binom{k_B + k'_B + b - 1}{k_B + k'_B}
\end{equation}
degree sequences that admit a $(k_B, r_B, B, D_B)$-graph with spine $S$. It remains to simplify~\eqref{eq: stars and bars}. Observe that $ \binom{k_B-k'_B + s - 1}{k_B-k'_B}\binom{k_B + k'_B + b - 1}{k_B + k'_B}$ is bounded from above by 
\begin{align*}
    &\left(\frac{s-1}{k_B-k'_B}\right)^{k_B - k'_B}\left(\frac{b-1}{k_B + k'_B}\right)^{k_B + k'_B}\exp[O(k_B)]\\
    &= \exp\left[k_B\log\frac{(b-1)(s-1)}{k^2_B - k'^2_B} +k'_B\log\frac{(b-1)(k_B - k'_B)}{(s-1)(k_B + k'_B)} + O(k_B)\right]\\
    &= \exp\left[k_B\log\frac{nk'_B}{D_Bk_B^2} + k'_B\log\frac{nD_B}{k_B} + O(k_B)\right]\\
    &\leq \exp\left[(1 + o(1))k'_B\log\frac{dn}{2k_B}\right],
\end{align*}
where the final inequality holds because $k_B/16 \geq a\log^{-4000/\eps^{4}}\frac{dn}{2k_B}$ implies that $D_Bk_B \geq n$.

\qed

\smallskip
\noindent\textit{Proof of \cref{clm: P4}.}
Fix some $(s',m)$ that satisfy~\eqref{eq: mS' relaxed}. This proof is very similar to the proof of \cref{clm: P fails}. In fact, the same argument immediately gives that there are at most 
\begin{equation}\label{eq: step 1 P4}
    \binom{as'}{m}\exp\left[-13\eps^2k'-B\right]
\end{equation}
choices in steps 1 and 2 that lead to (P4) failing. Also, observe that there are at most 
\begin{equation}\label{eq: step 3 P4}
    O\left(\binom{b-s'}{m'/a}^a\binom{a}{m'/(b - s')}^{b - s'}\binom{a(b-s')}{m'}^{-1}\right)
\end{equation}
choices in step 3 by \cref{cor: max sequence}. Finally, by \cref{cor: counting defects}, there are at most 
\begin{equation}\label{eq: step 4 P4}
    \exp\left[k_B\log\frac{n}{d} +(1 + \eps^2)k'_B\log\frac{dn}{2k_B}\right]
\end{equation}
choices in step 4. Hence, by~\eqref{eq: step 1 P4},~\eqref{eq: step 3 P4}, and~\eqref{eq: step 4 P4}, there are at most 
\begin{align*}
    \exp\left[k_B\log\frac{n}{d}-11\eps^{-2}k'_B\log\frac{dn}{2k_B}\right]\max_{s', m}\left\{\binom{as'}{m}\binom{b - s'}{m'/a}^a\binom{a}{m'/(b - s')}^{b - s'}\binom{a(b - s')}{m'}^{-1} \right\}
\end{align*}
graphs in $\mathcal H$ that do not satisfy (P4), completing the proof.

\qed

\medskip

\noindent\textit{Proof of \cref{clm: small torso simplification}.} This proof is very similar to the proof of \cref{clm: small torso final 1}. By \cref{clm: num deg sequences} and~\eqref{eq: small torso T}, we have 
\begin{align*}
    |\mathcal T_{k_A, r_A, k_B, r_B}| \leq& \max_{s', m}\left\{\binom{as'}{m}\binom{b - s'}{m'/a}^a\binom{a}{m'/(b - s')}^{b - s'}\binom{a(b - s')}{m'}^{-1} \right\}\notag \\
    &\times \exp\left[(k_A + k_B)\log\frac{n}{d}-5\eps^{-2}k_B'\log\frac{dn}{2k_B}\right].
\end{align*}
 That is, by~\eqref{eq:reg bip count2}, 
\begin{align}
    \frac{|\mathcal T_{k_A, r_A, k_B, r_B}|}{|\mathcal G_{n,d}^\text{bip}|} \leq& \max_{s', m}\left\{\frac{ \binom{as'}{m}\binom{b - s'}{m'/a}^a\binom{a}{m'/(b - s')}^{b-s'}\binom{n^2/4}{dn/2}}{\binom{a(b - s')}{m'}\binom{n/2}{d}^n\binom{n}{n/2} }\right\}\notag\\
    &\times \exp\left[(k_A + k_B)\log\frac{n}{d} -5\eps^{-2}k'_B\log\frac{dn}{2k_B}\right].\notag
\end{align}
\cref{clm: small torso max} then gives that 
\begin{align}
    \frac{|\mathcal T_{k_A, r_A, k_B, r_B}|}{|\mathcal G_{n,d}^\text{bip}|} \leq \frac{1}{\binom{n}{n/2}}\exp\left[-5\eps^{-2}k'_B\log\frac{dn}{2k_B} + O(k_A + k_B)\right].\label{eq: just fix exponential part}
\end{align}
Since $k'_A\log\frac{dn}{2k_A} \geq 2\eps^{-2}k'_B\log\frac{dn}{2k_B}$, the exponential term in~\eqref{eq: just fix exponential part} is bounded from above by 
\[
\exp\left[-2\left(k'_A \log\frac{dn}{2k_A } + k'_B\log\frac{dn}{2k_B}\right)\right],
\]
which completes the proof.

\qed

\subsection{Proofs of \cref{clm: P holds} and \cref{clm: P5}}\label{sec: expansion}
In both of these proofs, we first reduce the problem to showing that a randomly chosen bipartite graph with a certain fixed degree sequence (that is close to regular) avoids a large set of forbidden edges with sufficiently low probability. Hence, we provide the lemma below, which is proved in \cref{sec: subswitching}.
\begin{lem}\label{lem: switching} Assume \eqref{Assumptions}. Recall that here, $\mu_0 > 0$ is a sufficiently small constant. Let $U$ and $V$ be disjoint vertex sets where $|U|, |V| = \frac{n}{2}(1 + o(1))$ and $X \subseteq U \times V$. Suppose  
\begin{itemize}
    \item[(1)] $d|X| = \omega(n)$;
    \item[(2)] for all $u \in U$, $|\{uw \mathrel : uw \in X\}| \leq \mu_0n$;
    \item[(3)] for all $v \in V$, $|\{wv \mathrel : wv \in X\}| \leq \mu_0n$;
    \item[(4)] there exists $V' \subseteq V$ with $|V'| \geq |V| - 2\mu_0n$ such that for all $v \in V'$, $uv \notin X$ for any $u \in U$.
\end{itemize}    Fix a constant $0 < \alpha < 1/e$ where 
\begin{equation}\label{eq: alpha condition}
    \alpha^2 - 6\alpha^4 - 5\mu_0 > \alpha^3.
\end{equation} 
Let $\mathbf d^*$ be a degree sequence on $U \cup V$ with maximum degree at most $d$ where $\sum_{u \in U \cup V}d^*_u \geq (1 - o(1))dn$. Moreover, assume that for all $u \in U \cup V$ such that $u$ is incident to an edge in $X$, $d^*_u \geq \alpha d$. Let $G$ be a graph chosen uniformly at random from all bipartite graphs with bipartition $(U,V)$ and degree sequence $\mathbf d^*$. Then, 
    \[
    \P[E(G) \cap X = \emptyset]\leq \exp\left[ -\frac{\alpha^3d|X|}{2n}\right].
    \]
\end{lem}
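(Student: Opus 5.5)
We would prove \cref{lem: switching} with a switching argument that builds the probability up one edge at a time, using \cref{thm: bip edge probs} to control the conditional edge probabilities.

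\emph{Stratification.} Let $\mathcal G_j$ be the set of bipartite graphs with bipartition $(U,V)$ and degree sequence $\mathbf d^*$ having exactly $j$ edges in $X$. Then
\[
\P[E(G)\cap X=\emptyset]=\frac{|\mathcal G_0|}{\sum_j|\mathcal G_j|},
\]
so it suffices to show that the ratio $|\mathcal G_{j+1}|/|\mathcal G_j|$ stays large, at least $\tfrac{\alpha^3 d|X|}{2n(j+1)}$ or so, for all $j$ up to $J:=\alpha^3 d|X|/(2n)$. Chaining these ratios up to level $J$ gives $|\mathcal G_0|\le |\mathcal G_J|\, J!\,(2n/(\alpha^3 d|X|))^J\approx e^{-J}|\mathcal G_J|$, which is the claimed bound.

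\emph{The switching.} Start from $G\in\mathcal G_j$ and pick $uv\in X\setminus E(G)$ with $u\in U$, $v\in V$. Pick a neighbour $v'\in V$ of $u$ and a neighbour $u'\in U$ of $v$ such that $u'v'\notin E(G)$. Replace $uv',u'v$ by $uv,u'v'$. We also require $uv',u'v,u'v'\notin X$, so the new graph lies in $\mathcal G_{j+1}$ and the degrees are preserved.

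\emph{Forward count.} The number of forward switchings is about $\sum_{uv\in X\setminus E(G)} d^*_u d^*_v$, which is at least $(|X|-j)\alpha^2d^2$. From this we subtract the bad choices.
\begin{itemize}
\item Choices with $uv'\in X$ or $u'v\in X$: hypotheses (2)--(4) bound the $X$-degrees by $\mu_0 n$ and say most of $V$ is $X$-free. This forces $\alpha$ only on few vertices, and costs at most $5\mu_0|X|d^2$ in total.
\item Choices with $u'v'\in E(G)$: the expected count is about $d^2\cdot 2d/n$ per $uv$, by \cref{thm: bip edge probs} applied with $H_1,H_2$ the conditioned edges. This loses a factor like $6\alpha^4$ after normalisation, using $\alpha<1/e$.
\end{itemize}
Condition \eqref{eq: alpha condition}, $\alpha^2-6\alpha^4-5\mu_0>\alpha^3$, is exactly what leaves at least $\alpha^3|X|d^2$ good forward switchings, say when $j\le |X|/2$; the range $j$ up to about $d|X|/n\ll|X|$ is covered by this.

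\emph{Backward count.} Start from $G'\in\mathcal G_{j+1}$. Choose one of its $j+1$ edges $uv$ in $X$, then an edge $u'v'$ of $G'$ (at most $dn/2$ choices), and reverse the switching. This gives at most $(j+1)\,dn/2$ backward switchings. Double counting then gives
\[
\frac{|\mathcal G_{j+1}|}{|\mathcal G_j|}\ \ge\ \frac{\alpha^3|X|d^2}{(j+1)dn/2}=\frac{2\alpha^3 d|X|}{n(j+1)}.
\]
The constant here has slack, which absorbs the factor of $2$ in the exponent of the lemma.

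\emph{Main obstacle.} The difficult step is the forward count. We need the lower bound on good switchings to hold for every $G\in\mathcal G_j$, or at least on average, which is why we plan to argue in expectation with conditioned edge probabilities. The problem is that $\#\{u'v'\in E(G)\}$ is not controlled deterministically. We would handle it by averaging over $\mathcal G_j$ and using \cref{thm: bip edge probs} with $H_1$ equal to the $j$ fixed $X$-edges and $H_2=X\setminus H_1$. Its hypotheses hold: the degrees in $H_1$ are at most $j\ll \mu_0 d$ at the relevant vertices, $|E(H_1)|=o(dn)$ since $d|X|=\omega(n)$ keeps $j$ small relative to $dn$, and hypotheses (2),(3) give the $H_2$-degree bounds.
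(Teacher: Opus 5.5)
Your switching is the paper's switching read in the opposite direction: the paper lower-bounds, on average over $\mathcal G_\chi$, the number of ways to create an $X$-edge from a graph with $i-1$ of them, and upper-bounds the ways to destroy one by $i\,dn'/2$. Your averaging via \cref{thm: bip edge probs} with $H_1=\chi$, $H_2=X\setminus\chi$ is also what the paper does.

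The gap is in your check of the hypotheses of \cref{thm: bip edge probs}. That theorem needs $d^{H_1}\le\mu_0 d$ at both endpoints of the pair whose probability you bound. Here those endpoints are $u'$ and $v'$, which are arbitrary neighbours of $v$ and $u$. You justify the condition by ``the degrees in $H_1$ are at most $j\ll\mu_0 d$''. But the ratio bound is needed for all $j$ up to order $d|X|/n$, and hypothesis (1) bounds $|X|$ only from below. As soon as $|X|\ge\mu_0 n$, this range of $j$ exceeds $\mu_0 d$. In the applications (\cref{clm: P holds}, \cref{clm: P5}), $|X|$ is of order $k_BD'$ or $n^2/\mathrm{polylog}(n)$, so $j\gg d$. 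For a general $\chi$ with $j$ edges, some vertices can have $\chi$-degree of order $d$, and at those vertices you have no edge-probability bound. So the expected number of switchings killed by $u'v'\in E(G)$ is not controlled, and your forward lower bound $\alpha^3|X|d^2$ is not established.

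The missing idea is the paper's truncation. One works with the classes $\mathcal G_i$ of graphs in which $E(G)\cap X$ has maximum degree at most $\mu_0\alpha^{-4}d$. This makes \cref{thm: bip edge probs} applicable, with $\mu_0\alpha^{-4}$ playing the role of the small constant. It costs two extra arguments.
\begin{enumerate}
\item You must show the truncation is harmless, i.e.\ $\sum_i|\mathcal G'_i|=(1+o(1))\sum_i|\mathcal G_i|$. The paper does this in \cref{clm: chi degree}: a second switching at a fixed vertex gives a tail $e^{-\Omega(d)}=o(1/n)$, followed by a union bound over vertices.
\item The forward count must discard switchings whose new $X$-edge lands at a vertex already at the cap, since these leave the truncated class. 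By double counting (\cref{clm: S5}), when $i\le 3d|X|/n$ at most $2\alpha^4 n i/d\le 6\alpha^4|X|$ edges of $X$ touch a capped vertex. Hence at most $6\alpha^4 d^2|X|$ switchings are lost.
\end{enumerate}
That is exactly where the $6\alpha^4$ in \eqref{eq: alpha condition} comes from. You attributed it to the event $u'v'\in E(G)$, but that event only costs a factor $O(d/n)=O(\mu_0)$; the misattribution is a sign this step is missing.
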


\noindent\textit{Proof of \cref{clm: P holds}.} 
Fix some $(s', m)$ that satisfy~\eqref{eq: mS' relaxed 1}. Fix a graph $H$ that is obtained by steps 1 and 2 of the procedure. Then, there exists some degree sequence $\mathbf d^*$ dependent on $H$ such that the edges chosen in step 3 follow $d^*$. Let $G$ be a random graph chosen uniformly from all graphs with bipartition $(A, B')$ and degree sequence $d^*$. Let $\mathcal E$ be the event that no edge of $G_B$ is contained in a triangle in $H \cup G$. Recall that there are at most \begin{equation}
    O\left(\binom{|B'|}{m'/a}^a\binom{a}{m'/|B'|}^{|B'|}\binom{a|B'|}{m'}^{-1}\right)\notag
\end{equation}
graphs with bipartition $(A,B')$ and degree sequence $d^*$ by \cref{cor: max sequence}. Hence, by \eqref{eq: steps 1 and 2 1}, the number of choices in steps 1--3 is at most 
\begin{equation}
    \max_{s', m}\left\{ \binom{as'}{m}\binom{b - s'}{m'/a}^a\binom{a}{m'/(b - s')}^{b-s'}\binom{a(b - s')}{m'}^{-1} \right\}\max_{H}\{O(\P[\mathcal E])\},
\end{equation}
where the first maximization is over all $(s', m)$ that satisfy~\eqref{eq: mS' relaxed 1} and the second is over all graphs $H$ that can be obtained by the first two steps of the procedure. Thus, it suffices to show that 
\begin{equation}\label{eq: desired prob small torso small k}
    \P[\mathcal E] \leq \exp\left[-\eps^{-4}k'_B\log\frac{dn}{2k_B}\right].
\end{equation}
To do so, we first find a set of well behaved edges whose inclusion in $G$ would guarantee a triangle in $H \cup G$ and bound the probability that $G$ avoids all of these edges using \cref{lem: switching}.

Since $\{A_u\}_{u \in S'}$ is well-spread, there exists a set $S^* \subseteq S'$ and pairwise disjoint sets $\{A_u'\}_{u \in S^*}$ such that 

\begin{equation}\label{eq: well-spread A's}
    A_u' \subseteq A_u, \quad |A_u'| \geq |A_u|/2,\quad  \text{and } \sum_{u \in S^*}|A'_u| \geq k_B/16.
\end{equation}
Let $A^*$ be the set of vertices in $A$ that have degree less than $d/4$ in $d^*$. Since $G_A$ has maximum degree at most $d/2$, each vertex in $A^*$ must have degree at least $d/4$ in $H$. We then have that $d|A^*|/4 \leq d|S'|$, and so $|A^*| \leq 4s$. That is, $|A^*| = o(k_B)$. Hence, by potentially deleting vertices from each $A'_u$,~\eqref{eq: well-spread A's} gives that for each $u \in S^*$, there exists $A^*_u \subseteq A'_u$ such that 
\begin{align}
    \{A^*_u\}_{u \in S^{**}} \text{ are pairwise disjoint}&, \quad \sum_{u \in S^{**}}|A^*_u| \geq k_B(1 - o(1))/16,\notag \\ &\text{and for all $v \in \bigcup_{u \in S^{**}}A^*_u$,  } d^*_v \geq d/4. \label{eq: A* properties}
\end{align}
Let 
\begin{equation}\notag
    X = \bigcup_{u \in S^*}A^*_u \times (N_{G_B}(u) \cap B').
\end{equation}
Thus, 
\[
\P[\mathcal E] \leq \P[E(G) \cap X = \emptyset],
\]
and so we consider bounding $\P[E(G) \cap X = \emptyset]$ instead. We now show that all conditions of \cref{lem: switching} are satisfied when $U = B'$, $V = A$. Note that $s' = o(n)$, so~\eqref{eq:AB} gives that $|A|,|B'| = \frac{n}{2}(1 + o(1))$. 

Note that $|X| \geq \frac{k_BD'}{16}(1 + o(1))$. Then, 
\begin{align*}
    d|X|/n \geq C\frac{d^2k_B}{n\log\log\frac{dn}{2k_B}} \geq C'k_B\frac{\log n}{\log\log\frac{dn}{2k_B}} \gg 1,
\end{align*}
for some positive constants $C$ and $C'$. Thus, (1) is satisfied.

Moreover, observe that each vertex in $X \cap A$ is incident to at most $d \leq \mu_0n$ edges in $X$ since $G_B$ has maximum degree at most $d$ and by the pairwise disjoint property in~\eqref{eq: A* properties}, so (3) is satisfied. Likewise, each vertex in $B'$ is incident to at most $\max_{u \in S^*}\{|A^*_u|\} \leq d$ edges in $X$, so (2) is satisfied. Also, there are at most $k_B = o(n)$ vertices in $A$ that are incident to an edge in $X$, so (4) is satisfied. Finally, by~\eqref{eq: A* properties} and since every vertex in $B'$ has degree at least $d/2$ in $G$, we are able to apply \cref{lem: switching} with $\alpha = 1/4$ (notice that \eqref{eq: alpha condition} holds when $\mu_0$ is sufficiently small). That is,
\begin{align*}
    \P[E(G) \cap X = \emptyset] &\leq \exp\left[-\frac{dD'k_B}{128n}(1 - o(1))\right]\\
    &= \exp\left[-C\frac{d^2k_B}{n\log\log\frac{dn}{2k_B}}\right]\\
    &\leq \exp\left[-Ck_B\frac{\log n}{\log\log\frac{dn}{2k_B}}\right]\\
    &\leq \exp\left[-\eps^{-7}k_B\log\log\frac{dn}{2k_B}\right]\\
    &\leq \exp\left[ -\eps^{-4}k'_B\log\frac{dn}{2k_B}\right],
\end{align*}
where $C$ is some positive constant (that depends on $\eps$). Thus,~\eqref{eq: desired prob small torso small k} holds, and so the proof is complete.
\qed

\medskip
\noindent\textit{Proof of \cref{clm: P5}.}
Again, fix $(s',m)$ that satisfies~\eqref{eq: mS' relaxed}. As in the proof of \cref{clm: P holds}, let $H$ be a graph obtained from steps 1 and 2 of the procedure and let $\mathbf d^*$ be the degree sequence of the edges to be added in step 3. Let $G$ be a random bipartite graph with bipartition $(A, B')$ and degree sequence $\mathbf d^*$ chosen uniformly at random. Let $\mathcal E$ be the event that $\sum_{u \in S''}d'_u \leq k_B/8$. By \cref{cor: max sequence}, there are at most 
\[
O\left(\binom{b-s'}{m'/a}^a\binom{a}{m'/(b - s')}^{b - s'}\binom{a(b - s')}{m'}^{-1}\right)
\]
bipartite graphs with bipartition $(A, B')$ and degree sequence $d^*$, so there are at most 
\begin{equation}\label{eq: P4 step 3}
    \binom{b-s'}{m'/a}^a\binom{a}{m'/(b - s')}^{b - s'}\binom{a(b - s')}{m'}^{-1}O(\P(\mathcal E))
\end{equation}
choices in step 3. Also, there are at most 
\begin{equation}
    \exp\left[k_B\log\frac{n}{d} + (1 + \eps^2)k'_B\log\frac{dn}{2k_B}\right]
\end{equation}
choices in step 4 by \cref{cor: counting defects}. Hence, by~\eqref{eq: steps 1 and 2}, there are at most 
\begin{align}
    \max_{s',m}&\left\{\binom{b-s'}{m'/a}^a\binom{a}{m'/(b - s')}^{b - s'}\binom{a(b - s')}{m'}^{-1}\right\}\max_{H}\{O(\P(\mathcal E))\}\notag\\
    &\times \exp\left[k_B\log\frac{n}{d} + (1 + \eps^2)k'_B\log\frac{dn}{2k_B}\right]\notag
\end{align}
graphs in $\mathcal H$ for which (P5) fails. Thus, it suffices to show that 
\begin{equation}\label{eq: desired prob P5}
    \P[\mathcal E] \leq \exp\left[-9\eps^{-2}k'_B\log\frac{dn}{2k_B}\right].
\end{equation}
Recall that $\sum_{u \in S'}d'_u \geq (1 - \eps)k_B$ by \cref{clm: small torso subset}. Let $S'''\subseteq S'$ be the set of vertices $u\in S'$ where $A_u$ does not expand. Observe that if $\sum_{u \in S''}d'_u < k_B/8$, then $\sum_{u \in S'''}d'_u \geq k_B/8$. Thus, it suffices to show that for all sets $S^* \subseteq S'$ with $\sum_{u \in S^*}d'_u \geq k_B/8$, there exists  $v \in S^*$ such that $A_v$ expands. In fact, expansion is a upwards closed property, so it suffices to show that $A_v$ contains a subset that expands. 

Fix $S^* \subseteq S'$ with $\sum_{u \in S^*}d'_u \geq k_B/8$. Let $\mathcal E_{S^*}$ be the event that for all $u \in S^*$, $u$ does not expand. Since $\{A_u\}_{u \in S'}$ is well-spread, there exists $S^{**} \subseteq S^*$ and pairwise disjoint sets $\{A'_u\}_{u \in S^{**}}$ such that
\begin{equation}
    A'_u \subseteq A_u, \quad |A'_u| \geq |A_u|/2, \quad \text{and }\sum_{u \in S^{**}}|A'_u| \geq y'/2.
\end{equation}
We may further assume that 
\begin{equation}\label{eq: S** not too big}
    \sum_{u \in S^{**}}|A'_u| \leq y'/2 + d \leq 2\mu_0 n
\end{equation}
by potentially removing vertices from $S^{**}$.

Recall that $s' \leq 2k'_B/D_B < O\left(\frac{n\log d\log\log}{d}\right) < y$, so $y > s'$. Then, for any $v \in S^{**}$, $A_v$ expands if there is an edge induced by $A_V \cup B_v$ in $G$ for every $B_v \subseteq B'$ with $|B_v| = y-s'$. Let $\mathcal F(\{B_u\}_{u \in S^{**}})$ be the event that $A_u \cup B_u$ induces no edges in $G$ for all $u \in S^{**}$. 
\begin{equation}\label{eq: prob S*}
    \P[\mathcal E_{S^*}] \leq \binom{b - s'}{y - s'}^{|S^{**}|}\max_{\{B_u\}_{u \in S^{**}}}\P[\mathcal F(\{B_u\}_{u \in S^{**}})].
\end{equation}
Observe that $d|S^{**}|\geq y'$, so $|S^{**}|\geq y'/d$. Thus,
\begin{equation}\label{eq: bounding Bu}
    \binom{b - s'}{b - y}^{|S^{**}|} \leq \binom{b}{y}^{y'/d} \leq \left(\frac{eb}{y}\right)^{yy'/d}.
\end{equation}
Now, fix sets $\{B_u \subseteq B'\}_{u \in S^{**}}$ such that $|B_u| = y - s'$ for all $u \in S^{**}$.  We upper bound $\P[\mathcal F(\{B_u\}_{u \in S^{**}})]$ by finding a large, well-behaved set of forbidden edges in $G$ and using \cref{lem: switching} to upper bound the probability that none of those edges are included in $G$. It is natural to take $\bigcup_{u \in S^{**}}A'_u \times B_u$ to be those forbidden edges, but as in the proof of \cref{clm: P holds} there may be vertices in $A$ that are incident to a forbidden edge that also have small degree in $\mathbf d^*$. But again, there will be very few of these vertices. Let $A^*$ be the set of vertices in $A$ that have degree less than $d/4$ in $\mathbf d^*$. Then, since $G_A$ has maximum degree at most $d/2$, each vertex in $A^*$ has degree at least $d/4$ in $H$. Thus, as in the proof of \cref{clm: P holds}, $|A^*| = o(k_B)$. Hence, taking $A^*_u = A'_u \setminus A^*$ for all $u \in S^{**}$, we have that 
\begin{align}
    \{A^*_u \neq \emptyset\}_{u \in S^{**}} \text{ are pairwise disjoint},& \quad \sum_{u \in S^{**}}|A^*_u| \geq y(1 - o(1))/2, \quad \notag\\
    &\text{and $d^*_v \geq d/4$ for all $v \in \bigcup_{u \in S^{**}}A^*_u$.} \label{eq: A* properties 2}
\end{align}
Let $X = \bigcup_{u \in S^{**}}A^*_u \times B_u$. Next, we show that all conditions of \cref{lem: switching} are satisfied with $U = B'$, $V = A$, and $\alpha = 1/4$. Observe that $d|X| = \Omega(dyy') = \omega(n)$, so (1) is satisfied. Also, every vertex in $B$ is incident to at most $y = o(n)$ edges in $X$, and so (2) holds. Likewise, every vertex in $A$ is incident to at most $d \leq \mu_0n$ edges in $X$, so (3) holds. Finally, by~\eqref{eq: S** not too big}, we have $|\bigcup_{u \in S^{**}}A^*_u| \leq o(n) + d \leq 2\mu_0n$. Since no other vertex in $A$ is incident to an edge in $X$, (4) holds as well. Finally, all conditions on $\alpha$ hold for $\alpha = 1/4$ by~\eqref{eq: A* properties 2} and since \eqref{eq: alpha condition} holds when $\mu_0$ is sufficiently small. Thus,
\begin{equation}\notag
    \P[\mathcal F(\{B_u\}_{u \in S^{**}})]\leq\P[E(G) \cap X = \emptyset] \leq \exp\left[-\frac{d|X|}{128n}\right].
\end{equation}

Thus, by~\eqref{eq: prob S*} and~\eqref{eq: bounding Bu}, 
\begin{align}
    \P[\mathcal E_{S^*}] &\leq \exp\left[\frac{yy'}{d}\log\frac{eb}{y} - \frac{dyy'}{158n}(1 + o(1))\right]\notag\\
    &\leq\exp\left[\frac{dyy'}{n}\left(\frac{3}{(1 + \eps)^24\log n} + \frac{3\log\log^{400/\eps^4}\frac{dn}{2k_B}}{(1 + \eps)^24\log n} - (1 + o(1))/128\right)\right]\label{eq:1}\\
    &\leq \exp\left[-\frac{dn(1 + o(1))}{128\log^{4400/\eps^4}\frac{dn}{2k_B}}\right]\notag\\
    &\leq \exp\left[-10\eps^{-2}k'_B\log\frac{dn}{2k_B}\right]\label{eq: Final S^* bound},
\end{align}
where~\eqref{eq:1} holds since $d^2 \geq (1 + \eps)^2\frac{3}{4} n\log n$ and~\eqref{eq: Final S^* bound} holds because $k_B \leq n\log d$ implies $dn \geq dk_B/\log d$. Thus, by the union bound, 
\[
\P[\mathcal E] \leq 2^s\exp\left[-10\eps^{-2}k'_B\log\frac{dn}{2k_B}\right] \leq \exp\left[-9\eps^{-2}k'_B\log\frac{dn}{2k_B}\right],
\]
where the second inequality holds because $s = o(k_B)$. Thus,~\eqref{eq: desired prob P5} holds, and so the proof is complete.
\qed

\subsection{Proof of~\cref{clm: P3}}
Fix $(s', m)$ that satisfies~\eqref{eq: mS' relaxed}. For any $H$ that can be constructed via the first 3 steps of the procedure, let $N_H$ denote the number of choices in step 4. By~\eqref{eq: steps 1 and 2} and~\eqref{eq: step 3}, there are at most 
\begin{equation}\notag
    \max_{s', m}\left\{\binom{as'}{m}\binom{b - s'}{m'/a}^a\binom{a}{m'/(b - s')}^{b - s'}\binom{a(b - s')}{m'}^{-1} \right\}\max_{H}\{N_H\},
\end{equation}
graphs in $\mathcal H$ that satisfy (P1)--(P5), where the second maximization is over all $H$ that can be obtained by steps 1--3 of the procedure that satisfy (P4) and (P5).
Hence, it suffices to show that 
\begin{equation}\label{eq: desried count}
    N_H \leq \exp\left[k_B\log\frac{n}{d} - 10\eps^{-2}k'_B\log\frac{dn}{2k_B}\right].
\end{equation}
Here, we obtain an upper bound on $N_H$ by counting defect graphs that may or may not satisfy (P1). For all $u \in S''$, let $B_u = \{v \in B' \mathrel : vw \notin E(H) \text{ for any $w \in A_u$}\}$. Then, since each $A_u$ expands, each $B_u$ has size at most $y - s$ (recall that this is positive by the calculation below~\eqref{eq: S** not too big}). Thus, if $H \cup G_B$ has no triangle containing an edge in $G_B$, it must be the case that $N_{G_B}(u) \subseteq S' \cup B_u$ for all $u \in S''$. We show that there are few $(k_B, r_B, B, D_B)$-graphs $G_B$ where $S$ is a spine of $G_B$ that satisfy this property. In fact, it is shown in Equation (45) and (46) of \cite{osthus2003densities} that there are at most 
\begin{equation}\label{eq: osthus U}
    b^{2s + 2}\binom{sy}{k_B/8}\binom{bs}{7k_B/8 - k'_B}\binom{\binom{b}{2}}{k'_B}
\end{equation}
such graphs. It remains to simplify~\eqref{eq: osthus U}. Also, by Proposition 11 in \cite{osthus2003densities}, there are at most 
\begin{equation}\notag
    b^s\binom{bs}{k_B - k'_B}\binom{\binom{b}{2}}{k'_B}    
\end{equation}
$(k_B, r_B,B, D_B)$-graphs. That is, by \cref{cor: counting defects}, 
\begin{equation}
    b^s\binom{bs}{k_B - k'_B}\binom{\binom{b}{2}}{k'_B} \leq \exp\left[k_B\log\frac{n}{d} + (1 + \eps^2)k'_B\log\frac{dn}{2k_B}\right].
\end{equation}
Thus, we have
{\allowdisplaybreaks \begin{align}
    \text{\eqref{eq: osthus U}} &\leq b^{s + 2}\frac{\binom{sy}{k_B/8}\binom{bs}{7k_B/8 - k'_B}}{\binom{bs}{k_B- k'_B}}\exp\left[k_B\log\frac
    nd + (1 + \eps^2)k'_B\log\frac{dn}{2k_B}\right]\notag\\
    &\leq \left(\frac{sy}{k_B/8}\right)^{k_B/8}\left(\frac{bs}{7k_B/8 - k'_B}\right)^{7k_B/8 - k'_B}\left(\frac{k_B - k'_B}{bs}\right)^{k_B - k'_B}\notag\\&\hspace{2cm}\times\exp\left[k_B\log\frac{n}{d} + (1 + \eps^2)k'_B\log\frac{dn}{2k_B} + O(k_B)\right]\notag\\
    &= \left(\frac{y}{b}\right)^{k_B/8}\exp\left[k_B\log\frac{n}{d} + (1 + \eps^2)k'_B\log\frac{dn}{2k_B} + O(k_B)\right]\notag\\
    &= \exp\left[k_B \log\frac{n}{d}-\frac{50k_B}{\eps^4}\log\log\frac{dn}{2k_B} + (1 + \eps^2)k'_B\log\frac{dn}{2k_B}\right]\notag\\
    &\leq \exp\left[k_B\log\frac{n}{d} - 10\eps^{-2}k'_b\log\frac{dn}{2k_B}\right],\notag
\end{align}}
where the final inequality holds by~\eqref{eq: r1}. This completes the proof.

\qed

\subsection{Switching: proof of \cref{lem: switching}}\label{sec: subswitching}

\noindent\textit{Proof of \cref{lem: switching}.}
    Let $n' = |U| + |V|$. For any $\chi \subseteq X$, let $\mathcal G_{\chi}$ denote the set of bipartite graphs $G$ with bipartition $(U,V)$ on degree sequence $\mathbf d^*$ where $E(G) \cap X = \chi$. For all $0 \leq i \leq |X|$, let $\mathcal G'_i = \bigcup_{\chi \in \binom{X}{i}} \mathcal G_\chi$ (where $\binom{X}{i}$ is the set of subsets of $X$ with size $i$). One natural approach to this proof is to first show that 
    \begin{equation}\label{eq: switching natural}
        \frac{|\mathcal G'_{i}|}{|\mathcal G'_{i-1}|} \geq q
    \end{equation}
    for some $q = q(i,n)$, and then to upper bound $|\mathcal G'_0|/\sum_{i = 0}^{|X|}|\mathcal G'_i|$. To establish~\eqref{eq: switching natural}, though, we must consider only sets $\chi$ whose maximum degree is not too large (we may conveniently consider $\chi$ as a graph). For each $0 \leq i \leq |X|$, let $\mathcal X_i \subseteq \binom{X}{i}$ be the set of graphs $\chi \subseteq \binom{X}{i}$ where every vertex in $U \cup V$ is incident to at most $\mu_0d/\alpha^4$ edges in $\chi$. Then, let $\mathcal G_i = \bigcup_{\chi \in \mathcal X_i}\mathcal G_{\chi}$. The following claim allows us to consider only graphs $\chi$ who have small maximum degree.
    
    \begin{claim}\label{clm: chi degree}
        \[
        \sum_{i = 0}^{|X|}|\mathcal G'_i| = (1 + o(1))\sum_{i = 0}^{|X|}|\mathcal G_i|.
        \]
    \end{claim}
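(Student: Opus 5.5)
The plan is to show that a uniformly random bipartite graph $G$ with bipartition $(U,V)$ and degree sequence $\mathbf d^*$ has, with probability $1-o(1)$, at most $\mu_0 d/\alpha^4$ edges of $X$ at every vertex. This is enough: $\sum_i|\mathcal G'_i|$ counts all such graphs, and $\sum_i|\mathcal G_i|$ counts exactly those in which $E(G)\cap X$ has this degree bound. So it suffices to prove
\[
\P\bigl[\exists w\in U\cup V:\ |\{e\in X\cap E(G): w\in e\}|>\mu_0 d/\alpha^4\bigr]=o(1).
\]
I will take a union bound over the $n'$ vertices $w$. For a fixed $w$, let $X_w$ be the set of pairs of $X$ incident to $w$. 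By conditions (2) and (3), $|X_w|\le \mu_0 n$.

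For each fixed $w$ and each $t$-subset $Y\subseteq X_w$, with $t=\lceil \mu_0 d/\alpha^4\rceil$, I bound $\P[Y\subseteq E(G)]$ by conditioning edge by edge. Write $Y=\{e_1,\dots,e_t\}$. Then $\P[Y\subseteq E(G)]=\prod_j \P[e_j\in G\mid e_1,\dots,e_{j-1}\in G]$, and each factor is controlled by \cref{thm: bip edge probs} with $H_1=\{e_1,\dots,e_{j-1}\}$ and $H_2=\emptyset$. The hypotheses need checking:
\begin{itemize}
\item The degree of $H_1$ at $w$ is at most $t\le \mu_0 d$ only after rescaling. Here I would shrink the constant in the application, e.g.\ apply the theorem with $\mu_0'$ such that $\mu_0/\alpha^4\le\mu_0'$. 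Other vertices have $H_1$-degree at most $1$.
\item $|E(H_1)|=o(dn)$ holds.
\item $|E(G)|=(1-o(1))dn/2$ holds by assumption.
\item $d<\mu_0 n$ holds since $d=o(n)$.
\end{itemize}
Under these, each factor is at most $(2d/n)(1+O(\mu_0))\le 3d/n$. Hence
\[
\P[\deg_X(w)\ge t]\le \binom{\mu_0 n}{t}\Bigl(\frac{3d}{n}\Bigr)^t\le\Bigl(\frac{3e\mu_0 d}{t}\Bigr)^t=(3e\alpha^4)^t .
\]
Because $\alpha<1/e$, we have $3e\alpha^4<1$, and this is $\exp(-\Omega(d))$. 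Summing over the $n'$ vertices gives $n\exp(-\Omega(d))=o(1)$, since $d\ge\sqrt{n\log n}$.

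The main obstacle is making the application of \cref{thm: bip edge probs} honest, in two respects. First, its hypothesis $d^{H_1}_u\le\mu_0 d$ has to be matched with the threshold $\mu_0 d/\alpha^4$, which forces me either to run the argument with a smaller internal constant or to set $t=\mu_0 d$ directly; the latter is fine, because exceeding $\mu_0 d/\alpha^4$ implies exceeding $\mu_0 d$. Second, its degree sequence must have maximum degree at most $d$ and the right edge count, and both are given by the assumptions on $\mathbf d^*$. If that theorem fails to apply in some boundary case, I would fall back on a simple switching argument: move an $X$-edge at $w$ to a non-$X$ position using a $2$-path switch. This gives the same ratio bound of order $d/n$ per edge and yields the identical geometric tail.
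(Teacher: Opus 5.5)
Your argument is correct along its main line, but it takes a different route from the paper. The paper proves the per-vertex tail $\P[\mathcal E_x]\le\exp[-\Omega(d)]$ with a self-contained switching. It compares $\mathcal H_i$ and $\mathcal H_{i-1}$ (graphs in which $x$ has exactly $i$, resp.\ $i-1$, incident edges of $E(G)\cap X$) via $G'=G-xy-zw+yz+xw$. This gives $h_i/h_{i-1}\le 2\mu_0 d/(i(1-9\mu_0))\le 1/2$ for $i\ge 6\mu_0 d$, and a geometric tail is then summed from $\mu_0\alpha^{-4}d$. The only quantitative input is $\alpha^{-4}>6$.

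You instead take a first-moment union bound over $t$-subsets of $X_w$, and obtain each edge factor from \cref{thm: bip edge probs} with $H_1$ a star at $w$. This is shorter, and it reuses a tool the paper already invokes in \cref{clm: switching expectation}. The price is the theorem's hypothesis $d^{H_1}_w\le\mu d$ for its own small constant $\mu$. So you genuinely need $\mu_0/\alpha^4$ (which is $256\mu_0$ in the applications, where $\alpha=1/4$) to lie below that threshold. For general $\alpha$, you would instead cap $t$ at $\mu d$ and assume $\mu_0<\mu/(3e)$. The paper's switching needs no such adjustment of constants.

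One correction: your alternative of simply taking $t=\mu_0 d$ does not work. The event inclusion is fine, but your own estimate becomes $(3e\mu_0 d/t)^t=(3e)^{\mu_0 d}>1$. Indeed, $|X_w|$ can be about $\mu_0 n$ with each pair present with probability about $2d/n$, so the $X$-degree of $w$ can have mean about $2\mu_0 d$. A threshold of $\mu_0 d$ therefore cannot carry an $\exp[-\Omega(d)]$ tail; the threshold must exceed $\mu_0 d$ by a large constant factor. That is exactly the slack $\alpha^{-4}>e^4$ supplies, so only your rescaling option is viable.
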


    Now, we establish a switching result akin to~\eqref{eq: switching natural} with $\mathcal G_i$ rather than $\mathcal G'_i$.
    
    \begin{claim}\label{clm: switching}
        For all $0 < i \leq \frac{3d}{n}|X|$, 
        \begin{equation}
        \frac{|\mathcal G_i|}{|\mathcal G_{i-1}|} \geq \frac{2\alpha^3d|X|}{in'}\left(1 - \frac{i}{|X|}\right).\notag
    \end{equation}
    \end{claim}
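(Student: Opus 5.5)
The plan is a standard switching argument relating $\mathcal G_i$ and $\mathcal G_{i-1}$ by double counting. Define a forward switching that turns a graph $G\in\mathcal G_{i-1}$ into some $G'\in\mathcal G_i$. Pick an edge $xy\in X\setminus E(G)$ with $x\in U$, $y\in V$. Then pick edges $xv, uy\in E(G)\setminus X$ with $u\in U$, $v\in V$, such that $uv\notin E(G)$ and $uv\notin X$. Replace $xv,uy$ by $xy,uv$. This keeps the degree sequence $\mathbf d^*$ and the bipartition, and it adds exactly one edge of $X$. The result lies in $\mathcal G_i$ provided the new graph $\chi\cup\{xy\}$ still has maximum degree at most $\mu_0 d/\alpha^4$. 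The key estimate is then
\[
\frac{|\mathcal G_i|}{|\mathcal G_{i-1}|}\ \ge\ \frac{\min_{G\in\mathcal G_{i-1}}F(G)}{\max_{G'\in\mathcal G_i}R(G')},
\]
where $F$ counts forward switchings out of $G$ and $R$ counts reverse switchings into $G'$.

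For the lower bound on $F(G)$, first count choices of $xy\in X\setminus E(G)$. There are $|X|-(i-1)$ such edges. Some must be discarded because an endpoint already has $\chi$-degree $\mu_0d/\alpha^4$. Since $|\chi|=i-1\le 3d|X|/n$, the number of saturated vertices is at most $2i\alpha^4/(\mu_0 d)$. Conditions (2)--(3) bound the $X$-degree of each vertex by $\mu_0 n$, so this discards only a small fraction of $X$. I plan to absorb this loss into the $(1-i/|X|)$ factor together with $\mu_0$ slack; this is where the constant in \eqref{eq: alpha condition} will be spent.

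Given $xy$, the degree hypothesis $d^*_x,d^*_y\ge\alpha d$ gives at least $\alpha d$ choices for $v\in N_G(x)$ and $\alpha d$ choices for $u\in N_G(y)$. Now subtract the bad choices:
\begin{itemize}
\item cases where $xv$ or $uy$ lies in $X$, at most $\mu_0 d/\alpha^4$ each, by the $\chi$-degree cap;
\item cases where $uv\in E(G)$, at most $d^2$ over all pairs, which is of order $\alpha^4 d^2$ relative to the main count;
\item cases where $uv\in X$; condition (4) makes $v$ land in $V'$ for most choices.
\end{itemize}
The resulting count is $F(G)\ge (|X|-i)\,d^2(\alpha^2-6\alpha^4-5\mu_0)$, roughly, and the inequality \eqref{eq: alpha condition} turns this into $F(G)\ge \alpha^3 d^2(|X|-i)$.

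For the upper bound on $R(G')$, a reverse switching chooses an edge $xy\in\chi'$, which has $i$ choices. It then chooses an edge $uv\in E(G')$ with $uv\notin X$, which has at most $\sum_u d^*_u/2\le dn'/2$ choices. This gives $R(G')\le i\,dn'/2$. Dividing, I get
\[
\frac{|\mathcal G_i|}{|\mathcal G_{i-1}|}\ \ge\ \frac{\alpha^3d^2(|X|-i)}{i\,dn'/2}=\frac{2\alpha^3 d|X|}{in'}\Bigl(1-\frac{i}{|X|}\Bigr).
\]

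The main obstacle is the lower bound on forward switchings. Two things must be controlled there: the cap on the maximum degree of $\chi$ (which is why the $\mathcal G_i$ are defined with that cap), and the need for $uv$ to avoid both $E(G)$ and $X$. I will handle the first by using the range $i\le 3d|X|/n$ to bound how many vertices can be saturated. I will handle the second through conditions (2)--(4), losing at most $6\alpha^4+5\mu_0$ in the $d^2$ coefficient.
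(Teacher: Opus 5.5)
Your switching is the right one (it is the paper's relation $\sim$ read from the $\mathcal G_{i-1}$ side). Your bound $R(G')\le i\,dn'/2$ and your use of $i\le 3d|X|/n$ to control vertices saturated in $\chi$ both match the paper. The gap is in the lower bound on forward switchings. You claim $F(G)\ge \alpha^3d^2(|X|-i)$ for \emph{every} $G\in\mathcal G_{i-1}$, and this is false.

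Fix $xy\in X\setminus E(G)$. The pairs $(u,v)\in N_G(y)\times N_G(x)$ with $uv\in E(G)$ are exactly the paths $x\,v\,u\,y$ of length three. There can be up to $d_x(d-1)$ of them, not ``of order $\alpha^4d^2$'', so they can wipe out the whole main term $d_xd_y$. Here is a concrete instance:
\begin{itemize}
\item take $\mathbf d^*$ to be $d$-regular and $X$ a matching of size $\mu_0 n$; this satisfies (1)--(4);
\item let $G\in\mathcal G_0$ be a disjoint union of copies of $K_{d+1,d+1}$ minus a perfect matching, with every edge of $X$ among the removed matching edges.
\end{itemize}
Then each $xy\in X$ admits only $d$ forward switchings. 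So $\min_G F(G)=O(d|X|)$, and your ratio $\min F/\max R$ is too small by a factor of order $d$.

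The ``$uv\in X$'' case has the same problem. Condition (4) only gives $|V\setminus V'|\le 2\mu_0 n$, which is much larger than $d$. So for a particular $G$, all of $N_G(x)$ may lie outside $V'$.

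The missing idea is to average instead of taking a minimum. Since $\sum_{G\in\mathcal G_{i-1}}F(G)=\sum_{G'\in\mathcal G_i}R(G')$, it suffices to lower-bound the average of $F$ over $\mathcal G_{i-1}$. Concretely, fix $\chi\in\mathcal X_{i-1}$ and bound $\mathbb E[F(H)]$ for $H$ uniform in $\mathcal G_\chi$. The paper does this with the conditional edge-probability estimate of Theorem~\ref{thm: bip edge probs}, applied with $H_1=\chi$ and $H_2=X\setminus\chi$. This gives $\P[uv\in E(H)]\le \frac{2d}{n}(1+O(\mu_0))$ for $uv\notin X$. Consequently, per $xy$:
\begin{itemize}
\item the expected number of closing edges $uv\in E(H)$ is $O(d^3/n)=O(\mu_0 d^2)$;
\item the expected number of neighbours of $x$ in $V\setminus V'$ is $O(\mu_0 d)$.
\end{itemize}
The cap on $\chi$-degrees in the definition of $\mathcal X_i$ is what makes that theorem applicable; it is not only there to keep switched graphs inside $\mathcal G_i$. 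The $6\alpha^4$ loss in \eqref{eq: alpha condition} comes from switchings that push a vertex over the cap, and that term can be bounded deterministically as you propose. The $O(\mu_0)$ losses are the ones that genuinely require the averaging.
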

    Then, since 
    \begin{equation}
        \frac{|\mathcal G_i|}{|\mathcal G_0|} = \prod_{j = 1}^{i}\frac{|\mathcal G_{j}|}{|\mathcal G_{j-1}|}, \notag
    \end{equation}
    \cref{clm: switching} gives 
    \begin{equation}\label{eq: d0/di}
        \frac{|\mathcal G_i|}{|\mathcal G_0|} \geq \left(\frac{2\alpha^3d|X|(1 - i/|X|)}{n'}\right)^{i}\frac{1}{i!}.
    \end{equation}
    Therefore, by \cref{clm: chi degree} and since $\mathcal G'_0 = \mathcal G_0$,
    \begin{align}
       \frac{|\mathcal G_0'|}{\sum_{i = 1}^{|X|}|\mathcal G'_i|} = \frac{|\mathcal G_0|}{(1 + o(1))\sum_{i = 0}^{|X|}|\mathcal G_i|} &\leq \frac{|\mathcal G_0|}{\sum_{i = 0}^{3d|X|/n}|\mathcal G_i|}(1 + o(1)) \label{eq: abused sum 1}\\
        &= \left[ \sum_{i = 0}^{ 3d|X|/n}  \frac{|\mathcal G_i|}{|\mathcal G_0|}\right]^{-1}(1 + o(1))\label{eq: abused sum 2}\\
        &\leq \left[ \sum_{i = 0}^{3d|X|/n} \left(\frac{2\alpha^3d|X|\left(1 - 3d/n\right)}{n'}\right)^i\frac{1}{i!} \right]^{-1}(1 + o(1)),\label{eq: abused sum 3}
    \end{align}
    where the summations in~\eqref{eq: abused sum 1},~\eqref{eq: abused sum 2}, and~\eqref{eq: abused sum 3} are taken to $\lfloor 3d|X|/n\rfloor$ if $3d|X|/n$ is not an integer. Taylor's theorem gives that for any $y > 0$ and integer $\ell > 0$, 
    \[
    e^y \leq \sum_{i = 0}^\ell\frac{y^i}{i!} + e^y\frac{y^{\ell + 1}}{(\ell + 1)!}.
    \]
    Thus, we have that $ \sum_{i = 0}^{3d|X|/n}\left(\frac{2\alpha^3d|X|(1 - 3d/n)}{n'}\right)^i\frac{1}{i!} $ is bounded from below by
    \begin{align}
       &\exp\left[\frac{2\alpha^3d|X|(1 - 3d/n)}{n'}\right]\left[1 - \left(\frac{2\alpha^3d|X|(1 - 3d/n)}{n'}\right)^{3d|X|/n + 1}\frac{1}{(3d|X|/n + 1)!}\right]\notag\\
        &\geq \exp\left[\frac{\alpha^3d|X|}{n'}\right]\left[1 - \left(\frac{2\alpha^3e}{3}\right)^{3d|X|/n}O(d|X|/n)\right]\label{eq: summation manipulation 2}\\
        &= \exp\left[\frac{\alpha^3d|X|}{n'} - \left(\frac{2\alpha^3e}{3}\right)^{\Omega(d|X|/n)}\right]\notag\\
        &\geq \exp\left[ \frac{\alpha^3d|X|}{2n'}\right],\label{eq: summation maniulation 3}
    \end{align}
    where~\eqref{eq: summation manipulation 2} holds because $(3d|X|/n)! \leq \left[3d|X|/en\right]^{3d|X|/n}$ and ~\eqref{eq: summation maniulation 3} holds because $d|X|  = \omega(n)$ and $\frac{\alpha^3e}{3} < 1$ (since $\alpha < 1/e$) imply that 
    \[
    \frac{\alpha^3d|X|}{2n'} \geq \left(\frac{2\alpha^3e}{3}\right)^{\Omega(d|X|/n)}.
    \]
    Thus, combining~\eqref{eq: abused sum 3} and~\eqref{eq: summation maniulation 3} gives that 
    \begin{equation}\notag
        \frac{|\mathcal G'_0|}{\sum_{i = 1}^{|X|}|\mathcal G'_i|} \leq \exp\left[-\frac{\alpha^3d|X|}{2n'}\right] \leq \exp\left[-\frac{\alpha^3d|X|}{3n}\right],
    \end{equation}
    (because $n' = n(1 + o(1))$) as desired.
\qed
\smallskip 

    \noindent\textit{Proof of \cref{clm: chi degree}}
        Let $\mathcal H$ be the set of graphs with bipartition $(U,V)$ and degree sequence $\mathbf d^*$. Let $H$ be a graph chosen uniformly at random from $\mathcal H$. Let $\mathcal E$ be the event that no vertex in $H$ is incident to at least $\mu_0\alpha^{-4}d$ edges in $E(H) \cap X$. Then, it suffices to show that $\P[\mathcal E] = o(1)$. To prove this, we first show that the probability that a fixed vertex in $H$ is incident to at least $\mu_0\alpha^{-4}d$ edges in $E(H) \cap X$ is very small, and then use the union bound to bound the probability that any vertex in $H$ is incident to at least $\mu_0\alpha^{-4}d$ edges in $E(H) \cap X$. Fix $x \in U \cup V$ and let $\mathcal E_x$ be the event that $x$ is incident to at least $\mu_0\alpha^{-4}d$ edges in $E(H) \cap X$. We first show that
        \begin{equation}\label{eq: desired prob chi degree}
            \P[\mathcal E_x] \leq \exp[-\Omega(d)]
        \end{equation}
        using a switching argument. Then, since $\exp[-\Omega(d)]= o(1/n)$, the union bound gives that $\P[\mathcal E] = o(1)$, as desired. Hence, it suffices to show~\eqref{eq: desired prob chi degree}. Recall that every vertex in $U \cup V$ is incident to at most $\mu_0n$ edges in $X$ by assumption. For $0 \leq i \leq \mu_0n$, let $\mathcal H_i\subseteq \mathcal H$ be the set of graphs $G$ in $\mathcal H$ such that $x$ is incident to exactly $i$ edges in $E(G) \cap X$. Moreover, let $h_i = |\mathcal H_i|$. For each $0  < i \leq \mu_0n$, let $\sim$ be a relation on $\mathcal H_i \times \mathcal H_{i-1}$ where $G \sim G'$ if there exist distinct vertices $y,z,w \in (U\cup V)\setminus\{x\}$ such that
        \begin{enumerate}
            \item[(i)] $xy \in E(G) \cap X$;
            \item[(ii)] $yz, xw \notin E(G) \cup X$;
            \item[(iii)] $zw \in E(G)$;
            \item[(iv)] $G' = G - xy - zw + yz + xw$.
        \end{enumerate}

        \begin{figure}[h]
        \centering{
        \resizebox{0.5\textwidth}{!}{
        \includegraphics[]{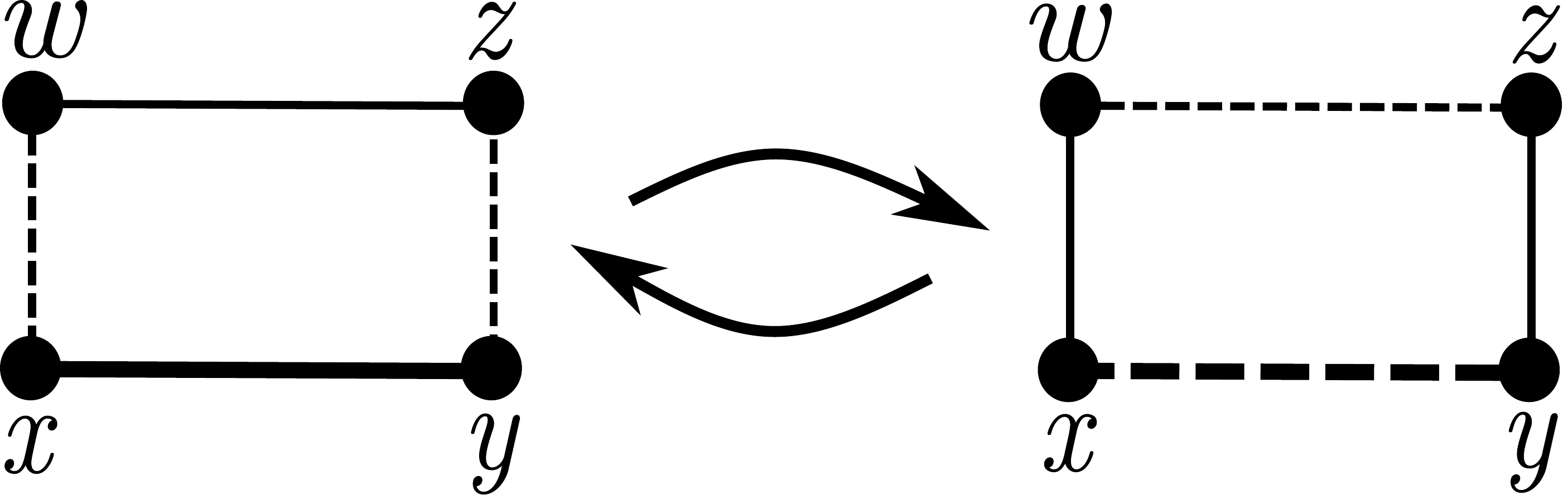}}}
        \label{fig: switching_1}
        \caption[Switching away forbidden edges incident to ``bad'' vertices.]{Switching away forbidden edges incident to ``bad'' vertices. The bold edge is a member of $X$ and $xz \notin X$. All other edges may be in $X$ or not in $X$.}
\end{figure}
        
        Let $\mathscr H$ be the bipartite graph with bipartition $(\mathcal H_i, \mathcal H_{i-1})$ with edges determined by $\sim$. Let $\underline{d}_i$ be the minimum degree of a graph in $\mathcal H_i$ in $\mathscr H$ and $\bar d_{i-1}$ be the maximum degree of a graph in $\mathcal H_{i-1}$ in $\mathscr H$. Then, $\underline{d}_ih_i \leq \bar d_{i-1}h_{i-1}$, and so 
        \begin{equation}\label{eq: switching h to d}
            h_i/h_{i-1} \leq \bar{d}_{i-1}/\underline{d}_i.
        \end{equation}
        Notice that each edge in $E(G) \cap X$ is incident to a vertex in $V \setminus V'$, and so there are at most $2\mu_0dn$ edges in $E(G) \cap X$. Next, we show that
        \begin{equation}\label{eq: switching deg d_i}
            \underline{d}_{i} \geq idn/2(1 + o(1)) - 2i\mu_0dn - 2id^2 \geq \frac{idn}{2}(1 - 9\mu_0).
        \end{equation}
The number of choices for $y$ satisfying condition (i) is exactly $i$. The number of choices for $(z,w)$ satisfying (iii) is asymptotic to $dn/2$ by the assumption $\sum_{u \in U \cup V}d^*_u \geq (1 - o(1))dn$, and the fact that $z$ needs to be in the same part as $x$ in the bipartition due to (iv). This gives a first approximation of $\underline{d}_i$ by the inclusion-exclusion principle. To obtain a lower bound for it, we subtract the number of choices of $(z,w)$ such that condition (ii) is violated from $i(dn/2)(1+o(1))$. Given $y$, there are at most $\mu_0 n+d$ choices for $z$ such that $yz\in E(G)\cup X$ and the same bound holds for the choices for $w$ such that $xw\in E(G)\cup X$. Hence, the number of choices for $(w,z)$ violating (ii) is at most $2i(\mu_0 n+d)d$. Subtracting this upper bound from the first approximation of $\underline{d}_i$ gives~\eqref{eq: switching deg d_i}. The second inequality in~\eqref{eq: switching deg d_i} holds because $d \leq \mu_0 n$. On the other hand, for any $G' \in \mathcal H_{i-1}$, we can upper bound $\bar d_{i-1}$ by counting choices of $y,z,w$ where $xy \in X$, $yz,xw \in E(G')$. That is,
        \begin{equation}\label{eq: switching deg d_i-1}
            \bar d_{i-1} \leq \mu_0nd^2
        \end{equation}
        since there are at most $\mu_0n$ edges in $X \setminus E(G)$ incident to $x$ and both $x$ and $y$ have at most $d$ neighbours in $G'$. 
        Let $\hat\imath=6\mu_0d$ and observe that $\alpha^{-4}>6$ by the assumption that $\alpha<1/e$ and thus $\alpha^{-4}\mu_0 d-\hat\imath=\Omega(d)$. Thus, by~\eqref{eq: switching h to d},~\eqref{eq: switching deg d_i}, and~\eqref{eq: switching deg d_i-1}, 
        \begin{equation}
            h_i/h_{i-1} \leq \frac{2\mu_0d}{i(1 - 9\mu_0)} \leq \frac{1}{2},\quad \text{for all $i\ge \hat\imath$}.
        \end{equation}
Thus,
        \begin{align*}
            \P[\mathcal E_x] =\sum_{i = \mu_0\alpha^{-4}d}^{\mu_0n}\frac{h_i}{|\mathcal H|} \leq \sum_{i = \mu_0\alpha^{-4}d}^{\mu_0n}\frac{h_i}{h_{\hat\imath}}\leq \sum_{i = \mu_0\alpha^{-4}d}^{\mu_0n}(1/2)^{i - \hat\imath} = \exp[-\Omega(d)].
        \end{align*}
      Hence,~\eqref{eq: desired prob chi degree} holds, and so the proof is complete.
    \qed

\medskip
\noindent\textit{Proof of \cref{clm: switching}.}
     We prove this using another switching argument. Fix $1 \leq i \leq |X|$ and consider a relation $\sim$ on $\mathcal G_i \times \mathcal G_{i-1}$ where $G' \sim H'$ if and only if there exist distinct vertices $w,y\in U$, $x,z \in V$ such that:
    \begin{itemize}
        \item[(i)] $wx \in E(G') \cap X$;
        \item[(ii)] $yz \in E(G') \setminus X$;
        \item[(iii)] $wz \notin E(G') \cup X$;
        \item[(iv)] $yx \notin E(G') \cup X$;
        \item[(v)] $G' -wx- yz + wz+ yx = H'$. 
    \end{itemize}

    \begin{figure}[h]
        \centering{
        \resizebox{0.5\textwidth}{!}{
        \includegraphics[]{switching_1.pdf}}}
        \label{fig: switching 2}
        \caption[Switching away forbidden edges.]{Switching away forbidden edges. The bold edge is a member of $X$ and all other edges must not be members of $X$.}
\end{figure}
    
    Let $\mathscr G$ be a bipartite graph with bipartition $(\mathcal G_i,\mathcal G_{i-1})$ and edges determined by $\sim$. We obtain a lower bound on $\frac{|\mathcal G_i|}{|\mathcal G_{i-1}|}$ by bounding the average degree in $\mathscr G$ of vertices in $\mathcal G_i$ and vertices in $\mathcal G_{i-1}$. More precisely, letting $f_i = \frac{1}{|\mathcal G_i|}\sum_{G \in \mathcal G_i}d_\mathscr G(G)$ and $r_{i-1} = \frac{1}{|\mathcal G_{i-1}|}\sum_{H \in \mathcal G_{i-1}}d_\mathscr G(H)$, we have $\frac{|\mathcal G_i|}{|\mathcal G_{i-1}|} = \frac{r_{i-1}}{f_i}$. Fix $G \in \mathcal G_i$. Then, 
    \begin{equation}
        d_\mathscr G(G) = \left|\left\{\{w,x,y,z\} \in \binom{V(G)}{4} \mathrel : \text{(i), (ii), (iii), and (iv) hold}\right\}\right|. \notag
    \end{equation}
    By first choosing an edge $wx \in E(G) \cap X$ and then choosing any other edge in $G$, we have that 
    \begin{equation}
        d_{\mathscr G}(G) \leq i\frac{dn'}{2}, \notag
    \end{equation}
    since the maximum degree of $G$ is at most $d$. This implies that
    \begin{equation}\label{eq: di upper}
        f_i \leq i\frac{dn'}{2}.
    \end{equation}
    Next we obtain a lower bound on $r_{i-1}$.  
    \begin{claim}\label{clm: switching expectation}
        \[
 r_{i-1} \geq (|X| - i)\alpha^3d^2.
        \]
    \end{claim}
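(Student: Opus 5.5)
Fix $H\in\mathcal G_{i-1}$ and set $\chi=E(H)\cap X$, which has $i-1$ edges. We count the quadruples $(w,x,y,z)$ with $wx\in X\setminus E(H)$, $wz,yx\in E(H)\setminus X$, $yz\notin E(H)\cup X$, and $H+wx+yz-wz-yx\in\mathcal G_i$. These are exactly the reverse switchings of the relation $\sim$. The plan is to lower bound this number for every such $H$, deterministically, by $(|X|-i)\alpha^3d^2$. Averaging over $\mathcal G_{i-1}$ then gives the bound on $r_{i-1}$.

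\textbf{Step 1: the main term.} Pick $wx\in X\setminus E(H)$; there are $|X|-(i-1)\ge |X|-i$ choices. Both endpoints of $wx$ are incident to an edge of $X$, so $d^*_w,d^*_x\ge\alpha d$. Choose $z\in N_H(w)$ and $y\in N_H(x)$, giving at least $\alpha^2d^2$ ordered pairs. Note that $z\in V$ and $y\in U$ because $H$ is bipartite, so the types match the relation, and $y\ne w$, $z\ne x$ because $wx\notin E(H)$.

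\textbf{Step 2: subtract the bad choices.} There are four kinds of bad pair $(y,z)$.
\begin{itemize}
\item[(a)] $wz\in X$ or $yx\in X$. These are edges of $\chi$ at $w$ or at $x$, and since the new graph must lie in $\mathcal G_i$ we may restrict $\chi$ to $\mathcal X_{i-1}$. So there are at most $\mu_0\alpha^{-4}d$ such $z$ (or $y$), which rules out at most $2\mu_0\alpha^{-4}d\cdot d$ pairs.
\item[(b)] $yz\in E(H)$. For each $z$ there are at most $d$ vertices $y\in N_H(z)$, and $y$ must also lie in $N_H(x)$. This crude count gives $d^2$, which is too weak. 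Instead, split by whether $y$ has many common neighbours with $x$ \ldots
\end{itemize}
Case (b) is the hard step. What we need is a bound of the form $\alpha^2 d^2-\#\{(y,z): y\sim z\}\ge \alpha^3 d^2$, and for that we must bound the number of 4-cycles $w$-$z$-$y$-$x$ through the non-edge $wx$. The crude count is useless, so we would avoid a deterministic bound and argue in expectation instead. Since $r_{i-1}$ is an average over $H$, it suffices to control the expected number of such paths. The plan is to use \cref{thm: bip edge probs}: conditional on the edges $wz,zy,yx$, the edge $yz$ appears with probability at most $\frac{2d}{n}(1+O(\mu_0))$. This gives at most $O(d^3/n)\,\alpha^{-?}=o(d^2)$ bad pairs on average.
\begin{itemize}
\item[(c)] $yz\in X$. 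By hypotheses (2)--(4) on $X$, for fixed $z$ there are at most $\mu_0 n$ vertices $y$ with $yz\in X$, and they are confined to the few vertices of $V\setminus V'$. This case should cost an $O(\mu_0)$ fraction.
\item[(d)] Forming $\chi\cup\{wx\}$ pushes the degree in $\chi$ of $w$ or $x$ to $\mu_0\alpha^{-4}d$. Restrict to those $wx$ whose endpoints have $\chi$-degree below the cap. The number of vertices at the cap is at most $2i/(\mu_0\alpha^{-4}d)$, and since $i\le 3d|X|/n$ these endpoints cover only a small fraction of $X$; this loses a factor $1-O(\alpha^4)$.
\end{itemize}

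\textbf{Step 3: collect terms.} Gathering the losses gives
\[
r_{i-1}\ge (|X|-i)d^2\bigl(\alpha^2-6\alpha^4-5\mu_0\bigr),
\]
and hypothesis \eqref{eq: alpha condition}, $\alpha^2-6\alpha^4-5\mu_0>\alpha^3$, then yields $r_{i-1}\ge(|X|-i)\alpha^3d^2$. We expect case (b) of Step 2 to be the main obstacle: it needs the edge-probability estimate from \cref{thm: bip edge probs} rather than a deterministic count, and checking that theorem's hypotheses (the degree and size conditions on $H_1$) requires care.
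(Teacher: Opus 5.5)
Your outline follows the paper's argument. The paper also starts from the at least $(|X|-i)\alpha^2d^2$ triples with $wx\in X\setminus E(H)$, $z\in N_H(w)$, $y\in N_H(x)$. It subtracts the $4$-cycle term $yz\in E(H)$ in expectation via \cref{thm: bip edge probs}, and removes cap violations with exactly your double count (at most $2\alpha^4 in/d\le 6\alpha^4|X|$ bad $X$-edges). It then concludes from \eqref{eq: alpha condition}.

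For the expectation step to be legitimate, drop the opening ``deterministic for every $H$'' framing. Instead fix $\chi\in\mathcal X_{i-1}$, let $H$ be uniform on $\mathcal G_\chi$, and bound $\mathbb E[d_{\mathscr G}(H)]$. This is what lets you apply \cref{thm: bip edge probs} with $H_1=\chi\cup\{wz,yx\}$ and $H_2=X\setminus\chi$; your ``conditional on $wz,zy,yx$'' should read conditional on $wz,yx\in E(H)$. Then $r_{i-1}$ is an average of these conditional expectations, because $\mathcal G_{i-1}$ is the disjoint union of the $\mathcal G_\chi$.

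The step that does not go through as written is (c). Hypotheses (2)--(4) alone do not make the pairs with $yz\in X$ an $O(\mu_0)$ fraction. Such pairs need $z\in N_H(w)\cap(V\setminus V')$, but $|V\setminus V'|$ can be $2\mu_0 n\gg d$. Nothing deterministic prevents all neighbours of $w$ from lying in $V\setminus V'$ via non-$X$ edges, with each such $z$ having up to $d$ vertices $y\in N_H(x)$ satisfying $yz\in X$. That is a loss of order $d^2$ per $wx$, which destroys the main term, so this case also needs the edge-probability estimate.

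The paper handles it by restricting to switchings with $z\in V'$, which disposes of $wz\in X$ and $yz\in X$ at once. It then bounds the expected number of $H$-neighbours of $w$ in $V\setminus V'$ by $|V\setminus V'|\cdot\frac{2d}{n}(1+O(\mu_0))=O(\mu_0 d)$.

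Alternatively, you can use your own ingredient. For each $z$ there are at most $\mu_0 n$ vertices $y$ with $yz\in X$. Each such $y$ with $yx\notin X$ (the other case is in (a)) is an $H$-neighbour of $x$ with conditional probability at most $\frac{2d}{n}(1+O(\mu_0))$. Summing over $z\in N_H(w)$ gives at most $2\mu_0 d^2(1+O(\mu_0))$ expected bad pairs per $wx$.

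Your handling of $wz\in X$ in (a) via the $\chi$-degree cap is correct and deterministic. It costs $2\mu_0\alpha^{-4}d^2$ per $wx$ rather than a small multiple of $\mu_0 d^2$, which is harmless since $\mu_0$ is chosen small with $\alpha$ fixed.
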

By (\ref{eq: di upper}) and \cref{clm: switching expectation}, we have 
    \begin{equation*} 
        \frac{|\mathcal G_i|}{|\mathcal G_{i-1}|}=\frac{r_{i-1}}{f_i} \geq \frac{2\alpha^3d|X|}{in'}\left(1 - \frac{i}{|X|}\right),
    \end{equation*}
    as desired.
    \qed

\medskip
\noindent\textit{Proof of \cref{clm: switching expectation}}
Fix $\chi \in \mathcal {X}_{i}$. Let $H$ be chosen uniformly at random from $\mathcal {G}_\chi$.
Then, it suffices to show that
$ \mathbb E[d_{\mathscr G}(H)] \ge (|X| - i)\alpha^3d^2$. 
       To prove this, we obtain a lower bound on the expected number of choices of switchings that produce $H$. Since we only need a lower bound, we consider a subset of switchings defined below. Consider the relation $\sim'$ on ${\mathcal G}_i\times {\mathcal G}_{i-1}$ where $G' \sim' H'$ if and only if there exist distinct vertices $w,y\in U$, $x,z \in V$ such that:
        \begin{itemize}
            \item[(I)] $wx \in X \setminus E(H')$;
            \item[(II)] $z \in V'$;
            \item[(III)] $yz \notin E(H')$;
            \item[(IV)] $wz \in E(H')$;
            \item[(V)] $yx \in E(H')\setminus X$;
            \item[(VI)] $H' - wz- yx + wx+ yz = G'$.
        \end{itemize}
Note that $z\in V'$ implies that $yz,wz\notin X$ by definition of $V'$. Hence, $\sim'$ is the restriction of $\sim$ by requiring $z\in V'$. Namely, for any $H'\in \mathcal{G}_{i-1}$, if $G'\sim' H'$ then $G'\sim H'$. 
Let $\mathscr{G}'$ be the bipartite graph on $(\mathcal{G}_i,\mathcal{G}_{i-1})$ with respect to the new relation $\sim'$. Then immediately we have $ \mathbb E[d_{\mathscr G}(H)] \ge  \mathbb E[d_{\mathscr G'}(H)] $.
Observe also that for any $H'\in {\mathcal G}_{i-1}$, a choice of $(x,y,z,w)$ satisfying (I')--(V') does not necessarily produce $G'\in {\mathcal G}_i$ since the switching of the edges from $H'$ to $G'$ may cause the maximum degree of $G'\cap X$ to exceed $\alpha^{-4}\mu_0 d$, and so we need to be careful when lower bounding the number of choices satisfying (I')--(V') where the resulting graph $G'$ produced by the switching is in ${\mathcal G}_{i}$.
By definition of ${\mathscr G}'$,

       
    \begin{align}
        d_{\mathscr{G}'}(H) 
        &= \left|\left\{\{w,x,y,z\} \in \binom{V(H)}{4} \mathrel : \text{(I)--(VI) hold for some $G'\in {\mathcal G}_{i}$}\right\}\right|.\notag\\
        &\geq |S_1| - |S_2| - |S_3| - |S_4| - |S_5|\label{eq: dR(H)}
    \end{align}
    where 
    \begin{itemize}
        \item $S_1 = \left\{\{w,x,y,z\} \in \binom{V(H)}{4} \mathrel : \text{(I) and (IV) hold and $yx \in E(H)$}\right\}$;
        \item $S_2 = \left\{\{w,x,y,z\} \in S_1 \mathrel : yz \in E(H)\right\}$;
        \item $S_3 = \left\{ \{w,x,y,z\} \in S_1 \mathrel : yx \in X \right\}$;
        \item $S_4 = \left\{ \left\{w,x,y,z\right\} \in S_1 \mathrel : z \notin V' \right\}$;
        \item $S_5 = \{\{w,x,y,z\} \in S_1 \mathrel : H' - wz - yx + wx + yz \notin \mathcal G_i\}$.
    \end{itemize}
    Observe that for any $u, v \in V(H)$ such that $uv \notin X$, we have 
    \begin{equation}\label{eq: edge prob}
        \P[uv \in E(H)] \leq \frac{2d}{n}(1 +O(\mu_0))
    \end{equation}
    by applying \cref{thm: bip edge probs} with $H_1 = \chi$ and $H_2 = X \setminus \chi$. We then have
    \begin{align}
        \mathbb E|S_1| &\geq \alpha^2(|X| - i)d^2, \label{eq: ineq 1}\\
        \mathbb E |S_2| &\leq (|X| - i)\frac{2d^3}{n}(1 + O(\mu_0)) \leq 2\mu_0(|X| - i)d^2(1 + O(\mu_0)),\label{eq: ineq 2}\\
        \mathbb E|S_3| &\leq \mu_0(|X| - i)d^2, \label{eq: ineq 3}\\
        \mathbb E |S_4| &\leq (|X| - i)d\left(\frac{2d(|V| - |V'|)}{n}(1 + O(\mu_0))\right) \leq 2\mu_0(|X| - i)d^2(1 + O(\mu_0)),\label{eq: ineq 4}
    \end{align}
    where~\eqref{eq: ineq 1} holds since $wx \in X$ implies $d^*_w, d^*_x \geq \alpha d$,~\eqref{eq: ineq 2} holds since $d/n \leq \mu_0$,~\eqref{eq: ineq 3} holds because there are at most $\mu_0d$ edges in $\chi$ incident to $x$, and~\eqref{eq: ineq 4} holds since the expected number of vertices in $V \setminus V'$ adjacent to $w$ is $2d(|V| - |V'|)(1 + O(\mu_0))/n$ by~\eqref{eq: edge prob}.
    The following claim bounds $|S_5|$ deterministically for any $H'$.

    \begin{claim}\label{clm: S5}
        \[
        |S_5| \leq 6\alpha^4d^2|X| = 6\alpha^4d^2(|X| - i)(1 + o(1)).
        \]
    \end{claim}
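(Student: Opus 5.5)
The plan is a deterministic count. We locate exactly where membership in $\mathcal G_i$ can fail, and then bound the number of tuples touching the few ``saturated'' vertices.

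Set $\Delta := \mu_0\alpha^{-4}d$. Fix $H\in\mathcal G_\chi$ with $\chi\in\mathcal X_{i-1}$, so every vertex is incident to at most $\Delta$ edges of $E(H)\cap X=\chi$. Take a tuple $\{w,x,y,z\}$ counted in the lower bound~\eqref{eq: dR(H)}. Since $S_4$ has already been subtracted, we may assume $z\in V'$, so $yz\notin X$.

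The switching $H\mapsto G'=H-wz-yx+wx+yz$ changes $E(\cdot)\cap X$ in only two ways:
\begin{itemize}
\item It deletes $yx$ if $yx\in X$. Deletions can only lower $X$-degrees.
\item It adds the single edge $wx\in X$. This raises the $X$-degree of $w$ and of $x$ by one, and of no other vertex.
\end{itemize}
Hence $G'$ has exactly $i$ edges in $X$. Moreover $G'\notin\mathcal G_i$ can only happen if $w$ or $x$ already has $\Delta$ (or, allowing for rounding, at least $\Delta-1$) incident edges of $\chi$. Call such vertices \emph{saturated}. So every tuple in $S_5$ (relative to $z\in V'$) has $w$ or $x$ saturated.

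Next we bound the number of saturated vertices. The $\chi$-degrees sum to $2|\chi|=2(i-1)$, and Claim~\ref{clm: switching} is only used for $i\le 3d|X|/n$. So the number of saturated vertices is at most
\[
\frac{2i}{\Delta(1-o(1))}\le \frac{6d|X|\,\alpha^4}{n\,\mu_0 d}(1+o(1))=\frac{6\alpha^4|X|}{\mu_0 n}(1+o(1)).
\]

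We then count tuples with a saturated endpoint. Choose a saturated vertex $v$ to play the role of $w$ or $x$. Then:
\begin{itemize}
\item Choose the $X$-edge $wx$ at $v$. By hypotheses (2)--(3) of Lemma~\ref{lem: switching} there are at most $\mu_0 n$ choices.
\item Choose $z$ as an $H$-neighbour of $w$: at most $d$ choices.
\item Choose $y$ as an $H$-neighbour of $x$: at most $d$ choices.
\end{itemize}
Every tuple in $S_5$ is produced at least once this way. This gives
\[
|S_5|\le \frac{6\alpha^4|X|}{\mu_0 n}\cdot \mu_0 n\cdot d^2 = 6\alpha^4 d^2|X|.
\]
There is a harmless $(1+o(1))$ factor, which can be absorbed by the slack in the rounding of $\Delta$ or in the constant of~\eqref{eq: alpha condition}.

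Finally, since $i\le 3d|X|/n$ and $d=o(n)$, we have $|X|-i=|X|(1-o(1))$. This gives the second form, $6\alpha^4d^2(|X|-i)(1+o(1))$.

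The main obstacle is bookkeeping rather than estimation: the claim has to be stated relative to tuples with $z\in V'$, with $S_4$ subtracted first, so that $yz$ is never a new $X$-edge. Without this, $y$ and $z$ could also become overloaded, and we would need a similar but separate count for them.
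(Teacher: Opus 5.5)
Your argument is correct and is essentially the paper's proof. You bound the number of saturated (``bad'') vertices by $2|\chi|/(\mu_0\alpha^{-4}d)$, multiply by the at most $\mu_0 n$ edges of $X$ at each such vertex to get at most $6\alpha^4|X|(1+o(1))$ possible edges $wx$ (using $i\le 3d|X|/n$, as the paper does), and then pay $d^2$ for $z\in N_H(w)$ and $y\in N_H(x)$.

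Restricting the count to tuples not already removed as $S_2,S_3,S_4$ is right, and the paper leaves this implicit. However, your line that $G'$ is a simple graph with exactly $i$ edges in $X$ also needs the exclusion of $S_3$ (so $yx\notin X$) and of $S_2$ (so $yz\notin E(H)$), not only of $S_4$.
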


    Thus, taking the expectation of (\ref{eq: dR(H)}) and $\mu_0$ to be sufficiently small, we have 
    \begin{equation}
        \mathbb E [d_{\mathscr G}(H)]\ge \mathbb E [d_{\mathscr G'}(H)] \geq (|X| - i)(\alpha^2 - 6\alpha^4 - 5\mu_0)d^2(1 + O(\mu_0)) \geq \alpha^3(|X| - i)d^2,
    \end{equation}
    where the third inequality holds by~\eqref{eq: alpha condition}. This completes the proof of this claim.
    \qed

    \medskip
    
    \noindent\textit{Proof of \cref{clm: S5}.}
        Fix $H'$. A vertex in $U \times V$ is called bad if it is incident to $\mu_0\alpha^{-4}d$ edges in $\chi$. An edge in $X$ is called bad if it is incident to a bad vertex. Note that $H' - wz - yx + wx + yz \notin \mathcal G_i$ only if $wx$ is bad. We will show that there are at most $6\alpha^4|X|$ bad edges in $X$. By choosing $wx$ and then $yz$, it follows that $|S_5| \leq 6\alpha^4d^2|X|$.  We prove that there are at most $6\alpha^4|X|$ bad edges by double counting the number of bad vertices using the number of bad edges and $|\chi|$. Let $q_e$ denote the number of bad edges in $H'$ and $q_v$ denote the number of bad vertices in $H'$. Note that each bad edge is incident to a bad vertex and each bad vertex is incident to at most $\mu_0n$ bad edges. Thus, 
        \begin{equation}\label{eq: q_v q_e}
            q_e \leq \mu_0nq_v.
        \end{equation}
        Also, each bad vertex is incident to $\mu_0\alpha^{-4}d$ edges in $\chi$. Thus, in total, the bad vertices are incident to at least $\mu_0dq_v/2\alpha^4$ edges in $\chi$, so
        \begin{equation}\label{eq: q_v chi}
            \mu_0dq_v/2\alpha^4 \leq i.
        \end{equation}
        Combining~\eqref{eq: q_v q_e} and~\eqref{eq: q_v chi} then gives that 
        \begin{align*}
            q_e/n \leq 2\alpha^4i/d,
        \end{align*}
        and so 
        \begin{equation}\notag
            q_e \leq \frac{2\alpha^4ni}{d} \leq 6\alpha^4|X|,
        \end{equation}
        as desired.

\qed

\subsection{Proof of \cref{clm: small torso max}}\label{sec: small torso max}
As in the proof of \cref{clm: s_B = 0} and \cref{clm: s_B > 0}, we begin by expanding each binomial coefficient using \cref{cor: choose} (except for $\binom{as'}{m}$, which we upper bound by $(eas'/m)^{m}$). Observe that
\begin{align}
    &\binom{as'}{m} \leq \left(\frac{eas'}{m}\right)^{m} = \left(\frac{n}{2d}\right)^{m}\exp[O(k_A + k_B)] \notag\\
    &\binom{b - s'}{m'/a}^a = \frac{(n/2)^{m'}(1 - m'/(ab - as'))^{-ab + as' +m' -a/2}}{(2\pi)^{a/2}d^{m' + a/2}(1 - a/12m' + O(a^2/m'^2))^{a}}\exp[O(k_A + k_B)]\notag\\
    &\binom{a}{m'/(b - s')}^{b - s'} = \frac{(n/2)^{m'}(1 - m'/(ab - as'))^{-ab + as' +m' -b/2 + s'/2}}{(2\pi)^{b/2}d^{m' + b/2 - s'/2}(1 - (b-s')/12m' + O(b^2/m'^2))^{b-s'}}\exp[O(k_A + k_B)]\notag\\
    &\binom{a(b-s')}{m'} = \frac{(n/2)^{m'  - 1/2}}{d^{m' + 1/2}(1 - m'/(ab - as'))^{ab - as' - m'}}\exp[O(k_A + k_B)]\notag\\
    &\binom{n^2/4}{dn/2}   = \frac{(n/2)^{dn}}{(dn/2)^{dn/2 + 1/2}(1 - 2d/n)^{n^2/4 - dn/2}}\exp[O(1)]\notag\\
     &\binom{n/2}{d}^n = \frac{(n/2)^{dn}}{(2\pi)^{n/2}d^{dn + n/2}(1 - 1/12d + O(1/d^2))^{n}(1 - 2d/n)^{n^2/2 - dn + n/2}}\exp[O(1)].\notag
\end{align}

Moreover, observe that 
\[
(1 - a/12m' + O(a^2/m'^2))^a(1 - (b-s')/12m' + O(b^2/m'^2))^{b - s'} = (1 - 1/12d)^n\exp[O(k_A + k_B)]
\]
and 
\[
(1 - m'/(ab - as'))^{ab - as' - m' + a/2 + b/2 - s'/2} = (1 - 2d/n)^{n^2/4 - dn/2 + n/2}\exp[O(k_A + k_B)].
\]
Thus, for any $(s', m)$ that satisfy~\eqref{eq: mS' relaxed} (or equivalently~\eqref{eq: mS' relaxed 1})
\begin{equation}\notag
    \frac{ \binom{as'}{m}\binom{b - s'}{m'/a}^a\binom{a}{m'/(b - s')}^{b-s'}\binom{n^2/4}{dn/2}}{\binom{a(b - s')}{m'}\binom{n/2}{d}^n} \leq (d/n)^{k_A + k_B}d^{s_A}\exp[O(k_A + k_B)].
\end{equation}
Observe that $d^{s_A} = \exp[s_A\log d] = \exp\left[O\left(\frac{k'_B\log d}{d}\right)\right] = \exp[O(k_B)]$. The desired result follows immediately.
\qed

\bibliographystyle{plain}

\bibliography{Greg.bib}

\newpage

\appendix
\section{Apendix}\label{sec: Appendix}
In this appendix, we establish a number of approximations that are useful for calculations throughout the paper. To do so, it is useful to define binomial coefficients using the gamma function. 

\begin{definition}
    For $z > 0$, the gamma function is defined as follows:
    \[
    \Gamma(z) = \int_{0}^\infty t^{z-1}e^{-t}dt.
    \]
\end{definition}
In particular, when $z$ is a non-negative integer, $\Gamma(z + 1) = z!$. Thus we may define binomial coefficients as follows:
\begin{definition}
    For any $0 \leq y \leq x$, 
    \[
    \binom{x}{y} = \frac{\Gamma(x+1)}{\Gamma(y+1)\Gamma(x - y + 1)}.
    \]
\end{definition}

The following is Stirling's approximation of the gamma function. 
\begin{prop}\label{prop: stirling} \cite{Stirling1730} For any $a > 0$,
    \[
    \Gamma(a+1) = (2\pi)^{1/2}a^{a + 1/2}e^{-a}(1 - 1/12a - O(1/a^2)).
    \]
\end{prop}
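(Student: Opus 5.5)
The plan is to derive the expansion from Binet's first formula rather than from a discrete Stirling sum, since the statement is for real $a>0$ and the binomial coefficients in this paper have non-integer entries. For $a>0$ we have
\[
\log\Gamma(a+1)=\Big(a+\tfrac12\Big)\log a-a+\tfrac12\log(2\pi)+\mu(a),
\qquad
\mu(a)=\int_0^\infty\Big(\frac{1}{e^t-1}-\frac1t+\frac12\Big)\frac{e^{-at}}{t}\,dt .
\]
This follows from $\Gamma(a+1)=a\Gamma(a)$ and the classical Binet representation of $\log\Gamma(a)$. Exponentiating gives $\Gamma(a+1)=(2\pi)^{1/2}a^{a+1/2}e^{-a}e^{\mu(a)}$, so the whole statement reduces to controlling the correction factor $e^{\mu(a)}$.

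The next step is to expand $\mu(a)$. The integrand bracket $\frac{1}{e^t-1}-\frac1t+\frac12$ equals $\frac{t}{12}+O(t^3)$ near $0$. It is also bounded between $0$ and $t/12$ for all $t>0$, which can be checked by comparing Taylor coefficients of $t\coth(t/2)$. Substituting this into the integral (a Watson's-lemma-type computation) gives
\[
0<\mu(a)<\frac{1}{12a},\qquad \mu(a)=\frac{1}{12a}+O(a^{-3}).
\]
Hence $e^{\mu(a)}=1+\frac{1}{12a}+O(a^{-2})$, with an absolute implied constant for $a\ge 1$; bounded $a$ is trivial.

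The main obstacle is the sign of the first-order term. Binet's formula forces the $1/(12a)$ term to enter with a plus sign, so the displayed factor $1-1/12a-O(1/a^2)$ cannot hold literally. The difference $1/(6a)$ is not absorbed by $O(1/a^2)$. My route is therefore to prove the statement in the uniform form $\Gamma(a+1)=(2\pi)^{1/2}a^{a+1/2}e^{-a}\big(1+\theta(a)/a\big)$ with $|\theta(a)|\le 1/12+O(1/a)$. This form contains the displayed correction up to the sign of $\frac1{12a}$.

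I would then check that every later use is insensitive to that sign. In the computations that use this proposition, the factor appears as products like $(1\mp 1/12d)^{n}$ in numerator and denominator. These products cancel between $\binom{n/2}{d}^n$ and the corresponding $\binom{|B'|}{m'/|A'|}^{|A'|}$-type terms, up to $\exp[O(k_A+k_B)]$, as in \eqref{eq: stirling error fine}. Any consistent choice of sign therefore yields the same downstream bounds.
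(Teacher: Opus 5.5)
You are right that the proposition as printed has the wrong sign. The true expansion is $\Gamma(a+1)=\sqrt{2\pi}\,a^{a+1/2}e^{-a}\bigl(1+\frac{1}{12a}+O(a^{-2})\bigr)$; the printed factor would be correct only if it sat in the denominator. The discrepancy $\frac{1}{6a}$ is not $O(a^{-2})$, and \cref{cor: choose} inherits the same sign error.

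The paper gives no argument for this proposition, only a citation, so your Binet-formula derivation is a genuinely different and more complete route. It handles real $a>0$ directly, which is what the paper needs, since its binomial coefficients are defined through $\Gamma$ with non-integer arguments. It also gives the explicit bounds $0<\mu(a)<\frac{1}{12a}$ and $\mu(a)=\frac{1}{12a}+O(a^{-3})$.

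Your check of the downstream uses is also correct. The Stirling corrections that are not absorbed into error terms are the factors $(1\pm\frac{1}{12y})^{\Theta(n)}$ with $y\approx d$. These cancel between $\binom{n/2}{d}^n$ and the matching products, as in \eqref{eq: stirling error fine} and in the proof of \cref{clm: small torso max}. So the paper's consistent use of the wrong sign is harmless.

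Two small repairs are needed.

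First, the bound $0<\frac{1}{e^t-1}-\frac1t+\frac12<\frac{t}{12}$ for all $t>0$ cannot be read off from the Taylor series of $t\coth(t/2)$. That series has radius of convergence $2\pi$ and coefficients $2B_{2k}/(2k)!$ of alternating sign. Instead, write the bracket as $\frac12(\coth u-\frac1u)$ with $u=t/2$. One option is to multiply by $u\sinh u$ and compare $u\cosh u-\sinh u$ with $\frac{u^2}{3}\sinh u$ termwise; the coefficients of $u^{2k+1}$ satisfy $\frac{2k}{(2k+1)!}\le\frac{1}{3(2k-1)!}$. The other option is to use $\coth u-\frac1u=\sum_{k\ge1}\frac{2u}{u^2+k^2\pi^2}$. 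This also gives $\frac{t}{12}-\bigl(\frac{1}{e^t-1}-\frac1t+\frac12\bigr)\le\frac{t^3}{720}$, hence $\mu(a)>\frac{1}{12a}-\frac{1}{360a^3}$ without needing Watson's lemma.

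Second, do not weaken your conclusion to $|\theta(a)|\le\frac1{12}+O(1/a)$. The cancellation in \eqref{eq: stirling error fine} needs the same first-order constant on both sides. A mismatched factor $\exp(\Theta(n/d))$ is not absorbed by $\exp[O(k_A+k_B)]$ when $k_A+k_B$ is small and $d$ is near $\sqrt{n\log n}$. State what you actually proved, namely $\theta(a)=\frac1{12}+O(1/a)$.
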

The corollary below can be obtained by applying \cref{prop: stirling} to each term in the binomial coefficient.
\begin{cor}\label{cor: choose}
    \[
    \binom{x}{y} = \frac{x^{y}}{\sqrt{2\pi}\cdot y^{y + 1/2}\left(1 - \frac{1}{12y} - O(1/y^2)\right)\left(1 - \frac{y}{x}\right)^{x - y + 1/2}}\exp[O(1/x)]
    \]
\end{cor}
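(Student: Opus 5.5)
The plan is to derive the statement directly from Stirling's formula for the gamma function, \cref{prop: stirling}, using the definition $\binom{x}{y}=\Gamma(x+1)/\bigl(\Gamma(y+1)\Gamma(x-y+1)\bigr)$. I will apply \cref{prop: stirling} to each of the three gamma factors, with $a=x$, $a=y$ and $a=x-y$, and then collect powers. This is implicitly in the regime where the error terms make sense, namely $y\to\infty$ and $x-y$ large. The statement only absorbs an $\exp[O(1/x)]$ factor and keeps an explicit $(1-1/12y-O(1/y^2))$ factor, so I will keep track of which correction terms are kept and which are absorbed.

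\textbf{Step 1.} Writing out the three factors gives
\[
\binom{x}{y}=\frac{x^{x+1/2}e^{-x}\,(1-\tfrac{1}{12x}-O(x^{-2}))}{\sqrt{2\pi}\,y^{y+1/2}e^{-y}(1-\tfrac{1}{12y}-O(y^{-2}))\,(x-y)^{x-y+1/2}e^{-(x-y)}(1-\tfrac{1}{12(x-y)}-O((x-y)^{-2}))}.
\]
The exponentials $e^{-x}$, $e^{-y}$ and $e^{-(x-y)}$ cancel exactly. One factor of $\sqrt{2\pi}$ survives in the denominator.

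\textbf{Step 2.} I will rewrite $(x-y)^{x-y+1/2}=x^{x-y+1/2}(1-y/x)^{x-y+1/2}$. Then the powers of $x$ combine as $x^{x+1/2}/x^{x-y+1/2}=x^{y}$. This yields exactly the main term
\[
\frac{x^y}{\sqrt{2\pi}\,y^{y+1/2}(1-y/x)^{x-y+1/2}}.
\]

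\textbf{Step 3.} I will handle the Stirling correction factors.
\begin{itemize}
\item The $y$-correction is kept verbatim in the denominator, as in the statement.
\item The $x$-correction is $1-\frac{1}{12x}+O(x^{-2})=\exp[O(1/x)]$.
\item The $(x-y)$-correction is $\exp[O(1/(x-y))]$.
\end{itemize}

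The last item is the one point needing care, and I expect it to be the main obstacle. In all applications in the paper $y\le x/2$; for example, $y\le d\le n/2$ against $x=n/2$ with $d=o(n)$, and $dn/2$ against $n^2/4$. Under $y\le x/2$ we have $x-y\ge x/2$, so $O(1/(x-y))=O(1/x)$. I will therefore state that assumption (or $y\le(1-c)x$ for a constant $c>0$) explicitly as the regime in which the corollary is used. If $x-y$ is bounded, the formula should instead be read with the $(x-y)$-correction kept. Apart from this, the proof is a direct substitution with no real obstacle.
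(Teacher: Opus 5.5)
Your argument is exactly the paper's one-line proof: apply \cref{prop: stirling} to $\Gamma(x+1)$, $\Gamma(y+1)$, $\Gamma(x-y+1)$ and simplify. Making the hypothesis $x-y=\Omega(x)$ explicit is a genuine improvement, although the paper does not always respect it (e.g.\ for $\binom{m'_A}{(1-\lambda)m'_A}$ one has $x-y=\lambda x=o(x)$, but there only an $\exp[O(k_A+k_B)]$ error is needed).

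One caveat comes from the paper itself: the correct expansion is $\Gamma(a+1)=\sqrt{2\pi}\,a^{a+1/2}e^{-a}\bigl(1+\frac{1}{12a}+O(a^{-2})\bigr)$. So the factor you keep verbatim should really read $1+\frac{1}{12y}+O(y^{-2})$. This is harmless for the paper, since these factors always cancel or are swallowed by much larger error terms.
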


We now establish some results that allow us to simplify messy binomial coefficients.
For any integer $a \geq 0$ and any $x \geq a$, let $x_a := x(x-1)\cdots(x-a+1)$. 

\begin{prop}\label{prop: choose top}
Let $x = x(n) = \omega(1)$, $y = y(n) = o(1)$, and $z = z(n) \in \{1, 2, \ldots, x(1-y) - 1\}$ be such that $xy$ is an integer. Then,
    \begin{equation*}
         \binom{x(1 - y)}{z} \geq \binom{x}{z}\exp\left[-yz - \frac{yz^2}{2x}(1 + o(1))- O(y^2z)\right].
    \end{equation*}
\end{prop}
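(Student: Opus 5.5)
We write the ratio of the two binomial coefficients as a ratio of falling factorials:
\[
\frac{\binom{x(1-y)}{z}}{\binom{x}{z}}=\prod_{j=0}^{z-1}\frac{x(1-y)-j}{x-j}=\prod_{j=0}^{z-1}\left(1-\frac{xy}{x-j}\right).
\]
Since $xy$ is an integer and $z\le x(1-y)-1$, every factor is positive. The claimed bound then amounts to a lower bound on
\[
\sum_{j=0}^{z-1}\log\Bigl(1-\frac{xy}{x-j}\Bigr).
\]

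\textbf{Step 1.} We use $\log(1-t)\ge -t-t^2$ for $0\le t\le 1/2$. Put $t_j=xy/(x-j)$. This tool needs care when $t_j$ is not small, which happens for $j$ close to $x$. In the main regime $z\le x/2$ (more generally $z=(1-\Omega(1))x$), we have $t_j\le 2y=o(1)$. Then
\[
\sum_j t_j^2=O(y^2 z),
\]
which is the $O(y^2z)$ error term.

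\textbf{Step 2.} For the linear part we write
\[
\frac{xy}{x-j}=y\Bigl(1+\frac{j}{x}+\frac{j^2}{x(x-j)}\Bigr).
\]
Summing over $0\le j<z$ gives
\[
\sum_j t_j=yz+\frac{yz(z-1)}{2x}+y\sum_j\frac{j^2}{x(x-j)}.
\]
The last sum is $O(yz^3/x^2)$ when $z\le x/2$. This is $o(yz^2/x)$ only when $z=o(x)$. For $z=\Theta(x)$ it is of the same order as the main term $yz^2/2x$, so it cannot be absorbed into the $(1+o(1))$ factor as written. I would either read the $(1+o(1))$ with the intended use $z=d=o(n)$, $x=n/2$ in mind, or absorb the extra term using $z^3/x^2\le z^2/x$ at the cost of a constant factor. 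Comparing with the four applications in the 0-statement, every use has $z=d\le x\cdot o(1)$, so restricting to $z=o(x)$ is safe.

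\textbf{Step 3.} If $z$ is comparable to $x$, then near $j\approx x(1-y)$ the quantity $t_j$ approaches $1$ and the expansion breaks. This is the main obstacle. There I would switch to Stirling's formula via \cref{cor: choose}, comparing $\binom{x(1-y)}{z}$ and $\binom{x}{z}$ through their entropy forms. The key expression is
\[
\log\frac{x-z-xy}{x-z}\cdot(x-z)\approx -xy-\frac{(xy)^2}{2(x-z)},
\]
and it has to be controlled. I expect this to give only a weaker statement, so I would state the proposition for $z=o(x)$, which is the only case used.
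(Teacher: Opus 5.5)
Your argument is correct for $z=o(x)$. For $z\le x/2$ it even gives the bound with $1+O(z/x)$ in place of $1+o(1)$ and with $4y^2z$ as the $O(y^2z)$ term. The restriction to $z=o(x)$ is not a weakness of your method: the proposition as stated is false when $z=\Theta(x)$. So Step 3 should be replaced by a counterexample, not by a Stirling computation.

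Take $x=n$, $y=1/n$ (so $xy=1$) and $z=n/2$. All hypotheses hold, and
\[
\binom{n-1}{n/2}\Big/\binom{n}{n/2}=1-\frac{z}{x}=\frac12 .
\]
The claimed lower bound, however, is $\exp[-\tfrac12-\tfrac18(1+o(1))-O(1/n)]\to e^{-5/8}>\tfrac12$. More generally, $\log(1-c)<-c-c^2/2$ for fixed $c\in(0,1)$, so no reading of the $(1+o(1))$ rescues $z=\Theta(x)$; in particular, absorbing $z^3/x^2$ at a constant-factor cost does not give the stated form. The paper's proof also silently needs $z=o(x)$, at the step where it expands $\log(1-t)$ with $t=z/(x(1-y)+1)$. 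As you observed, every application in the proof of the 0-statement has $x=n/2$ and $z\in\{d-1,d\}$ with $d=O(\sqrt{n\log n})$.

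Your route differs from the paper's in how the ratio is factored.
\begin{itemize}
\item The paper writes the ratio as $(x-z)_{xy}/x_{xy}=\prod_{i=0}^{xy-1}\bigl(1-\frac{z}{x-i}\bigr)$, a product of $xy$ factors. This is where the hypothesis that $xy$ is an integer is used. It bounds every factor below by the smallest one, getting $\bigl(1-\frac{z}{x(1-y)+1}\bigr)^{xy}$, and then does a single Taylor expansion.
\item You instead take the $z$ factors $1-\frac{xy}{x-j}$ and expand each logarithm. This needs the sum $\sum_j j/x$ and a second-order remainder. It uses only the integrality of $z$, so it works with Gamma-function binomials without requiring $xy\in\mathbb Z$, and it produces explicit error terms.
\end{itemize}
The paper's version is shorter; yours is slightly more general and more quantitative. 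Both are confined to $z=o(x)$, which the example above shows is unavoidable.
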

\begin{proof}
    Observe that 
    \begin{align}
        \frac{\binom{x(1-y)}{z}}{\binom{x}{z}}&= \frac{(x - xy)!}{x!} \cdot \frac{(x - z)!}{(x - xy - z)!}\notag\\
        &= \frac{(x - z)_{xy}}{x_{xy}}. \label{eq: choose top 1}
        \end{align}
         Thus, \eqref{eq: choose top 1} gives 
         \begin{align*}
             \frac{\binom{x(1-y)}{z}}{\binom{x}{z}}  &\geq \left(\frac{x(1 - y) + 1 - z}{x(1 - y) + 1}\right)^{xy} \\
             &= \left(1 - \frac{z}{x(1 - y) + 1}\right)^{xy}\\
             &= \exp\left[-\frac{yz}{1 - y + 1/x} - \frac{yz^2}{x} (1 + o(1))\right]\\
             &= \exp\left[-yz - \frac{yz^2}{2x}(1 + o(1))- O(y^2z)\right],
         \end{align*}
        
    as desired.
\end{proof}
\begin{prop}\label{prop: choose bot}
    Let $x = x(n) = \omega(1)$, $y = y(n) < 1$, and $z = z(n) \in \{1, 2, \ldots, x\}$ be such that $zy$ is an integer and $x - z(1 - y) \geq z$. Then,
    \begin{align*}
        \binom{x}{z(1 - y)} \geq\binom{x}{z}\left(\frac{z}{x}\right)^{yz}\exp\left[\left( \frac{yz^2}{x}- y^2z\right)(1 + o(1))\right].
    \end{align*}
        
\end{prop}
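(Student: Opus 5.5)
The plan is to compute the ratio exactly as a finite product and then bound its logarithm termwise. Throughout, write $w := yz$, which is a nonnegative integer by hypothesis, so that $z(1-y) = z-w$.

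\emph{Step 1: exact ratio.} Expanding both binomial coefficients as factorials gives
\[
\frac{\binom{x}{z-w}}{\binom{x}{z}} = \frac{z!\,(x-z)!}{(z-w)!\,(x-z+w)!} = \frac{z_w}{(x-z+w)_w} = \prod_{j=0}^{w-1}\frac{z-j}{x-z+w-j}.
\]
The hypothesis $x-z(1-y)\ge z$ says $x-z+w\ge z$, so every factor is positive and at most $1$.

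\emph{Step 2: extract the main term.} I would factor out $(z/x)^w=(z/x)^{yz}$ and rewrite the ratio as
\[
\Bigl(\frac{z}{x}\Bigr)^{w}\prod_{j=0}^{w-1}\frac{1-j/z}{1-(z-w+j)/x}.
\]
It then suffices to bound the logarithm of the remaining product from below by $\bigl(\frac{yz^2}{x}-y^2z\bigr)(1+o(1))$.

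\emph{Step 3: the denominators.} Using $-\log(1-t)\ge t$, the denominators contribute at least
\[
\sum_{j=0}^{w-1}\frac{z-w+j}{x} = \frac{w(z-w)}{x}+\frac{w(w-1)}{2x} = \frac{yz^2}{x}-\frac{y^2z^2}{2x}-\frac{yz}{2x}.
\]
Because $z\le x$, the two negative terms are at most $\tfrac{y}{2}\cdot y^2z + \tfrac{y}{2}$. The first of these is dominated by $y^2z$. The second is $o(y^2 z)$ once $yz$ is large, and when $yz=w$ is bounded it is an $O(1)$ correction absorbed in the same way.

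\emph{Step 4: the numerators.} Using $\log(1-t)\ge -t-\frac{t^2}{2(1-t)}$ with $t=j/z\le y$, the numerators contribute at least
\[
-\sum_{j<w}\frac{j}{z}\Bigl(1+O\Bigl(\frac{y}{1-y}\Bigr)\Bigr) = -\frac{y^2z}{2}\bigl(1+O(y)\bigr).
\]

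\emph{Step 5: combine.} Adding Steps 3 and 4, the logarithm of the ratio is at least
\[
\frac{yz^2}{x} - \frac{y^2z}{2}\bigl(1+O(y)\bigr) - \frac{y}{2}\,y^2 z - \frac{y}{2}.
\]
This is at least $\frac{yz^2}{x}-y^2z$ up to a $(1+o(1))$ factor, because the total coefficient of $y^2z$ is $\tfrac12+O(y)<1$. This yields the stated inequality.

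The main obstacle is the regime where $y$ is not small: the statement only assumes $y<1$. There the expansion $\log(1-j/z)\approx -j/z$ in Step 4 degrades, because the terms with $j/z$ close to $y$ are no longer small. To handle this I would bound the sum by the integral
\[
z\int_0^y \log(1-t)\,dt,
\]
and check that the resulting error is still absorbed into the $(1+o(1))$ factor multiplying $-y^2z$. If that fails for $y$ close to $1$, I would restrict attention to $y=o(1)$, which is the only regime in which the proposition is applied earlier in the paper (in the proof of the $0$-statement, $y=1/d$ and $y=(d+1)/d\cdot\ldots$ relative to $z=d$).
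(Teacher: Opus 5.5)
Your route is the paper's: write the ratio exactly as $\prod_{j<w}\frac{z-j}{x-z+w-j}$ and pull out $(z/x)^{w}$. The paper then bounds every factor below by the last one, $\frac{z-w+1}{x-z+1}$; this is legitimate because the factors are non-increasing in $j$ when $x-z+w\ge z$. Your termwise sum of logarithms is a finer variant of the same idea. However, Steps 3--5 contain two bookkeeping errors.

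\emph{First error.} The inequality $\frac{y^2z^2}{2x}\le\frac{y}{2}\cdot y^2z$ holds only if $z/x\le y$. That is not assumed, and it fails in the paper's own use ($y=1/d$, $z=d$, $x\approx n/2$) as soon as $2d^2>n$. What is true is
\[
\frac{y^2z^2}{2x}=\frac{y^2z}{2}\cdot\frac zx\le\frac{y^2z}{2(2-y)}.
\]
This is exactly where the hypothesis $x-z(1-y)\ge z$, i.e.\ $x\ge(2-y)z$, must enter; your argument otherwise never uses it.

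\emph{Second error.} The $-\frac y2$ term is not negligible when $w=yz$ is bounded, and $w=1$ is precisely the case used in Section~3. There $\frac y2=\frac{y^2z}{2}$, so with the corrected first term your accounting gives a coefficient of $y^2z$ of about $\frac54>1$. The fix is not to discard the $+\frac y2$ in Step~4. Your bound there is really
\[
-\frac{w(w-1)}{2z}\Bigl(1+\frac{y}{2(1-y)}\Bigr)=-\frac{y^2z-y}{2}\Bigl(1+\frac{y}{2(1-y)}\Bigr),
\]
and $\frac y2-\frac{yz}{2x}\ge0$ because $z\le x$. With both fixes you get
\[
\log\frac{\binom{x}{z(1-y)}}{\binom{x}{z}(z/x)^{yz}}\ \ge\ \frac{yz^2}{x}-y^2z\Bigl(\frac12+\frac{1}{2(2-y)}+\frac{y}{4(1-y)}\Bigr).
\]
The bracket is at most $0.925<1$ for $y\le 1/3$. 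This proves the proposition in that range, in particular for $y=o(1)$, even with the $o(1)$ replaced by $0$.

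Your worry about large $y$ is well founded, and the fallback is not optional: the statement is false for constant $y$ close to $1$, so the integral comparison cannot rescue it. Take $y=9/10$ and $z=x/2$ with $20\mid x$. The hypotheses hold, since $x-z(1-y)=0.95x\ge z$ and $yz=9x/20$ is an integer. Your Step~1 product then gives, up to $O(1)$,
\[
z\int_0^{0.9}\log(1-t)\,dt\;-\;x\int_{0.05}^{0.5}\log(1-s)\,ds\;\approx\;(-0.6697+0.3043)\,z\approx-0.3654\,z,
\]
whereas $\frac{yz^2}{x}-y^2z=(0.45-0.81)\,z=-0.36\,z$. A deficit of order $z$ cannot be absorbed by a factor $1+o(1)$. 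So the integral computation you propose would confirm the failure rather than repair it. The proposition has to be read with $y$ small, as in its only application ($y=1/d$, $w=1$). The paper's own argument likewise only delivers it there, since its final step hides $O(y)$ relative errors inside the $o(1)$.
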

\begin{proof}
    Observe that 
    \begin{align}
        \frac{\binom{x}{z(1-y)}}{\binom{x}{z}}&= \frac{z!}{(z - yz)!} \cdot \frac{(x - z)!}{(x - z + yz)!} \notag\\
        &= \frac{z_{yz}}{(x - z + yz)_{yz}}\label{eq: choose bottom}.
    \end{align}
    Thus, \eqref{eq: choose bottom} gives 
    \begin{align*}
        \frac{\binom{x}{z(1-y)}}{\binom{x}{z}} &\geq \left(\frac{z - yz + 1}{x - z + 1}\right)^{yz}\\
        &= \left(\frac{z}{x}\right)^{yz}\left(\frac{1 - y - 1/z}{1 - z/x + 1/x}\right)^{yz}\\
        &= \left(\frac{z}{x}\right)^{yz}\left(1 - y - 1/z\right)^{yz}\left(1 + \frac{z-1}{x} + O(z^2/x^2)\right)^{yz}\\
        &= \left(\frac z x \right)^{yz}\exp\left[\left(\frac{yz^2}{x} - y^2z\right)(1 + o(1))\right],
    \end{align*}
    as desired.
\end{proof}

    \begin{prop}\label{prop: choose approx ub top}
        For any $x = x(n),y = y(n) = o(1),z = z(n)$ where $x(1 - y)/2 \geq z$, we have 
        \[
        \binom{x(1 - y)}{z} = \binom{x}{z}\exp[O(yz + 1/x)].
        \]
    \end{prop}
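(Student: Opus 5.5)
The plan is to compare the two binomial coefficients through a falling-factorial identity, as in the proofs of \cref{prop: choose top} and \cref{prop: choose bot}, and to control the resulting product by elementary logarithm estimates. Since the statement is two-sided, we need both an upper and a lower bound on the ratio
\[
R := \binom{x(1-y)}{z}\Big/\binom{x}{z}.
\]
With the Gamma-function definition of binomial coefficients, $xy$ need not be an integer. We therefore do not telescope $xy$ factors literally. Instead we write
\[
\log R = \log\Gamma(x(1-y)+1) - \log\Gamma(x(1-y)-z+1) - \big(\log\Gamma(x+1) - \log\Gamma(x-z+1)\big).
\]
Equivalently, $\log R = -\int_{x(1-y)}^{x}\big(\psi(t+1)-\psi(t-z+1)\big)\,dt$, where $\psi$ is the digamma function. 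This is the continuous analogue of $\log\frac{(x-z)_{xy}}{x_{xy}}$ from \eqref{eq: choose top 1}.

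Next we bound the integrand. For $t \ge x(1-y)$ we have $t - z \ge x(1-y)/2 \ge$ a constant fraction of $t$, using the hypothesis $z \le x(1-y)/2$. We would use the standard estimate $\psi(u+1) = \log u + O(1/u)$, or alternatively apply \cref{prop: stirling} directly to each Gamma term. This gives
\[
\psi(t+1)-\psi(t-z+1) = \log\frac{t}{t-z} + O\Big(\frac{1}{t-z}\Big).
\]
The main term satisfies $\log\frac{t}{t-z} = -\log(1-z/t) \le \frac{z/t}{1-z/t} \le 2z/t$, because $z/t \le 1/2(1+o(1))$. It is nonnegative, so it is $\Theta(z/t)$. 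Integrating over an interval of length $xy$ with $t = \Theta(x)$ yields $O(xy\cdot z/x) = O(yz)$. The error term integrates to $O(xy/x) = O(y)$.

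The main obstacle is this $O(y)$ remainder, which is not obviously $O(yz + 1/x)$. It is absorbed once $z \ge 1$, since then $y \le yz$. If $z$ can be smaller than $1$ (non-integer $z$ in $(0,1)$), we would instead use the mean value theorem on $u \mapsto \log\Gamma(u+1)-\log\Gamma(u-z+1)$. Its derivative is bounded by $z\cdot\sup\psi' = O(z/t)$ uniformly, because $\psi'(u) = O(1/u)$ and the difference of digammas over an interval of length $z$ is at most $z\sup\psi'$. This gives the bound $O(yz)$ directly, with no separate additive error.

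The $1/x$ term in the statement only accommodates the lower-order Stirling error coming from \cref{cor: choose}. That error is $\exp[O(1/x)]$ when both coefficients are expanded via \cref{cor: choose} instead of via digamma. If we take the Stirling route, we would expand both binomials with \cref{cor: choose}, and we would check that the $(1-1/12y)$-type correction factors cancel up to $O(1/x)$ when $z$ is fixed. The ratio of main terms would then be handled exactly as above.
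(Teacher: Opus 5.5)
Your argument is correct, but it takes a genuinely different route from the paper's. The paper applies \cref{cor: choose} to $\binom{x(1-y)}{z}$ and keeps the $z$-dependent factor, which is identical in $\binom{x}{z}$ and so cancels exactly. It then asserts that $(1-y)^z$, and the ratio of $\bigl(1-\frac{z}{x(1-y)}\bigr)^{x(1-y)-z+1/2}$ to $\bigl(1-\frac{z}{x}\bigr)^{x-z+1/2}$, are both $\exp[O(yz)]$; the additive $O(1/x)$ in the statement is just the Stirling remainder.

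You instead start from the exact identity $\log R=-\int_{x(1-y)}^{x}\bigl(\psi(t+1)-\psi(t-z+1)\bigr)\,dt$. Since $\psi$ is increasing, this gives $R\le 1$ immediately. Since $\psi'$ is decreasing with $\psi'(v)\le 2/v$ for $v\ge 1$, and $t-z\ge x(1-y)/2$, the integrand is at most $z\,\psi'(t-z+1)=O(z/x)$ on the whole range. Hence $|\log R|=O(yz)$ with no $1/x$ term, no integrality assumptions on $xy$ or $z$, and no need for $x\to\infty$.

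Your route is more rigorous and gives a slightly sharper statement. The step the paper leaves implicit, comparing the two $(1-\cdot)^{\cdot}$ factors, actually needs a derivative estimate of exactly the kind you carry out. The paper's route is shorter given its appendix toolkit.

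Two small remarks. First, make the mean-value bound your main argument rather than a fallback for $z<1$. It makes both the $O(y)$ remainder and the case split on $z\ge 1$ unnecessary. Second, in your last paragraph, the $\bigl(1-\frac{1}{12z}\bigr)$-type correction factors depend only on the lower index $z$. They therefore cancel exactly in the ratio, and no ``$z$ fixed'' condition is needed.
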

    \begin{proofclaim}
        We use \cref{cor: choose} to approximate $\binom{x(1 - y)}{z}$. This gives 
        \begin{align*}
            \binom{x(1 - y)}{z} &= \frac{x^z(1 - y)^z}{z!\big(1 - z/(x(1-y))\big)^{x - xy - z + 1/2}}\exp[O(1/x)]\\
            &= \frac{x^z}{z!(1 - z/x)^{x - z + 1/2}}\exp[O(yz + 1/x)]\\
            &= \binom{x}{z}\exp[O(yz + 1/x)].
        \end{align*}
    \end{proofclaim}

\begin{prop}\label{prop: choose approx ub bot}
    For any $x = x(n), y = y(n) = o(1), z = z(n)$ where $x/2 \geq z$, we have 
    \[
    \binom{x}{z(1 - y)}= \binom{x}{z}\left(\frac{z}{x}\right)^{yz}\exp[O(yz)].
    \]
\end{prop}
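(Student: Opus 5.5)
The approach mirrors \cref{prop: choose approx ub top}: expand both binomial coefficients with \cref{cor: choose} and compare them factor by factor. Set $z' = z(1-y)$. Then \cref{cor: choose} gives
\[
\binom{x}{z'} = \frac{x^{z'}}{\sqrt{2\pi}\, z'^{z'+1/2}(1-1/12z' - O(1/z'^2))(1-z'/x)^{x-z'+1/2}}\exp[O(1/x)],
\]
and the analogous formula holds with $z$ in place of $z'$. I will divide the two expressions and show that the ratio equals $(z/x)^{yz}\exp[O(yz)]$.

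\textbf{Powers of $x$ and $z$.} The ratio of the powers of $x$ is $x^{-yz}$. The ratio of the powers of $z$ is
\[
\frac{z^{z+1/2}}{(z(1-y))^{z(1-y)+1/2}} = z^{yz}(1-y)^{-z(1-y)-1/2}.
\]
Since $y=o(1)$, we have $-\log(1-y)=O(y)$, so $(1-y)^{-z(1-y)} = \exp[O(yz)]$ and $(1-y)^{-1/2}=\exp[O(y)]$. Together these factors give exactly $(z/x)^{yz}\exp[O(yz)]$.

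\textbf{Stirling correction factors.} If $z$ is bounded below by a constant, the factor $(1-1/12z)/(1-1/12z')$ is $1+O(1/z - 1/z') = 1+O(y/z)=\exp[O(yz)]$. For tiny $z$ this factor is $O(1)$. I will treat it as absorbed into the error term, in the same loose way the paper handles such constants elsewhere. Equivalently, one may assume $yz\ge 1$, or else absorb the constant. This is the one step where the bound could fail as literally stated (e.g.\ $yz\to0$ with $z$ bounded), so I would flag it.

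\textbf{The $(1-z/x)$ factor (main obstacle).} It remains to show
\[
\frac{(1-z/x)^{x-z+1/2}}{(1-z'/x)^{x-z'+1/2}} = \exp[O(yz)].
\]
Take logarithms and let $f(t) = (x-t+1/2)\log(1-t/x)$. Then $f'(t) = -\log(1-t/x) - (x-t+1/2)/(x-t)$. On $[z',z]\subseteq[0,x/2]$ we have $|\log(1-t/x)|\le \log 2$, and the second term is $O(1)$ provided $x-t\ge x/2\ge$ a constant. Hence $|f(z)-f(z')|\le |z-z'|\cdot O(1) = O(yz)$ by the mean value theorem. The hypothesis $z\le x/2$ is exactly what keeps the logarithm bounded; this is the key place it is used.

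The $\exp[O(1/x)]$ error terms are $O(1)$, which again is absorbed (or is $O(yz)$ when $yz$ is bounded below). Multiplying the three estimates gives $\binom{x}{z(1-y)}=\binom{x}{z}(z/x)^{yz}\exp[O(yz)]$.
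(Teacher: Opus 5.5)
Your proposal is correct and follows essentially the same route as the paper: expand both binomial coefficients with \cref{cor: choose} and compare factor by factor. Your mean-value-theorem bound on $(x-t+1/2)\log(1-t/x)$ over $[z(1-y),z]$ justifies a step the paper performs without comment. Your caveat about the $\exp[O(1/x)]$ and small-$z$ Stirling terms is apt, since the paper's own proof likewise only concludes $\exp[O(yz+1/x)]$.
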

\begin{proof}
    We use \cref{cor: choose} to approximate $\binom{x}{z(1 - y)}$. This gives 
        \begin{align*}
            \binom{x}{z(1-y)} &= \frac{x^{z - yz}}{z^{z - yz}\big(1 - z(1-y)/x\big)^{x - z + yz+ 1/2}}\exp[O(1/x)]\\
            &= \frac{x^z}{z!(1 - z/x)^{x - z + 1/2}}\left(\frac{z}{x}\right)^{yz}\exp[O(yz + 1/x)]\\
            &= \binom{x}{z}\left(\frac{z}{x}\right)^{yz}\exp[O(yz + 1/x)].
        \end{align*}
\end{proof}

We next provide a proposition that allows us to upper bound the product of a number of binomial coefficients by the product of their averages (in some sense of average). To do so, it is convenient to use the beta function.

\begin{definition}\label{def: Beta}
    For any $x,y > 0$, The beta function is 
    \[
    B(x,y) = \int_{0}^1t^{x - 1}(1 - t)^{y  - 1}dt.
    \]
\end{definition}
It is known that for $0 \leq  y \leq x$, 
\begin{equation}\label{eq: Binom Beta}
    \binom{x}{y} = \frac{1}{(x+1)B(y+1, x - y + 1)}.
\end{equation}

\begin{prop}\label{prop: binom average}
    Suppose $0 < y \leq x$ and for positive numbers $y_1, \ldots, y_r$, we have $y = \frac{y_1 + \cdots + y_r}{r}$. Then, 
    \[
        \binom{x}{y}^r \geq \prod_{i = 1}^r \binom{x}{y_i}.
    \]
\end{prop}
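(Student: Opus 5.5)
The claim is that $y\mapsto\log\binom{x}{y}$ is concave on $[0,x]$. Granting that, Jensen's inequality applied to the points $y_1,\dots,y_r$ with equal weights gives
\[
\sum_{i=1}^r\log\binom{x}{y_i}\le r\log\binom{x}{y},
\]
since $y$ is their average. Exponentiating yields the stated bound. So the whole task is to verify this concavity, using the gamma-function definition of the binomial coefficient.

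Write
\[
f(y)=\log\binom{x}{y}=\log\Gamma(x+1)-\log\Gamma(y+1)-\log\Gamma(x-y+1),
\]
which is smooth for $y\in(-1,x+1)$, a range containing every $y_i$ (these lie in $(0,x]$ for the statement to make sense). Differentiating twice in $y$ gives
\[
f''(y)=-\psi'(y+1)-\psi'(x-y+1),
\]
where $\psi=(\log\Gamma)'$ is the digamma function. The trigamma function has the series
\[
\psi'(t)=\sum_{k\ge0}\frac{1}{(t+k)^2}>0 \qquad (t>0).
\]
Hence $f''<0$ on the relevant interval, so $f$ is strictly concave.

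If one prefers to avoid special-function series, the same conclusion follows from the log-convexity of $\Gamma$ (Bohr--Mollerup). Alternatively, \eqref{eq: Binom Beta} can be used: $-\log B(y+1,x-y+1)$ is concave in $y$ for fixed $x$. This is because $B(y+1,x-y+1)=\int_0^1 t^{y}(1-t)^{x-y}\,dt$ is an integral of exponentials in $y$, so its logarithm is convex by Hölder's inequality. Either route is routine.

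The only step needing any care is justifying concavity for non-integer arguments, which is exactly why the paper defines binomial coefficients through $\Gamma$. One should also check that the $y_i$ stay inside the domain of concavity, which holds if they lie in $[0,x]$.
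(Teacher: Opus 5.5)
Your proof is correct. Its skeleton matches the paper's: both rest on the log-concavity of $y\mapsto\binom{x}{y}$ and then average over the $y_i$.

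The paper proves this concavity exactly by your third option. It applies H\"older's inequality with exponents $p=r/(r-1)$ and $q=r$ to the Beta integrals $\int_0^1 t^{y}(1-t)^{x-y}\,dt$. It also replaces the appeal to Jensen by an induction on $r$: the induction step is the two-point concavity inequality at $y=\frac{r-1}{r}y'+\frac1r y_r$, where $y'$ is the average of $y_1,\dots,y_{r-1}$.

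Your primary route, $f''(y)=-\psi'(y+1)-\psi'(x-y+1)<0$, or equivalently log-convexity of $\Gamma$, is an equally valid way to get the concavity. Each route has its merits. Yours is quick and transparent once you accept the series for the trigamma function, and citing Jensen directly avoids the induction bookkeeping. The paper's H\"older argument is self-contained from the integral representation, imports no special-function facts, and never needs differentiability in $y$.

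Both arguments need every $y_i$ in $(-1,x+1)$, the domain where the Gamma/Beta expression is positive. This holds since the $y_i$ lie in $(0,x]$, as you noted.
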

\begin{proof}
    We prove this by induction on $r$. The base case holds trivially when $r = 1$, so suppose $r > 1$ and the statement holds for all positive integers $r' < r$. Let $y' = \frac{y_1 + \cdots + y_{r-1}}{r-1}$. Then, we have 
    \begin{align*}
        \prod_{i = 1}^r \binom{x}{y_i} \leq \binom{x}{y'}^{r-1}\binom{x}{y_r}.
    \end{align*}
    
    H\"older's inequality states that for positive values $p,q$ such that $1/p + 1/q = 1$ and real valued functions $f$ and $g$, 
    \begin{equation}\label{eq: Holder}
        \int_{0}^1|f(t)g(t)|dt \leq \left(\int_{0}^1|f(t)|^pdt\right)^{1/p}\left(\int_{0}^1|g(t)|^qdt\right)^{1/q}.
    \end{equation}
    By first applying \eqref{eq: Binom Beta}, then applying \cref{def: Beta}, and finally applying \eqref{eq: Holder} with $p = \frac{r}{r-1}$, $q = r$, $f(t) = t^{(y_1 + \cdots + y_{r-1})/{r}}(1-t)^{x(r-1)/r}$, and $g(t)= t^{y_r/r}(1-t)^{x/r - y_r/r}$, we have
    \begin{align*}
        \binom{x}{y'}^{r-1}\binom{x}{y_r} &= \frac{1}{(x+1)^r[B(y' + 1, x - y' + 1)]^{r-1}B(y_r + 1, x - y_r + 1)} \\
        &= \left[(x+1)^r\left(\int_0^1t^{y'}(1-t)^{x - y'}dt\right)^{r-1}\int_0^1t^{y_r}(1-t)^{x-y_r}dt\right]^{-1}\\
        &\leq \left[(x+1)\int_0^1t^y(1-t)^{x-y}dt\right]^{-r}\\
        &= \frac{1}{(x+1)^q[B(y+1, x - y+1)]^q} = \binom{x}{y}^r.
    \end{align*}
\end{proof}

Finally, we establish a result that allows us to expand many of the terms obtained approximating binomial coefficients using \cref{cor: choose}.

\begin{prop}\label{prop: expand log}
For $x = x(n)$, $y = y(n)$, and $z = z(n)$ with $x> y$, there exists $w = w(z,n)$ such that for any integer $w > w_z$, 
    \[
    (1 - y/x)^{x - y} = \exp\left[-y + \sum_{i = 2}^{w}\frac{y^i}{x^{i-1}i(i-1)} + O(z)\right]
    \]
\end{prop}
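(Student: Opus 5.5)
Writing $t=y/x\in(0,1)$, the left-hand side becomes $\exp[(x-y)\log(1-t)]$, so the plan is to expand $\log(1-t)=-\sum_{j\ge1}t^j/j$, which converges absolutely for $0\le t<1$. Then
\[
(x-y)\log(1-t)=-x\sum_{j\ge1}\frac{t^j}{j}+x\sum_{j\ge1}\frac{t^{j+1}}{j}.
\]
Next I will reindex the second sum with $i=j+1$ and combine it with the first sum term by term. The $i=1$ term of the first sum gives $-xt=-y$. For each $i\ge2$ the coefficient of $xt^i$ is $-\frac1i+\frac1{i-1}=\frac{1}{i(i-1)}$. This produces the exact identity
\[
(1-y/x)^{x-y}=\exp\Bigl[-y+\sum_{i\ge2}\frac{y^i}{x^{i-1}i(i-1)}\Bigr].
\]

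It then remains to truncate at $w$ and control the tail $R_w=\sum_{i>w}\frac{y^i}{x^{i-1}i(i-1)}$. I will bound it by $x\,t^{w+1}\sum_{k\ge0}t^k/(w(w+1))\le \frac{x\,t^{w+1}}{w(w+1)(1-t)}$. Since $t<1$ is fixed for the given $n$, this quantity is decreasing in $w$ and tends to $0$ as $w\to\infty$ ($x t^{w+1}\to0$). So for any prescribed error scale $z=z(n)>0$, I can choose $w_z$ so that $R_w\le z$ for all integers $w>w_z$. This gives the error term $O(z)$, and in fact it is even one-sided and nonnegative.

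The only real obstacle is reading the quantifiers correctly. The statement writes $w=w(z,n)$ and then ``$w>w_z$''. I interpret this as saying there is a threshold $w_z$ depending on $z$ and $n$, beyond which truncation costs at most $z$. If $t$ stays bounded away from $1$ uniformly in $n$, say $y\le(1-\eta)x$ as in every application in the paper, where $y/x$ is $O(d/n)$ or smaller, then $w_z$ can be taken as $\lceil\log(x/z)/\log(1/(1-\eta))\rceil$. When $z$ is polynomial in $n$ this is $O(\log n)$, or even $O(1)$ when $t=O(d/n)=n^{-\Omega(1)}$. I will state this explicitly so that the applications in the proofs of \cref{clm: s_B = 0} and \cref{clm: s_B > 0}, which use a fixed finite $w$ with error $O(k_A+k_B)$, are justified.
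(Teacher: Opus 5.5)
Your argument is correct and follows essentially the same route as the paper: expand $\log(1-y/x)$ as a power series, multiply by $x-y$, and recombine the coefficients via $\frac{1}{i-1}-\frac{1}{i}=\frac{1}{i(i-1)}$. Your ordering is cleaner than the paper's. You derive the exact series identity first and only then truncate, with the explicit tail bound $\frac{x t^{w+1}}{w(w+1)(1-t)}$. The paper instead truncates the logarithm first and then multiplies by $x-y$, so it silently (though harmlessly, since $w$ may be taken larger) absorbs both the factor $x-y$ on the $O(z)$ error and the leftover boundary term $\frac{y^{w+1}}{x^{w}w}$.
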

\begin{proof}
    Observe that 
    \[
    (1 - y/x)^{x - y} = \exp\left[(x - y)\log(1 - y/x)\right].
    \]
    Then, by taking a sufficiently precise Taylor polynomial of $\log(1 - y/x)$, we have that there exists $w_Z = w_Z(n)$ such that for all $w > w_z$,
    \[
    \log(1 - y/x) = -\sum_{i = 1}^w\frac{y^i}{x^ii} + O(z).
    \]
    Thus, 
    \begin{align}
        (1 - y/x)^{x - y} &= \exp\left[-(x - y)\sum_{i = 1}^w\frac{y^i}{x^ii} + O(z)\right]\notag\\
        &= \exp\left[-y + \sum_{i = 1}^w\left(\frac{y^{i+1}}{x^ii} - \frac{xy^{i+1}}{x^{i+1}(i+1)}\right) + O(z)\right]\notag\\
        &= \exp\left[-y + \sum_{i = 2}^w\frac{y^i}{x^{i-1}i(i-1)} + O(z)\right]\notag,
    \end{align}
    as desired.
\end{proof}

\end{document}